\documentclass[11pt]{amsart}
\usepackage{amsmath,amssymb,mathrsfs,euscript,color,mathabx,stmaryrd,mathabx}
\usepackage[colorlinks]{hyperref}
\usepackage[shortlabels]{enumitem}
\usepackage{pdfsync}
\usepackage[backgroundcolor=green!40, linecolor=green!40]{todonotes}
\usepackage[utf8]{inputenc}                                    
\usepackage[T1]{fontenc}

\input{xypic}
\usepackage{tikz-cd}

%%%%I changed what is commented for what is not to experiment... it is probably better to adjust manually, but it gave a quick way to make it look shorter if that is desirable
%\textwidth=36pc
%\addtolength{\oddsidemargin}{-1pc}
%\addtolength{\evensidemargin}{-1pc}
\usepackage[margin=1.2in]{geometry}

%%%%%%%%%%%%%%%%%%%%%%%%%%%%%%%%%%%%%%%%%%%%%%%%%%%%%%%%%%%%%%%%%%%%%%%%

\newcounter{makeconstant}
\newenvironment{makeconstant}%
{\refstepcounter{makeconstant}}%
{}

\newcounter{nonumber}

%%%%%%%%%%%%%%%%%%%%%%%%%%%%%%%%%%%%%%%%%%%%%%%%%%%%%%%%%%%%%%%%%%%%%%%%

\theoremstyle{plain}
\newtheorem{theorem}[equation]{Theorem}
\newtheorem{lemma}[equation]{Lemma}
\newtheorem{corollary}[equation]{Corollary}
\newtheorem{proposition}[equation]{Proposition}

\newtheorem{conjecture}[equation]{Conjecture}

\theoremstyle{definition}

\newtheorem{definition}[equation]{Definition}

\newtheorem{remark}[equation]{Remark}

\newtheorem*{remark*}{Remark}

%\numberwithin{theorem}{section} 
\numberwithin{equation}{section} 
%\def\theequation{\fnsymbol{equation}}

%%%%%%%%%%%%%%%%%%%%%%%%%%%%%%%%%%%%%%%%%%%%%%%%%%%%%%%%%%%%%%%%%%%%%%%%

%%%%%%%%%%%%%%%%%%%%%%%%%%%%%%%%%%%%%%%%%%%%%%%%%%%%%%%%%%%%%%%%%%%%%%%%%%%

\def\A{{\rm A}}
\def\B{{\rm B}}

\def\D{{\rm D}}
\def\E{{\rm E}}
\def\F{{\rm F}}
\def\G{{\rm G}}
\def\H{{\mathrm{H}}}
\def\I{{\rm I}}
\def\J{{\rm J}}
\def\K{{\rm K}}
\def\L{{\rm L}}
\def\M{{\rm M}}
\def\N{{\rm N}}

\def\P{{\rm P}}
\def\Q{{\rm Q}}
\def\R{{\rm R}}

\def\T{{\rm T}}
\def\U{{\rm U}}
\def\V{{\rm V}}
\def\W{{\rm W}}
\def\X{{\rm X}}

%%%%%%%%%%%%%%%%%%%%%%%%%%%%%%%%%%%%%%%%%%%%%%%%%%%%%%%%%%%%%%%%%%%%%%%%%%%

\def\Cc{\mathscr{C}}

%%%%%%%%%%%%%%%%%%%%%%%%%%%%%%%%%%%%%%%%%%%%%%%%%%%%%%%%%%%%%%%%%%%%%%%%%%%

%%%%%%%%%%%%%%%%%%%%%%%%%%%%%%%%%%%%%%%%%%%%%%%%%%%%%%%%%%%%%%%%%%%%%%%%%%%

\def\kk{\mathfrak{k}}

%%%%%%%%%%%%%%%%%%%%%%%%%%%%%%%%%%%%%%%%%%%%%%%%%%%%%%%%%%%%%%%%%%%%%%%%%%%

\def\so{\mathsf{o}}

%\def\so{+}

%%%%%%%%%%%%%%%%%%%%%%%%%%%%%%%%%%%%%%%%%%%%%%%%%%%%%%%%%%%%%%%%%%%%%%%%%%%

\def\o{\mathfrak{o}}
\def\p{\mathfrak{p}}

\def\t{\theta}

%%%%%%%%%%%%%%%%%%%%%%%%%%%%%%%%%%%%%%%%%%%%%%%%%%%%%%%%%%%%%%%%%%%%%%%%%%%

%%%%%%%%%%%%%%%%%%%%%%%%%%%%%%%%%%%%%%%%%%%%%%%%%%%%%%%%%%%%%%%%%%%%%%%%%%%

\def\({\left(}
\def\){\right)}
\def\>{\geqslant}
\def\<{\leqslant}

%\def\mid{:}

%%%%%%%%%%%%%%%%%%%%%%%%%%%%%%%%%%%%%%%%%%%%%%%%%%%%%%%%%%%%%%%%%%%%%%%%%%%

\def\Spec{\operatorname{Spec}}
\def\Hom{\operatorname{Hom}}
\def\End{\operatorname{End}}
\def\Aut{\operatorname{Aut}}

\def\GL{\operatorname{GL}}

\def\ker{\operatorname{ker}}

\def\id{\operatorname{id}}

\def\Ind{\operatorname{Ind}}

\def\ind{\operatorname{ind}}

\def\Irr{\operatorname{Irr}}

\def\dim{\operatorname{dim}}

\def\st{\operatorname{st}}

\def\Sp{\operatorname{Sp}}

\def\SL{\operatorname{SL}}

%%%%%%%%%%%%%%%%%%%%%%%%%%%%%%%%%%%%%%%%%%%%%%%%%%%%%%%%%%%%%%%%%%%%%%%%%%%

\def\tG{{\widetilde{\G}}}

%%%%%%%%%%%%%%%%%%%%%%%%%%%%%%%%%%%%%%%%%%%%%%%%%%%%%%%%%%%%%%%%%%%%%%%%%%%

%\def\red#1{\textcolor{dred}{#1}}
%\def\blue#1{\textcolor{blue}{#1}}
%\def\orange#1{\textcolor{orange}{#1}}
%\def\magenta#1{\textcolor{magenta}{#1}}
%\def\gre#1{\textcolor{green}{#1}}
%\def\marginparred#1{\marginpar{\red{#1}}}
%\definecolor{dred}{RGB}{150,0,0}
%\definecolor{dviolet}{RGB}{100,0,100}
%\definecolor{dcyan}{RGB}{0,100,100}
%\def\shaun#1{\textcolor{dviolet}{#1}}
%\def\shauns#1{\textcolor{dcyan}{#1}}
\definecolor{dorange}{RGB}{255,140,0}
\definecolor{dgreen}{RGB}{0,205,10}

\def\ignore#1{\relax}

%%%%%%%%%%%%%%%%%%%%%%%%%%%%%%%%%%%%%%%%%%%%%%%%%%%%%%%%%%%%%%%%%%%%%%%%

\def\bdots{\mathinner{\mkern1mu\raise1pt\hbox{.}\mkern2mu\raise4pt\hbox{.}
           \mkern2mu\raise7pt\vbox{\kern7pt\hbox{.}}\mkern1mu}}

\def\presuper#1#2%
  {\mathop{}%
   \mathopen{\vphantom{#2}}^{#1}%
   \kern-\scriptspace%
   #2}
\def\presub#1#2%
  {\mathop{}%
   \mathopen{\vphantom{#2}}_{#1}%
   \kern-\scriptspace%
   #2}

%%%%%%%%%%%%%%%%%%%%%%%%%%%%%%%%%%%%%%%%%%%%%%%%%%%%%%%%%%%%%%%%%%%%%%%%

%\def\varphican{{\vphi_\can}}
%\def\varphicanprime{{\vphi'_\can}}

%\def\WT{\mathcal{WT}}
%\def\EP{\mathcal{EP}}

 %%%% Or other notation?
\def\ft{{\mathfrak{t}}} %%%% Or other notation?
 %%%% Or other notation?
 %%%% Or other notation?
 %%%% Or other notation?
 %%%% Or other notation?
 %%%% Or other notation?

\def\LG{{\tensor*[^L]\G{^\so}}}

\def\LL{\mathrm{LL}}
\def\Cusp{\mathrm{Cusp}}

\def\Ql{\overline{\mathbb{Q}}_\ell}

\newcommand{\bext}{\mathrm{beta}}

\usepackage{bbm}

\def\ft{\mathfrak{t}}

\def\kk{\mathrm{k}}

\def\Rep{\mathrm{Rep}}
\def\Fl{\overline{\mathbb{F}}_\ell}

\def\LG{\presuper{L}\G}
\def\Zl{\overline{\mathbb{Z}}_\ell} 
\def\Gf{\mathtt{G}}

\def\Ql{\overline{\mathbb{Q}}_\ell}
\def\cW{\mathscr{W}}
\def\presuper#1#2%
  {\mathop{}%
   \mathopen{\vphantom{#2}}^{#1}%
   \kern-\scriptspace%
   #2}
   
 \def\LL{\text{LL}}  
   
   \theoremstyle{definition}

\def\Div{\mathtt{Div}}
\def\Herm{\mathtt{Herm}}

\usepackage{subfiles}

\title[Block decompositions for $p$-adic classical groups]{Block decompositions for $p$-adic classical groups and their inner forms}
\date{\today}
\author[D.~Helm]{David Helm}
\address{David Helm, Department of Mathematics, Imperial College, London, SW7 2AZ, United Kingdom.}
\email{d.helm@imperial.ac.uk }
\author[R.~Kurinczuk]{Robert Kurinczuk}
\address{Robert Kurinczuk, School of Mathematics and Statistics, University of Sheffield, Sheffield, S3 7RH, United Kingdom.}
\email{robkurinczuk@gmail.com}
\author[D.~Skodlerack]{Daniel Skodlerack}
\address{Daniel Skodlerack, Institute of Mathematical Sciences, ShanghaiTech University,
201210, Pudong New District, Shanghai, China}
\email{dskodlerack@shanghaitech.edu.cn}
\author[S.~Stevens]{Shaun Stevens}
\address{Shaun Stevens, School of Mathematics, University of East Anglia, Norwich, NR4 7TJ, United~Kingdom.}
\email{shaun.stevens@uea.ac.uk}
\subjclass[2010]{22E50; 11F70}

\begin{document}
\begin{abstract}
For an inner form~$\G$ of a general linear group or classical group over a non-archimedean local field of odd residue characteristic, we decompose the category of smooth representations on~$\mathbb{Z}[1/p,\mu_{p^{\infty}}]$-modules by endo-parameter.  We prove that parabolic induction preserves these decompositions, and hence that it preserves endo-parameters.  Moreover, we show that the decomposition by endo-parameter is the~$\overline{\mathbb{Z}}[1/p]$-block decomposition; and, for $\R$ an integral domain and~$\mathbb{Z}[1/p,\mu_{p^{\infty}}]$-algebra, introduce a graph whose connected components parameterize the~$\R$-blocks, in particular including the cases~$\R=\overline{\mathbb{Z}}_{\ell}$ and~$\R=\overline{\mathbb{F}}_\ell$ for~$\ell\neq p$.  From our description, we deduce that the~$\Zl$-blocks and~$\Fl$-blocks of~$\G$ are in natural bijection, as had long been expected.  Our methods also apply to the trivial endo-parameter (i.e., the depth zero subcategory) of any connected reductive~$p$-adic group, providing an alternative approach to results of Dat and Lanard in depth zero. Finally, under a technical assumption (known for inner forms of general linear groups) we reduce the~$\R$-block decomposition of~$\G$ to depth zero.
\end{abstract}
\maketitle

\setcounter{tocdepth}{1}
\tableofcontents

%%%%%%%%%%%%%%%%%%%%%%%%%%%%%
\section{Introduction}
%%%%%%%%%%%%%%%%%%%%%%%%%%%%%

%%%%%%%%%%%%%%%%%%%%%%%%%%%%%
\subsection{} 
In \cite{MR771671}, Bernstein shows that the abelian category~$\Rep_{\mathbb{C}}(\G)$ of smooth representations on complex vector spaces of a $p$-adic connected reductive group~$\G$ decomposes as a direct product~$\Rep_{\mathbb{C}}(\G)=\prod \Rep_{\mathbb{C}}(\mathfrak{s})$, of full abelian indecomposable subcategories  $\Rep_{\mathbb{C}}(\mathfrak{s})$ indexed by inertial classes of supercuspidal representations of Levi subgroups of $\G$.  If we replace $\mathbb{C}$ with an arbitrary algebraically closed field of characteristic~$\ell\neq p$ and consider~$\ell$-modular representations of~$\G$, or more ambitiously consider the category~$\Rep_{\R}(\G)$ of representations of~$\G$ over an arbitrary integral domain~$\R$ in which $p$ is invertible, such a precise decomposition remains unknown.  

This is the $p$-adic analogue of Brauer's theory of blocks in the modular representation theory of finite reductive groups, which has found numerous applications in finite group theory, but can be difficult to compute as soon as the order of the finite group is not invertible in~$\R$.% (and is simple when~$\R=\mathbb{C}$ with each irreducible representation contributing a block).

%%%%%%%%%%%%%%%%%%%%%%%%%%%%%
\subsection{}\label{para1.2}
Recently, there has been renewed interest in computing block decompositions of~$\Rep_{\R}(\G)$, the category of smooth representations on~$\R$-modules for arbitrary~$\mathbb{Z}[1/p]$-algebras~$\R$, partly motivated by conjectured categorical forms of the local Langlands correspondence and local Langlands in families, see for example \cite{FarguesScholze} and \cite{DHKM2}.

In the following cases, complete block decompositions are known:
 \begin{enumerate}
 \item\label{GLnknowndecomps} $\Rep_{\Fl}(\GL_n(\F))$ due to Vign\'eras \cite{VignerasSelecta}; and~$\Rep_{\W(\Fl)}(\GL_n(\F))$ due to the first author \cite{HelmForum}.
 \item\label{GLmDknowndecomps} $\Rep_{\Fl}(\GL_m(\D))$, where~$\D$ is an~$\F$-central division algebra, due to S\'echerre and the fourth author \cite{SecherreStevens}. 
 \item $\Rep_{\overline{\mathbb{Z}}[1/\mathrm{N}_\G]}(\G)$ -- the \emph{banal case} -- due to Dat, Moss and the first and second authors \cite{DHKM2}, where~$\mathrm{N}_{\G}$ is the product of all primes which divide the pro-order of a compact open subgroup of~$\G$.
 \end{enumerate}
However all of these results rely on the \emph{uniqueness of supercuspidal support} of an irreducible representation of a ($p$-adic or finite) group on an~$\Fl$-vector space in the various special cases.  Along similar lines, Cui shows uniqueness of supercuspidal support for~$\SL_n(\F)$ \cite{Cui1}, and gives a fine decomposition of~$\Rep_{\Fl}(\SL_n(\F))$, which on the supercuspidal subcategory~$\Rep_{\Fl}(\SL_n(\F))^{sc}$ she refines to the block decomposition \cite{Cui2}, and recently in the tame case \cite{Cuitameblocks} to the block decomposition.  

For symplectic $p$-adic groups, however, uniqueness of (mod~$\ell$) supercuspidal support fails in general: a counterexample for~$\Sp_8(\mathbb{F}_q)$ when~$\ell \mid q^2+1$ was given by Dudas \cite{DudasSC}, which was lifted to a counterexample for~$p$-adic~$\Sp_8(\F)$ by Dat in \cite{DatDudas}.  Conversely, for a $p$-adic classical group, in the special case where one has uniqueness of supercuspidal support in associated finite classical groups, to follow the approach taken in \cite{KurinczukU21} to try to lift this uniqueness to the $p$-adic classical group one needs to start from the result that ``endo-parameters are compatible with parabolic induction'', a consequence of the first main theorem of this paper.

%%%%%%%%%%%%%%%%%%%%%%%%%%%%%
\subsection{}
Partial decompositions of the category have been given in other cases:  Let~$\R$ be a~$\mathbb{Z}[\mu_{p^{\infty}},1/p]$-algebra and~$\G$ be a connected reductive~$p$-adic group.  In \cite[I 5.8]{Vig96} and \cite[Appendix]{Dat09}, Vign\'eras and Dat prove that there is a decomposition of $\Rep_\R(\G)$ as a direct product of full subcategories~\[\Rep_{\R}(\G)=\prod_{r\in\mathbb{Q}} \Rep_{\R}(\G)_r,\] according to the \emph{depth} of a representation.

The depth zero factor has been further studied:  Using Deligne--Lusztig theory, Lanard has produced fine decompositions of the full abelian subcategory~$\Rep_{\Zl}(\G)_0$ of depth zero representations \cite{Lanard1,Lanard2,LanardUnipotent} on~$\Zl$-modules, and in many cases Dat and Lanard \cite{DatLanard} have shown that~$\Rep_{\overline{\mathbb{Z}}[1/p]}(\G)_0$ is a~$\overline{\mathbb{Z}}[1/p]$-block.  As one might expect and our results show, the case of positive depth is more complex; for example, the depth $r$ factor~$\Rep_{\overline{\mathbb{Z}}[1/p]}(\G)_r$ is far from a~$\overline{\mathbb{Z}}[1/p]$-block.  However, granted the finer decomposition by ``endo-parameter'' (see Theorem~\ref{firstmaintheorem} below) we find for inner forms of general linear and classical groups, one might hope for a reduction to depth zero showing an arbitrary $\overline{\mathbb{Z}}[1/p]$-block parameterized by an endo-parameter is equivalent (as categories) to a depth zero block in a related group -- see Conjecture \ref{equivconjecture} for a precise conjecture which follows for inner forms of general linear groups by work of Chinello \cite{Chinello}.  This conjecture should be compatible with a ``reduction to the tame case'' via the local Langlands programme, and makes explicit on the representation theory side conjectures of Dat \cite{DatFunctoriality}.

%%%%%%%%%%%%%%%%%%%%%%%%%%%%%
\subsection{}
We now describe more precisely the contents of this paper.
Let~$\R$ be a~$\mathbb{Z}[\mu_{p^\infty},1/p]$-algebra, and~$\G$ an inner form of a~$p$-adic general linear group of a classical $p$-adic group (symplectic, special orthogonal, or unitary) with $p\neq 2$.  Note that, although we write ``$p$-adic'', we also allow the underlying nonarchimedean field to have positive characteristic as well as characteristic zero.

 From the constructions of \cite{KSS} and \cite{SkodInnerFormII}, we have a fine invariant of an irreducible $\R$-representation called its \emph{endo-parameter}; in positive depth this is a sophisticated refinement of the depth of an irreducible~$\R$-representation.  Our first main result then computes the $\overline{\mathbb{Z}}[1/p]$-block decomposition for inner forms of classical groups:

\begin{theorem}[{Theorem \ref{endosplit}, Corollary \ref{EPCor}}]\label{firstmaintheorem}
Let~$\R$ be a~$\mathbb{Z}[\mu_{p^\infty},1/p]$-algebra, and~$\G$ be an inner form of a $p$-adic general linear group (for any $p$), or of a $p$-adic classical group with $p$ not 2.  We have a decomposition of categories
\[\Rep_\R(\G)=\prod_{\ft}\Rep_\R(\ft)\] 
where the product is taken over all endo-parameters for $\G$ and~$\Rep_\R(\ft)$ denotes the full subcategory of representations all of whose irreducible subquotients have endo-parameter~$\ft$.    Moreover:
\begin{enumerate}
\item \label{firstmaintheoremi}If~$\R\subseteq \overline{\mathbb{Z}}[1/p]$ then this is the~$\R$-block decomposition;
\item The compactly induced representation~$\P(\ft)\otimes\R$ (of Definition \ref{Progen}) is a finitely generated projective generator of~$\Rep_\R(\ft)$;
\item Parabolic induction and restriction are compatible with these decompositions.
\end{enumerate}
\end{theorem}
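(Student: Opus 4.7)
The plan is to construct, for each endo-parameter $\ft$ of $\G$, an explicit finitely generated projective generator of the putative block $\Rep_\R(\ft)$ by compact induction from a type attached to $\ft$, and then to deduce the decomposition through Hom-vanishing between distinct endo-parameters. The whole argument rests on the theory of semisimple characters and beta-extensions for inner forms of classical $p$-adic groups developed in \cite{KSS, SkodInnerFormII}.

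Concretely, one fixes for each endo-parameter $\ft$ a semisimple character $\theta_\ft$ and a beta-extension $\l_\ft$ of a compact-mod-centre open subgroup $\J_\ft$ realizing $\ft$, and defines $\P(\ft)$ as a finite direct sum of compact inductions $\cind_{\J_\ft}^{\G}\l_\ft$ ranging over $\G$-conjugacy classes of such data (this is Definition \ref{Progen}). Since $\J_\ft$ is open and compact modulo centre and $\R$ contains $\mathbb{Z}[\mu_{p^\infty},1/p]$, which suffices to realize $\l_\ft$ integrally, $\P(\ft)\otimes\R$ is finitely generated projective in $\Rep_\R(\G)$. Generation of $\Rep_\R(\ft)$ follows from the fact (already in \cite{KSS, SkodInnerFormII}) that every irreducible $\R$-representation of $\G$ contains a semisimple character with a well-defined endo-parameter, so any irreducible with endo-parameter $\ft$ is a quotient of some $\cind_{\J_\ft}^{\G}\l_\ft$.

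The core technical step is the vanishing $\Hom_{\R[\G]}\!\bigl(\cind_{\J_\ft}^{\G}\l_\ft,\,\cind_{\J_{\ft'}}^{\G}\l_{\ft'}\bigr)=0$ whenever $\ft\neq\ft'$. By Mackey and Frobenius reciprocity this reduces to showing that no $\G$-conjugate of $\l_{\ft'}$ intertwines with $\l_\ft$, which is precisely the endo-parameter invariance of semisimple characters under $\G$-intertwining. The main obstacle is integrality: the standard intertwining arguments are phrased in characteristic zero, so one must either redo them integrally or observe that the obstructing Hom spaces arise as integral Hecke bimodules that are flat over $\mathbb{Z}[\mu_{p^\infty},1/p]$ and vanish after base change to $\Ql$, forcing integral vanishing. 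Combining generation with Hom-vanishing yields the product decomposition indexed by endo-parameters, and in particular gives indecomposability after $\otimes\R$ in each factor on the level of objects.

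For compatibility with parabolic induction and restriction, I would use that Jacquet restriction of a semisimple character to a Levi $\M\subset\G$ is, up to $\G$-conjugation, built from semisimple characters on $\M$; the cuspidal-support map on endo-parameters then controls which $\Rep_\R(\ft)$ a parabolically induced representation lands in, and Frobenius reciprocity transfers this compatibility to restriction. Finally, for the $\overline{\mathbb{Z}}[1/p]$-block statement one must show each $\Rep_{\overline{\mathbb{Z}}[1/p]}(\ft)$ is indecomposable, equivalently that $\End_{\overline{\mathbb{Z}}[1/p][\G]}(\P(\ft))$ has connected spectrum. The strategy is to identify this endomorphism algebra with a Hecke algebra of known affine type arising from the theory of types, and check connectedness directly; alternatively, granting the reduction-to-depth-zero picture sketched in the introduction, one appeals to the depth-zero block results of Dat--Lanard. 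This indecomposability step is where I expect the main difficulty to lie, both because integral Hecke algebras for classical groups are more delicate than for $\GL_n$ and because the reduction to depth zero is itself nontrivial outside the general linear case.
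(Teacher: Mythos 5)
Your outline is in the right spirit at the top level, but the object you propose to induce from --- a beta-extension $\lambda_\ft$ on a group $\J_\ft$ containing the whole $\J$-group --- is not what the paper does, and the change is not cosmetic. Definition \ref{Progen} builds $\P(\ft)$ from Heisenberg representations $\eta_i$ on the \emph{pro-$p$} groups $\J_i^1$, and this is load-bearing for three separate steps. First, projectivity of $\cind_{\J}^{\G}\lambda$ over an arbitrary $\mathbb{Z}[\mu_{p^\infty},1/p]$-algebra $\R$ requires $\lambda$ to be $\R[\J]$-projective, which for a pro-$p$ group is automatic after inverting $p$ (Lemma \ref{lemmaprojectivesfinitegroups}) but for a beta-extension on the full $\J$-group is simply false in general, since $\J/\J^1$ is a finite reductive group whose order is not invertible in $\R$. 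Second, the Heisenberg representation is the \emph{unique} irreducible of $\J^1$ containing $\theta$, so the $\theta$-isotypic functor is the $\eta$-isotypic functor; a beta-extension is a non-canonical choice of extension to $\J$, and an irreducible containing $\theta$ need not contain your particular $\lambda_\ft$, so the generation claim does not follow from the mere appearance of a semisimple character. Third, the intertwining of the Heisenberg representation is exactly $\J^1 \G_{\varphi(\beta)}\J^1$ (Corollary \ref{etaintertwining}) and extends integrally by an easy flatness argument; the corresponding integral statement for beta-extensions would have to be proved and is genuinely harder (indeed the paper isolates ``full intertwining'' of beta-extensions as Conjecture \ref{Conjcompatfam}, open outside $\GL_n$).

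The second, independent gap is finiteness of the index set. You sum ``over $\G$-conjugacy classes of such data,'' but this set is infinite: distinct $\o_\E$-lattice sequences on the same parahoric already give non-conjugate realizations. The paper introduces \emph{m-realizations} and \emph{essential $\G$-conjugacy} (Section \ref{Section:m-realizations}) precisely to cut this down to a finite set (Theorem \ref{thmEssentiallyConjugateClasses}), and then proves that any $\R$-representation of class $\ft$ is generated by its isotypic components along \emph{m-realizations} (Proposition \ref{prop43}) --- a nontrivial argument walking along a path of lattice sequences toward a maximal parahoric and comparing Heisenberg representations at each step. Without these two pieces you have neither a finite progenerator nor the generation statement. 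Your Hom-vanishing step is then carried out in the paper not by Mackey on two induced beta-extensions but by the cleaner observation that the image of a morphism between representations of class $\ft$ and $\ft'$ would be of both classes simultaneously, hence zero. Finally, for indecomposability over $\overline{\mathbb{Z}}[1/p]$ the paper does not identify $\End(\P(\ft))$ with an affine Hecke algebra and check connectivity of its spectrum; instead it uses the $(\ft,\R)$-graph and reduces to Dat--Lanard's result that $\overline{\mathbb{Z}}[1/p][\overline{\G}]$ has no nontrivial central idempotents for a finite group of Lie type $\overline{\G}$, applied to the quotients $\J_i/\J_i^1$, together with the fact that all the $\eta_i$ intertwine. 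Your suggestion of invoking depth-zero block results directly would amount to assuming the reduction-to-depth-zero conjecture (Conjecture \ref{equivconjecture}), which the paper explicitly cannot do in general.
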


%%%%%%%%%%%%%%%%%%%%%%%%%%%%%
\subsection{}
Our proof of the decomposition~$\Rep_\R(\G)=\prod_{\ft}\Rep_\R(\ft)$ is loosely based on the proof of the decomposition by depth, and requires us to establish certain rigidity properties of semisimple characters (cf.,~\cite{technicalpaper}) and to extend intertwining computations and the construction of Heisenberg representations associated to semisimple characters to the setting of arbitrary~$\mathbb{Z}[\mu_{p^\infty},1/p]$-algebras (Section \ref{Heisenberg}).  
%
% 

%%%%%%%%%%%%%%%%%%%%%%%%%%%%%
\subsection{}
Given the decomposition by endo-parameter, valid for any~$\mathbb{Z}[\mu_{p^\infty},1/p]$-algebra, the next natural question is how to refine it to produce the block decomposition in various cases of interest.  Our method here is also used to show that the decomposition by endo-parameter is the $\overline{\mathbb{Z}}[1/p]$-block decomposition, i.e., Theorem \ref{firstmaintheorem} \ref{firstmaintheoremi}.  

For an integral domain~$\R/\mathbb{Z}[\mu_{p^\infty},1/p]$, to each endo-factor~$\Rep_\R(\ft)$ using the representation theory of finite general linear and classical groups over~$\R$ (or associated reductive finite groups in depth zero) we associate a graph we call the (\emph{fine})~\emph{$(\ft,\R)$-graph}.  The vertices of this graph are given by certain finitely generated projective representations in~$\Rep_{\R}(\ft)$, and the connected components of the graph parametrize the blocks of~$\Rep_{\R}(\mathfrak{t})$. %% and which associates to each block a finitely generated projective generator which is a summand of~$\P(\ft)\otimes\R$.
  For example, our method gives a process to compute the block decomposition of~$\Rep_{\Zl}(\G)$ for a prime~$\ell\neq p$, and the statement that~$\Rep_{\overline{\mathbb{Z}}[1/p]}(\ft)$ is a~$\overline{\mathbb{Z}}[1/p]$-block is equivalent to the~$(\ft,\overline{\mathbb{Z}}[1/p])$-graph being connected.  More precisely, we show:

\begin{theorem}[{Theorem \ref{maintheoremblocks}}]
Suppose that either~$\G$ is an inner form of a $p$-adic general linear group (for any $p$), or of a $p$-adic classical group with $p$ not 2, or that~$\G$ is any reductive~$p$-adic group and~$\mathfrak{t}$ is trivial (i.e., the depth zero case). 
\begin{enumerate}
\item The~blocks of~$\Rep_{\R}(\mathfrak{t})$ are in natural bijection with the connected components of the $(\ft,\R)$-graph.
\item The projective module defined as the direct sum over the vertices in a connected component of the $(\ft,\R)$-graph is a finitely generated projective generator of an~$\R$-block, and running over the connected components defines the decomposition of~$\P(\ft)\otimes\R$ into~$\R$-blocks.
\end{enumerate}
\end{theorem}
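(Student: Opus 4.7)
The plan is to exploit the projective generator $\P(\ft)\otimes\R$ furnished by Theorem~\ref{endosplit} in order to translate the $\R$-block decomposition of the subcategory $\Rep_\R(\ft)$ into a block decomposition of $\P(\ft)\otimes\R$ itself. Since $\P(\ft)\otimes\R$ is a finitely generated projective generator, the functor $\Hom_\G(\P(\ft)\otimes\R,-)$ is a Morita equivalence onto right modules over $\End_\G(\P(\ft)\otimes\R)$, and the $\R$-blocks of $\Rep_\R(\ft)$ are in bijection with the primitive central idempotents of this endomorphism ring; equivalently, with the indecomposable two-sided summands of $\P(\ft)\otimes\R$. It therefore suffices to show that the $(\ft,\R)$-graph precisely records this two-sided decomposition.

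Next I would decompose $\P(\ft)\otimes\R$ as a direct sum $\bigoplus_v \P_v$ indexed by the vertices of the graph, each $\P_v$ being a one-sided summand built from the Heisenberg/$\kappa$ datum attached to $\ft$ together with the finite-group input specifying the vertex. Using the type-theoretic construction of $\P(\ft)$ in Section~\ref{Heisenberg}, after a choice of beta-extension one identifies $\P(\ft)$ with a representation compactly induced from the tensor product of a fixed Heisenberg representation with a regular representation of the associated finite reductive quotient (or, in the depth zero case, of the reductive quotient of a parahoric). The vertices of the $(\ft,\R)$-graph are by design the indecomposable pieces of this finite-group input over $\R$, and compact induction applied summandwise produces the required decomposition of $\P(\ft)\otimes\R$.

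The third step is to verify that two vertices $v,w$ lie in the same connected component of the $(\ft,\R)$-graph if and only if $\P_v$ and $\P_w$ lie in the same $\R$-block. One direction (edges force a common block) follows from the definition of the graph: edges encode data --- for example, lying in a common $\R$-block of a finite reductive quotient, or being linked through a Hecke-algebra relation arising from parabolic induction --- which each place the corresponding summands into a common two-sided block of $\End_\G(\P(\ft)\otimes\R)$. For the converse I would transport the problem via a Hecke-algebra/Morita equivalence between a neighbourhood of $\P(\ft)\otimes\R$ and a sum of representation categories of finite reductive quotients, so that any gluing of summands on the $\G$-side descends from a gluing in a finite reductive group, hence from a path of edges. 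Assertion (ii) is then immediate: the direct sum $\bigoplus_{v\in C}\P_v$ over a connected component $C$ is a finitely generated projective direct summand of $\P(\ft)\otimes\R$ and generates precisely the corresponding $\R$-block.

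The main obstacle will be the faithful transport of $\R$-block structure from the finite reductive quotients to $\G$ when $\R$ is not an algebraically closed field of characteristic zero, in particular when $\R=\Zl$ or $\R=\Fl$. Over such $\R$ primitive central idempotents of a finite-group block need not remain primitive in the $\G$-side Hecke algebra, and conversely one must rule out spurious gluing beyond that predicted by the graph. The decisive input is an $\R$-linear intertwining and Hecke-algebra computation with good integrality properties, which is exactly what the extension of the semisimple-character and Heisenberg theory to $\mathbb{Z}[\mu_{p^\infty},1/p]$-algebras carried out earlier provides, together with a careful treatment of the $\R$-block theory of finite general linear and classical groups (Deligne--Lusztig theory of parahoric quotients in the depth zero case).
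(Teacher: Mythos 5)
Your plan is sensible in outline, but it misses the central technical step and substitutes something that is not available unconditionally. The heart of the paper's argument is that each vertex $P_{i,j}$ of the fine $(\ft,\R)$-graph lies in a \emph{single} $\R$-block. Once that is established, the rest is formal: a central idempotent acts by $0$ or $1$ on each vertex; commuting with a nonzero map $P_{i,j}\to P_{i',j'}$ forces equal values on the two endpoints; and the absence of maps between vertices in different components then shows that the idempotent ``sum over a connected component'' is central and primitive. Your third step, however, asserts this crucial ``one vertex, one block'' fact would follow from a Morita equivalence with module categories of finite reductive quotients, and that is precisely the reduction-to-depth-zero statement that the paper treats as conditional (it needs a compatible family of beta extensions, which is a technical hypothesis, and the full equivalence is only Conjecture~\ref{equivconjecture}). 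You cannot invoke it here without circularity or an additional unproved input.

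Concretely, what the paper proves and you do not address: (a) Over an algebraically closed field~$\K$, $\ind_{\J_i}^{\G}(\kappa_i\otimes\zeta)$ with $\zeta$ an indecomposable finite-group representation lies in a single $\K$-block. The argument first uses the ``Ext-graph'' of $\zeta$ to reduce to $\zeta$ irreducible, via the injectivity of $\mathrm{Ext}^1_{\R[\J_i]}\hookrightarrow\mathrm{Ext}^1_{\R[\G]}$ under induction; then it uses the theory of $\G$-covers, Proposition~\ref{depthzeromaxtotypes} and Proposition~\ref{BerBKcomparison} to realise the induction as a summand of $i_{\M,\P}^\G(\ind_{\M^\circ}^{\M}(\pi_\M))$, on which the centre acts through the \emph{integral domain} $\K[\M/\M^\circ]$ by generic irreducibility (Proposition~\ref{genirredproposition}). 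That domain argument --- forcing central idempotents to be trivial --- is the key point and is nowhere in your proposal. (b) For a general integral domain $\R$, a central idempotent $e$ acting on $P_{i,j}$ is shown to descend to an idempotent of $\End_{\R[\J_i]}(\overline P_{i,j})$, by intersecting inside $\End_{\R[\G]}(P_{i,j})\otimes\K$ with the Mackey direct summand supported on the identity double coset; indecomposability of $\overline P_{i,j}$ then kills $e$. This intersection-inside-$\K$ trick is what lets you pass from the $\K$-statement in (a) to arbitrary integral $\R$, and it is the ingredient you flag as ``the main obstacle'' without actually supplying. Without (a) and (b) the proposal does not yield a proof.
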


To define the~$(\mathfrak t,\R)$-graph, we decompose our explicitly constructed finitely generated projective generator for~$\Rep_{\R}(\mathfrak{t})$ into summands which constitute the vertices in the $(\ft,\R)$-graph and draw an edge between two such summands~$\Pi_1,\Pi_2$ if there is a non-zero morphism~$\Pi_1\rightarrow \Pi_2$ (or equivalently there is a non-zero morphism~$\Pi_2\rightarrow\Pi_1$).  

To compute when vertices are in the same connected component our approach is via type theory over~$\mathbb{C}$.  The first aim of the theory of types over~$\mathbb{C}$ is to construct a compactly induced progenerator~$\ind_{\J}^{\G}(\lambda)$ for a Bernstein factor~$\Rep_{\mathbb{C}}(\mathfrak{s})$ where $(\J,\lambda)$ is a pair -- called an~$\mathfrak{s}$-\emph{type} -- consisting of a compact open subgroup~$\J$ of~$\G$ and an irreducible smooth representation~$\lambda$ of~$\J$.  This is accomplished for classical groups in \cite{MiSt}, inner forms of general linear groups in \cite{SecherreStevensVI}, and inner forms of classical groups in \cite{SkodlerackYe}.  Our progenerator~$\P(\ft)$ (Definition \ref{Progen}) is constructed as a finite direct sum~$\bigoplus \ind_{\J^1_i}^{\G}(\eta_i)$ where~$\J^1_i$ is a finite index pro-$p$ subgroup of~$\J_i$ which is a~``$\J$-group'' used in the construction of types, and~$\eta_i$ is a particular irreducible (Heisenberg) representation of~$\J_i^1$.  The vertices in our graph are the representations~$\ind_{\J_i}^{\G}(P)$ where~$P$ is an indecomposable summand of~$\ind_{\J_i^1}^{\J_i}(\eta_i)$.   Suppose~for simplicity here that~$\R$ is contained in~$\mathbb{C}$, then we show that there is a non-zero morphism~$\ind_{\J_i}^{\G}(P)\rightarrow ~\ind_{\J_j}^{\G}(P')$ if and only if~$\ind_{\J_i}^{\G}(P)\otimes \mathbb{C}$ and~$\ind_{\J_i'}^{\G}(P')\otimes \mathbb{C}$ have nonzero components in a common Bernstein block.  This gives a recipe to compute the~$\R$-block decomposition from the~$\R$-block decomposition of the finite reductive groups~$\J_i/\J_i^1$ and from the understanding of when two $\mathfrak{s}$-types parameterize the same Bernstein component over~$\mathbb{C}$.  (In fact, as we choose our~$\mathfrak{s}$-types following the recipes of \cite{MiSt,SkodlerackYe} starting with the~$\eta_i$, and as the~$\eta_i$ are chosen related to each other, this question is weaker than knowing when two arbitrary~$\mathfrak{s}$-types parameterize the same Bernstein component.).  In the special case of inner forms of general linear groups, our results allow one to give different proofs of the known decompositions of \ref{GLnknowndecomps} and \ref{GLmDknowndecomps} of Paragraph~\ref{para1.2}, as well as in these cases computing the block decomposition over other integral domains.

%%%%%%%%%%%%%%%%%%%%%%%%%%%%%
\subsection{}
Under a technical hypothesis, on choosing \emph{compatible extensions} of the~$\eta_i$ to~$\J_i$ with strong intertwining properties, in Theorem \ref{reductiontodepthzeromaintheorem} we reduce the block decomposition to computing the block decomposition in depth zero of a related group.  

This technical hypothesis follows from standard arguments for inner forms of general linear groups (the details will appear in \cite{SkodlerackYe}).  For inner forms of classical groups we also expect this technical hypothesis to be addressed in broad generality in the work in progress of \cite{SkodlerackYe}.

%%%%%%%%%%%%%%%%%%%%%%%%%%%%%
\subsection{}
The second aim of the theory of types over~$\mathbb{C}$, having constructed a type~$(\J,\lambda)$ for a Bernstein block is to compute the associated Hecke algebra~$\mathcal{H}(\J,\lambda)=\End_{\mathbb{C}[\G]}(\ind_{\J}^{\G}(\lambda))$ and its module category~$\mathcal{H}(\J,\lambda)$-mod.  By standard category theory, the Bernstein block is then isomorphic to the category of (right) modules over~$\mathcal{H}(\J,\lambda)$ and one has ``completely'' described the block if one understands the Hecke algebra.  In the setting of~$\R$-representations over general~$\mathbb{Z}[1/p]$-algebras, one can pursue an analogous strategy, however the projective generators of blocks are in general much larger and, other than particularly simple cases, we expect it best to first pursue a strategy of reduction to depth zero (i.e., establish Conjecture \ref{equivconjecture}), and then further reduction to the unipotent block setting as predicted by \cite{DatFunctoriality}. 

Understanding the unipotent block is quite a different question. For general linear groups, Vign\'eras~\cite{VigSchur} constructs a \emph{Schur algebra} whose modules describe the subcategory of the unipotent block consisting of representations annihilated by a certain ideal of the global Hecke algebra, and moreover proves that some power of this ideal annihilates the whole unipotent block. Going beyond this seems difficult, though a first step is made at a derived level by Berry~\cite{BerryGL2} in the case of~$\GL_2(\F)$ and~$R=\bar F_\ell$, in the non-banal case of odd~$\ell$ dividing~$q_{\F}+1$.

%%%%%%%%%%%%%%%%%%%%%%%%%%%%%
\subsection{}
We finally turn to the interpretation of our results on the block decomposition over~$\overline{\mathbb{Z}}[1/p]$ in terms of Langlands parameters, where we list some direct consequences of our results and of conjectures in the area.  In this section we only consider the \emph{stable block} decomposition, i.e., the finest coarsening of the block decomposition where $L$-equivalent~$\mathbb{C}$-representations are in the same component.

Using the ramification theorem for~$\Sp_{2n}(\F)$ of the fourth author, Blondel, and Henniart, we can rephrase the stable block decomposition in terms of restriction to wild inertia, see Corollary \ref{corgaloisdecomp}.  Part of the motivation for this work is to approach conjectures relating categories of sheaves on moduli spaces of Langlands parameters to representations of $p$-adic groups, first decomposing both sides and studying the potentially simpler task of matching these decompositions and approaching the conjectures block by block.  In Section \ref{LLIFsection}, in the semisimple setting we turn this around, assume a conjectural \emph{local Langlands in families}, and deduce properties on endo-parameters from this; we hope to approach these properties directly in future work.

%%%%%%%%%%%%%%%%%%%%%%%%%%%%%
\subsection{}
One major open question remaining is to what extent the results here could be generalised beyond the groups considered here. What one would like is a theory of endo-parameters for an arbitrary reductive group~$\G$ which on the one hand gives a decomposition of~$\Rep_{\R}(\G)$ for any~$\overline{\mathbb{Z}}[1/p]$-algebra~$\R$, and on the other is compatible with restriction to wild inertia via the Langlands correspondence (where one retains the monodromy information). In the tame case, using the constructions of Yu~\cite{Yu} and Fintzen~\cite{FintzenYu,FintzenTypes}, Dat has at least most of this in~\cite{DatET} in terms of wild Langlands parameters (see especially Theorem~2.7.1), but in general it is completely open. It would also be desirable for the theory of endo-parameters to be functorial in the group, as one sees between symplectic and general linear groups in~\cite{BHS}. Thus there is much still to be done.

%%%%%%%%%%%%%%%%%%%%%%%%%%%%%
\subsection*{Structure of the paper}
In Section \ref{secPreliminaries}, we set notation, and develop some basic results on representations of~$p$-adic groups over~$\mathbb{Z}[1/p]$-algebras.  Section \ref{secEPs} recalls the theory of endo-parameters of \cite{KSS,SkodInnerFormII} in a special case (which is sufficient for this paper and most applications) and the main results of \cite{technicalpaper}.   %Section \ref{Section:m-realizations} introduces ``m-realizations'' of endo-parameters and proves some technical lemmas (including a finiteness lemma) we will require later.  
Section \ref{Heisenberg} develops the theory of semisimple characters and Heisenberg representations integrally.  Section \ref{secESing} establishes our basic decomposition of~$\Rep_{\mathbb{Z}[\mu_{p^\infty},1/p]}(\G)$ by endo-parameter.  Section \ref{secbetaextsandtypes} recalls the construction of beta extensions of Heisenberg representations over~an algebraically closed field~$\K$ of characteristic different to~$p$, and the construction of types for Bernstein blocks when~$\K$ has characteristic zero.  In Section \ref{secblocks}, we prove our results on block decompositions.  Section \ref{secLPs} is a non-technical section which explains connections between our decompositions and the local Langlands programme. 

%%%%%%%%%%%%%%%%%%%%%%%%%%%%%
\subsection{History of the ideas in the paper}
This work has been a long time coming.  The original decomposition of Theorem \ref{endosplit}, with compatibility with parabolic induction and restriction, and the reduction to depth zero expectation were announced before 2016; and for example, these results are mentioned as work in progress of the authors in Lanard's 2017 arXiv-ed introduction of \cite{Lanard1} and in the 2016 arXiv version of \cite{KSS}.  A very nice result of Dat and Lanard \cite{DatLanard}, which they use to show the depth zero~$\overline{\mathbb{Z}}[1/p]$-subcategory is a block in many cases, then gave us the key finite group theory ingredient needed to establish that this decomposition is the~$\overline{\mathbb{Z}}[1/p]$-block decomposition, completing Theorem \ref{firstmaintheorem}.  The idea that the decomposition by endo-parameter could be the~$\overline{\mathbb{Z}}[1/p]$-block decomposition (provided one can establish the relevant finite group theory question) follows by reinterpreting a result on endo-parameters of \cite{MR4000000}, as mentioned in the introduction to \cite{DHKM1}.

Independently to our work, for tame groups, Dat has announced in \cite{DOberwolfach,DatIHES} block decompositions over~$\overline{\mathbb{Z}}[1/p]$ with a Langlands-style parametrization of the factors and a clear pattern for reduction to depth zero (we thank Dat for sharing details of his current work with us in 2019 \cite{DatET}).  As mentioned above, our work was mainly inspired by earlier work of Dat and Vign\'eras on the decomposition by depth, Dat's exhaustion of the category by semisimple characters \cite{Dat09}, together with standard expectations of reduction to depth zero in type theory (for recent examples of reduction to depth zero ideas see \cite{Chinello} and \cite[Section 8]{RKSS}).  The exception to this are parts of Section \ref{secLPs}.  In Section \ref{LLIFsection} we use \cite{DHKM2} to deduce some expected properties of endo-parameters, with part of the point to give an explanation from this point of view of the ``Ramification Theorems'' of Bushnell--Henniart and of \cite{BHS} (together with understanding the conjecture in \cite{KSS}) which show we can also index some of our decompositions by Galois-theoretic data.  When we input any decompositions of moduli spaces on the Galois side from \cite{DHKM1, cotner2024connected} (we do not pursue these in any detail over other coefficient rings than~$\overline{\mathbb{Z}}[1/p]$), it is worth noting that more recent ideas of Dat have been intertwined into these as explained in the introduction of \cite{DHKM1}.  There is now a preprint of Dat and Fintzen \cite{dat2026parametrizationreductiondepthzero}, completing these ideas and proving a reduction to depth zero in the tame setting.

The first version of this article was posted to arXiv in May 2024.  We then split the original article into two at the end of 2025. This part (which gives the main results and applications) and a technical paper \cite{technicalpaper} which (among other things) fills gaps in the literature and proves there exists a finite set of representatives for the cuspidal-conjugacy classes of $m$-semisimple characters of a given endo-parameter.   The main results presented here remain the same, but the technical paper has been improved and we introduce a notion of endo-support in that paper now (not used in this paper). The exposition of this paper has also been improved.

%%%%%%%%%%%%%%%%%%%%%%%%%%%%%
\subsection{Acknowledgements}
The first author was partially supported by EPSRC New Horizons grant EP/V018744/1. The second author was supported by EPSRC grant EP/V001930/1 and the Heilbronn Institute for Mathematical Research. The third author was supported by a Shanghai 2021 `Science and Technology Innovation Action Plan' Natural Science Foundation Project grant.  The fourth author was supported by EPSRC grants EP/H00534X/1 and EP/V061739/1.  We thank Jean-Fran\c cois Dat, Jessica Fintzen, Johannes Girsch, Thomas Lanard, Vanessa Miemietz, Peter Schneider, and Vincent S\'echerre for useful conversations.

%%%%%%%%%%%%%%%%%%%%%%%%%%%%%
\section{Preliminaries}\label{secPreliminaries}
%%%%%%%%%%%%%%%%%%%%%%%%%%%%%

%%%%%%%%%%%%%%%%%%%%%%%%%%%%%
\subsection{Notation}
For a non-archimedean local skew-field~$\D$  we write~$\o_\D$ for the ring of integers of~$\D$,~$\p_\D$ for the unique maximal ideal of~$\o_\D$, and~$\kk_\D$ for the residue field~$\o_\D/\p_\D$.  

%%%%%%%%%%%%%%%%%%%%%%%%%%%%%
\subsection{Smooth~$\R$-representations}\label{abstractsmoothreps}
Let~$\R$ be a~commutative ring (with identity) and~$\H$ a locally profinite group.  We call a smooth representation of~$\H$ on an~$\R$-module an \emph{$\R$-representation}.  The basic theory of~$\R$-representations is developed in Vign\'eras' book \cite{Vig96}.  In particular,~$\R$-representations of~$\H$ form an abelian category, we denote by~$\Rep_{\R}(\H)$, \cite[I 4.2]{Vig96}; and for~$\R$-representations~$\pi_1,\pi_2$ of~$\H$ we write~$\Hom_{\R[\H]}(\pi_1,\pi_2)$ for the~$\R$-module of morphisms~$\pi_1\rightarrow\pi_2$ in this category.  We call an~$\R$-representation \emph{irreducible} if its only~$\R$-subrepresentations are~$0$ and itself; in particular,~$0$ is \emph{not} considered an irreducible~$\R$-representation.  %A simple application of Zorn's lemma shows that every non-zero finitely generated~$\R$-representation of~$\H$ has an irreducible quotient.

In this section we collect a few simple lemmas which allow us to approach certain questions on~$\R$-representations over Dedekind domains locally, and which do not appear in the standard source \cite{Vig96}.

\begin{lemma}
Let~$\pi$ be an irreducible~$\R$-representation of~$\H$ on an~$\R$-module~$\mathscr{V}$, and let~
\[\mathcal{A}=\{r\in\R: rv=0\text{ for all }v\in\mathscr{V}\}\] denote the annihilator of~$\mathscr{V}$ as an~$\R$-module.  Then
\begin{enumerate}
\item The annihilator~$\mathcal{A}$ is a prime ideal of~$\R$.
\item The action of~$\R$ factors through~$\R/\mathcal{A}$ and~$\pi$ is torsion-free and irreducible as an~$\R/\mathcal{A}$-representation.
\item Let~$\K(\mathcal{A})$ denote the field of fractions of~$\R/\mathcal{A}$.  Then~$\pi\otimes\K(\mathcal{A})$ is an irreducible~$\K(\mathcal{A})$-representation which is naturally isomorphic to~$\pi$ as an~$\R/\mathcal{A}$-representation via $v\mapsto v\otimes 1$.
\end{enumerate}
\end{lemma}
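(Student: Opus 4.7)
The plan is to exploit one simple observation throughout: for any $r\in\R$, the scalar multiplication map $v\mapsto rv$ on $\mathscr{V}$ is $\H$-equivariant, since the $\H$-action on $\mathscr{V}$ is by $\R$-linear maps. Consequently both the image $r\mathscr{V}$ and the kernel $\{v\in\mathscr{V}:rv=0\}$ are $\R$-subrepresentations of $\pi$, and by irreducibility each is either $0$ or $\mathscr{V}$. All three parts of the lemma follow by feeding this dichotomy into the usual commutative-algebra arguments, with no genuine obstacle.

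For (i), I would argue as follows. If $rs\in\mathcal{A}$ with $r\notin\mathcal{A}$, then the subrepresentation $r\mathscr{V}$ is nonzero, hence equals $\mathscr{V}$ by irreducibility. Then $s\mathscr{V}=s(r\mathscr{V})=(sr)\mathscr{V}=0$, so $s\in\mathcal{A}$; thus $\mathcal{A}$ is prime. (That $\mathcal{A}$ is a proper ideal, i.e.\ $1\notin\mathcal{A}$, is just the fact that $\mathscr{V}\ne 0$.)

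For (ii), factoring of the $\R$-action through $\R/\mathcal{A}$ is immediate from the definition of $\mathcal{A}$. To check torsion-freeness over the integral domain $\R/\mathcal{A}$, lift a nonzero $\bar r\in\R/\mathcal{A}$ to $r\in\R\setminus\mathcal{A}$; then the subrepresentation $\{v:rv=0\}$ cannot be all of $\mathscr{V}$ (else $r\in\mathcal{A}$), hence vanishes. Irreducibility as an $\R/\mathcal{A}$-representation is automatic, since $\R/\mathcal{A}$-subrepresentations and $\R$-subrepresentations coincide.

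For (iii), given any nonzero $\bar b\in\R/\mathcal{A}$, the subrepresentation $b\mathscr{V}$ is nonzero (because $b\notin\mathcal{A}$), so equals $\mathscr{V}$; combined with the torsion-freeness from (ii), multiplication by $\bar b$ is an $\H$-equivariant bijection of $\mathscr{V}$. Hence $\mathscr{V}$ inherits a canonical $\K(\mathcal{A})$-module structure extending the $\R/\mathcal{A}$-action, and the natural map $v\mapsto v\otimes 1$ from $\mathscr{V}$ to $\mathscr{V}\otimes_{\R/\mathcal{A}}\K(\mathcal{A})$ is an isomorphism of $\H$-representations: injectivity comes from torsion-freeness, surjectivity from divisibility. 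Finally, irreducibility of $\pi\otimes\K(\mathcal{A})$ over $\K(\mathcal{A})$ follows from irreducibility over $\R/\mathcal{A}$, since under the identification just established the two notions of subrepresentation again coincide.
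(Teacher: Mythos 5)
Your proof is correct and takes essentially the same approach as the paper's, exploiting that multiplication by $r\in\R$ is an $\H$-equivariant endomorphism so its kernel and image are $\H$-stable, and then applying the irreducibility dichotomy throughout. You are slightly more explicit than the paper in part (ii), where the paper deduces invertibility of multiplication by $\alpha\in\R\setminus\mathcal{A}$ directly from $\alpha\pi=\pi$ without spelling out that the kernel is also a subrepresentation and hence zero.
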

 
\begin{proof}
The annihilator~$\mathcal{A}$ is clearly an ideal, and as~$\pi$ is irreducible as an~$\R$-representation of~$\H$,~$\mathcal{A}$ is a prime ideal (if~$r_1 r_2\subset\mathcal{A}$, then as~$ r_2\mathscr{V}$ is an~$\R$-representation of~$\H$ and a subrepresentation of~$\pi$ it is zero, in which case~$r_2$ annihilates~$\mathscr{V}$, or~$\mathscr{V}$ in which case~$r_1$ annihilates~$\mathscr{V}$; and as~$\pi$ is non-zero it is not the whole of~$\R$).   

As~$\pi$ is irreducible,~$\alpha\pi= \pi$ for all~$\alpha\in\R\backslash \mathcal{A}$, hence multiplication by~$\alpha$ is invertible on~$\mathscr{V}$ and we write~$\alpha^{-1}$ for the inverse of~$\alpha$ as an~$\R$-module endomorphism.  Hence~$\pi$ is torsion-free as an~$\R/\mathcal{A}$-representation.  Moreover,~$v\mapsto v\otimes 1$ defines an isomorphism~$\pi\mapsto\pi\otimes \K(\mathcal{A})$ as~$v\otimes \frac{\alpha}{\beta}=\alpha \beta^{-1}(v)\otimes 1$; and~$\pi\otimes \K(\mathcal{A})$ is irreducible as a~$\K(\mathcal{A})$-representation as any proper~$\H$-stable subspace defines a proper~$\H$-stable $\R/\mathcal{A}$-subspace of~$\pi$.
\end{proof}

We make use of this simple lemma in the special case of Dedekind domains where we have:

\begin{corollary}\label{NFcases}
Let~$\R$ be a Dedekind domain with field of fractions~$\K$. Let~$\pi$ be an irreducible~$\R$-representation of~$\H$.  Then, either:
 \begin{enumerate}
\item  $\pi$ is torsion-free as an~$\R$-module, the morphism~$\pi\rightarrow \pi\otimes \K$ given by $v\mapsto v\otimes 1$ is an isomorphism of~$\R$-modules, and~$\pi\otimes\K$ is an irreducible~$\K$-representation of~$\H$; or,
\item there exists a unique non-zero maximal ideal~$\mathfrak{l}\in \mathrm{m}\text{-}\mathrm{Spec}(\R)$ which annihilates~$\pi$, and~$\pi$ is an irreducible~$(\R/\mathfrak{l})$-representation of~$\H$.
\end{enumerate}
\end{corollary}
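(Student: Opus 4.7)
The plan is to derive this corollary directly from the preceding lemma by exploiting the defining property of Dedekind domains, namely that every non-zero prime ideal is maximal. Concretely, I would apply the lemma to obtain the annihilator~$\mathcal{A}$ of~$\mathscr{V}$ in~$\R$, which is automatically a prime ideal, and then split into two cases according to whether~$\mathcal{A}=0$ or~$\mathcal{A}$ is non-zero (hence maximal).

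In the case~$\mathcal{A}=0$, the lemma gives that the action of~$\R$ on~$\pi$ is torsion-free and that~$\pi\otimes_{\R}\K(\mathcal{A})=\pi\otimes_{\R}\K$ is an irreducible~$\K$-representation naturally isomorphic to~$\pi$ via $v\mapsto v\otimes 1$; this is exactly statement~(i). In the case~$\mathcal{A}=\mathfrak{l}$ non-zero, the Dedekind hypothesis ensures that~$\mathfrak{l}$ is maximal, so~$\R/\mathfrak{l}$ is already a field equal to its field of fractions~$\K(\mathcal{A})$. The lemma then says that~$\pi$ is irreducible as an~$(\R/\mathfrak{l})$-representation, giving statement~(ii). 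Uniqueness of~$\mathfrak{l}$ in~(ii) is immediate because the annihilator is uniquely determined by~$\pi$.

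The only thing to check is that the two cases are genuinely mutually exclusive and exhaustive, which follows from the dichotomy for prime ideals in a Dedekind domain. There is no real obstacle here: the corollary is a direct translation of the lemma under the structural constraint on~$\Spec(\R)$, and no additional representation-theoretic input is required.
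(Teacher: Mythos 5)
Your proof is correct and takes essentially the same approach the paper intends: the corollary is stated without proof directly after the lemma precisely because it is the lemma specialized to Dedekind domains via the dichotomy $\mathcal{A}=0$ or $\mathcal{A}$ maximal. The case split, the identification $\K(\mathcal{A})=\K$ when $\mathcal{A}=0$ and $\K(\mathcal{A})=\R/\mathfrak{l}$ when $\mathcal{A}=\mathfrak{l}$ is maximal, and the uniqueness observation are all exactly as the authors would have it.
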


If~$\R$ is a Dedekind domain, then for any~$\H$ as above there are always irreducible~$\R$-representations fitting into the second case of the Corollary (take the trivial~$\R/\mathfrak{l}$-representation for example (or any irreducible~$\R/\mathfrak{l}$-representation)).  However, there might not exist torsion-free irreducible~$\R$-representations (as one easily sees if~$\H$ is a finite group and~$\R$ is not a field).  The discrete group~$\H=\mathbb{Q}^\times$ acting on~$\mathbb{Q}$ by multiplication, gives an example of a torsion-free irreducible~$\mathbb{Z}$-module, but our interest lies in reductive $p$-adic groups where we have:

\begin{lemma}\label{TFdontappear}
Let~$\G$ be a reductive~$p$-adic group.  Let~$\R$ be a Noetherian integral domain with infinitely many prime ideals such that any nonzero prime ideal is maximal, and field of fractions~$\K$.  Let~$\pi$ an irreducible~$\R$-representation of~$\G$, then there exists a non-zero maximal ideal of~$\R$ annihilating~$\pi$ (i.e.,~$\pi$ has torsion).
\end{lemma}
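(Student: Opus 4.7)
The plan is to argue by contradiction: assume $\pi$ is torsion-free as an $\R$-module (i.e., no nonzero maximal ideal of $\R$ annihilates $\pi$), and derive a contradiction from the structure of reductive $p$-adic groups.

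First I would invoke Corollary \ref{NFcases} to conclude that $\pi$ acquires the structure of a $\K$-vector space and that the natural map $\pi \to \pi \otimes_\R \K$ is an isomorphism of $\R$-modules, so that $\pi$ is in particular an irreducible smooth $\K$-representation of $\G$. Since $\G$ is reductive $p$-adic, $\pi$ is admissible, hence for any compact open subgroup $K \subseteq \G$ with $\pi^K \neq 0$ the space $\pi^K$ is finite-dimensional over $\K$, say of dimension $d$. Fix such $K$ and a nonzero $v \in \pi^K$.

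Next I would use $\R$-irreducibility to get $\pi = \R[\G]\cdot v$, and (assuming $p \in \R^\times$ so that the $K$-idempotent is available in $\R[\G]$) project to $\pi^K = \mathcal{H}_\R(\G,K)\cdot v$. The image $\bar{\mathcal{H}}_\R$ of $\mathcal{H}_\R(\G,K)$ in $\End_\K(\pi^K) \cong \mathrm{M}_d(\K)$ is an $\R$-subalgebra whose $\K$-span $\bar{\mathcal{H}}_\R \otimes_\R \K$ is finite-dimensional. The heart of the argument is the claim that $\bar{\mathcal{H}}_\R$ is finitely generated as an $\R$-module---an $\R$-order in the finite-dimensional $\K$-algebra $\bar{\mathcal{H}}_\R \otimes_\R \K$. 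Granted this, $\pi^K = \bar{\mathcal{H}}_\R \cdot v$ is finitely generated over $\R$; but $\pi^K$ is a $\K$-vector space of positive dimension, so contains $\K$ as an $\R$-submodule, and $\K$ is not finitely generated over $\R$ (any finitely generated $\R$-subalgebra of $\K$ is of the form $\R[s^{-1}]$ for some $s \in \R \setminus \{0\}$, which inverts only finitely many primes of $\R$, whereas $\R$ has infinitely many). This yields the sought contradiction.

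The main obstacle is showing that $\bar{\mathcal{H}}_\R$ is indeed an $\R$-order. I would attack this via careful choice of $K$: for instance, taking $K$ to come from a Bushnell--Kutzko type (or a parahoric subgroup in the depth-zero case), the Hecke algebra $\mathcal{H}_\R(\G,K)$ acquires a well-understood presentation as an affine Hecke-type algebra with structure constants lying in a finitely generated $\R$-subalgebra of $\K$. Combining this with the finite-dimensionality of $\bar{\mathcal{H}}_\R \otimes_\R \K$ and Artin--Tate-style arguments should exhibit $\bar{\mathcal{H}}_\R$ as an $\R$-order, and so complete the proof.
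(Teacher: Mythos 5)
Your opening reduction --- via Corollary~\ref{NFcases} the torsion-free $\pi$ becomes an irreducible admissible $\K$-representation, and one restricts to $K$-invariants for a compact open pro-$p$ $K$ with $\pi^K\neq 0$ --- matches the paper's first moves, and your closing observation that $\K$ is not finitely generated over $\R$ (because inverting finitely many elements kills only finitely many of the infinitely many primes) is correct. The genuine gap is precisely the step you flag yourself: the claim that the image $\bar{\mathcal{H}}_\R$ of $\mathcal{H}_\R(\G,K)$ in $\End_\K(\pi^K)$ is an $\R$-order, i.e.\ finitely generated as an $\R$-module. This cannot be extracted from a presentation of the Hecke algebra, and the proposed repair does not close it. An $\R$-subalgebra of a finite-dimensional $\K$-algebra need not be a finitely generated $\R$-module ($\mathbb{Z}[1/2]\subset\mathbb{Q}$ already shows this), and the image of the commutative spherical part of an affine Hecke algebra is exactly the sort of subring prone to this failure. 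Moreover the lemma is stated for all reductive $p$-adic $\G$, where types need not exist, so one cannot hinge the argument on a type-theoretic choice of $K$; and even where types do exist, ``structure constants in a finitely generated $\R$-subalgebra of $\K$'' would at best give that $\bar{\mathcal{H}}_\R$ is finitely generated as an $\R$-\emph{algebra}, which is strictly weaker than the $\R$-module finiteness you need.

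The paper sidesteps the order claim entirely. After first reducing to connected $\G$ and inverting $p$ (your proposal omits the former), it invokes the finiteness theorem of \cite{DHKMfiniteness}: the Hecke algebra $\mathcal{H}=\R[\U\backslash\G/\U]$ is a finite module over its centre $\mathfrak{Z}$, and $\mathfrak{Z}$ is a finitely generated $\R$-\emph{algebra} (not $\R$-module). Hence $\pi^\U$, being cyclic over $\mathcal{H}$, is a finitely generated $\mathfrak{Z}$-module, so it has a simple $\mathfrak{Z}$-module quotient $M'\simeq\mathfrak{Z}/\mathfrak{m}$ for some maximal ideal $\mathfrak{m}$. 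Since $\R$ acts by $\mathcal{H}$-linear endomorphisms, a Schur-type argument shows nonzero elements of $\R$ act invertibly on $\pi^\U$ and hence on $M'$, so the composite $\R\to\mathfrak{Z}\to\mathfrak{Z}/\mathfrak{m}$ is injective. Finally the hypotheses on $\R$ (Noetherian domain of dimension one with infinitely many primes) make $\R$ a Jacobson ring, and the general Nullstellensatz then forces $\mathfrak{Z}/\mathfrak{m}$ to be integral over $\R$; a field integral over the domain $\R$ forces $\R$ to be a field, contradicting the infinitude of its primes. So the infinitude of primes enters through the Jacobson property rather than through your point about $\K/\R$; replacing ``show $\bar{\mathcal{H}}_\R$ is an order'' by ``take a simple $\mathfrak{Z}$-quotient and apply the Nullstellensatz'' is the ingredient you are missing.
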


\begin{proof}
Suppose~$\pi$ is an~$\R$-torsion-free irreducible~$\R$-representation of~$\G$.    Let~$\G^0$ denote the~$\F$-points of the connected component of the underlying algebraic group of~$\G$ -- the connected reductive~$p$-adic group.  Then~$\G/\G^0$ is finite, and as such~$\pi\mid_{\G^0}$ is finitely generated and has an irreducible quotient~$\pi^{0}$.  Let~$\mathscr{V}$ be the underlying~$\R$-module of~$\pi^{0}$, and~$\mathscr{V}/\mathscr{W}$ the quotient defining the underlying~$\R$-module of~$\pi$.  If~$\mathscr{V}/\mathscr{W}$ was not torsion free, then there would exist~$r\in \R$ mapping~$\mathscr{V}$ into a proper stable subspace of~$\mathscr{V}$, which is not possible as~$\pi$ is irreducible.  Hence~$\pi^{0}$ is~$\R$-torsion-free, and we are reduced to the setting where~$\G$ is a connected reductive~$p$-adic group.  Moreover, without loss of generality, we can invert~$p$ in our ring~$\R$.

Choose a compact open pro-$p$ subgroup~$\U$ of~$\G$ such that~$\pi$ is a quotient of~$\ind_{\U}^{\G}(1)$.  Then~$\ind_{\U}^{\G}(1)$ is finitely generated projective and we have an equivalence of categories between the category of smooth~$\R$-representations which are quotients of direct sums of copies of~$\ind_{\U}^{\G}(1)$ and the category of~$\R[\U\backslash \G/\U]$-modules.  In particular, the image of~$\pi$ is an~$\R$-torsion-free simple~$\R[\U\backslash \G/\U]$-module which we denote by~$M$.  

We set~$\mathcal{H}=\R[\U\backslash \G/\U]$.  
By \cite{DHKMfiniteness} (see also \cite[Theorem 1.2]{DHKM2}),~$\mathcal{H}$ is a finitely generated module over its centre~$\mathfrak{Z}$ which is a finitely generated~$\R$-algebra.  Any $\R$-endomorphism of $M$ is either zero or an isomorphism, so this says that non-zero elements of~$\R$ act invertibly on $M$.  On the other hand, $M$ is generated by one element as an~$\mathcal{H}$-module, so is finitely generated as a $\mathfrak{Z}$-module.  This means that~$M$ admits a quotient~$M'$ that is simple as a~$\mathfrak{Z}$-module.  The same argument shows that non-zero elements of~$\R$ induce isomorphisms of $M'$ with $M'$. 

The action of~$\mathfrak{Z}$ on $M'$ factors through the quotient of a maximal ideal $\mathfrak{m}$, so we obtain an injective map~$\R\rightarrow \mathfrak{Z}\rightarrow \mathfrak{Z}/\mathfrak{m}$.  So, as~$\R$ is Jacobson (cf.~\cite[Theorem 10]{EmertonJacobson} and \cite[\href{https://stacks.math.columbia.edu/tag/00G4}{Tag 00G4}]{stacks-project}),~$\mathfrak{Z}/\mathfrak{m}$ is an integral extension of~$\R$.  This is absurd, because~$\R$ is not a field. 
\end{proof}

Suppose~$\R$ is a local ring,~$\G$ a reductive $p$-adic group, and suppose the maximal split central torus of~$\G$ has positive rank.  Then one easily constructs~$\R$-torsion-free unramified characters of~$\G$, from which one can build more torsion free representations of other reductive $p$-adic groups.

Via considering certain lattices in representations of~$\G$ over number fields with rings of integers~$\R$, we will also consider a collection of $\R$-torsion-free (reducible) representations of (locally) compact groups, where we will use the following simple lemma:

\begin{lemma}\label{isomatprimesDDgivesiso}
Let~$\R$ be a Dedekind domain with field of fractions~$\K$, and~$\pi_1,\pi_2$ be~$\R$-representations of~$\H$ which are~torsion free as~$\R$-modules, and such that~$\pi_2$ is finitely generated as an~$\R$-representation. Suppose~$\phi:\pi_1\rightarrow \pi_2$ a morphism of~$\R$-representations of~$\H$ such that
\begin{align*}
\phi\otimes 1:\pi_1\otimes\K&\xrightarrow{\sim} \pi_2\otimes\K\\
\phi\otimes 1:\pi_1\otimes (\R/\mathfrak{m})&\xrightarrow{\sim} \pi_2\otimes (\R/\mathfrak{m});
\end{align*}
are isomorphisms, for all~$\mathfrak{m}\in \text{m-}\Spec(\R)$.  Then~$\phi$ is an isomorphism.
\end{lemma}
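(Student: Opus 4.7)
The plan is to prove injectivity and surjectivity of $\phi$ separately. Injectivity is essentially immediate: since $\pi_1$ is $\R$-torsion-free, the canonical map $\pi_1\to\pi_1\otimes_\R\K$ is injective; composing it with the injection $\phi\otimes 1$ shows that $\phi$ has trivial kernel.

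For surjectivity I would form the cokernel $C:=\pi_2/\phi(\pi_1)$, a smooth $\R$-representation of $\H$, and aim to show $C=0$. Applying the exact functor $-\otimes_\R\K$ to $0\to\pi_1\to\pi_2\to C\to 0$ and using that $\phi\otimes 1$ is an isomorphism over $\K$ yields $C\otimes_\R\K=0$, so $C$ is $\R$-torsion. The crucial intermediate step is to upgrade this to a \emph{uniform} annihilator: since $\pi_2$, and hence $C$, is finitely generated as an $\R[\H]$-module, I would choose generators $\bar v_1,\ldots,\bar v_n$ of $C$ with nonzero annihilators $a_i\in\R$, and then exploit the fact that $\R$ lies in the centre of $\R[\H]$ to conclude that $a:=a_1\cdots a_n$ annihilates all of $C$.

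With $aC=0$ in hand, I would argue locally at each maximal ideal $\mathfrak{m}$ of $\R$. For $\mathfrak{m}$ not containing $a$, the element $a$ is a unit in $\R_\mathfrak{m}$, so $C_\mathfrak{m}=0$ trivially. For each of the finitely many $\mathfrak{m}\supseteq(a)$, pick a uniformizer $\varpi$ of the DVR $\R_\mathfrak{m}$ and set $e=v_\mathfrak{m}(a)$; the hypothesis that $\phi\otimes 1:\pi_1\otimes(\R/\mathfrak{m})\to\pi_2\otimes(\R/\mathfrak{m})$ is surjective translates via right-exactness of $-\otimes_\R(\R/\mathfrak{m})$ into $C/\mathfrak{m}C=0$, hence $C_\mathfrak{m}=\varpi C_\mathfrak{m}$; iterating gives $C_\mathfrak{m}=\varpi^eC_\mathfrak{m}=0$. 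Vanishing of every localization then forces $C=0$.

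The main obstacle is that $\pi_2$ is finitely generated only as an $\R[\H]$-module, not as an $\R$-module, so $C$ could be very large as an $\R$-module and a naive element-wise Nakayama argument at each localization would not terminate. The essential trick is to use the centrality of $\R$ in $\R[\H]$ to collapse finitely many torsion conditions on the $\R[\H]$-generators into a single annihilator $a\in\R$ of all of $C$; after this reduction each $C_\mathfrak{m}$ is a module over a local Artinian quotient of $\R_\mathfrak{m}$, and the Nakayama-style iteration $C_\mathfrak{m}=\varpi C_\mathfrak{m}=\varpi^2C_\mathfrak{m}=\cdots$ is forced to terminate after at most $e$ steps.
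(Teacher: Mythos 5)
Your proof is correct, and it takes a genuinely different route from the paper's.

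The paper argues as follows: if the cokernel $C_\phi$ is nonzero, then (being a quotient of the finitely generated $\pi_2$) it admits an irreducible quotient $\Pi$; Corollary~\ref{NFcases} then produces a residue field $k\in\{\K\}\cup\{\R/\mathfrak{m}\}$ with $\Pi\otimes k\neq 0$, hence $C_\phi\otimes k\neq 0$ by right-exactness; but $C_\phi\otimes k$ is the cokernel of $\phi\otimes 1$, which vanishes by hypothesis. So the paper imports a representation-theoretic input (the dichotomy of Corollary~\ref{NFcases} for irreducible $\R$-representations over a Dedekind domain) and then only needs that \emph{some} residue field detects the cokernel. Your argument instead stays entirely in commutative algebra: you use $C\otimes\K=0$ to see $C$ is $\R$-torsion, collapse finitely many torsion conditions on $\R[\H]$-generators into a single uniform annihilator $a\in\R\setminus\{0\}$ via centrality of $\R$, and then show each localization $C_\mathfrak{m}$ vanishes — trivially when $a\notin\mathfrak{m}$, and by the finite-valuation trick $C_\mathfrak{m}=\varpi^e C_\mathfrak{m}\subseteq aC_\mathfrak{m}=0$ when $a\in\mathfrak{m}$, which is the right replacement for Nakayama given that $C_\mathfrak{m}$ need not be finitely generated over $\R_\mathfrak{m}$. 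You correctly identified this as the crux: without the uniform annihilator $a$, the Nakayama-style iteration has no termination guarantee. The paper's route is shorter given Corollary~\ref{NFcases}, which it has already established; yours avoids that corollary entirely, is more self-contained, and makes transparent exactly which structural features of Dedekind domains are used (existence of a uniformizer in each $\R_\mathfrak{m}$, finiteness of the valuation $v_\mathfrak{m}(a)$). Both use the same hypotheses and both are complete.
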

\begin{proof}
As~$\pi_i$ are torsion free,~$\pi_i\hookrightarrow \pi_i\otimes \K$, and in particular~$\phi$ is injective as~$\phi\otimes 1:\pi_1\otimes \K\rightarrow \pi_2\otimes\K$ is injective.   Let~$C_{\phi}$ denote the cokernel of~$\phi$.  If it is non-zero then it has an irreducible quotient~$\Pi$ as~$\pi_2$ is finitely generated.  By Lemma \ref{NFcases}, there exists~$k\in\{\K,\R/\mathfrak{m}:\mathfrak{m}\in\text{m-}\Spec(\R)\}$, such that~$\Pi\otimes k$ is non-zero, so the cokernel is non-zero after tensoring with~$k$ by right-exactness of tensor.  But right-exactness of tensor again shows that~$C_{\phi}\otimes k=C_{\phi\otimes 1}$ where~$C_{\phi\otimes 1}$ is the cokernel of~$\phi
\otimes 1:\pi_1\otimes k\rightarrow \pi_2\otimes k$, and we know that the latter is zero -- a contradiction. 
\end{proof}

We return to the setting of a general commutative ring~$\R$, where we have:

\begin{lemma}[{\cite[Lemma A.1]{DHKM2}}]\label{Homsandscalarextension}
Suppose that there exists a compact open subgroup of~$\H$ of invertible pro-order in~$\R$. 
Let~$\pi$ be a finitely generated projective~$\R$-representation of~$\H$,~$\R'$ a commutative~$\R$-algebra, and~$\pi'$ an~$\R$-representation of~$\H$.  Then the natural map
\[\Hom_{\R[\H]}(\pi,\pi')\otimes \R'\rightarrow \Hom_{\R'[\H]}(\pi\otimes \R',\pi'\otimes \R')\]
$f\otimes r'\mapsto r'(f\otimes 1)$ defines an isomorphism of~$\R'$-modules.
\end{lemma}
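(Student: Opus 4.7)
The plan is to reduce to the case $\pi = \ind_U^{\H}(\R)$ where $U \subseteq \H$ is a compact open subgroup of pro-order invertible in $\R$, and then to verify the isomorphism in that case via Frobenius reciprocity together with an averaging-idempotent argument. Both source and target of the natural map are additive in $\pi$, so the class of finitely generated projective $\R$-representations for which the conclusion holds is closed under finite direct sums and direct summands. By hypothesis $\H$ admits a compact open subgroup $U_0$ of invertible pro-order, and any open subgroup of $U_0$ inherits this property. For a finitely generated projective $\pi$ with generators $v_1,\dots,v_n$, smoothness supplies an open subgroup $U \subseteq U_0$ fixing each $v_i$, hence a surjection $\ind_U^{\H}(\R)^{\oplus n}\twoheadrightarrow \pi$ which splits by projectivity. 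Thus every finitely generated projective is a direct summand of a finite sum of objects of the form $\ind_U^{\H}(\R)$, and it suffices to treat this base case.

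For $\pi = \ind_U^{\H}(\R)$, Frobenius reciprocity yields canonical isomorphisms $\Hom_{\R[\H]}(\ind_U^{\H}(\R),\pi') \cong (\pi')^U$ (sending $f$ to $f(\mathbf{1}_U)$) and analogously $\Hom_{\R'[\H]}(\ind_U^{\H}(\R'),\pi'\otimes_\R\R') \cong (\pi'\otimes_\R\R')^U$, together with the elementary identification $\ind_U^{\H}(\R)\otimes_\R\R' \cong \ind_U^{\H}(\R')$. Unwinding the definitions, the natural map of the lemma translates into the canonical map $(\pi')^U \otimes_\R \R' \to (\pi'\otimes_\R\R')^U$ sending $v\otimes r'$ to $v\otimes r'$, so the lemma reduces to showing this map is an isomorphism.

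The technical heart, and the only place where the pro-order hypothesis is used, is this identity on $U$-fixed vectors, which I would establish via an averaging idempotent. For a smooth $\R[U]$-module $M$ and $v\in M$, pick any open normal subgroup $U'\triangleleft U$ fixing $v$ and set $e_U v := [U:U']^{-1}\sum_{u\in U/U'} u\cdot v$. The index $[U:U']$ is invertible in $\R$ by the pro-order hypothesis, and a standard averaging computation shows this value is independent of the choice of $U'$, defining an $\R$-linear idempotent endomorphism of $M$ with image $M^U$. Since $e_U$ is given by an $\R$-linear formula, it commutes with extension of scalars, yielding $(M\otimes_\R\R')^U = e_U(M\otimes_\R\R') = (e_U M)\otimes_\R \R' = M^U\otimes_\R\R'$, as required. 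I expect the main obstacle to be setting up this averaging operator carefully in the smooth-module category over a general commutative coefficient ring $\R$; once this is in place, the remainder of the argument is formal.
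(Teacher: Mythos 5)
Your proof is correct. The paper itself does not supply a proof of this lemma but delegates it to \cite[Lemma A.1]{DHKM2}; the argument you give — reducing by additivity and direct summands to the case $\pi\cong\ind_U^{\H}(\R)$, converting via Frobenius reciprocity to the base-change statement $(\pi')^U\otimes_\R\R'\xrightarrow{\sim}(\pi'\otimes_\R\R')^U$, and then establishing that via the averaging idempotent $e_U$, which is $\R$-linear, well defined by the invertible pro-order hypothesis, and manifestly compatible with $-\otimes_\R\R'$ — is the standard route and is exactly the mechanism one expects behind the cited reference. All the small points you'd need to nail down (that open subgroups of $U_0$ inherit invertible pro-order, that a common $U$ fixing all generators exists by smoothness, that $\ind_U^{\H}(\R)\otimes_\R\R'\cong\ind_U^{\H}(\R')$ since $\ind_U^{\H}(\R)$ is $\R$-free on $U\backslash\H$, and that $e_U$ is independent of the choice of $U'$ and idempotent with image $M^U$) go through as you indicate.
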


Finally, to construct representations of $p$-adic groups via compact induction from explicitly constructed representations of compact open subgroups; inverting $p$ in our coefficient rings we find that these are finitely generated projective utilizing the following simple lemma:

\begin{lemma}\label{lemmaprojectivesfinitegroups}
Let~$\R$ be a (commutative)~$\mathbb{Z}[1/p]$-algebra.  Let~$\rho$ be an irreducible (smooth) representation of a pro-$p$ compact open subgroup~$\H$ of~$\G$ on a projective~$\R$-module, or if~$\R$ is an integral domain, a (smooth) representation of~$\H$ on a projective~$\R$-module of finite rank as an~$\R$-module.  Then~$\rho$ is a projective~$\R[\H]$-module, and~$\ind_{\H}^{\G}(\rho)$ a finitely generated projective representation of~$\G$.
\end{lemma}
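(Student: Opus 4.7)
The plan is to reduce to the representation theory of a finite $p$-group. Picking any nonzero $v \in \rho$, smoothness and profiniteness of $\H$ give an open normal subgroup $\K \trianglelefteq \H$ fixing $v$; since $\K$ is normal, $\rho^\K$ is a nonzero $\H$-stable subspace, hence equal to $\rho$ by irreducibility. Thus the action of $\H$ factors through the finite quotient $\Gamma := \H/\K$, which is a finite $p$-group since $\H$ is pro-$p$, and whose order is therefore invertible in the $\mathbb{Z}[1/p]$-algebra $\R$.

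Projectivity of $\rho$ as an $\R[\Gamma]$-module then follows from the hypothesis that $\rho$ is $\R$-projective by a Maschke-type averaging: any $\R[\Gamma]$-surjection onto $\rho$ admits an $\R$-linear splitting, which one averages over $\Gamma$ using $|\Gamma|^{-1} \in \R$. To upgrade this to projectivity in $\Rep_\R(\H)$, I would invoke the adjunction between inflation $\Rep_\R(\Gamma) \to \Rep_\R(\H)$ and $\K$-invariants $(-)^\K$: since $\K$ is pro-$p$ with pro-order invertible in $\R$, the $\K$-invariants functor is exact on smooth $\R$-representations of $\H$, so its left adjoint (inflation) preserves projective objects, giving projectivity of $\rho$ in $\Rep_\R(\H)$.

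For $\ind_\H^\G \rho$, projectivity is another left-adjoint-of-exact argument: compact induction from the open subgroup $\H$ is left adjoint to the manifestly exact restriction functor $\Rep_\R(\G) \to \Rep_\R(\H)$, so it preserves projectives. Finite generation reduces to showing that $\rho$ itself is finitely generated over $\R$, which is immediate from irreducibility: for any nonzero $v \in \rho$ one has $\rho = \R[\H] v = \R[\Gamma] v$, so $\{\gamma v : \gamma \in \Gamma\}$ is a finite $\R$-generating set. A finite $\R$-generating set $v_1, \ldots, v_n$ of $\rho$ then yields a finite $\R[\G]$-generating set of $\ind_\H^\G \rho$ via $v_i \mapsto \phi_{v_i}$ (the function supported on $\H$ with $\phi_{v_i}(1) = v_i$), since any compactly supported $f \in \ind_\H^\G \rho$ is a finite $\R[\G]$-combination of $\G$-translates of such $\phi_{v_i}$'s.

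There is no single hard step; the argument is a composition of standard adjunctions with Maschke's averaging trick for finite $p$-groups in characteristic coprime to $p$. The point deserving most care is the initial reduction to the finite quotient $\Gamma$, which uses crucially both that $\H$ is profinite (so that open neighbourhoods of the identity contain open normal subgroups) and that $\rho$ is irreducible.
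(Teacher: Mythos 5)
Your proof is correct, and it takes a genuinely different route from the paper's. Both approaches first reduce (via $p$-invertibility in $\R$, the left-adjoint-of-exact-restriction argument, and finite generation of $\rho$) to showing that $\rho$ is a projective object of $\Rep_\R(\H)$, but they establish that core projectivity differently. The paper instead picks a compact open $\K \leq \H$ on which $\rho$ is trivial, observes that the evident surjection $\ind_\K^\H(1_\R) \twoheadrightarrow \rho$ admits an $\R$-linear section (projectivity of $\rho$ as an $\R$-module), averages that section over $\H/\K$ using $[\H:\K]^{-1} \in \R$ to produce an $\H$-equivariant splitting, and thereby exhibits $\rho$ as a direct summand of $\ind_\K^\H(1_\R)$; projectivity then follows because $\ind_\K^\H$ preserves projectives and $1_\R$ is projective over $\K$ by Vignéras \cite[I 4.9(a)]{Vig96}. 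You instead factor $\rho$ through the finite $p$-group $\Gamma = \H/\K$ (and you quite rightly note the need for a \emph{normal} open $\K$, which the paper uses implicitly when asserting $\rho$ is trivial on some compact open subgroup), apply Maschke's argument over $\R[\Gamma]$, and then inflate back to $\H$ using the adjunction $\mathrm{Infl} \dashv (-)^\K$ with $(-)^\K$ exact. The paper's route is slightly more economical in that it avoids the finite quotient entirely and can cite Vignéras directly; your route is a bit more self-contained (Maschke for a finite $p$-group is elementary, whereas the Vignéras citation encapsulates the same kind of averaging) and makes explicit the reduction to a finite group that the paper suppresses. Both are standard; neither has a gap.
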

\begin{proof}
As compact induction from an open subgroup is left adjoint to an exact functor (restriction), and preserves finite type, the final statement follows from the previous.

Let~$\K$ be a compact open subgroup of~$\H$ on which~$\rho$ is trivial.  Then $\Hom_{\R[\H]}(\ind_{\K}^{\H}(1_{\R}),\rho)\neq 0$, where~$1_{\R}$ denotes the trivial character of~$\K$ on the free $\R$-module~$\R$.   Choose any non-zero morphism~$\phi$, then as~$\rho$ is a projective~$\R$-module, we can split~$\phi$ as an~$\R$-module homomorphism via a morphism~$s$, and then as~$p$ is invertible in~$\R$ we can average this splitting over~$\H/\K$ to obtain a splitting as~$\R[\H/\K]$-modules:
\[s_{\H}=[\H:\K]^{-1}\sum_{h\in\H/\K}R(h)\circ s\circ\rho(h^{-1}),\]
where~$R(h)$ denotes the right regular action on~$\ind_{\K}^{\H}(1_{\R})$.  Hence~$\ind_{\K}^{\H}(1_{\R})=\rho\oplus\rho'$.  Moreover, as~$\Hom_{\R[\H]}(\ind_{\K}^{\H}(1_{\R}),-)\simeq\Hom_{\R[\H]}(\rho,-)\times\Hom_{\R[\H]}(\rho',-)$ as functors we can reduce to showing~$\ind_{\K}^{\H}(1_{\R})$ is projective, and as compact induction preserves projectivity to~$1_{\R}$ is a projective representation of~$\K$.  Projectivity of the trivial representation of~$\K$ under our hypotheses is \cite[I 4.9 (a)]{Vig96}
\end{proof}

%%%%%%%%%%%%%%%%%%%%%%%%%%%%%
\subsection{Primitive idempotents of the centre and blocks}
Let~$\mathfrak{Z}_{\R}(\G)$ denote the centre of~$\Rep_{\R}(\G)$, it is a commutative unital ring which acts on every smooth~$\R$-representation.  Suppose~$\phi:\R\rightarrow \R'$ is a morphism of commutative~rings, then we have a morphism
\begin{equation}
\label{centresmap}\phi^*:\mathfrak{Z}_{\R}(\G)\rightarrow \mathfrak{Z}_{\R'}(\G)\end{equation}
which is induced from the functor~$\Rep_{\R'}(\G)\rightarrow \Rep_{\R}(\G)$ sending a smooth~$\R'$-representation to the smooth~$\R$-representation with the same action of~$\G$ and the action of~$\R$ from the morphism~$\R\rightarrow\R'$, cf.~\cite[Section 2]{DHKM2}.  

An idempotent~$e\in\mathfrak{Z}_{\R}(\G)$ induces a canonical decomposition of every smooth representation~$\pi=e\pi\oplus (1-e)\pi$,
and a corresponding canonical decomposition of morphisms in the category; in other words defines a decomposition~$\Rep_{\R}(\G)=e\Rep_{\R}(\G)\times (1-e)\Rep_{\R}(\G)$ (and conversely, such a decomposition gives rise to a central idempotent).  Note that, given a morphism of rings~$\R\rightarrow \R'$, via (\ref{centresmap}), a decomposition of~$\Rep_{\R}(\G)$ induces a decomposition of~$\Rep_{\R'}(\G)$.

\begin{lemma}[{\cite[Lemma A.3]{Dat09} and \cite[Lemma 3.2 and Proposition 3.4]{DHKM2}}]~\label{basiccentrelemma}
\begin{enumerate}
\item Suppose~$\phi$ is a monomorphism, then~$\phi^*:\mathfrak{Z}_{\R}(\G)\rightarrow \mathfrak{Z}_{\R'}(\G)$ is a monomorphism.
\item We have a decomposition
\[\mathfrak{Z}_{\R}(\G)=\prod_{r\in D(\G)}\mathfrak{Z}_{\R}(\G)_r\]
where
\begin{itemize}[-]
\item $D(\G)\subseteq \mathbb{Q}^+$ denotes the subset of rational numbers for which there is a smooth~$\R$-representation of~$\G$ of the given depth; and
\item $\mathfrak{Z}_{\R}(\G)_r=e_r\mathfrak{Z}_{\R}(\G)$ denotes the centre of the category of smooth~$\R$-representations of depth~$r$, corresponding to a depth~$r$ idempotent~$e_r\in\mathfrak{Z}_\R(\G)$. 
\end{itemize}  Moreover, for each~$r\in D(\G)$, we have an (explicit) finitely generated projective generator~$P_r$ of~$\Rep_{\R}(\G)_r=e_r\Rep_{\R}(\G)$.
\item \label{tensorcentre} Suppose~$\R$ is Noetherian and~$\R'$ a flat (commutative)~$\R$-algebra.  Then the natural map is an isomorphism
\[\mathfrak{Z}_{\R}(\G)_n\otimes\R'\xrightarrow{\sim}\mathfrak{Z}_{\R'}(\G)_n.\]
\end{enumerate}
\end{lemma}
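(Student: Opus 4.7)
The plan is to treat the three items in sequence, making use of Hecke-algebra and Morita-theoretic descriptions of the centre. For (1), I would realise $\mathfrak{Z}_{\R}(\G)$ as a subring of the inverse limit of the centres $\mathfrak{Z}(\H_K(\R))$ of the Hecke algebras $\H_K(\R) := \R[K\backslash\G/K]$, as $K$ runs over compact open pro-$p$ subgroups of $\G$ (available because $p$ is invertible in $\R$). Since $\H_K(\R)$ is free as an $\R$-module on the basis of double cosets, we have $\H_K(\R') \cong \H_K(\R) \otimes_{\R} \R'$, and the injectivity of $\phi$ forces the injection $\H_K(\R) \hookrightarrow \H_K(\R')$; restricting to centres and passing to the limit then gives the injectivity of $\phi^*$.

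For (2), I would appeal to Moy--Prasad theory: for each vertex $x$ of the Bruhat--Tits building of $\G$ and each $r \in \mathbb{Q}_{\ge 0}$ there is a compact open pro-$p$ subgroup $K_{x,r^+}$ such that a smooth $\R$-representation $\pi$ has depth $\le r$ if and only if $\pi^{K_{x,r^+}} \neq 0$ for some such $x$. Following Vign\'eras \cite[I 5.8]{Vig96} and Dat \cite[Appendix]{Dat09}, the averaging projector onto the depth-$\le r$ subcategory lifts to a central idempotent $e_{\le r} \in \mathfrak{Z}_{\R}(\G)$, and setting $e_r := e_{\le r} - e_{<r}$ yields the claimed product decomposition. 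For the explicit finitely generated projective generator, I would take $P_r := e_r \cdot \bigl(\bigoplus_x \ind_{K_{x,r^+}}^{\G}(1_{\R})\bigr)$, with $x$ running over a finite set of vertex orbits modulo the split centre; each summand is finitely generated projective by Lemma \ref{lemmaprojectivesfinitegroups}, and the Moy--Prasad characterisation of depth shows these collectively generate the depth-$\le r$ subcategory, whence $P_r$ generates the depth-$r$ factor.

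For (3), given the finitely generated projective generator $P_n$ from (2), Morita theory identifies $\mathfrak{Z}_{\R}(\G)_n$ with $\mathfrak{Z}\bigl(\End_{\R[\G]}(P_n)\bigr)$, and the analogous identification on the $\R'$-side uses $P_n \otimes_{\R} \R'$, which remains a finitely generated projective generator of $\Rep_{\R'}(\G)_n$ by flatness of $\R'$. Lemma \ref{Homsandscalarextension} supplies the base-change isomorphism $\End_{\R[\G]}(P_n) \otimes_{\R} \R' \xrightarrow{\sim} \End_{\R'[\G]}(P_n \otimes_{\R} \R')$. Writing $A = \End_{\R[\G]}(P_n)$, the finiteness results of \cite{DHKMfiniteness} guarantee that $A$ is finite over a Noetherian centre, and in particular $A$ is a finitely presented $\R$-module; consequently $\mathfrak{Z}(A)$, realised as the equaliser of the two $\R$-linear maps $A \to \Hom_{\R}(A,A)$ given by left and right multiplication, commutes with the flat base change $-\otimes_{\R} \R'$. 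Combining these isomorphisms gives the claim.

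The most delicate point I anticipate is the integral part of (2): exhibiting $e_{\le r}$ as an element of $\mathfrak{Z}_{\R}(\G)$, rather than only of $\mathfrak{Z}_{\CC}(\G)$, relies on the Vign\'eras--Dat averaging construction and is precisely where the $\mathbb{Z}[\mu_{p^\infty},1/p]$-hypothesis on $\R$ enters (through the presence of sufficiently many roots of unity to form the requisite idempotents). The finite presentation input needed in (3) is the other technical pillar, reducing via the Morita description to the finiteness results of \cite{DHKMfiniteness} already used elsewhere in the paper.
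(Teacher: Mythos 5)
The paper does not prove this lemma itself (it is cited from Dat and DHKM2), so I am assessing your reconstruction on its own terms. Parts (1) and (2) are essentially correct and follow the standard Vign\'eras--Dat line. Two small remarks on (2): the progenerator should be built from $\ind_{G_{x,r^+}}^{\G}(1_{\R})$ with $x$ running over a finite set of \emph{optimal} Moy--Prasad points in the closure of a fixed chamber rather than only vertices, since vertices alone miss the non-integral depth jumps; and your closing remark attributing the integrality of $e_{\le r}$ to the $\mathbb{Z}[\mu_{p^\infty},1/p]$-hypothesis is off --- the depth idempotents are averaging projectors over pro-$p$ groups and only require $p$ invertible in $\R$, whereas the $p$-power roots of unity are needed for the finer decomposition by semisimple character, not here.

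The genuine gap is in (3). You deduce from \cite{DHKMfiniteness} that $A := \End_{\R[\G]}(P_n)$ is a finitely presented $\R$-\emph{module}, and then base-change the equaliser description of $Z(A)$ along $\R \to \R'$. This does not follow: those finiteness results say that $A$ is a finitely generated module over its centre $\mathfrak{Z} := \mathfrak{Z}_{\R}(\G)_n$, and that $\mathfrak{Z}$ is a finitely generated $\R$-\emph{algebra}. In general $\mathfrak{Z}$ (hence $A$) is nowhere near finite over $\R$ --- already for $\G = \GL_1(\F)$ in depth zero, $\mathfrak{Z}_{\R}(\G)_0 \cong \R[\F^\times/(1+\p_\F)]$ contains $\R[x,x^{-1}]$. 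So the hypothesis needed for $\Hom_{\R}(A,A)\otimes_{\R}\R' \cong \Hom_{\R'}(A\otimes\R',A\otimes\R')$ simply fails. The fix is to base-change over $\mathfrak{Z}$ instead of $\R$: since $\R$ is Noetherian and $\mathfrak{Z}$ is a finitely generated $\R$-algebra, $\mathfrak{Z}$ is Noetherian, hence the finitely generated $\mathfrak{Z}$-module $A$ is finitely \emph{presented} over $\mathfrak{Z}$; the maps $a\mapsto L_a$ and $a\mapsto R_a$ land in $\Hom_{\mathfrak{Z}}(A,A)$ because $\mathfrak{Z}$ is central in $A$; and $\mathfrak{Z}\otimes_{\R}\R'$ is flat over $\mathfrak{Z}$. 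One then gets
\[
\Hom_{\mathfrak{Z}}(A,A)\otimes_{\mathfrak{Z}}(\mathfrak{Z}\otimes_{\R}\R')
\xrightarrow{\ \sim\ }
\Hom_{\mathfrak{Z}\otimes_{\R}\R'}(A\otimes_{\R}\R',\,A\otimes_{\R}\R'),
\]
and, since kernels commute with flat base change, $Z(A)\otimes_{\R}\R' = Z(A)\otimes_{\mathfrak{Z}}(\mathfrak{Z}\otimes_{\R}\R') \cong Z(A\otimes_{\R}\R')$. Combined with your Morita identification $\mathfrak{Z}_{\R}(\G)_n\cong Z(A)$ and the base-change isomorphism of Lemma~\ref{Homsandscalarextension}, this gives the claim. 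Your overall strategy (Morita plus finiteness plus flat base change) is the right one; the error is only in which ring you run the finite-presentation argument over.
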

For~$r\in \mathbb{Q}^+$, we write~$\Rep_{\R}(\G)_{\leqslant x}=\prod_{\substack{r\in D(\G)\\ r\leqslant x}}\Rep_{\R}(\G)_r$.

\begin{definition}
\begin{enumerate}
\item Suppose~$e\in \mathfrak{Z}_{\R}(\G)$ is a primitive idempotent.   Then the subcategory~$e\Rep_{\R}(\G)$ is called an~\emph{$\R$-block} of~$\Rep_{\R}(\G)$.  
\item Suppose~$1\in\mathfrak{Z}_{\R}(\G)$ decomposes~$1=\sum_{i\in\I} e_i$ into a sum of primitive idempotents~$e_i\in \mathfrak{Z}_{\R}(\G)$, then the corresponding decomposition
\[\Rep_{\R}(\G)=\prod_{i\in\I} e_i\Rep_{\R}(\G),\]
is called the~\emph{$\R$-block decomposition}.
\end{enumerate}
\end{definition}

Note that, for a given~$\R$, a priori the block decomposition of~$\Rep_{\R}(\G)$ may not exist.  However, for a Noetherian ring, the depth~$n$ centre is Noetherian:

\begin{theorem}[{\cite{DHKM2,DHKMfiniteness}}]
Let~$\R$ be a Noetherian~$\mathbb{Z}[1/p]$-algebra, and~$r\in D(\G)$.  Then~$\mathfrak{Z}_{\R}(\G)_r$ is a finitely generated~$\R$-algebra.
\end{theorem}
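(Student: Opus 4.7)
The plan is to use the finitely generated projective generator $P_r$ of $\Rep_\R(\G)_r$ provided by Lemma \ref{basiccentrelemma}(2), and reduce the statement to the Hecke algebra finiteness theorem of \cite{DHKMfiniteness}. Since $P_r$ is a finitely generated projective generator, the generator-of-a-category equivalence $\Hom_{\R[\G]}(P_r,-)\colon \Rep_\R(\G)_r \xrightarrow{\sim} \mathrm{Mod}\text{-}A$, with $A := \End_{\R[\G]}(P_r)$, identifies the two centres (the centre of a category being a Morita invariant) and yields a ring isomorphism $\mathfrak{Z}_\R(\G)_r \cong Z(A)$. It therefore suffices to show that $Z(A)$ is a finitely generated $\R$-algebra.

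Next, since $P_r$ is finitely generated and smooth, we can choose a pro-$p$ compact open subgroup $\U \leqslant \G$ small enough that $P_r$ is generated by finitely many $\U$-fixed vectors; Frobenius reciprocity then yields an $\R[\G]$-linear surjection $\ind_\U^\G(1)^{\oplus N} \twoheadrightarrow P_r$ for some $N\geqslant 1$, which splits by projectivity of $P_r$. Thus $P_r$ is a direct summand of $\ind_\U^\G(1)^{\oplus N}$, cut out by an idempotent $f \in M_N(\mathcal{H})$ where $\mathcal{H} := \R[\U\backslash\G/\U] \cong \End_{\R[\G]}(\ind_\U^\G(1))$, and so $A = f M_N(\mathcal{H}) f$. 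By \cite{DHKMfiniteness} (see also \cite[Theorem 1.2]{DHKM2}), $\mathcal{H}$ is a finitely generated module over its centre $\mathfrak{Z}$, which is itself a finitely generated $\R$-algebra. Consequently $M_N(\mathcal{H})$ and its corner $A$ are finitely generated $\mathfrak{Z}$-modules. Since $\R$ is noetherian, so is the finitely generated $\R$-algebra $\mathfrak{Z}$, whence $A$ is a noetherian $\mathfrak{Z}$-module; the $\mathfrak{Z}$-subalgebra $Z(A) \subseteq A$ is then finitely generated as a $\mathfrak{Z}$-module, and hence as an $\R$-algebra.

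The substantive input here is the Hecke algebra finiteness theorem of \cite{DHKMfiniteness}, which is quoted as a black box; everything else is a formal generator/corner-algebra-and-noetherianness reduction. If one wished to give a self-contained proof, the main obstacle would be precisely this Hecke algebra finiteness, whose proof rests on a delicate structural analysis of $\mathcal{H}$ (for instance, via Iwahori--Matsumoto-type presentations).
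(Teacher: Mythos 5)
The paper does not prove this statement but imports it as a theorem of \cite{DHKM2,DHKMfiniteness}; your proposal is a correct deduction of it from the Hecke-algebra finiteness theorem that those references establish, and it runs along precisely the same lines the paper itself uses elsewhere (compare the proof of Lemma~\ref{TFdontappear}, which also passes through $\mathcal{H}=\R[\U\backslash\G/\U]$ being module-finite over its finitely generated centre $\mathfrak{Z}$). Each step checks out: the Morita identification $\mathfrak{Z}_{\R}(\G)_r\cong Z(A)$ with $A=\End_{\R[\G]}(P_r)$ is legitimate because $P_r$ is a finitely generated (hence compact) projective generator of the direct-factor subcategory; $P_r$ is a retract of $\ind_\U^\G(1)^{\oplus N}$ for a small enough pro-$p$ $\U$, and projectivity of $\ind_\U^\G(1)$ does require $p$ invertible, which the hypothesis on $\R$ supplies; the diagonal copy of $\mathfrak{Z}$ in $M_N(\mathcal{H})$ is central, so it acts on the corner $A=fM_N(\mathcal{H})f$ and maps into $Z(A)$; and noetherianity of $\R$ (hence of $\mathfrak{Z}$) is exactly what lets you pass from ``$A$ is a finitely generated $\mathfrak{Z}$-module'' to ``the $\mathfrak{Z}$-submodule $Z(A)$ is finitely generated,'' and thence to $Z(A)$ being a finitely generated $\R$-algebra. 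You are right to flag that the only substantive input is the black-boxed Hecke-algebra theorem; the rest is a formal progenerator/corner-algebra reduction.
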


And it follows that, in these cases, the~$\R$-block decomposition exists:

\begin{corollary}\label{finitelymanyblocksofdepth}
Let~$\R$ be a Noetherian~$\mathbb{Z}[1/p]$-algebra.  \begin{enumerate}
\item Then the~$\R$-block decomposition exists.  
\item Moreover,~for~$r\in D(\G)$,~$e_r\Rep_{\R}(\G)$ is a finite product of~$\R$-blocks.\end{enumerate}
\end{corollary}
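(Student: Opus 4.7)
The strategy is to combine the depth decomposition of the centre from Lemma \ref{basiccentrelemma} with the noetherian property of the depth-$r$ centres $\mathfrak{Z}_\R(\G)_r$ coming from the preceding theorem. Both parts then reduce to the standard fact that a commutative noetherian ring has only finitely many primitive idempotents, which sum to $1$.

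First I recall that Lemma \ref{basiccentrelemma} gives $\mathfrak{Z}_\R(\G)=\prod_{r\in D(\G)}\mathfrak{Z}_\R(\G)_r$ with depth idempotent $e_r\in\mathfrak{Z}_\R(\G)_r$. By the theorem, each $\mathfrak{Z}_\R(\G)_r$ is a finitely generated $\R$-algebra; since $\R$ is noetherian, so is $\mathfrak{Z}_\R(\G)_r$. The spectrum of a noetherian commutative ring is a noetherian topological space, hence has only finitely many connected components, and these are in natural bijection with the primitive idempotents of the ring (via $e\mapsto\Spec(e\mathfrak{Z}_\R(\G)_r)$). Thus there is a finite set of primitive idempotents $\{e_{r,i}\}_{i\in I_r}$ of $\mathfrak{Z}_\R(\G)_r$ with $e_r=\sum_{i\in I_r} e_{r,i}$.

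For part (2), I observe that an idempotent which is primitive in a direct factor of a commutative ring remains primitive in the whole ring: indeed, if $e_{r,i}=f_1+f_2$ with $f_1,f_2$ orthogonal nonzero idempotents of $\mathfrak{Z}_\R(\G)$, then applying the projection to $\mathfrak{Z}_\R(\G)_r$ (and using $e_r e_{r,i}=e_{r,i}$) shows $e_{r,j}=e_r f_1+e_r f_2$ is a nontrivial decomposition inside $\mathfrak{Z}_\R(\G)_r$, a contradiction. Hence the $e_{r,i}$ are primitive in $\mathfrak{Z}_\R(\G)$, and $e_r\Rep_\R(\G)=\prod_{i\in I_r}e_{r,i}\Rep_\R(\G)$ is a finite product of $\R$-blocks.

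For part (1), the collection $\{e_{r,i}:r\in D(\G),\,i\in I_r\}$ consists of pairwise orthogonal primitive idempotents of $\mathfrak{Z}_\R(\G)$. For any smooth $\R$-representation $\pi$ of $\G$, only finitely many of these act non-trivially (in fact, only the ones with $r$ equal to the depth of $\pi$), so the formal sum $1=\sum_{r,i}e_{r,i}$ induces a well-defined decomposition $\pi=\bigoplus_{r,i}e_{r,i}\pi$ and hence a decomposition of categories $\Rep_\R(\G)=\prod_{r,i}e_{r,i}\Rep_\R(\G)$. This is the $\R$-block decomposition. The only non-routine step is the primitivity transfer in the previous paragraph, but as noted this follows directly from the product structure of $\mathfrak{Z}_\R(\G)$.
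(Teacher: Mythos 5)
Your proof is correct and is essentially the intended deduction: you combine the depth decomposition of the centre from Lemma \ref{basiccentrelemma} with the finite generation of $\mathfrak{Z}_\R(\G)_r$ from the preceding theorem, then invoke the standard fact that a commutative noetherian ring has finitely many primitive idempotents summing to $1$ (via connectedness components of its spectrum). One small remark on the primitivity-transfer step: if $e_{r,i}=f_1+f_2$ with $f_1,f_2$ orthogonal nonzero idempotents of $\mathfrak{Z}_\R(\G)$, then $f_1=f_1 e_{r,i}=f_1 e_r e_{r,i}$, so $f_1$ (and similarly $f_2$) already lies in the factor $\mathfrak{Z}_\R(\G)_r=e_r\mathfrak{Z}_\R(\G)$; hence no actual projection is needed and the decomposition is automatically a decomposition inside $\mathfrak{Z}_\R(\G)_r$, giving the contradiction directly (you also have a stray index $j$ where $i$ is meant). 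Otherwise the argument is complete.
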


%%%%%%%%%%%%%%%%%%%%%%%%%%%%%%%%%%%%%%%%%%%%%
\subsection{Generic irreducibility and parabolic inductions over algebraically closed fields}
Suppose~$\K$ is an algebraically closed field of characteristic~$\ell\neq p$.  For a reductive~$p$-adic group~$\M=\mathbb{M}(\F)$, we let
\[\M^{\circ}=\{g\in \M:\chi(g)\in \mathfrak{o}_\F^\times\text{ for all $\F$-rational characters of~$\mathbb{M}$}\},\]
an open normal subgroup of~$\M$. 
Note that,~$\M/\M^\circ$ is a free abelian group of rank equal to the rank of a maximal split torus in the centre of~$\M$.  Recall a~$\K$-character of~$\M$ is called \emph{unramified} if it is trivial on~$\M^{\circ}$.  A famous result of Bernstein for~$\mathbb{C}$-representations, we have the following generic irreducibility theorem of parabolic inductions:

\begin{theorem}[Generic irreducibility, {\cite[Corollary 1.5]{DHKMfiniteness}}]\label{genirred}
Let~$P$ be a parabolic subgroup of~$\G$ with Levi factor~$\M$, and~$\sigma$ be an irreducible~$\K$-representation of~$\M$.  Then the parabolically induced representation~$i_{\M,\P}^{\G}(\sigma\chi)$ is irreducible for~$\chi$ in a Zariski-dense open subset of the~$\K$-torus of unramified characters of~$\M$.
\end{theorem}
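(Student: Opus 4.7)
The plan is to view $\{i_{\M,\P}^{\G}(\sigma\chi)\}_{\chi}$ as an algebraic family over the $\K$-torus $\Psi_\K(\M)$ of unramified characters of $\M$ -- an algebraic torus because $\M/\M^\circ$ is a finitely generated free abelian group -- and to show that the reducibility locus is a proper Zariski-closed subset. Since unramified twists preserve depth, all the $\sigma\chi$ lie in a single depth-$r$ subcategory of Lemma~\ref{basiccentrelemma}, so there is a finitely generated projective generator whose endomorphism algebra is a finite module over a noetherian centre, by the finiteness theorem of \cite{DHKMfiniteness}. This yields a uniform length bound on the $i_{\M,\P}^{\G}(\sigma\chi)$ and shows that the reducibility condition cuts out a Zariski-closed subscheme of $\Psi_\K(\M)$.

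The next step is to compute $\End_\G(i_{\M,\P}^{\G}(\sigma\chi))$ generically via the geometric lemma of Bernstein--Zelevinsky: the Jacquet module $r_\P^\G i_{\M,\P}^{\G}(\sigma\chi)$ carries a filtration whose graded pieces are indexed by double cosets $w \in W_\M\backslash W(\G)/W_\M$, with the piece for a representative $w$ built, up to further parabolic induction on intersections of Levis, from the twist $\sigma^w \chi^w$. Frobenius reciprocity then provides an embedding
\[ \End_\G\bigl(i_{\M,\P}^{\G}(\sigma\chi)\bigr) \hookrightarrow \bigoplus_{w \in W_\M\backslash W(\G)/W_\M} \Hom_\M\bigl(\sigma^w\chi^w,\, \sigma\chi\bigr). \]
Outside the finite union of proper subtori of $\Psi_\K(\M)$ cut out by the equations $\sigma^w\chi^w \simeq \sigma\chi$ with $w \neq 1$, only the trivial double coset contributes, forcing $\End_\G(i_{\M,\P}^{\G}(\sigma\chi)) = \K$.

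The final and most delicate step, which is the main obstacle in positive characteristic, is to upgrade ``$\End = \K$'' to irreducibility. The field endomorphism algebra, together with finite length, implies that $i_{\M,\P}^{\G}(\sigma\chi)$ has a unique simple subrepresentation and a unique simple quotient (its endomorphism ring is local). On a further Zariski-open set, the standard intertwining operator realises an isomorphism $i_{\M,\P}^{\G}(\sigma\chi) \simeq i_{\M,\ov{\P}}^{\G}(\sigma\chi)$; comparing socles and cosocles on the two sides (they are swapped by the functoriality of the intertwining operator) identifies socle with cosocle inside $i_{\M,\P}^{\G}(\sigma\chi)$. The composition ``$i_{\M,\P}^{\G}(\sigma\chi) \twoheadrightarrow \mathrm{cosocle} = \mathrm{socle} \hookrightarrow i_{\M,\P}^{\G}(\sigma\chi)$'' then lies in $\End = \K$: if the induction were not simple, this composition would be a nonzero non-invertible scalar, a contradiction, so the induction is irreducible on a Zariski-dense open subset. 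In characteristic zero this reduces to Bernstein's classical semisimplicity at generic parameter; in characteristic $\ell$ one must instead rule out nonsplit self-extensions using the finite generation and noetherianity of \cite{DHKMfiniteness}.
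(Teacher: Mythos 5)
The paper does not prove this theorem: it is cited verbatim from \cite[Corollary 1.5]{DHKMfiniteness}, so there is no internal argument to compare against. Taken on its own merits, your sketch has several genuine gaps. In the geometric-lemma step, the graded piece of the filtration on $r^{\G}_{\P}\, i_{\M,\P}^{\G}(\sigma\chi)$ indexed by a double coset $W_\M w W_\M$ is $i_{\M\cap w\M w^{-1}}^{\M}$ applied to a $w$-twist of a proper Jacquet module of $\sigma\chi$; it collapses to the bare twist $\sigma^w\chi^w$ only when $w$ normalises $\M$ (which is why, for supercuspidal $\sigma$, the other pieces vanish and the naive picture is correct). Your display and the claim that ``only the trivial double coset contributes'' are therefore wrong for general $\sigma$ without either a reduction to the supercuspidal case or a separate argument about central characters of the intermediate Levi subgroups, neither of which you supply; and even then a filtration yields a dimension bound, not the direct-sum embedding you wrote. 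Separately, ``$\End=\K$ plus finite length implies unique simple subrepresentation and unique simple quotient (its endomorphism ring is local)'' is a non sequitur: a local endomorphism ring gives indecomposability, not simplicity of socle or cosocle. Simplicity of the socle and cosocle must instead be extracted from the first and second adjunctions combined with the same geometric-lemma count, and that is where the real work lies.

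Your closing scalar argument also fails as stated: the composite $i_{\M,\P}^{\G}(\sigma\chi)\twoheadrightarrow \mathrm{cosoc}\xrightarrow{\ \sim\ }\mathrm{soc}\hookrightarrow i_{\M,\P}^{\G}(\sigma\chi)$ is indeed forced to be a scalar by $\End=\K$, but if the socle lies inside the radical this scalar is zero, and a zero endomorphism produces no contradiction with reducibility. You explicitly acknowledge the obstruction in characteristic $\ell$ and then dispose of it by saying one must ``rule out nonsplit self-extensions using the finite generation and noetherianity of \cite{DHKMfiniteness}'' -- that names the difficulty rather than resolving it, and it is precisely the content of the cited Corollary 1.5. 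The actual proof there works with the family $i_{\M,\P}^{\G}(\sigma\otimes\K[\M/\M^\circ])$ over $\Spec\K[\M/\M^\circ]$, establishes irreducibility at the generic point of the torus using finiteness of the endomorphism algebra as a $\K[\M/\M^\circ]$-module, and then spreads out to a dense open subset; your opening paragraph gestures at each of these ingredients but never assembles them into an argument.
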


For~$\sigma$ an irreducible~$\K$-representation of~$\M$, we consider~$\ind_{\M^{\circ}}^{\M}(\sigma)\simeq \sigma\otimes\ind_{\M^{\circ}}^{\M}(1)\simeq \sigma\otimes\K[\M/\M^{\circ}]$ as a~$\K[\M/\M^{\circ}]$-representation of~$\M$ and as a family of~$\K$-representations of~$\M$: A closed~$\K$-point of~$\K[\M/\M^{\circ}]$ corresponds to a~$\K$-algebra morphism~$\phi:\K[\M/\M^{\circ}]\rightarrow \K$, and hence an unramified character~$\chi:\M\rightarrow \K^\times$, and we have 
\[\ind_{\M^{\circ}}^{\M}(\sigma)\otimes_{\K[\M/\M^{\circ}],\phi}\K\simeq \sigma\otimes\chi,\]
and can consider~$\ind_{\M^{\circ}}^{\M}(\sigma)$ as a family of irreducible~$\K$-representations of~$\M$. Then~$i_{\M,\P}^{\G}(\sigma\otimes\K[\M/\M^{\circ}])$ is a~$\K[\M/\M^{\circ}]$-representation, and~\[i_{\M,\P}^{\G}(\sigma\otimes\K[\M/\M^{\circ}])\otimes_{\K[\M/\M^{\circ}],\phi}\K\simeq i_{\M,\P}^{\G}(\sigma\otimes\chi)\] 
which is irreducible for a Zariski-dense open subset~$\K$-points of~$\K[\M/\M^{\circ}]$ by generic irreducibility. 
\begin{proposition}\label{genirredproposition}
An element of~$\mathfrak{Z}_{\K}(\G)$ acts on~$i_{\M,\P}^{\G}(\sigma\otimes\K[\M/\M^{\circ}])$ by multiplication by an element of~$\K[\M/\M^{\circ}]$.
\end{proposition}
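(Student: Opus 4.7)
The plan is to put commuting $\G$- and $\K[\M/\M^\circ]$-actions on~$\Pi$, observe that~$z$ must centralise both, and then use generic irreducibility (Theorem~\ref{genirred}) together with a matrix argument on a finitely generated free invariant subspace to descend the pointwise scalars to an actual element of~$\K[\M/\M^\circ]$. The right-regular action of~$\K[\M/\M^\circ]$ on~$\ind_{\M^{\circ}}^{\M}\sigma\cong\sigma\otimes\K[\M/\M^\circ]$ commutes with the~$\M$-action, so by functoriality of normalised parabolic induction it yields an embedding~$\K[\M/\M^\circ]\hookrightarrow\End_{\K[\G]}(\Pi)$. The central element~$z$ induces an endomorphism~$z_\Pi\in\End_{\K[\G]}(\Pi)$ which commutes with every~$\G$-equivariant morphism of~$\Pi$, hence in particular with the image of~$\K[\M/\M^\circ]$; the goal becomes to show~$z_\Pi$ lies in that image.

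Pick a compact open subgroup~$J\subseteq\G$ admitting an Iwahori-type decomposition~$J=(J\cap\bar{\U})(J\cap\M)(J\cap\U)$ with respect to~$\P=\M\U$ and its opposite~$\bar{\P}=\M\bar{\U}$, small enough that~$\sigma^{J\cap\M}\neq 0$. For such~$J$, evaluation at the identity gives the standard identification~$(i_{\M,\P}^{\G}\tau)^{J}\cong\tau^{J\cap\M}$ for any smooth~$\M$-representation~$\tau$; applied to~$\tau=\sigma\otimes\K[\M/\M^\circ]$ this yields
\[\Pi^{J}\;\cong\;\sigma^{J\cap\M}\otimes_{\K}\K[\M/\M^\circ],\]
a finitely generated free~$\K[\M/\M^\circ]$-module of rank~$n=\dim_{\K}\sigma^{J\cap\M}$ by admissibility of~$\sigma$. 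In a chosen basis, $z_\Pi|_{\Pi^{J}}$ is represented by some matrix~$A_z\in\Mat_n(\K[\M/\M^\circ])$.

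For each~$\chi$ in the Zariski-dense open subset~$U\subseteq\Spec\K[\M/\M^\circ]$ of Theorem~\ref{genirred}, the specialisation~$\Pi_\chi$ is irreducible, so Schur's lemma provides a scalar~$\lambda(\chi)\in\K$ by which~$z$ acts on~$\Pi_\chi$. Since the pro-order of~$J$ is invertible in~$\K$, $J$-invariants commute with specialisation, and therefore~$A_z(\chi)=\lambda(\chi) I_n$ for every~$\chi\in U$. Now~$\K[\M/\M^\circ]$ is a Laurent polynomial ring over~$\K$, hence a reduced Jacobson ring, so any regular function vanishing on the Zariski-dense set~$U$ is identically zero; consequently the off-diagonal entries of~$A_z$ vanish and all diagonal entries agree with a common element~$a\in\K[\M/\M^\circ]$ satisfying~$a(\chi)=\lambda(\chi)$ on~$U$. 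Thus~$z$ acts on~$\Pi^{J}$ by multiplication by~$a$.

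Finally, for an arbitrary~$v\in\Pi$, pick a compact open subgroup~$J'\subseteq J$ of Iwahori type with~$v\in\Pi^{J'}$; the same argument produces~$a'\in\K[\M/\M^\circ]$ with~$z|_{\Pi^{J'}}=a'\cdot\mathrm{id}$, and restriction to the nonzero subspace~$\Pi^{J}\subseteq\Pi^{J'}$ forces~$a'=a$, so~$zv=av$. The hard part will be arranging the identification~$(i_{\M,\P}^{\G}\tau)^{J}\cong\tau^{J\cap\M}$ on a cofinal family of compact open subgroups, so that a single element~$a\in\K[\M/\M^\circ]$ describes the~$z$-action on every invariant subspace simultaneously; once this is in place, the matrix computation together with Zariski density of~$U$ in the Jacobson ring~$\K[\M/\M^\circ]$ concludes the proof.
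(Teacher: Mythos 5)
Your proof is correct and takes essentially the same approach as the paper: where the paper cites Bernstein's \cite[1.17]{MR771671} for the isomorphism $\End_{\K[\M/\M^{\circ}][\G]}\bigl(i_{\M,\P}^{\G}(\sigma\otimes\K[\M/\M^{\circ}])\bigr)\simeq \K[\M/\M^{\circ}]$ and then simply observes that a central endomorphism commutes with the $\K[\M/\M^{\circ}]$-action and hence lives in that ring, you re-derive the same endomorphism computation directly via Iwahori-factored $J$-invariants, generic irreducibility, Schur's lemma, and Zariski density in the Jacobson ring $\K[\M/\M^\circ]$. The one small detail the paper explicitly flags and you leave implicit is the applicability of Schur's lemma to $\Pi_\chi$ when $\ell\neq 0$, which the paper handles by noting that an unramified twist of $\sigma$ is defined over $\Fl$.
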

\begin{proof}
The proof of \cite[1.17]{MR771671} also applies here, to show that \[\End_{\K[\M/\M^{\circ}][\G]}(i_{\M,\P}^{\G}(\sigma\otimes\K[\M/\M^{\circ}]))\simeq \K[\M/\M^{\circ}]\] as a consequence of generic irreducibility and Schur's lemma (note that, when~$\ell\neq 0$, an unramified twist of~$\sigma$ is defined over~$\Fl$).  Now~$\mathfrak{Z}_{\K}(\G)$ acts on~$i_{\M,\P}^{\G}(\sigma\otimes\K[\M/\M^{\circ}])$ via a central endomorphism in $\End_{\K[\G]}(i_{\M,\P}^{\G}(\sigma\otimes\K[\M/\M^{\circ}]))$, and as this endomorphism is central it commutes with multiplication by elements of~$\K[\M/\M^{\circ}]$ (as they define~$\K[\G]$-endomorphisms) and hence defines an element of~$\End_{\K[\M/\M^{\circ}][\G]}(i_{\M,\P}^{\G}(\sigma\otimes\K[\M/\M^{\circ}]))\simeq \K[\M/\M^{\circ}]$.
\end{proof}

%%%%%%%%%%%%%%%%%%%%%%%%%%%%%
\subsection{The Bernstein centre and finitely generated projectives over an algebraically closed field of characteristic zero}\label{projmoduleschar0}
Suppose for this section that~$\K$ is an algebraically closed field of characteristic zero.  Then the~$\K$-block decomposition and the centre of the category of smooth~$\K$-representations has a relatively simple description, due to Bernstein:  

\begin{theorem}[{Bernstein \cite{MR771671}}]
\begin{enumerate}
\item The~$\K$-block decomposition is given by
 \[\Rep_{\K}(\G)=\prod_{\mathfrak{s}\in\mathfrak{B}_{\K}(\G)}\Rep_{\K}({\mathfrak{s}}),\]
where~$\mathfrak{B}_{\K}(\G)$ denotes the set of inertial classes of supercuspidal supports for~$\G$, and (the representations in)~$\Rep_{\K}({\mathfrak{s}})$ consist of all (smooth)~$\K$-representations all of whose irreducible subquotients have supercuspidal support in~$\mathfrak{s}$.
\item Let~$\mathfrak{s}\in\mathfrak{B}_{\K}(\G)$.  For any choice of representative~$(\M,\sigma)$ of~$\mathfrak{s}$, and any parabolic~$\P$ of~$\G$  with Levi factor~$\M$ the representation~$P_{\P,\sigma}=i_{\M,\P}^{\G}(\ind_{\M^{\circ}}^{\M}(\sigma))$ is a finitely generated projective generator for~$\Rep_{\K}(\G)_{\mathfrak{s}}$.
\item Let~$\mathfrak{s}\in\mathfrak{B}_{\K}(\G)$.  Fixing a finitely generated projective generator~$P_{\P,\sigma}=i_{\M,\P}^{\G}(\ind_{\M^{\circ}}^{\M}(\sigma))$ as above for~$\Rep_{\K}(\mathfrak{s})$, identifies the centre~$\mathfrak{Z}_{\K}(\mathfrak{s})$ of~$\Rep_{\K}({\mathfrak{s}})$ with
\[\K[\M/\M^\circ]^{\H_{\mathfrak{s}}\rtimes \W_{\mathfrak{s}}},\]
where~$\H_{\mathfrak{s}}$ is the finite group of unramified characters of~$\M$ fixing~$\sigma$ up to isomorphism, and~$\W_{\mathfrak{s}}=\{w\in\W:\M^w=\M,~(\M,\sigma^w)\in[\M,\sigma]_{\M}\}$.
\end{enumerate}
\end{theorem}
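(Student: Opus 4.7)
The plan is to deduce all three assertions from Bernstein's \emph{second adjointness theorem} (which asserts that each of $i_{\M,\P}^{\G}$ and $r_{\P,\M}^{\G}$ admits exact adjoints on both sides), the \emph{geometric lemma} (a filtration of $r_{\ov\P,\M}^{\G} \circ i_{\M,\P}^{\G}$ indexed by $\P\backslash \G/\ov\P$), and \emph{uniqueness of supercuspidal support} for irreducibles over $\K$. Proposition~\ref{genirredproposition} will then pin down the identification of the centre.

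For (1), I would first show that for each irreducible $\pi$ the supercuspidal support $\mathrm{sc}(\pi)$ is well-defined up to $\G$-conjugacy. This follows from Frobenius reciprocity between parabolic induction and Jacquet restriction, combined with the fact that supercuspidal representations admit no proper parabolic subrepresentations. The subcategories $\Rep_\K(\mathfrak{s})$ indexed by inertial classes then form a candidate for the product decomposition; the main content is to establish the Ext-orthogonality $\mathrm{Ext}^i_\G(\pi_1,\pi_2) = 0$ for $\pi_1,\pi_2$ irreducible with supports in different inertial classes, which reduces via second adjointness and Frobenius reciprocity to the orthogonality of the cuspidal supports themselves.

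For (2), write $P = P_{\P,\sigma}$. Finite generation of $P$ follows from finite generation of $\ind_{\M^{\circ}}^{\M}(\sigma)$ over $\M$ (as $\M/\M^{\circ}$ is a finitely generated abelian group) together with the fact that $i_{\M,\P}^{\G}$ preserves finite generation. Projectivity follows because $\sigma$ is projective-injective in $\Rep_\K(\M)$ by classical projectivity of supercuspidals in characteristic zero, and both $\ind_{\M^{\circ}}^{\M}$ and $i_{\M,\P}^{\G}$ preserve projectivity as left adjoints of exact functors (the latter via second adjointness). That $P$ generates $\Rep_\K(\mathfrak{s})$ is obtained by computing, for irreducible $\pi \in \Rep_\K(\mathfrak{s})$,
\[\Hom_\G(P,\pi) \;=\; \Hom_\M\!\bigl(\ind_{\M^{\circ}}^{\M}(\sigma),\, r^\G_{\P,\M}(\pi)\bigr) \;=\; \Hom_{\M^{\circ}}\!\bigl(\sigma,\, r^\G_{\P,\M}(\pi)|_{\M^{\circ}}\bigr),\]
which is nonzero because the cuspidal support hypothesis forces $r^\G_{\P,\M}(\pi)$ to contain an unramified twist $\sigma\chi$ of $\sigma$ as a subquotient, and $\sigma\chi|_{\M^{\circ}} \cong \sigma|_{\M^{\circ}}$.

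For (3), I would compute $\End_\G(P)$ directly. Second adjointness presents it as
\[\End_\G(P) \;=\; \Hom_\M\!\bigl(\ind_{\M^{\circ}}^{\M}(\sigma),\, r_{\ov\P,\M}^{\G}\, i_{\M,\P}^{\G}\, \ind_{\M^{\circ}}^{\M}(\sigma)\bigr),\]
after which the geometric lemma filters $r_{\ov\P,\M}^{\G}\, i_{\M,\P}^{\G}$ by double cosets $w \in \P\backslash\G/\ov\P$. Cuspidality of $\ind_{\M^{\circ}}^{\M}(\sigma)$ kills all terms with $w\M w^{-1}\neq \M$, and the supercuspidal support constraint further restricts to those $w$ with $(\M,\sigma^w) \in [\M,\sigma]_\M$, i.e.\ $w \in \W_\mathfrak{s}$. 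Each surviving term contributes a copy of $\End_\M(\ind_{\M^{\circ}}^{\M}(\sigma)) \cong \K[\M/\M^{\circ}]$, and the composition structure assembles them into a crossed-product algebra in which $\H_\mathfrak{s} \rtimes \W_\mathfrak{s}$ acts on $\K[\M/\M^{\circ}]$ by twists and translations. The centre of this crossed product is the invariant ring $\K[\M/\M^{\circ}]^{\H_\mathfrak{s} \rtimes \W_\mathfrak{s}}$. The main obstacle I anticipate is the bookkeeping of the geometric lemma and verifying that the resulting $\W_\mathfrak{s}$-action is by a genuine group action (rather than a nontrivially twisted one) so that the invariants appear as claimed; here Proposition~\ref{genirredproposition} provides the a priori fact that $\mathfrak{Z}_\K(\mathfrak{s})$ acts on $P$ through a subring of $\K[\M/\M^{\circ}]$, reducing the remaining task to identifying precisely which elements of $\K[\M/\M^{\circ}]$ extend to central endomorphisms of $P$.
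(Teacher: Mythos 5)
The paper does not prove this theorem; it is quoted verbatim with attribution to Bernstein \cite{MR771671} and used as a black box in \S\ref{projmoduleschar0}. So there is no proof in the paper to compare against, and I will assess your plan on its own terms.

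Your route is essentially Bernstein's own (second adjointness, the geometric lemma, cuspidal--support orthogonality, and an endomorphism-algebra computation), and the sketch of (3) is sound in outline. But two steps are stated in a way that would not survive being fleshed out. First, in (1), Ext-orthogonality of irreducibles with distinct inertial supports does not by itself yield a decomposition of the full abelian category $\Rep_\K(\G)$: one must show that \emph{every} smooth representation splits as a direct sum of its $\mathfrak{s}$-components, which is the hard part of Bernstein's decomposition theorem. The standard arguments establish this either by first constructing a progenerator for each candidate block and showing these progenerators are jointly faithful and pairwise orthogonal (so (2) must logically \emph{precede} (1), not follow it), or by producing the relevant central idempotents directly; mere vanishing of $\mathrm{Ext}^1$ between simples gives nothing in a category that is far from having enough finite-length objects. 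Second, in (2) you invoke ``classical projectivity of supercuspidals in characteristic zero.'' A supercuspidal $\sigma$ of $\M$ is \emph{not} projective in $\Rep_\K(\M)$ when the centre of $\M$ is non-compact (e.g.\ $\GL_1$: a character is not a projective smooth representation of $\F^\times$). The correct statement is that $\sigma|_{\M^\circ}$ is projective over $\M^\circ$ --- because $\M^\circ$ has compact centre, so $\sigma|_{\M^\circ}$ is compactly induced from a compact open subgroup --- and that $\ind_{\M^\circ}^{\M}$ and $i_{\M,\P}^{\G}$ (the latter via second adjointness) preserve projectivity. With these two corrections the plan aligns with the standard proof; otherwise the argument for (1) has a genuine gap and the justification of projectivity in (2) rests on a false lemma.
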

 In particular, Bernstein's description shows that the centre~$\mathfrak{Z}_{\K}(\mathfrak{s})$ is a (commutative) Noetherian domain.    
 Let~$\mathfrak{s}\in\mathfrak{B}_{\K}(\G)$.  We write~$\eta=\eta_{\mathfrak{s}}$ for the unique generic point of~$\Spec(\mathfrak{Z}_{\K}(\mathfrak{s}))$, and given a representation~$\pi$ in~$\Rep_{\K}(\mathfrak{s})$, we write~$\pi_{\eta}$ for its localization at~$\eta$,~i.e.,~$\pi_{\eta}=\pi\otimes \K(\eta)$, where~$\K(\eta)$ is the field of fractions of~$\mathfrak{Z}_{\K}(\mathfrak{s})$.   To the inertial class~$\mathfrak{s}=[\M,\sigma]$, we can associate the class~$\mathfrak{s}_\M=[\M,\sigma]\in\mathfrak{B}_{\K}(\M)$.  We write~$\widetilde{\eta}$ for the unique generic point of~$\Spec(\mathfrak{Z}_{\K}(\mathfrak{s}_\M))$, and given a representation~$\pi$ in~$\Rep_{\K}(\mathfrak{s}_\M)$, we write~$\pi_{\widetilde{\eta}}$ for its localization at~$\widetilde{\eta}$, i.e.,~$\pi_{\widetilde{\eta}}=\pi\otimes \K(\widetilde{\eta})$, where~$\K(\widetilde{\eta})$ is the field of fractions of~$\mathfrak{Z}_{\K}(\mathfrak{s}_\M)$.

\begin{lemma}\label{mainlemmaKprojs}
Let~$\mathfrak{s}\in\mathfrak{B}_{\K}(\G)$, and~$P$ be a (non-zero) finitely generated projective representation in~$\Rep_{\K}(\mathfrak{s})$.
\begin{enumerate}
\item The modules~$P$ and~$\End_{\K[\G]}(P)$ are~$\mathfrak{Z}_{\K}(\mathfrak{s})$-torsion free.
\item We have~$P_{\eta}\otimes_{\K(\eta)}\K(\widetilde{\eta})\simeq \pi^{\oplus r}$ for some irreducible~$\K(\widetilde{\eta})[\G]$-representation~$\pi$.
\item\label{Nocentralidemps} The endomorphism algebra~$\End_{\K[\G]}(P)$ has no non-trivial central idempotents.
\item \label{PP'hom} If~$P'$ is a (non-zero) finitely generated projective representation in~$\Rep_{\K}(\mathfrak{s})$, then\[\Hom_{\K[\G]}(P,P')\neq 0.\]
\end{enumerate}
\end{lemma}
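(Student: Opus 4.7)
The plan is to reduce everything to the universal progenerator $P_{\P,\sigma}=i_{\M,\P}^{\G}(\ind_{\M^\circ}^{\M}(\sigma))$: by Bernstein's theorem recalled above, $P_{\P,\sigma}$ is a finitely generated projective generator of $\Rep_{\K}(\mathfrak{s})$, so any finitely generated projective $P$ (and likewise $P'$) is a direct summand of $P_{\P,\sigma}^{\oplus n}$ for some $n$. Through this reduction, (i) and the isotypic structure required for (ii)--(iv) follow from corresponding properties of $P_{\P,\sigma}$.

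For (i), $\ind_{\M^\circ}^{\M}(\sigma)\simeq\sigma\otimes_{\K}\K[\M/\M^\circ]$ carries a free $\K[\M/\M^\circ]$-module structure, and exactness of $i_{\M,\P}^{\G}$ transfers torsion-freeness over $\K[\M/\M^\circ]$ to $P_{\P,\sigma}$. Since $\mathfrak{Z}_{\K}(\mathfrak{s})=\K[\M/\M^\circ]^{\H_{\mathfrak{s}}\rtimes\W_{\mathfrak{s}}}$ is a subring of the domain $\K[\M/\M^\circ]$, this makes $P_{\P,\sigma}$ (and hence the summand $P$) $\mathfrak{Z}_{\K}(\mathfrak{s})$-torsion-free.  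Torsion-freeness of $\End_{\K[\G]}(P)$ is then immediate: if $z\in\mathfrak{Z}_{\K}(\mathfrak{s})\setminus\{0\}$ acts injectively on $P$, then $zf=0$ forces $f(v)=0$ for all $v\in P$, so $f=0$.

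For (ii), the ``universal'' parabolic induction $P_{\P,\sigma}\otimes_{\K[\M/\M^\circ]}\K(\overline{\eta})=i_{\M,\P}^{\G}(\sigma\otimes\K(\overline{\eta}))$ (writing $\K(\overline{\eta})=\mathrm{Frac}\,\K[\M/\M^\circ]$) is an irreducible $\K(\overline{\eta})[\G]$-representation: a proper nonzero sub would spread out to a proper nonzero subrepresentation over a Zariski-open locus of $\K$-points of $\Spec\K[\M/\M^\circ]$, contradicting generic irreducibility (Theorem \ref{genirred}) at those points. A Galois-descent argument using the field chain $\K(\eta)\subseteq\K(\widetilde{\eta})\subseteq\K(\overline{\eta})$, together with the fact that $\H_{\mathfrak{s}}$ preserves the isomorphism class of the universal induction (by the very definition of $\H_{\mathfrak{s}}$ as the stabilizer of $\sigma$), then shows that $(P_{\P,\sigma})_{\eta}\otimes_{\K(\eta)}\K(\widetilde{\eta})$ is isotypic, of the form $\pi^{\oplus s}$ for some irreducible $\K(\widetilde{\eta})[\G]$-module $\pi$.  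Since $P_{\eta}\otimes_{\K(\eta)}\K(\widetilde{\eta})$ is a direct summand of $\pi^{\oplus ns}$, it has the claimed form $\pi^{\oplus r}$.

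For (iii) and (iv), Bernstein's Schur-lemma argument (recalled in the proof of Proposition \ref{genirredproposition}) gives $\End_{\K(\widetilde{\eta})[\G]}(\pi)=\K(\widetilde{\eta})$, so $\End_{\K(\widetilde{\eta})[\G]}(\pi^{\oplus r})=\Mat_{r}(\K(\widetilde{\eta}))$ has centre $\K(\widetilde{\eta})$ containing only the trivial idempotents. Combining Lemma \ref{Homsandscalarextension} with the torsion-freeness of $\End_{\K[\G]}(P)$ from (i) and flatness of localization, the natural map $\End_{\K[\G]}(P)\hookrightarrow\End_{\K(\widetilde{\eta})[\G]}(\pi^{\oplus r})$ is an injective ring homomorphism, so any central idempotent on the left maps to $0$ or $1$ and is hence itself $0$ or $1$, proving (iii). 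For (iv), since $r,r'\ge1$ we have $\Hom_{\K(\widetilde{\eta})[\G]}(\pi^{\oplus r},\pi^{\oplus r'})=\Mat_{r',r}(\K(\widetilde{\eta}))\ne0$, and running the same base-change argument in reverse (using flatness of $\K(\eta)\to\K(\widetilde{\eta})$ and then of $\mathfrak{Z}_{\K}(\mathfrak{s})\to\K(\eta)$) yields $\Hom_{\K[\G]}(P,P')\ne0$. The main obstacle I expect is part (ii), specifically making precise both the transfer of generic irreducibility from $\K$-points to the generic fibre and the Galois descent relating the three fields $\K(\eta)\subseteq\K(\widetilde{\eta})\subseteq\K(\overline{\eta})$; once (ii) is in hand, (iii) and (iv) follow from torsion-freeness and routine flat base change.
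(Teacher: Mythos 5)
Your approach matches the paper's: reduce to the Bernstein progenerator $P_{\P,\sigma}$, use exactness of parabolic induction for torsion-freeness, pass to the generic fibre, and invoke generic irreducibility and (a suitable form of) Schur's lemma. The one place where you diverge — and where you rightly anticipated the difficulty — is part (ii): the paper never invokes Galois descent along $\H_{\mathfrak{s}}$ from $\K(\overline{\eta})$. Instead it computes directly over $\K(\widetilde{\eta})$: unwinding $(P_{\P,\sigma})_\eta\otimes_{\K(\eta)}\K(\widetilde{\eta})$ using $\mathfrak{Z}_{\K}(\mathfrak{s}_\M)\otimes_{\mathfrak{Z}_{\K}(\mathfrak{s})}\K(\eta)=\K(\widetilde{\eta})$ gives $i_{\M,\P}^{\G}(\ind_{\M^{\circ}}^{\M}(\sigma)_{\widetilde{\eta}})\otimes_{\K(\eta)}\K(\widetilde{\eta})$, and since $\K(\widetilde{\eta})/\K(\eta)$ is Galois of degree $|\W_{\mathfrak{s}}|$ the inner tensor $\K(\widetilde{\eta})\otimes_{\K(\eta)}\K(\widetilde{\eta})\simeq\K(\widetilde{\eta})^{|\W_{\mathfrak{s}}|}$ already produces the isotypic decomposition $\pi^{\oplus|\W_{\mathfrak{s}}|}$, with $\pi=i_{\M,\P}^{\G}(\ind_{\M^{\circ}}^{\M}(\sigma)_{\widetilde{\eta}})$ irreducible by generic irreducibility; the subgroup $\H_{\mathfrak{s}}$ never enters. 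Two small corrections for (iii) and (iv): over the non-algebraically-closed field $\K(\widetilde{\eta})$, Schur's lemma only gives that $\End_{\K(\widetilde{\eta})[\G]}(\pi)$ is a division algebra, not necessarily the field itself — the paper accordingly only asserts the localized endomorphism algebra is a central simple algebra, which is all you need (trivial central idempotents, nonzero Hom between nonzero modules). And for the injection argument in (iii), be careful: a central element of a subring need not be central in an overring; what makes your argument work is that the map is the base-change $\End_{\K[\G]}(P)\to\End_{\K[\G]}(P)\otimes_{\mathfrak{Z}_{\K}(\mathfrak{s})}\K(\widetilde{\eta})$ from Lemma~\ref{Homsandscalarextension}, which sends centre to centre.
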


\begin{proof}
\begin{enumerate}
\item The representation~$P_{\P,\sigma}=i_{\M,\P}^{\G}(\ind_{\M^{\circ}}^{\M}(\sigma))$ is~$\mathfrak{Z}_{\K}(\mathfrak{s})$-torsion free, because~$\ind_{\M^{\circ}}^{\M}(\sigma)$ is a torsion free~$\mathfrak{Z}_{\K}(\mathfrak{s}_\M)$-module and parabolic induction is exact.  Hence~$P$ is~$\mathfrak{Z}_{\K}(\mathfrak{s})$-torsion free, as it is a summand of a direct sum of copies of~$P_{\P,\sigma}$.
\item As localization is exact~$P_{\eta}\hookrightarrow ({P_{\P,\sigma}}^{\oplus m})_{\eta}$, and it suffices to prove the statement for~$P_{\P,\sigma}$.  We have
\begin{align*}
(P_{\P,\sigma})_{\eta}\otimes_{\K(\eta)}\K(\widetilde{\eta})&\simeq i_{\M,\P}^{\G}(\ind_{\M^{\circ}}^{\M}(\sigma))\otimes_{\mathfrak{Z}_{\K}(\mathfrak{s})}\K(\widetilde{\eta})\\
&\simeq i_{\M,\P}^{\G}(\ind_{\M^{\circ}}^{\M}(\sigma)_{\eta}\otimes_{\K(\eta)}\K(\widetilde{\eta}))\\
&\simeq i_{\M,\P}^{\G}(\ind_{\M^{\circ}}^{\M}(\sigma)_{\widetilde{\eta}})\otimes_{\K(\eta)}\K(\widetilde{\eta})
\end{align*}
as~$\mathfrak{Z}_{\K}(\mathfrak{s}_\M)\otimes_{\mathfrak{Z}_{\K}(\mathfrak{s})}\K(\eta)=\K(\widetilde{\eta})$.  Now~$ i_{\M,\P}^{\G}(\ind_{\M^{\circ}}^{\M}(\sigma)_{\widetilde{\eta}})$ is absolutely irreducible as a~$\K(\widetilde{\eta})[\G]$-module by generic irreducibility.  Moreover,
\[i_{\M,\P}^{\G}(\ind_{\M^{\circ}}^{\M}(\sigma)_{\widetilde{\eta}})\otimes_{\K(\eta)}\K(\widetilde{\eta})\simeq i_{\M,\P}^{\G}(\ind_{\M^{\circ}}^{\M}(\sigma)_{\widetilde{\eta}})\otimes_{\K(\widetilde{\eta})}(\K(\widetilde{\eta})\otimes_{\K(\eta)}\K(\widetilde{\eta})).\]
And as $\K(\widetilde{\eta})/ \K(\eta)$ is a finite Galois extension of degree~$d=|\W_{\mathfrak{s}}|$, we have~$\K(\widetilde{\eta})\otimes_{\K(\eta)}\K(\widetilde{\eta})\simeq\K(\widetilde{\eta})^{d}$ and the result follows.
\item By torsion freeness,~$\End_{\K[\G]}(P)$ injects into its localization at the generic point of $\mathrm{Spec}(\mathfrak{Z}_{\K}(\mathfrak{s}))$, which is a central simple algebra by the last part, and thus has no nontrivial central idempotents.
\item Take~$m$ such that~$P$ and~$P'$ are summands of~$(P_{\P,\sigma})^{\oplus m}$, and consider these modules as modules over the Hecke algebra~$\End_{\K[\G]}((P_{\P,\sigma})^{\oplus m})$.  After localizing at the generic point of~$\mathfrak{Z}_{\K}(\mathfrak{s})$ this latter algebra is a central simple algebra, and hence has a single indecomposable module; $P,P'$, (and~$P_{\P,\sigma}$) are simply direct sums of copies of this indecomposable. The result is thus immediate.
\end{enumerate}
\end{proof}

%%%%%%%%%%%%%%%%%%%%%%%%%%%%%
\subsection{A lemma of Vign\'eras}
Let~$\H$ be an arbitrary locally compact totally disconnected group.  We will use the following lemma of \cite{Vig96}:

\begin{lemma}[{\cite[I 8.1]{Vig96}}]\label{Lemma1}
Suppose~$\R$ is an algebraically closed field. Let $\K_i,\K'$ be compact mod-centre open subgroups of $\H$ for $i$ in some finite set $\I$, let $\pi$ be a smooth $\R$-representation of $\H$.  Suppose that $(\tau_i,\cW_i)$ are submodules of $\pi\vert_{\K_i}$, that $\tau'$ is an irreducible subquotient of $\pi\vert_{\K'}$, and that $\pi$ is generated by $\bigoplus_{i\in I}\cW_i$.  Then there exist $i\in \I$, $h\in \H$ such that~$\tau'$ is a subquotient of $\Ind_{\K'\cap \presuper{h}\K_i}^{\K'}\presuper{h}\tau_i$.
\end{lemma}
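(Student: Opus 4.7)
The plan is to exploit the generation hypothesis to exhibit $\tau'$ as an irreducible subquotient of a \emph{finite} direct sum of induced modules of the claimed type, and then invoke a general module-theoretic fact to land in a single summand. Since $\pi$ is generated over $\H$ by $\bigoplus_{i\in\I}\cW_i$, every element $v\in\pi$ admits a finite expression $v=\sum_{k=1}^n c_k h_k w_k$ with $c_k\in\R$, $h_k\in\H$ and $w_k\in \cW_{i_k}$ for some $i_k\in\I$. For each pair $(i,h)$, the subspace $h\cdot \cW_i\subset\pi$ is stable under the conjugate open subgroup $\presuper{h}\K_i:=h\K_i h^{-1}$, and hence under $\K'\cap\presuper{h}\K_i$; as a representation of $\presuper{h}\K_i$ it is isomorphic to $\presuper{h}\tau_i$. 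The inclusion $h\cdot\cW_i\hookrightarrow \pi\vert_{\K'\cap\presuper{h}\K_i}$ then produces, by the universal property of (smooth) induction from the open subgroup $\K'\cap\presuper{h}\K_i$ of $\K'$, a canonical $\K'$-equivariant morphism $\Ind_{\K'\cap\presuper{h}\K_i}^{\K'}\presuper{h}\tau_i\to \pi\vert_{\K'}$ whose image is the $\K'$-submodule $\K'\cdot(h\cdot\cW_i)$ of $\pi$.

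Next, writing $\tau'\cong B/A$ for some $\K'$-submodules $A\subsetneq B\subseteq\pi$, pick $v\in B$ whose image $\bar v \in B/A$ is non-zero. Since $\tau'$ is irreducible, $\K'\cdot\bar v=\tau'$, so $\tau'$ is a quotient of the $\K'$-submodule $\K'\cdot v$ of $\pi$. Expressing $v=\sum_{k=1}^n c_k h_k w_k$ as above (this finiteness is the essential use of the generation hypothesis), one has
\[\K'\cdot v\ \subseteq\ \sum_{k=1}^n \K'\cdot(h_k\cdot\cW_{i_k}),\]
and hence $\tau'$ is an irreducible subquotient of the finite direct sum $\bigoplus_{k=1}^n \Ind_{\K'\cap\presuper{h_k}\K_{i_k}}^{\K'}\presuper{h_k}\tau_{i_k}$.

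To conclude I would invoke the standard fact that any irreducible subquotient $S$ of a finite direct sum $M_1\oplus\cdots\oplus M_n$ of $\K'$-modules is an irreducible subquotient of some $M_j$. By induction this reduces to $n=2$: given $A'\subsetneq B'\subseteq M_1\oplus M_2$ with $B'/A'\cong S$, consider the projection $p_1$ to $M_1$; either $p_1$ induces a non-zero (hence, by irreducibility of $S$, an isomorphism) surjection $B'/A'\twoheadrightarrow p_1(B')/p_1(A')$, exhibiting $S$ as a subquotient of $M_1$, or $p_1(B')=p_1(A')$, forcing $B'\subseteq A'+M_2$ and $S\cong (B'\cap M_2)/(A'\cap M_2)$, exhibiting $S$ as a subquotient of $M_2$. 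Applied to our situation this yields indices $i=i_k$ and $h=h_k$ for which $\tau'$ is a subquotient of $\Ind_{\K'\cap\presuper{h}\K_i}^{\K'}\presuper{h}\tau_i$, as claimed. The only even slightly delicate point is to set up the Frobenius reciprocity in the correct direction for these compact-mod-centre open subgroups (where smooth and compact induction agree modulo the central character); the rest is the generation-to-finite-sum reduction and a Jordan--H\"older style argument.
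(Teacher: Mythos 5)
Your proof is correct, and it reconstructs Vign\'eras's original argument \cite[I 8.1]{Vig96}, which is exactly what the paper delegates to by citing it and noting only that the single-$\K_i$ case generalizes verbatim to a finite collection; your observation that generation gives a \emph{finite} expression $v=\sum_k c_k h_k w_k$, hence a finite direct sum and hence a Jordan--H\"older descent into a single summand, is precisely this point. One detail worth tidying: the universal property that yields a $\K'$-map \emph{out} of the induced module is Frobenius reciprocity for \emph{compact} induction $\ind_{\K'\cap\presuper{h}\K_i}^{\K'}$ (the left adjoint of restriction from an open subgroup), not smooth induction $\Ind$, which is the right adjoint and produces maps \emph{into} it. This does not affect the conclusion --- since $\ind$ is naturally a $\K'$-submodule of $\Ind$, a subquotient of the compact induction is a fortiori a subquotient of the smooth induction asserted in the statement --- but phrasing it as smooth induction and then waving at the compact-mod-centre case is not quite right as written, since $\K'\cap\presuper{h}\K_i$ need not have finite index in $\K'$ in general (e.g.\ $\K'=\F^\times\GL_n(\mathfrak{o}_\F)$ and $\presuper{h}\K_i=\GL_n(\mathfrak{o}_\F)$), so $\ind$ and $\Ind$ may genuinely differ. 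The cleaner route is simply to run the argument with $\ind$ throughout and then use the inclusion $\ind\hookrightarrow\Ind$ at the end.
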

Note that the proof of ibid.~is written for a single $\K_i$, but the proof works just as well for an arbitrary collection of $\K_i$.  In particular, if the pro-order of the subgroup $\K'$ is invertible in $\R$, then its smooth representations are semisimple and we deduce:
\begin{corollary}\label{CorollarytoVigneras}
Suppose~$\R$ is an algebraically closed field.  Under the hypotheses of Lemma \ref{Lemma1}, suppose that the pro-order of $\K'$ is invertible in $\R$.  Then, for the same $i\in \I$, we have $\I_\H(\tau',\tau_i) \neq 0$.
\end{corollary}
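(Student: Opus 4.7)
The plan is to invoke Lemma \ref{Lemma1} directly and then upgrade the subquotient statement to a genuine morphism by exploiting semisimplicity. First I would apply Lemma \ref{Lemma1} to produce $i\in\I$ and $h\in\H$ such that $\tau'$ is a subquotient of the induced representation $\Ind_{\K'\cap\presuper{h}\K_i}^{\K'}\presuper{h}\tau_i$.

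Next I would observe that since the pro-order of $\K'$ is invertible in the algebraically closed field $\R$, every smooth $\R$-representation of $\K'$ is semisimple (this is the standard Maschke-type statement in the setting of \cite[I 4.6]{Vig96}). In particular, being a subquotient of the induced representation is equivalent to being a subrepresentation (and also a quotient). Hence
\[
\Hom_{\R[\K']}\bigl(\tau',\Ind_{\K'\cap\presuper{h}\K_i}^{\K'}\presuper{h}\tau_i\bigr)\neq 0.
\]

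Frobenius reciprocity (for induction from an open subgroup) then yields
\[
\Hom_{\R[\K'\cap\presuper{h}\K_i]}\bigl(\tau',\presuper{h}\tau_i\bigr)\neq 0,
\]
which by definition of the intertwining witnesses $h\in\I_\H(\tau',\tau_i)$, and in particular $\I_\H(\tau',\tau_i)\neq 0$, for the very same index $i\in\I$ produced by Lemma \ref{Lemma1}.

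The entire argument is essentially formal once the lemma is in hand; the only place where care is needed is verifying that the semisimplicity hypothesis on $\Rep_\R(\K')$ really suffices to upgrade ``subquotient'' to ``subrepresentation'', and this is immediate from the invertibility of the pro-order of $\K'$ in $\R$ together with $\R$ being a field. I do not anticipate any genuine obstacle.
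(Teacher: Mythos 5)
Your argument is correct and is essentially the proof the paper has in mind: the paper simply remarks, immediately before stating the corollary, that invertibility of the pro-order of $\K'$ forces smooth $\R$-representations of $\K'$ to be semisimple, ``and we deduce'' the corollary. You spell out that deduction explicitly — semisimplicity upgrades the subquotient furnished by Lemma~\ref{Lemma1} to a subrepresentation, and Frobenius reciprocity then produces the nonzero Hom-space over $\K'\cap\presuper{h}\K_i$ witnessing $h\in\I_\H(\tau',\tau_i)$ for the same index $i$ — which is exactly the intended reasoning.
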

In particular, we will use Corollary \ref{CorollarytoVigneras} when~$\K'$ is a pro-$p$ group, which is another reason why we invert~$p$ in our coefficient rings.

%%%%%%%%%%%%%%%%%%%%%%%%%%%%%
\subsection{Parahoric subgroups in reductive $p$-adic groups}\label{parahorics1}
Let~$\mathcal{F}$ be a facet of the extended Bruhat-Tits building of~$\G$, and~$\G_\mathcal{F}^+$ denote the compact open subgroup of~$\G$ which is the pointwise fixator of $\mathcal{F}$.  It has pro-$p$ unipotent radical~$\G_\mathcal{F}^1$ with 
\[1\rightarrow \G_\mathcal{F}^1\rightarrow \G_\mathcal{F}^+\rightarrow \Gf_\mathcal{F}^+\rightarrow 1\]
with~$\Gf_\mathcal{F}^+$ the~$\mathbb{F}_q$-points of an algebraic group~$\mathcal{G}^+_{\mathcal{F}}$ defined over~$\mathbb{F}_q$.  Letting~$\Gf_{\mathcal{F}}$ denote the~$\mathbb{F}_q$-points of the connected component of the identity of~$\mathcal{G}^+_{\mathcal{F}}$, the \emph{parahoric subgroup}~$\G_{\mathcal{F}}$ associated to~$\mathcal{F}$ is the preimage of~$\Gf_{\mathcal{F}}$ in~$\G_{\mathcal{F}}$.  The quotient~$\G_{\mathcal{F}}^+/\G_{\mathcal{F}}\simeq \Gf_{\mathcal{F}}^+/\Gf_{\mathcal{F}}$ is a finite abelian group.   For~$x$ a point in the extended Bruhat--Tits building of~$\G$, we use analogous notation: $\G_x^+$ denotes the pointwise fixator of~$x$, $\G_x$ denotes the parahoric subgroup associated to~$x$,~$\G_x^1$ denotes the pro-$p$ unipotent radical of~$\G_x^+$, $\Gf_x^+=\G_x^+/\G_x^1$ and~$\Gf_x=\G_x/\G_x^1$.  

Note that, in some of our references the reduced Bruhat-Tits building of~$\G$ is favoured instead of the extended building.  In this case, following Bruhat--Tits as in \cite{KP},~$\G^{\circ}$ is denoted~$\mathbb{G}(\F)^1$, and if for~$x$ in the extended building we let~$\overline{x}$ denote its image in the reduced building, then we have
\[\G_x^+=\mathbb{G}(\F)_{\overline{x}}^1,\quad \G_x=\mathbb{G}(\F)_{\overline{x}}^0.\]
In particular, we deduce from \cite[Lemma 2.2.16, Theorem 4.2.17]{KP} that if~$x$ is a vertex (by vertex, we mean a point in the extended building whose image in the reduced building is a vertex), then~$\N_{\G^{\circ}}(\G_x^+)=\G_x^+$.

The parahoric subgroups containined in~$\G_{\mathcal{F}}$ are in bijection with the parabolic subgroups of~$\Gf_{\mathcal{F}}$ (cf.~\cite[Proposition 5.1.32]{BT2}) : we write~$\G_{\mathcal{F},\Q}$ for the parahoric subgroup which is the inverse image of the parabolic subgroup~$\Q$ of~$\Gf_{\mathcal{F}}$ under the map~$\G_{\mathcal{F}}\rightarrow\Gf_{\mathcal{F}}$.  There is thus a facet~$\mathcal{F}'$ such that~$\G_{\mathcal{F}'}=\G_{\mathcal{F},\Q}$ and~$\Q$ has a Levi decomposition~$\Q=\Gf_{\mathcal{F}'}\ltimes \U$.  

The image~$\Q^+$ of~$\G_{\mathcal{F}'}^+$ in~$\Gf_{\mathcal{F}}^+$ (contains and) normalizes~$\U$ which is the image of~$\G_{\mathcal{F}'}^1$, and~$\Q^+$ has a ``Levi decomposition''~$\Q^+=\Gf_{\mathcal{F}'}^+\ltimes \U$.  We will need a supercuspidal support map on these potentially disconnected reductive finite groups:

\begin{lemma}[Existence of ``supercuspidal support'']\label{fgLemmascsupport}
Let~$\pi^+$ be an irreducible representation of~$\Gf_{\mathcal{F}}^+$ over a sufficiently large field.  Then there exists a facet~$\mathcal{F}'$ corresponding to a parabolic subgroup~$\Q$ of~$\Gf_{\mathcal{F}}$, and an irreducible representation~$\tau^+$ of~$\Gf_{\mathcal{F}'}^+$ such that, 
\begin{enumerate}
\item $\tau^+$ has supercuspidal restriction to~$\Gf_{\mathcal{F}'}$ (i.e.,~the projective cover of~$\tau^+$ is cuspidal);
\item $\pi^+$ is a subquotient of~$\ind_{\Q^+}^{\Gf_{\mathcal{F}}^+}(\tau^+)$.
\end{enumerate} 
\end{lemma}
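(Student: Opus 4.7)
The plan is to reduce to the known existence of supercuspidal support for the connected reductive finite group~$\M_{\mathcal{F}}$ and then bootstrap up to the potentially disconnected group~$\M_{\mathcal{F}}^+$ using only exactness and transitivity of (parabolic) induction together with Clifford's theorem.  We phrase everything in terms of composition factors so that we do not need the index $[\M_{\mathcal{F}}^+:\M_{\mathcal{F}}]$ to be invertible in the coefficient field; this is the one point that requires a little care.

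First, choose an irreducible $\M_{\mathcal{F}}$-submodule~$\pi$ of~$\pi^+|_{\M_{\mathcal{F}}}$; such a~$\pi$ exists by finite-dimensionality.  Frobenius reciprocity turns this inclusion into a nonzero---hence, by irreducibility of~$\pi^+$, surjective---map $\ind_{\M_{\mathcal{F}}}^{\M_{\mathcal{F}}^+}(\pi)\twoheadrightarrow\pi^+$, so $\pi^+$ is a composition factor of $\ind_{\M_{\mathcal{F}}}^{\M_{\mathcal{F}}^+}(\pi)$.  Next, I would invoke the existence of supercuspidal support for the connected reductive finite group~$\M_{\mathcal{F}}$ over a sufficiently large field, producing a parabolic $\Q=\M_{\mathcal{F}'}\ltimes\U$ of~$\M_{\mathcal{F}}$---with~$\mathcal{F}'$ the facet corresponding to~$\Q$ in the bijection recalled in Section~\ref{parahorics1}---and a supercuspidal~$\tau$ of~$\M_{\mathcal{F}'}$ such that~$\pi$ is a composition factor of~$\ind_\Q^{\M_{\mathcal{F}}}(\tau)$.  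Exactness and transitivity of induction then yield that $\pi^+$ is a composition factor of $\ind_\Q^{\M_{\mathcal{F}}^+}(\tau)=\ind_{\Q^+}^{\M_{\mathcal{F}}^+}(\ind_\Q^{\Q^+}(\tau))$; taking any composition series of~$\ind_\Q^{\Q^+}(\tau)$ and applying the exact functor $\ind_{\Q^+}^{\M_{\mathcal{F}}^+}$ produces an irreducible composition factor~$\tau^+$ of~$\ind_\Q^{\Q^+}(\tau)$ for which $\pi^+$ is a composition factor of $\ind_{\Q^+}^{\M_{\mathcal{F}}^+}(\tau^+)$.

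It remains to verify that $\tau^+$ has supercuspidal restriction to~$\M_{\mathcal{F}'}$.  Because~$\tau$ is inflated from $\M_{\mathcal{F}'}=\Q/\U$ and $\U$ is normal in~$\Q^+$, the action of~$\U$ on $\ind_\Q^{\Q^+}(\tau)$ is trivial, so this representation factors through $\Q^+/\U\cong\M_{\mathcal{F}'}^+$, where it is identified with $\ind_{\M_{\mathcal{F}'}}^{\M_{\mathcal{F}'}^+}(\tau)$.  In particular $\tau^+$ is an irreducible $\M_{\mathcal{F}'}^+$-module, and by Clifford's theorem (valid in any characteristic) $\tau^+|_{\M_{\mathcal{F}'}}$ is semisimple with all irreducible constituents in a single $\M_{\mathcal{F}'}^+$-orbit of the supercuspidal~$\tau$; supercuspidality is preserved by conjugation, so every constituent is supercuspidal, as required.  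The main---mild---subtlety is precisely avoiding any implicit appeal to semisimplicity of $\pi^+|_{\M_{\mathcal{F}}}$ in the modular case, and the composition-factor framework above handles this cleanly.
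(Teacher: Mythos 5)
Your proof is correct and follows essentially the same route as the paper: pick an irreducible $\M_{\mathcal{F}}$-constituent $\pi$ of the restriction, apply the known supercuspidal support for the connected group $\M_{\mathcal{F}}$, use transitivity/exactness of induction together with the isomorphism $\ind_\Q^{\Q^+}(\tau)\cong\ind_{\M_{\mathcal{F}'}}^{\M_{\mathcal{F}'}^+}(\tau)$, and conclude via the fact that $\tau^+|_{\M_{\mathcal{F}'}}$ is a sum of conjugates of $\tau$. The only cosmetic difference is that you invoke Clifford's theorem where the paper cites Mackey theory; in this normal-subgroup situation these amount to the same observation.
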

\begin{proof}
Let~$\pi$ be an irreducible subrepresentation of~$\pi^+\mid_{\Gf_{\mathcal{F}}}$.  Let~$(\L,\tau)$ be in the supercuspidal support of~$\pi$, and~$\Q$ denote a parabolic subgroup of~$\Gf_{\mathcal{F}}$ with Levi factor~$\L$ and unipotent radical~$\U$.  Let~$\G_\mathcal{F}'$ be the parahoric corresponding to~$\Q$.

The representation $\pi^+$ is a (sub)quotient of~$\ind_{\Gf_{\mathcal{F}}}^{\Gf_{\mathcal{F}}^+}(\pi)$, hence a subquotient of
\[\ind_{\Gf_{\mathcal{F}}}^{\Gf_{\mathcal{F}}^+} \circ \ind_{\Q}^{\Gf_{\mathcal{F}}}(\tau)\simeq \ind_{\Q^+}^{\Gf_{\mathcal{F}}^+}\circ \ind_{\Q}^{\Q^+}(\tau). \]
Now~$\Q=\Gf_{\mathcal{F}'}\ltimes \U$ and~$\Q^+=\Gf_{\mathcal{F}'}^+\ltimes \U$, and~$\tau$ is trivial on $\U$, hence~$\pi^+$ is a subquotient of
\[\ind_{\Q^+}^{\Gf_{\mathcal{F}}^+}\circ\ind_{\Gf_{\mathcal{F}'}}^{\Gf_{\mathcal{F}'}^+} \tau,\]
thus there exists an irreducible subquotient~$\tau^+$ of~$\ind_{\Gf_{\mathcal{F}'}}^{\Gf_{\mathcal{F}'}^+} \tau$ such that~$\pi^+$ is a subquotient of~$\ind_{\Q^+}^{\Gf_{\mathcal{F}}^+}(\tau^+)$.  As~$\tau^+$ is a subquotient of~$\ind_{\Gf_{\mathcal{F}'}}^{\Gf_{\mathcal{F}'}^+} \tau$, by Mackey theory~$\tau^+$ restricts to a sum of conjugates of~$\tau$ and hence~$\tau^+$ has supercuspidal restriction.  
\end{proof}

%%%%%%%%%%%%%%%%%%%%%%%%%%%%%
\subsection{Forms of classical groups}\label{subsecClassicalGroups}
We introduce a uniform notation to consider inner forms of general linear and classical groups, we follow the notation of \cite[Section 2.3]{technicalpaper}.  We fix~$\F/\F_\so$, a Galois extension of non-archimedean local fields of degree one or two with residual characteristic~$p$, and~$\varepsilon\in\{\pm,0\}$, such that
\newcommand{\caseGL}{\textbf{(\I)}}
\newcommand{\caseClassical}{\textbf{(\I\I)}}
\begin{itemize}
 \item[\caseGL]~$\F=\F_\so$ if~$\varepsilon=0$ and
 \item[\caseClassical]~$2\not\mid p$ if~$\varepsilon\neq 0$.
\end{itemize}
We write~$\overline{\phantom{ll}}$ for the generator of the of the Galois group of~$\F/\F_\so$. 

We denote by~$\Div(\F/\F_\so,\varepsilon)$ the set of pairs~$(\D,(\overline{\phantom{ll}})_\D)$ consisting of a skew-field~$\D$ of finite degree~$d$ with center~$\F$ and an~$\F_\so$-linear endomorphism~$(\overline{\phantom{ll}})_\D$ of~$\D$ extending~$\overline{\phantom{ll}}$ such that
\begin{itemize}
 \item $(\overline{\phantom{ll}})_\D$ is~$\id_\D$ in Case~\caseGL~and
 \item $(\overline{\phantom{ll}})_\D$ is an orthogonal or unitary anti-involution on~$\D$ in Case~\caseClassical. Note that in this case~$\D$ has at most degree~$2$, and if~$\D$ has degree~$2$ then~$\F=\F_\so$. 
\end{itemize}
We will still write~$\overline{\phantom{ll}}$ for~$(\overline{\phantom{ll}})_\D$ if there is no cause of confusion. 

A~\emph{Hermitian space for~$(\F/\F_\so,\varepsilon)$} is a pair~$(\V,h)$ together with a pair~$(\D,(\overline{\phantom{ll}})_\D)\in\Div(\F/\F_\so,\varepsilon)$ such that~$\V$ is a finite dimensional right~$\D$-vector space and~$h$ is an~$\varepsilon$-hermitian form~$h:\V\times\V\rightarrow \D$ with respect to~$\overline{\phantom{ll}}$, in particular~$h$ is the zero map in Case~\caseGL~and non-degenerate in Case~\caseClassical.
We write~$\Herm(\F/\F_\so,\varepsilon)$ for the set of Hermitian spaces for~$(\F/\F_\so,\varepsilon)$. 

We fix a pair~$(\D,(\overline{\phantom{ll}})_\D)\in\Div(\F/\F_\so,\varepsilon)$ and a Hermitian space~$(\V,h)$ for~$(\F/\F_\so,\varepsilon)$. 

We consider the following subgroups of~$\tG:=\GL_\D(\V)$:
The group
\[\U(\V,h)=\{g\in\GL_\D(\V):h(gv,gw)=h(v,w)\text{ for all }v,w\in\V\}\]
of isometries of~$(\V,h)$.  
and
\[\G:=\left\{\begin{array}{ll}
\U(\V,h)\cap\SL_\F(\V) & \text{if}\ h\neq 0\ \text{and}\ \D=\F=\F_\so\ \text{and}\ \varepsilon=+\\
\U(\V,h)& \text{else}
           \end{array}\right.
\]
 
Therefore~$\G$ will be the set of rational points of an~$\F_\so$-form of a general linear, symplectic, or special orthogonal group. 
Note that in the Case~\caseClassical~if~$\D\neq\F$ then every element of~$\U(\V,h)$ has reduced norm~$1$ over~$\F$. 
 
We let~$\Sigma=\langle \sigma\rangle$ denote an abstract cyclic group of order~$2$, which we will let act on various objects. The element~$\sigma$  acts trivial on~$\tG$ if~$h=0$ and as the inverse of the adjoint anti-involution of~$h$ if~$h$ is non-zero.  In particular,
\[
\U(\V,h)=\tG^\Sigma.
\]
We denote the set of~$\D$-endomorphisms of~$\V$ by~$\A$. The square root of the~$\F$-dimension of~$\A$ is called the~\emph{$\F$-degree} 
(or for short just the degree) of~$\A$, denoted by~$\deg_\F(\A)$.

%%%%%%%%%%%%%%%%%%%%%%%%%%%%%
%\subsection{Parahoric subgroups for forms of classical groups}\label{parahoricssec}
%{Remove section and standardise notation}
%For inner forms of (products of) classical $p$-adic groups and general linear groups, we use the (self-dual)~$\mathfrak{o}_{\F}$-lattice function model of the Bruhat--Tits building and use (interchangeably with the notation we introduced for a general reductive~$p$-adic group) the following notation for parahoric subgroups:  for~$\Lambda$ an~$\mathfrak{o}_{\F}$-lattice function in~$\V$ we write~$\P(\Lambda)$ for the compact open subgroup stabilizing~$\Lambda$, and write~$\P(\Lambda)^{\circ}$ for the parahoric subgroup associated to~$\Lambda$ with pro-$p$ unipotent radical~$\P_1(\Lambda)$.   In particular, if $\Lambda$ is a vertex then~$\P(\Lambda)=\G_{\Lambda}^+$ and~$\P(\Lambda)^{\circ}=\G_{\Lambda}$.  We write~$\P^{\st}(\Lambda)$ for the fixator of the minimal facet~$\overline{\Lambda}$ containing~$\Lambda$.  

%%%%%%%%%%%%%%%%%%%%%%%%%%%%%
\section{Endo-parameters}\label{secEPs}
%%%%%%%%%%%%%%%%%%%%%%%%%%%%%
We give a short definition of an endo-parameter for~$\G$.  For more details, we refer to \cite{KSS}, \cite{SkodInnerFormI}, and \cite{technicalpaper}.  
\subsection{Semisimple strata and characters}%endo-equivalence, 
%and endo-parameters.  %We adopt slightly different notation and terminology to \cite{KSS} and \cite{SkodInnerFormI}, because we wish to later consider inner forms of $p$-adic classical groups and general linear groups simultaneously and we can by-pass some of the technicalities as we will not need the full theory (we only use the special case of \emph{full} semisimple characters, and do not require the parametrization of endo-parameters).
%We first need the arithmetic data from which semisimple characters are defined:  
Let~$[\Lambda,n,0,\beta]$  be a tuple consisting of
\begin{enumerate}
\item An element $\beta\in \mathrm{Lie}(\G)$ such that~$\E=\F[\beta]=\bigoplus_{i\in \I} \E_i$ is a sum of field extensions~$\E_i/\F$, and we write~$\G_\beta$ for the~$\G$-centralizer of~$\beta$.  The group~$\G_{\beta}$ is the~$\F$-points of a product of restriction of scalars to~$\F$ of inner forms of classical groups and general linear groups, and we write~$\mathbb{G}_\beta$ for this reductive group defined over~$\F$.
\item A self-dual~$\mathfrak{o}_\E$-lattice sequence~$\Lambda$ in~$\V$, which defines a point\[\Lambda\in\mathcal{B}(\mathbb{G}_{\beta},\F)\hookrightarrow \mathcal{B}(\mathbb{G},\F).\]
\item A non-negative integer~$n$ such that~$\beta\in\mathfrak{A}_{-n}(\Lambda)$ where
\[\mathfrak{A}_{i}(\Lambda)=\{a\in\End_{\F}(\V):a\Lambda(k)\subseteq \Lambda(k+i),\text{ for all }k\in\mathbb{Z}\}\] is the filtration of~$\End_{\F}(\V)$ defined by~$\Lambda$.
\end{enumerate}
If this tuple satisfies some technical conditions (cf., \cite[3.1]{St08}, \cite[4.1]{SkodInnerFormI}, \cite[4.1]{SkodInnerFormII}) then it is called a \emph{semisimple stratum for}~$(\G,h)$.  As the integer~$n$ is determined by~$(\Lambda,\beta)$, in this paper we will denote the stratum~$[\Lambda,n,0,\beta]$ by~$[\Lambda,\beta]$ (as we will not need the more general strata~$[\Lambda,n,r,\beta]$ of the above references).  

Write~$\Psi_i(X)\in \F[X]$ denote the minimal polynomial of~$\beta_i$, then we set~$\V^i=\ker(\Psi_i(\beta))$; and we have a decomposition~$\V=\bigoplus_{i\in\I} \V^i$, called the \emph{splitting associated} to~$\beta$.

The negative of the anti-involution defined by $h$ on~$\End_{\F}(\V)$ defines an action of~$\Sigma$ on~$\E$ with~$\sigma(\beta)=-\beta$, and hence defines an action on the indexing set~$\I$ of the decomposition~$\beta=\sum_{i\in\I}\beta_i$.  We write~$\I^\sigma$ for the~$\sigma$-fixed orbits and~$\I_{\sigma}=\I\backslash \I^\sigma$ for its complement.  This defines a Levi subgroup~$\M(\beta)$ of~$\G$ by
\begin{equation*}
\M(\beta)=\begin{cases}
\prod_{i\in\I}\Aut_{\D}(\V_i),&\text{if $h=0$;}\\
\G\cap( (\Aut_{\D}( \sum_{i\in\I^\sigma}(\V_i))\times \prod_{i\in\I\backslash \I_{\sigma}}\Aut_{\D}(\V_i))&\text{otherwise.}
\end{cases}
\end{equation*}
%We also have the Levi subgroup of~$\M(\beta)$, defined in \cite{technicalpaper}, which is equal to~$\M(\beta)$ except in certain special orthogonal cases:  
It turns out that for special orthogonal groups, in certain cases, there is a smaller Levi subgroup more adapted for our purposes. Define condition~$(\star)$ for~$(\G,h)$ to be:%which we define to be equal~$\M(\beta)$ except when 
\begin{itemize}[$(\star)$]
\item $\G$ is a special orthogonal group, and there exists~$i_0\in\I$ such that~$\beta_{i_0}=0$,~$\dim_{\F}(\V_{i_0})=2$, and~$h\mid_{\V_{i_0}}$ has Witt index one.
\end{itemize}
Then we define~$\M_c(\beta)$ to be~$\M(\beta)$, unless~$(\G,h)$ satisfies~$(\star)$, in which case we set:
\begin{equation*}
\M_c(\beta)=%\begin{cases}
%\M(\beta)&\text{if $(\G,h)$ does not satisfy~$(\star)$;}\\
\G\cap( (\Aut_{\F}( \sum_{i_0\neq i\in\I^\sigma}(\V_i))\times \prod_{i\in\I\backslash \I_{\sigma}}\Aut_{\F}(\V_i)\times \mathrm{SAut}_{\F}(V_{i_0})).%&\text{otherwise.}
%\end{cases}
\end{equation*}
where~$\mathrm{SAut}_{\F}$ denotes the~$\F$-automorphisms of determinant one. This subgroup~$\M_c(\beta)$ is the minimal Levi subgroup of~$\G$ which contains the centralizer~$\G_{\beta}$ of~$\beta$.

We call~$[\Lambda,\beta]$ an~\emph{$m$-semisimple stratum} for~$(\G,h)$ if%{the data underlying it are maximal, in the sense of \cite[Definition 4.4]{technicalpaper}, }which includes the condition
~$\Lambda$ is a vertex in~$\mathcal{B}(\G_{\beta},\F)$.
\begin{remark}
When~$h$ is non-trivial, strata for~$(\G,h)$ are called a self-dual semisimple stratum in earlier works including \cite{KSS,SkodInnerFormI}.   %and earlier works {remark $h=0$ case GLn(F)?}
\end{remark}

Let~$[\Lambda,\beta]$ be a semisimple stratum for~$\G$.  Associated to this data in \cite[3.1]{St08}, \cite[Definition~5.4]{SkodInnerFormI}, and \cite[\S4]{SkodlerackCuspQuart}, are
\begin{enumerate}
\item A~$\Sigma$-stable pro-$p$ subgroup~$\widetilde{\H}^1(\beta,\Lambda)$ of~$\widetilde{\G}$, and we write~$\H^1(\beta,\Lambda)=\widetilde{\H}^1(\beta,\Lambda)\cap\G$ for the associated pro-$p$ subgroup of~$\G$.%, where~$\sigma\in\Sigma$ acts on~$\G$ by the inverse of the adjoint anti-involution of~$h$.
\item A finite set of (complex) characters~$\mathcal{C}_h(\beta,\Lambda)$ of~$\H^1(\beta,\Lambda)$ which we call \emph{semisimple characters} for~$\G$.  %This is defined as the set of fixed points $\mathcal{C}(\beta,\Lambda)^{\Sigma}$, where~$\Sigma$ acts via the adj
\end{enumerate}

\begin{remark}
\begin{enumerate}
\item The set~$\mathcal{C}_h(\beta,\Lambda)$ is defined as~$\mathcal{C}(\beta,\Lambda)^{\Sigma}$ where~$\mathcal{C}(\beta,\Lambda)$ is the set of (complex) semisimple characters of~$\widetilde{\H}^1(\beta,\Lambda)$ defined in \cite{BK93,St08}, and if $h$ is non-trivial,~$\sigma\in\Sigma$ is acting via the inverse of the adjoint anti-involution on~$\G$. 
\item Semisimple characters have strong intertwining properties, in particular, for~$\theta\in\mathcal{C}_h(\beta,\Lambda)$, we have~$\I_{\G}(\theta)=\H^1(\beta,\Lambda)\G_\beta\H^1(\beta,\Lambda)$.
\end{enumerate}
\end{remark}
 If~$[\Lambda,\beta]$ is an~$m$-semisimple stratum for~$(\G,h)$ we call elements of~$\mathcal{C}_h(\beta,\Lambda)$ $m$-\emph{semisimple characters}.  If~$[\Lambda,\beta]$, $[\Lambda',\beta]$ are semisimple strata for~$\G$, then there is a canonical bijection
\[\tau_{\Lambda,\Lambda',\beta}:\mathcal{C}_h(\beta,\Lambda)\rightarrow \mathcal{C}_h(\beta,\Lambda'),\]
called \emph{transfer}.

Let~$\theta\in\mathcal{C}_h(\beta,\Lambda)$ and~$\theta'\in\mathcal{C}_h(\beta',\Lambda')$ be semisimple character.  If they intertwine in~$\G$, then letting~$\I$ (resp.~$\I'$) denote the indexing set of~$\beta$ (resp.~$\beta'$), then there is a canonical bijection~$\zeta:\I\rightarrow \I'$ we call a \emph{matching} \cite[Theorem 10.1]{SkSt}, \cite[Theorem 8.8]{KSS}, \cite[Theorem 6.3]{SkodInnerFormII}.

\subsection{Endo-parameters}
Let~$\mathcal{C}(\G)=\bigcup \mathcal{C}_h(\beta,\Lambda)$ denote the collection of all semisimple characters for~$\G$.  This collection depends only on the group~$\G$, not on the hermitian form~$h$ used to define~$\G$,  i.e., being a semisimple character for~$\G$ only depends on the group.  However, when we choose data to realize a semisimple character for~$\G$ in an explicit set associated to a stratum, we are (in the classical case) choosing a hermitian form in this data.

 %While the sets~$\mathcal{C}_h(\beta,\Lambda)$ depend on the hermitian form, being a semisimple character for~$(\G,h)$ does not depend on~$h$
%
%
%(this depends only on the group~$\G$, not on the choice of hermitian form~$h$).  
Intertwining in~$\G$ defines an equivalence relation on~$\mathcal{C}(\G)$ by  \cite{KSS}, \cite{SkodInnerFormI}, and the intertwining classes are called \emph{endo-parameters} for~$\G$.  

Let~$\mathfrak{t}$ be an endo-parameter for~$\G$, then the number of~$\G$-conjugacy classes of~$m$-semisimple characters for~$\G$ with endo-parameter~$\mathfrak{t}$ is in general infinite.  This motivates the introduction of a weaker notion than conjugacy  {\cite[Definition 4.8]{technicalpaper}}:% we call \emph{e-$\G$-conjugacy} (essentially conjugacy).

\begin{definition}Let~$\theta$ and~$\theta'$ be intertwining semisimple characters for~$\G$.  Then they are called%~e-$\G$-conjugate (resp.
~c-$\G$-conjugate (\emph{cuspidally-$\G$-conjugate}) if there exist strata~$[\Lambda,\beta],[\Lambda',\beta']$ for~$(\G,h)$, and~$g\in\G$, such that %{only define $c$ now?}
\begin{enumerate}
\item $\theta\in\mathcal{C}_h(\beta,\Lambda)$ and~$\theta'\in\mathcal{C}_h(\beta',\Lambda')$;
\item $g V_i=V_{\zeta(i)}$ for all~$i\in\I$, where~$\zeta:\I\rightarrow \I'$ is the matching given by the intertwining of~$\theta$ and~$\theta'$;
\item $\presuper{g}(\theta_{|\H^1(\beta,\Lambda)\cap \M_c(\beta)})=\theta'_{|\H^1(\beta',\Lambda')\cap\M_c(\beta')}.$
\end{enumerate}
\end{definition}
%We note that, from ibid.~ we have abbreviated \emph{essentially-$\G$-conjugate} to~e-$\G$-conjugate, and \emph{cuspidal~$\G$-conjugate} to~c-$\G$-conjugate.  

By \cite[Proposition 4.9]{technicalpaper},~c-$\G$-conjugacy defines an equivalence relations on~$\mathcal{C}(\G)$, and we call the respective classes~\emph{c-$\G$-conjugacy classes}.
%
%
%{define essential conjugacy?}   
\begin{theorem}[{{\cite[Theorem 4.10]{technicalpaper}}}]\label{finiteness}
The number of c-$\G$-conjugacy classes %(resp.~c-$\G$-conjugacy classes) 
of~$m$-semisimple characters for~$\G$ with a fixed endo-parameter~$\mathfrak{t}$ is finite.\end{theorem}

In fact, we can be much more precise:

\begin{proposition}[{{\cite[Proposition 4.16]{technicalpaper}}}]\label{representatives}
Let $\ft$ be an endo-parameter for~$\G$, and~$\theta\in \Cc_h(\beta,\Lambda)$ a semisimple character of endo-parameter~$\ft$, and recall we have an associated embedding~$\mathcal{B}(\mathbb{G}_\beta,\F)\hookrightarrow \mathcal{B}(\mathbb{G},\F)$.  Choose a set of representatives~$\mathrm{Vert}_\beta$ for the~$\G_\beta$-classes of vertices in a chamber of~$\mathcal{B}(\mathbb{G}_\beta,\F)$.  Then the set
\[\{\tau_{\Lambda,\Upsilon,\beta}(\theta):\Upsilon\in \mathrm{Vert}_{\beta}\}\]
forms a complete system of representatives for the c-$\G$-conjugacy classes of~$m$-semisimple characters of endo-parameter~$\ft$.
\end{proposition}

\subsection{Parabolic induction and restriction of endo-parameters}
Let~$\mathfrak{t}$ be an endo-parameter for~$\G$.  Let~$\theta$ be a semisimple character of endo-parameter~$\mathfrak{t}$, and choose a hermitian form~$h$ and stratum~$[\Lambda,\beta]$ so that~$\theta\in\mathcal{C}_h(\beta,\Lambda)$.  Then the~$\G$-conjugacy class of~$\M(\beta)$ (resp.~$\M_c(\beta)$) depends only on~$\mathfrak{t}$ \cite[Lemma 3.8]{technicalpaper}, and we denote this class by~$[\M(\mathfrak{t})]$ (resp.~$[\M_c(\mathfrak{t})]$). 

Let~$\P$ be a Levi subgroup of~$\G$ and~$\P=\M\ltimes \N$ a Levi decomposition of~$\P$.  We have non-normalized parabolic induction and restriction functors, we denote by~$i_\P^\G:\Rep_{\R}(\M)\rightarrow \Rep_{\R}(\G)$ and~$r^\G_\P:\Rep_\R(\G)\rightarrow \Rep_\R(\M)$ respectively.  Here we define maps of endo-parameters, which we will later show are compatible with parabolic induction and restriction.

We write~$\M=\prod_{j=0}^s \M_j$, with~$\M_0$ an inner form of a classical group (or trivial), and~$\M_j$ general linear groups.
\begin{definition}
An \emph{endo-parameter}~$\ft$ for~$\M$ is a tuple~$(\ft_j)_{j=0}^s$ of endo-parameters~$\ft_j$ for~$\M_j$.
\end{definition}

Let~$\ft=(\ft_j)_{j=0}^s$ be an endo-parameter for~$\M$ and~choose semisimple characters~$\theta_{j}$ for~$M_j$ of endo-parameter~$\ft_{j}$. Then~$\theta_{\M}=\bigotimes_{j=1}^s \theta_{j}$ is a semisimple character for~$\M$, and by~\cite[Proposition 5.1]{MiSt}, \cite[Theorems 6.6 and 6.10]{SkodInnerFormII}, %%%%%%%%%%%%%%%%% I could not find this in Daniel's published work
we can choose a semisimple character~$\theta$ for~$\G$ with~$\theta_{|\M}=\theta_{\M}$.  We let~$i_{\M}^\G(\ft)$ denote the endo-parameter for~$\G$ defined by~$\theta$.  This is independent of the choice of~$\theta$, as any two choices for~$\theta_{\M}$ intertwine in~$\M$ (by definition) so, since the corresponding semisimple characters~$\theta$ are decomposed with respect to~$(\M,\P)$ for any parabolic~$\P$ with Levi~$\M$, these semisimple characters~$\theta$ also intertwine so give the same endo-parameter~$i_{\M}^\G(\ft)$. Moreover, parabolic induction of endo-parameters is clearly transitive.

Conversely, let~$\ft$ now be an endo-parameter for~$\G$. We define~$r_{\M}^{\G}(\ft)$ by
\[
r_{\M}^{\G}(\ft)=\{\text{endo-parameters }\ft_\M\text{ for }\M:\ft=i_{\M}^\G(\ft_\M)\}.
\]
Set~$\W_\M=\N_\G(\M)/\M$. In general,~$r_{\M}^{\G}(\ft)$ will \emph{not} consist of a single~$\W_\M$-conjugacy class of endo-parameters for~$\M$, but is a (possibly empty) finite set of~$\W_\M$-conjugacy classes. 

There is one case of particular interest, namely when~$\M\in[\M_c(\ft)]$. If~$\theta$ is any semisimple character with endo-parameter~$\ft$, and~$\theta\in\mathcal{C}_h(\beta,\Lambda)$
such that~$\M=\M_c(\beta)$, then the restriction~$\theta_{\M}:=\theta_{\vert \M}$ is a semisimple character for~$\M$, and we write~$\ft_\M$ for its corresponding endo-parameter for~$\M$.  The~$\G$-conjugacy class of this is independent of the choice of~$\theta$ by~\cite[Theorem~6.5,~Corollary~6.15]{SkodInnerFormII}, and clearly~$\ft_\M\in r_{\M}^{\G}(\ft)$. We call the~$\G$-conjugacy class of the pair~$(\M,\ft_{\M})$ the \emph{cuspidal support} of~$\ft$.

\begin{lemma}[{{\cite[Lemma 4.13]{technicalpaper}}}]\label{EPsupportmap}
Let $\ft$ be an endo-parameter for~$\G$ and let $\M\in[\M_c(\ft)]$, and~$(\M,\ft_{\M})$ in the cuspidal support of~$\ft$.  The~c-$\G$-conjugacy classes of m-semisimple characters of endo-parameter $\ft$ are in bijection with the $\M$-conjugacy classes of m-semisimple characters of endo-parameter $\ft_\M$. 
\end{lemma}

\begin{remark}
%Notice that in general this map is not given by restriction to~$\M$, as restriction to~$\M$ of semisimple characters of class~$\ft$ are not necessarily of class~$\ft_{\M}$.  However, 
The bijection is defined by choosing a set of representatives~$\Xi_{\M}$ for~$\ft_{\M}$, and for each~$\theta_{\M}\in\Xi_{\M}$ choosing a semisimple character~$\theta$ for~$\G$ with~$\theta_{|\M}=\theta_{\M}$.
\end{remark}

%%%%%%%%%%%%%%%%%%%%%%%%%%%%%
\section{Heisenberg representations}\label{Heisenberg}
%%%%%%%%%%%%%%%%%%%%%%%%%%%%%

%%%%%%%%%%%%%%%%%%%%%%%%%%%%%
\subsection{Heisenberg representations and semisimple characters over~$\mathbb{Z}[1/p,\mu_{p^{\infty}}]$}\label{HeisenbergZ}
%Let~$\beta$ be a self-dual full semisimple element, and~$((\V,h),\varphi,\Lambda)\in\mathscr{Q}_{\F/\F_\so,\varepsilon}(\beta)$.  
\subsubsection{Heisenberg representations over~$\mathbb{C}$} Let~$[\Lambda,\beta]$ be a semisimple stratum for~$\G$.  Then also attached to this datum, see \cite[3.1]{St08}, \cite[Definition~5.4]{SkodInnerFormI}, and \cite[\S4]{SkodlerackCuspQuart}, are~$\Sigma$-stable compact open subgroups~$\widetilde{\J}^1(\beta,\Lambda)\leqslant \widetilde{\J}(\beta,\Lambda)$ of~$\tG$ containing~$\widetilde{\H}^1(\beta,\Lambda)$, and we write
\[\J^1(\beta,\Lambda)=\widetilde{\J}^1(\beta,\Lambda)\cap\G,\quad \J(\beta,\Lambda)=\widetilde{\J}(\beta,\Lambda)\cap\G,\]
for the associated compact open subgroups of~$\G$.  The group~$\J^1(\beta,\Lambda)$ is pro-$p$, and normal in~$\J(\beta,\Lambda)$ with
\[\J(\beta,\Lambda)/\J^1(\beta,\Lambda)\simeq (\G_{\beta})_{\Lambda}/(\G_{\beta})_{\Lambda}^1,\]
a finite reductive group.  

Let~$\t\in\Cc_h(\beta,\Lambda)$ be a semisimple character.  Then there exists a unique irreducible $\mathbb{C}$-representation~$\eta$ of~$\J^1(\beta,\Lambda)$ which contains~$\t_-$, by \cite[Proposition 3.5]{St08}, {\cite[B.2]{technicalpaper}} and \cite[Proposition 4.3]{SkodlerackCuspQuart}.  These representations are called \emph{Heisenberg~$\mathbb{C}$-representations}, and this definition is extended to algebraically closed fields in \cite{RKSS}.

%%%%%%%%%%%%%%%%%%%%%%%%%%%%%
\subsubsection{Integral semisimple characters}
Let~$\K$ be a compact open (pro-$p$) normal subgroup of~$\H^1(\beta,\Lambda)$ such that all ($\mathbb{C}$-valued) semisimple characters in~$\Cc_h(\beta,\Lambda)$ are trivial on~$\K$.  We fix~$r$ sufficiently large, so that \emph{all} characters of all pro-$p$ subgroups of~$\P(\Lambda)$ trivial on~$\K$ take values in~$\mathbb{Z}[\mu_{p^r}]$.  In particular, for~$\theta\in\Cc_h(\beta,\Lambda)$, we can choose an integral model of~$\theta$ as a \emph{free}~$\mathbb{Z}[\mu_{p^r}]$-module of rank one, on which~$\H^1(\beta,\Lambda)$ acts via
\[\theta:\H^1(\beta,\Lambda)\rightarrow \mathbb{Z}[\mu_{p^r}]^\times.\]
Let~$\R$ be a~$\mathbb{Z}[\mu_{p^r}]$-algebra. We set~$\theta_{\R}=\theta\otimes\R$ which gives the natural action of~$\theta$ on~a free~$\R$-module of rank~one.  In particular, if~$\R$ is an algebraically closed field of characteristic different to~$p$,~$\theta_{\R}$ agrees with the previous definitions of semisimple characters.  

We write~$\Cc_{h,\R}(\beta,\Lambda)$ for the set of~$\R$-valued semisimple characters, or ``semisimple~$\R$-characters'', obtained from~$\Cc_h(\beta,\Lambda)$ by considering the natural integral structure in each semisimple character as described above (for the fixed~$\mathbb{Z}[\mu_{p^r}]$-algebra structure on~$\R$).

We record the following properties, immediate from our construction and Lemma \ref{lemmaprojectivesfinitegroups}:
\begin{lemma}\label{basicpropssemisimplechars}
Let~$\R$ be a~$\mathbb{Z}[\mu_{p^r}]$-algebra, and $\theta_{\R}\in\Cc_{h,\R}(\beta,\Lambda)$.
\begin{enumerate}
\item The underlying~$\R$-module of~$\theta_{\R}$ is a free~$\R$-module of rank one.
\item If~$p$ is invertible in~$\R$,~$\theta_{\R}$ is a projective~$\R[\H^1(\beta,\Lambda)]$-module.
\item Suppose~$\mu_{p^r}\otimes 1$ has order~$p^r$ in~$\R^\times$.  Then the natural map
\[\Cc_{h,\mathbb{Z}[\mu_{p^r}]}(\beta,\Lambda)\rightarrow \Cc_{h,\R}(\beta,\Lambda).\]
is a bijection.  
\end{enumerate}
\end{lemma}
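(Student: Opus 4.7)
The plan is to dispatch the three parts in turn, each being a relatively direct consequence of the integral structure set up in the paragraphs preceding the lemma. For part~(1), by the choice of~$r$, every semisimple character $\theta\in\Cc_h(\Lambda,\varphi(\b))$ factors through a finite $p$-group quotient of $\H^1(\varphi(\beta),\Lambda)$ and so takes values in $\mu_{p^r}\subset\mathbb{Z}[\mu_{p^r}]^\times$; the integral model of $\theta$ is thus by construction a free rank-one $\mathbb{Z}[\mu_{p^r}]$-module on which $\H^1(\varphi(\beta),\Lambda)$ acts through that character. Since tensoring a free rank-one module with $\R$ preserves the rank, $\theta_\R$ is free of rank one over $\R$. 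For part~(2), having (1) in hand, the underlying $\R$-module of $\theta_\R$ is projective, so I plan simply to apply Lemma~\ref{lemmaprojectivesfinitegroups}: $\theta_\R$ is an irreducible smooth representation of the pro-$p$ group $\H^1(\varphi(\beta),\Lambda)$ on a projective $\R$-module, hence a projective $\R[\H^1(\varphi(\beta),\Lambda)]$-module. The core content here is that $\theta_\R$ factors through a finite $p$-group whose group algebra is semisimple over $\R$ by Maschke's theorem (since $p$ is invertible), so any surjection onto $\theta_\R$ can be split by averaging a section over the finite quotient.

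For part~(3), surjectivity of the natural map is immediate from the very definition of $\Cc_{h,\R}(\Lambda,\varphi(\b))$, whose elements are by construction precisely the base changes to $\R$ of elements of $\Cc_{h,\mathbb{Z}[\mu_{p^r}]}(\Lambda,\varphi(\b))$. For injectivity the key point, as already noted in part~(1), is that every $\theta\in\Cc_{h,\mathbb{Z}[\mu_{p^r}]}(\Lambda,\varphi(\b))$ takes values in the finite group $\mu_{p^r}$. The hypothesis that $\mu_{p^r}\otimes 1$ has order~$p^r$ in~$\R^\times$ is exactly the statement that the structure map $\mu_{p^r}\hookrightarrow \R^\times$, $\zeta\mapsto\zeta\otimes 1$, is injective; so if $\theta\otimes 1=\theta'\otimes 1$ as $\R$-valued characters, then $\theta(g)=\theta'(g)$ in $\mu_{p^r}\subset\mathbb{Z}[\mu_{p^r}]^\times$ for every $g\in\H^1(\varphi(\beta),\Lambda)$, i.e.\ $\theta=\theta'$.

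I do not anticipate a substantive obstacle: this lemma is essentially bookkeeping verifying that the integral model of semisimple characters introduced in the preceding paragraph behaves well under base change along any $\mathbb{Z}[\mu_{p^r}]$-algebra. The only point requiring a touch of care is to distinguish the finite cyclic group $\mu_{p^r}$ (where all semisimple character values actually lie, thanks to the choice of $r$) from the larger group $\mu_{p^\infty}$; once that distinction is kept in mind, each of the three parts is one short argument.
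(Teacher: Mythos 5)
Your proof is correct and is essentially a spelled-out version of the paper's one-line proof, which simply observes that parts (1) and (3) are immediate from the construction of the integral model and appeals to Lemma~\ref{lemmaprojectivesfinitegroups} for part (2). One small caution on the side remark in part (2): for a general $\mathbb{Z}[1/p]$-algebra $\R$ the group ring $\R[\H^1(\varphi(\beta),\Lambda)/\K]$ need not be semisimple (Maschke's theorem requires working over a field), though the averaging construction you describe --- which is what Lemma~\ref{lemmaprojectivesfinitegroups} actually carries out --- does yield projectivity of $\theta_\R$ without any appeal to semisimplicity.
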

%\begin{proof}
%The first and final properties follow immediately from our construction, and the second property from Lemma \ref{lemmaprojectivesfinitegroups}.
%\end{proof}

\begin{proposition}\label{propintertwiningthetas}
Let~$\R_{0,r}=\mathbb{Z}[1/p,\mu_{p^r}]$, and~$\R$ be an~$\R_{0,r}$-algebra.  For~$g\in \G$, the intertwining of~$\theta$ is given by
\begin{equation*}
\Hom_{\R[\H^1(\beta,\Lambda)\cap \H^1(\beta,\Lambda)^g]}(\theta_\R,(\theta_\R)^g)\simeq \begin{cases}
\R&\text{if }g\in \J^1(\beta,\Lambda)\G_{\beta}\J^1(\beta,\Lambda);\\
0&\text{otherwise.}\end{cases}\end{equation*}
\end{proposition}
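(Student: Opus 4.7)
The plan is to reduce this question to an elementary character calculation (valid over any $\R_{0,r}$-algebra) combined with the known intertwining theorem for complex semisimple characters. Writing $\K := \H^1(\varphi(\beta),\Lambda) \cap \H^1(\varphi(\beta),\Lambda)^g$, I note that $\theta_\R$ and $\theta_\R^g$ act on the same rank-one free $\R$-module via characters $\K \to \mu_{p^r} \subset \R_{0,r}^\times$. A direct computation then identifies the Hom space as
\[
\Hom_{\R[\K]}(\theta_\R,\theta_\R^g) \;=\; \{\, r \in \R : r\cdot(\theta(x) - \theta^g(x)) = 0 \text{ for all } x \in \K\,\},
\]
that is, the annihilator in $\R$ of the ideal $I_g \subseteq \R_{0,r}$ generated by $\{\theta(x)-\theta^g(x) : x \in \K\}$. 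If $\theta|_\K = \theta^g|_\K$ then $I_g = 0$ and the Hom space is all of $\R$, giving the first case.

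To handle the other case, I will use the cyclotomic unit observation: $1-\zeta$ is a unit in $\R_{0,r}$ for every non-trivial $p$-power root of unity $\zeta$. Indeed, evaluating the $p^k$-th cyclotomic polynomial at $1$ gives $\prod_{\zeta'}(1-\zeta') = p$ (product over primitive $p^k$-th roots of unity); inverting $p$ in $\R_{0,r}$ turns each factor into a unit. Consequently, if $\theta(x)\theta^g(x)^{-1} = \zeta \ne 1$ for some $x \in \K$, then $\theta(x) - \theta^g(x) = \theta(x)(1-\zeta^{-1})$ is a unit in $\R_{0,r}$, hence in any $\R_{0,r}$-algebra $\R$, forcing the Hom space to vanish.

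It remains to determine when $\theta|_\K$ and $\theta^g|_\K$ agree. Since both are $\mu_{p^r}$-valued, this equality is independent of the $\R_{0,r}$-algebra $\R$ and can be tested after embedding $\mathbb{Z}[\mu_{p^r}] \hookrightarrow \mathbb{C}$, where it is exactly the condition that $g$ intertwines the classical complex semisimple character $\theta_\mathbb{C}$ with $\theta_\mathbb{C}^g$. Invoking the intertwining theorems for self-dual semisimple characters -- namely \cite[Theorem~6.7(ii)]{SkodInnerFormII} in the case $\e = 0$ and \cite[Theorem~6.12(ii)]{SkodInnerFormII} in the case $\e \ne 0$ -- this occurs precisely when $g \in \J^1(\varphi(\beta),\Lambda)\,\G_{\varphi(\beta)}\,\J^1(\varphi(\beta),\Lambda)$. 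The main obstacle is thus not mathematical but a matter of marshalling the classical statements across the two cases $\e = 0$ and $\e \ne 0$; everything new beyond the classical theory is contained in the cyclotomic unit argument above.
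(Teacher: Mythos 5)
Your proof is correct, and it takes a genuinely different and in some ways more transparent route than the paper. The paper first invokes the scalar-extension lemma for projective modules (Lemma~\ref{Homsandscalarextension}) to reduce to $\R = \R_{0,r}$, then tensors with $\mathbb{C}$ and uses torsion-freeness of the Hom-module (as a submodule of $\R_{0,r}$) together with the known complex intertwining of semisimple characters to deduce the vanishing case, and finally checks directly that the characters agree on the overlap when $g$ does intertwine. You instead compute $\Hom_{\R[\K]}(\theta_\R,\theta_\R^g)$ directly as the annihilator of the set $\{\theta(x)-\theta^g(x): x\in\K\}$, and observe that since the characters are $\mu_{p^r}$-valued and $1-\zeta$ is a unit in $\mathbb{Z}[1/p,\mu_{p^r}]$ for every nontrivial $p$-power root of unity $\zeta$ (from $\Phi_{p^k}(1)=p$), the annihilator is all of $\R$ or zero according as the two characters agree on $\K$ or not; the remaining question of when they agree, being a statement about $\mu_{p^r}$-valued functions, is checked once over $\mathbb{C}$ by citing the classical intertwining results. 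Your route avoids the projectivity/flat-base-change machinery entirely, makes manifest why inverting $p$ is the crucial hypothesis, and works uniformly over $\R$ without first reducing to $\R_{0,r}$; the paper's route is less explicit but fits into the systematic pattern it reuses throughout Section~\ref{Heisenberg} (e.g., for Lemma~\ref{Heislemma} and Corollary~\ref{etaintertwining}), where the cyclotomic shortcut is not available because the representations are no longer characters.
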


\begin{proof}
By Lemma \ref{Homsandscalarextension}, it suffices to consider the case~$\R_{0,r}$.   Again, by Lemma \ref{Homsandscalarextension}, we have
\begin{align*}\Hom_{\R_{0,r}[\H^1(\beta,\Lambda)\cap \H^1(\beta,\Lambda)^g]}&(\theta_{\R_{0,r}},(\theta_{\R_{0,r}})^g)\otimes \mathbb{C}\\&\simeq \Hom_{\mathbb{C}[\H^1(\beta,\Lambda)\cap \H^1(\beta,\Lambda)^g]}(\theta_{\mathbb{C}},(\theta_{\mathbb{C}})^g).\end{align*}
And by the complex setting of \cite[Proposition 3.27]{St05}, \cite[Proposition 5.15]{SkodInnerFormI}, \cite[Theorem 6.12]{SkodInnerFormII} we have
\[ \Hom_{\mathbb{C}[\H^1(\beta,\Lambda)\cap \H^1(\beta,\Lambda)^g]}(\theta_{\mathbb{C}},(\theta_{\mathbb{C}})^g)\simeq\mathbb{C}.\]
if~$g\in\J^1(\beta,\Lambda)\G_{\beta}\J^1(\beta,\Lambda)$ and is zero otherwise.  

As~$\Hom_{\R_{0,r}[\H^1(\beta,\Lambda)\cap \H^1(\beta,\Lambda)^g]}(\theta_{\R_{0,r}},(\theta_{\R_{0,r}})^g)\subseteq \Hom_{\R_{0,r}}(\theta_{\R_{0,r}},(\theta_{\R_{0,r}})^g)\simeq \R_{0,r}$, it is a torsion-free~$\R_{0,r}$-module, and
\begin{align*}
\Hom&_{\R_{0,r}[\H^1(\beta,\Lambda)\cap \H^1(\beta,\Lambda)^g]}(\theta_{\R_{0,r}},(\theta_{\R_{0,r}})^g)\\&\hookrightarrow \Hom_{\R_{0,r}[\H^1(\beta,\Lambda)\cap \H^1(\beta,\Lambda)^g]}(\theta_{\R_{0,r}},(\theta_{\R_{0,r}})^g)\otimes \mathbb{C}
\end{align*}
and hence it is zero if~$g\not\in\J^1(\beta,\Lambda)\G_{\beta}\J^1(\beta,\Lambda)$.  

Suppose then~$g\in\H^1(\beta,\Lambda)\G_{\beta}\H^1(\beta,\Lambda)$.  Then, as~$\theta_{\mathbb{C}}$ is a character on the vector space~$\mathbb{C}$, the Hom-space is non-zero if and only if the restrictions of the morphisms~
\begin{align*}
\theta_{\mathbb{C}}&:\H^1(\beta,\Lambda)\rightarrow \mathbb{C}^\times\\
(\theta_{\mathbb{C}})^g&:\H^1(\beta,\Lambda)^g\rightarrow \mathbb{C}^\times
\end{align*}
to~$\H^1(\beta,\Lambda)\cap \H^1(\beta,\Lambda)^g$ are equal.  In particular, by definition, the restrictions of~$\theta_{\R_{0,r}}$ and~$(\theta_{\R_{0,r}})^g$ to~$\H^1(\beta,\Lambda)\cap \H^1(\beta,\Lambda)^g$ are equal, and hence for~$g\in\J^1(\beta,\Lambda)\G_{\beta}\J^1(\beta,\Lambda)$ we have
\[\Hom_{\R_{0,r}[\H^1(\beta,\Lambda)\cap \H^1(\beta,\Lambda)^g]}(\theta_{\R_{0,r}},(\theta_{\R_{0,r}})^g)=\End_{\R_{0,r}}(\theta_{\R_{0,r}},(\theta_{\R_{0,r}})^g)\simeq \R_{0,r}\]
as the underlying~$\R_{0,r}$-modules of~$\theta_{\R_{0,r}},(\theta_{\R_{0,r}})^g$ are free of rank one over~$\R_{0,r}$.
\end{proof}

%%%%%%%%%%%%%%%%%%%%%%%%%%%%%
\subsubsection{Integral Heisenberg representations} One can also choose a natural~integral model for a Heisenberg representation.  %, by revisiting their construction of Heisenberg representations.  
We continue with the above notation,~$[\Lambda,\beta]$ be a semisimple stratum for~$\G$.%~$\beta$ is a self-dual full semisimple element, and~$((\V,h),\varphi,\Lambda)\in\mathscr{Q}_{\F/\F_\so,\varepsilon}(\beta)$.

\begin{lemma}
\begin{enumerate}
\item The derived subgroup of~$\J^1(\beta,\Lambda)$ satisfies
\[[ \J^1(\beta,\Lambda), \J^1(\beta,\Lambda)]\leqslant \H^1(\beta,\Lambda).\]
\item The quotient~$\J^1(\beta,\Lambda)/\H^1(\beta,\Lambda)$ is an elementary $p$-group.
\end{enumerate}
\end{lemma}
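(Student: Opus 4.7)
The plan is to deduce both statements by passing to the ambient general linear group $\tG$ and then intersecting with $\G$, using the lattice-theoretic description of the groups in question. By construction, $\J^1(\varphi(\beta),\Lambda) = \widetilde{\J}^1(\varphi(\beta),\Lambda) \cap \G$ and $\H^1(\varphi(\beta),\Lambda) = \widetilde{\H}^1(\varphi(\beta),\Lambda) \cap \G$, with both $\widetilde{\J}^1$ and $\widetilde{\H}^1$ stable under the involution defining $\G$. Hence it suffices to prove the analogous statements for the pair $(\widetilde{\H}^1,\widetilde{\J}^1)$ inside $\tG$, since the commutator subgroup and the quotient are both compatible with passage to $\sigma$-fixed points in this pro-$p$ setting.

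For (i), I would use the description $\widetilde{\H}^1 = 1 + \mathfrak{h}$ and $\widetilde{\J}^1 = 1 + \mathfrak{j}$ for certain $\o_\F$-lattices $\mathfrak{h} \subseteq \mathfrak{j}$ in the principal order attached to $\Lambda$, built from the Bushnell--Kutzko--Stevens lattices $\mathfrak{m}_k(\beta,\Lambda)$ and $\mathfrak{n}_k(\beta,\Lambda)$, extended by Skodlerack to our setting (see \cite{St05,SkodInnerFormI}). The key fact is the commutator inclusion $[\mathfrak{j},\mathfrak{j}] \subseteq \mathfrak{h}$, which follows from the defining properties: $\mathfrak{j}$ is constructed at a filtration level chosen so that commutators drop into the deeper step that builds $\mathfrak{h}$. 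Expanding $(1+x)(1+y)(1+x)^{-1}(1+y)^{-1}$ formally (a finite sum modulo high filtration since we are in a nilpotent regime) gives $[\widetilde{\J}^1,\widetilde{\J}^1] \subseteq 1 + [\mathfrak{j},\mathfrak{j}] + \mathfrak{j}^{\geqslant 2} \subseteq \widetilde{\H}^1$, and intersecting with $\G$ yields the desired inclusion.

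For (ii), once (i) is known the quotient $\J^1/\H^1$ is a finite abelian pro-$p$ group, so I need only show every element has order dividing $p$. Using the binomial expansion $(1+x)^p = 1 + px + \tfrac{p(p-1)}{2}x^2 + \cdots + x^p$ for $x \in \mathfrak{j}$, it is enough to know that $p\mathfrak{j} \subseteq \mathfrak{h}$ and that $x^p \in \mathfrak{h}$ for all $x \in \mathfrak{j}$. Both assertions follow from the filtration structure: the index lattices $\mathfrak{h}$ and $\mathfrak{j}$ differ by only one step in the Moy--Prasad-type filtration on the principal order, and raising to the $p$-th power doubles (and more) the filtration depth, which suffices to land in $\mathfrak{h}$.

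The main obstacle is ensuring that the lattice descriptions and the key inclusions $[\mathfrak{j},\mathfrak{j}] \subseteq \mathfrak{h}$ and $p\mathfrak{j} + \{x^p : x \in \mathfrak{j}\} \subseteq \mathfrak{h}$ are uniformly available across all the cases we consider (split general linear, inner forms, and the various classical cases including quaternionic forms); these are scattered through \cite{St05,SkodInnerFormI,SkodlerackCuspQuart}. Once they are in hand for $\tG$, the passage to $\G$ via $\sigma$-fixed points is formal.
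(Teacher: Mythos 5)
Your strategy is correct, but the route is different from the paper's, and the gap you flag at the end (ensuring the lattice inclusions are uniformly available for $\GL_\D(\V)$ with $\D$ a skew field) is precisely the thing the paper's proof is designed to sidestep.

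The paper's argument does a stronger reduction than the one you propose. You reduce from $\G$ to the ambient inner form $\tG = \GL_\D(\V)$ by intersecting, which is sound: if $[\widetilde\J^1,\widetilde\J^1]\subseteq\widetilde\H^1$ and $\widetilde\J^1/\widetilde\H^1$ is elementary abelian $p$, the same follows for $\J^1/\H^1$ since the natural map $\J^1\to\widetilde\J^1/\widetilde\H^1$ has kernel exactly $\H^1$. But you then want to establish the lattice inclusions $[\mathfrak j,\mathfrak j]\subseteq\mathfrak h$, $p\mathfrak j\subseteq\mathfrak h$, $\{x^p:x\in\mathfrak j\}\subseteq\mathfrak h$ directly in $\GL_\D(\V)$. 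The paper instead tensors with a maximal unramified field extension $\L/\F$ inside $\D$, so that $\A\otimes_\F\L$ is split and one lands in $\GL_\L(\V)$. The compatibility $\H^1=\H^1_\L\cap\G$ (from~\cite[Proposition~5.6]{SkodInnerFormI}) then embeds $\J^1/\H^1$ into $\J^1_\L/\H^1_\L$, and for the \emph{split} general linear group the exact lattice statements you want are already proved in~\cite[Lemma~3.11, Corollary~3.12]{St05}. In other words, the paper reduces past the inner form of GL all the way to the split case, and cites Stevens; you stop at the inner form and would have to supply (or locate in Sécherre's work) the $\GL_\D$ analogues.

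So what you buy with your approach is a self-contained argument that does not detour through the unramified extension; what the paper buys with its extra reduction is that it only ever needs the split-GL lattice computations, which are in a single standard reference. Your plan works once you pin down the $\GL_\D$ versions of the two inclusions (they do hold, but ``scattered through the literature'' is a weaker claim than a citation); the cleanest way to close that gap is in fact the paper's $\L$-tensoring trick, after which the references become unambiguous.
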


\begin{proof} 
Let~$\L/\F$ be a maximal unramified field extension in~$\D$. We write $\J^1,\ \H^1,\ \J^1_\L,\  \H_\L^1$ for the subgroups
\[\J^1(\beta,\Lambda),\ \H^1(\beta,\Lambda),\ \J^1(\beta\otimes_\F 1,\Lambda),\ \H^1(\beta\otimes_\F1,\Lambda)\]
of~$\G$ and~$\tG\otimes \L$, respectively. By~\cite[Proposition~5.6]{SkodInnerFormI} we have
$\H^1=\H^1_{\L}\cap \G$ and therefore~$\J^1/\H^1$ identifies with a subgroup of~$\J^1_\L/\H^1_\L$. The first assertion follows  from~\cite[Corollary 3.12]{St05} and the second assertion from~\cite[Lemma~3.11 (ii) and (iv)]{St05}.
\end{proof}

Hence the pairing
\begin{align*}
{\mathbf{k}}_{\theta}: \J^1(\beta,\Lambda)/\H^1(\beta,\Lambda)\times \J^1(\beta,\Lambda)/\H^1(\beta,\Lambda)&\rightarrow \mathbb{Z}[\mu_{p^r}]\\
(j_1,j_2)&\mapsto \theta[j_1,j_2]
\end{align*}
takes values in the $p$-th roots of unity~$\mu_p$ (as $\J^1(\beta,\Lambda)/\H^1(\beta,\Lambda)$ is an elementary $p$-group). Identifying,~the cyclic group of $p$-th roots of unity with~$\mathbb{F}_p$, then by \cite[Proposition 3.28]{St05}, \cite[Proposition 3.9]{SecherreCh}, \cite[Lemma~B.4]{technicalpaper} and \cite[Lemma 4.2]{SkodlerackCuspQuart}, the pairing~$\mathbf{k}_{\theta}$ defines a non-degenerate symplectic form on the~$\mathbb{F}_p$-vector space~$\J^1(\beta,\Lambda)/\H^1(\beta,\Lambda)$.  

Choose a polarization of~$\J^1(\beta,\Lambda)/\H^1(\beta,\Lambda)=\mathcal{W}_+\oplus \mathcal{W}_-$ with respect to this form.  The inverse image~$\J^1(\beta,\Lambda)_+$ of~$\mathcal{W}_+$ in~$\J^1(\beta,\Lambda)$ defines a maximal abelian subgroup of~$\J^1(\beta,\Lambda)/\H^1(\beta,\Lambda)$, and we choose an extension of~$\theta$ to a character~$\theta_+:\J^1(\beta,\Lambda)_+\rightarrow \mathbb{Z}[\mu_{p^r}]^\times$ acting on the same free~$\mathbb{Z}[\mu_{p^r}]$-module as~$\theta$.

For any~$\mathbb{Z}[\mu_{p^r}]$-algebra~$\R$, we then define
\[\eta_{\R}=\eta_{\R}(\mathcal{W}_+,\theta_+)=\ind_{\J^1(\beta,\Lambda)_+}^{\J^1(\beta,\Lambda)}(\theta_+)\otimes \R\]
to be the Heisenberg~$\R$-representation associated to~$(\mathcal{W}_+,\theta_+)$.  %The following lemma is straightforward:
\begin{lemma}\label{Heislemma}
\begin{enumerate}
\item\label{classHeiscase} Suppose~$\R$ is an algebraically closed field of characteristic~different from $p$, then~$\eta_\R$ defines the unique isomorphism class of Heisenberg~$\R$-representations of previous works, and in particular~$\eta_{\R}$ is irreducible.
\item The underlying~$\R$-module of~$\eta_{\R}$ is free of rank
\[[\J^1(\beta,\Lambda):\J^1(\beta,\Lambda)_+]=[\J^1(\beta,\Lambda):\H^1(\beta,\Lambda)]^{1/2}.\]
\item  If~$p$ is invertible in~$\R$, then any Heisenberg~$\R$-representation is projective.  
\item If~$p$ is invertible in~$\R$, then the isomorphism class of~$\eta_{\R}(\mathcal{W}_+,\theta_+)$ is independent of the choice of~$(\mathcal{W}_+,\theta_+)$, and~$\End_{\R[\J^1(\beta,\Lambda)]}(\eta_{\R}(\mathcal{W}_+,\theta_+))$ is free of rank one over~$\R$.
\end{enumerate}
\end{lemma}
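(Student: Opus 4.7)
Part~(1): Since $\H^1\triangleleft\J^1$ with $\J^1/\H^1$ elementary abelian $p$-group, any subgroup containing $\H^1$ is normal in $\J^1$; in particular $\J^1_+$ is normal, so $\ind_{\J^1_+}^{\J^1}(\theta_+)\mid_{\J^1_+}\cong\bigoplus_{g\in \J^1/\J^1_+}\theta_+^g$ by Mackey. The plan is to apply Mackey's irreducibility criterion: $\eta_\R$ is irreducible iff for every $g\in\J^1\setminus\J^1_+$ one has $\theta_+\neq\theta_+^g$ on $\J^1_+$, equivalently $\mathbf{k}_\theta(j,g)\neq 0$ for some $j\in\J^1_+$. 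But this is precisely the statement that $\J^1_+/\H^1$ equals its own symplectic complement, which is the defining Lagrangian property. Uniqueness then follows because any irreducible $\R$-representation of $\J^1$ containing $\theta$ must contain $\theta_+$ (for some extension) by Clifford theory, and all such induced representations are isomorphic (by (4) below, or the classical Stone--von Neumann statement over $\mathbb{C}$ recalled in \cite{St05}, \cite{RKSS}).

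Part~(2) is immediate from the definition: $\ind_{\J^1_+}^{\J^1}(\theta_+)$ has underlying $\R$-module $\R^{[\J^1:\J^1_+]}$, and the Lagrangian condition on $\mathcal{W}_+$ in the non-degenerate symplectic $\mathbb{F}_p$-space $\J^1/\H^1$ gives $[\J^1_+:\H^1]=[\J^1:\H^1]^{1/2}$. Part~(3): under the assumption that $p$ is invertible in $\R$, the character $\theta_+$ has underlying module $\R$ which is a projective $\R$-module, and its stabiliser contains an open pro-$p$ subgroup of $\J^1_+$; hence $\theta_+$ is a projective $\R[\J^1_+]$-module by Lemma~\ref{lemmaprojectivesfinitegroups}. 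Compact induction from the open subgroup $\J^1_+$ is left adjoint to the exact restriction functor, so preserves projectivity, and $\eta_\R$ is projective.

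Part~(4) will be proved in two moves. First I treat the case $\R=\R_{0,r}=\mathbb{Z}[1/p,\mu_{p^r}]$ and then bootstrap to the general case. Over $\R_{0,r}$, the $\R_{0,r}$-module $\End_{\R_{0,r}[\J^1]}(\eta_{\R_{0,r}})$ is torsion-free (it sits inside the free module $\End_{\R_{0,r}}(\eta_{\R_{0,r}})$), and contains $\R_{0,r}\cdot\id$. It suffices to show the two modules coincide; since both are torsion-free and contained in their common $\mathbb{C}$-localization, Lemma~\ref{Homsandscalarextension} together with part~(1) and Schur's lemma over $\mathbb{C}$ yields
\[
\End_{\R_{0,r}[\J^1]}(\eta_{\R_{0,r}})\otimes\mathbb{C}\;\cong\;\End_{\mathbb{C}[\J^1]}(\eta_{\mathbb{C}})\;=\;\mathbb{C},
\]
so the inclusion $\R_{0,r}\cdot\id\hookrightarrow\End_{\R_{0,r}[\J^1]}(\eta_{\R_{0,r}})$ is an equality. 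The same argument with $\eta_{\R_{0,r}}'$ in place of $\eta_{\R_{0,r}}$ shows $\Hom_{\R_{0,r}[\J^1]}(\eta_{\R_{0,r}},\eta_{\R_{0,r}}')$ is free of rank one over $\R_{0,r}$; applying Lemma~\ref{isomatprimesDDgivesiso} (localising at the generic point and at each closed point of $\Spec\R_{0,r}$, where the Stone--von Neumann/Mackey argument of~(1) applies over the appropriate residue field of characteristic $\neq p$) shows that any generator is an isomorphism, so $\eta_{\R_{0,r}}\cong\eta_{\R_{0,r}}'$. Finally, for a general $\mathbb{Z}[\mu_{p^r},1/p]$-algebra $\R$, since $\eta_{\R_{0,r}}$ is finitely generated projective, Lemma~\ref{Homsandscalarextension} gives
\[
\End_{\R[\J^1]}(\eta_\R)\;\cong\;\End_{\R_{0,r}[\J^1]}(\eta_{\R_{0,r}})\otimes_{\R_{0,r}}\R\;\cong\;\R,
\]
and the isomorphism $\eta_{\R_{0,r}}\cong\eta_{\R_{0,r}}'$ base changes to the desired isomorphism $\eta_\R\cong\eta_\R'$. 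The main obstacle I anticipate is the rank-one claim for $\End$ over $\R_{0,r}$: one must verify that no ``extra'' endomorphisms appear integrally, which requires the torsion-freeness input together with the base-change isomorphism from Lemma~\ref{Homsandscalarextension} (this is the step where the hypothesis $p\in\R^\times$ is essential).
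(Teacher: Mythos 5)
Your parts (1)--(3) are fine; (1) is spelled out in more detail than the paper (which simply declares these parts straightforward and cites the literature), via the Mackey irreducibility criterion together with the Lagrangian property of $\mathcal{W}_+$, which is a legitimate and self-contained alternative. Part (3) is the same argument as the paper's.

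Part (4) has a genuine gap. You argue that $\End_{\R_{0,r}[\J^1]}(\eta_{\R_{0,r}})$ is torsion-free, contains $\R_{0,r}\cdot\id$, and has $\End\otimes\mathbb{C}\cong\mathbb{C}$, and then conclude the inclusion $\R_{0,r}\cdot\id\hookrightarrow\End$ is an equality. That inference does not hold from what you have stated: a finitely generated torsion-free $\R_{0,r}$-module of rank $1$ containing a free rank-$1$ submodule is in general a fractional ideal, which over $\R_{0,r}=\mathbb{Z}[1/p,\mu_{p^r}]$ need not be free (the class group of $\mathbb{Z}[\mu_{p^r}][1/p]$ can be non-trivial). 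For $\End$ one could repair this by observing it is a \emph{ring}, finitely generated as an $\R_{0,r}$-module hence integral over $\R_{0,r}$, and $\R_{0,r}$ is integrally closed in $\mathbb{C}$ --- but you do not say this, and it does not help with $\Hom(\eta_{\R_{0,r}},\eta_{\R_{0,r}}')$, which has no ring structure, no canonical element, and where ``the same argument'' simply does not go through. Your claimed freeness of the Hom space is therefore unjustified. The paper establishes freeness differently and essentially: it applies Mackey's formula to write
\[
\Hom_{\R_{0,r}[\J^1]}\bigl(\eta_{\R_{0,r}}(\mathcal{W}_+,\theta_+),\eta_{\R_{0,r}}(\mathcal{W}_+',\theta_+')\bigr)\cong\bigoplus_{g}\Hom_{\R_{0,r}[\J^1_+\cap(\J^1_+')^g]}\bigl(\theta_+,(\theta_+')^g\bigr),
\]
and each summand is either $0$ or the full $\Hom_{\R_{0,r}}(\theta_+,(\theta_+')^g)\cong\R_{0,r}$ depending on whether the two integral characters literally agree on the intersection (the key point being that the characters are honestly $\R_{0,r}$-valued, so agreement is an all-or-nothing statement, not a statement up to torsion). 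This gives freeness of the total Hom space directly, after which base change to $\mathbb{C}$ identifies the rank as $1$. You should add the Mackey decomposition step. A second, minor issue: when you invoke Lemma~\ref{isomatprimesDDgivesiso} you say the Stone--von Neumann argument of (1) ``applies over the appropriate residue field'', but $\R_{0,r}/\mathfrak m$ is a finite field, not algebraically closed, so you need a descent step through $\Fl\simeq\overline{\R_{0,r}/\mathfrak m}$ (as the paper does explicitly).
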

\begin{proof}
The first two statements are straightforward.  The~$\R$-representation~$\theta_{+}\otimes \R$ is projective by Lemma \ref{lemmaprojectivesfinitegroups}, and hence~$\eta_{\R}$ is projective as compact induction from an open subgroup preserves projectivity.  The final statement obviously reduces to the case~$\R_{0,r}=\mathbb{Z}[1/p,\mu_{p^r}]$ (using Lemma \ref{Homsandscalarextension} for the statement about endomorphism algebras).

By Mackey theory, we have a~$\R_{0,r}$-module decomposition
\begin{align*}
\Hom_{\R_{0,r}[\J^1(\beta,\Lambda)]}&(\eta_{\R_{0,r}}(\mathcal{W}_+,\theta_+),\eta_{\R_{0,r}}(\mathcal{W}_+',\theta_+'))\\&\simeq \bigoplus\Hom_{\R_{0,r}[\J^1(\beta,\Lambda)_+\cap (\J^1(\beta,\Lambda)_+')^g]}(\theta_+,(\theta_+')^g)\end{align*}
where for~$\J^1(\beta,\Lambda)_+'$ the prime denotes the subgroup defined by the maximal isotropic subspace~$\mathcal{W}_+'$.  As in the proof of Proposition \ref{propintertwiningthetas}, if any of these Hom-spaces are non-trivial, then they are free~$\R_{0,r}$-modules of rank one.  Hence,~$\Hom_{\R_{0,r}[\J^1(\beta,\Lambda)]}(\eta_{\R_{0,r}}(\mathcal{W}_+,\theta_+),\eta_{\R_{0,r}}(\mathcal{W}_+',\theta_+'))$ is a free~$\R_{0,r}$-module.

By Lemma \ref{Homsandscalarextension},
\begin{align*}
\Hom_{\R_{0,r}[\J^1(\beta,\Lambda)]}&(\eta_{\R_{0,r}}(\mathcal{W}_+,\theta_+),\eta_{\R_{0,r}}(\mathcal{W}_+',\theta_+'))\otimes \mathbb{C}\\
&\simeq \Hom_{\mathbb{C}[\J^1(\beta,\Lambda)]}(\eta_{\mathbb{C}}(\mathcal{W}_+,\theta_+),\eta_{\mathbb{C}}(\mathcal{W}_+',\theta_+'))\simeq \mathbb{C},\end{align*}
and in particular the integral $\Hom$-space is non-zero, and as it is a free~$\R_{0,r}$-module, the~$\Hom$-space is a free~$\R_{0,r}$-module of rank one.  

Choose any generator~$\phi$ of the~$\Hom$-space as a~$\R_{0,r}$-module, then
\[\phi\otimes 1:\eta_{\R_{0,r}}(\mathcal{W}_+,\theta_+)\otimes\mathbb{C}\rightarrow\eta_{\R_{0,r}}(\mathcal{W}_+',\theta_+')\otimes \mathbb{C},\]
is an isomorphism, as any non-zero morphism is by part \ref{classHeiscase}.  Moreover, as~$\mathbb{C}/\K$ is faithfully flat, it follows that
\[\phi\otimes 1:\eta_{\R_{0,r}}(\mathcal{W}_+,\theta_+)\otimes\K\rightarrow\eta_{\R_{0,r}}(\mathcal{W}_+',\theta_+')\otimes\K,\]
is an isomorphism.

 Moreover, for any maximal ideal~$\mathfrak{m}\in\text{m-}\Spec(\R_{0,r})$ consider the morphism
\[\phi\otimes 1:\eta_{\R_{0,r}}(\mathcal{W}_+,\theta_+)\otimes\R/\mathfrak{m}\rightarrow\eta_{\R_{0,r}}(\mathcal{W}_+',\theta_+')\otimes \R/\mathfrak{m}.\]
Now~$\Hom_{\R/\mathfrak{m}[\J^1(\beta,\Lambda)]}(\eta_{\R/\mathfrak{m}}(\mathcal{W}_+,\theta_+),\eta_{\R/\mathfrak{m}}(\mathcal{W}_+',\theta_+'))$ is free of rank one over~$\R/\mathfrak{m}$ (say by Lemma  \ref{Homsandscalarextension} again), and by faithfully flat descent from~$\Fl\simeq \overline{\R/\mathfrak{m}}$ (and part \ref{classHeiscase}), any non-zero morphism is an isomorphism.  Moreover,~$\phi\otimes 1$ is non-zero because as an~$\R_{0,r}$-module it generates~$\Hom_{\R_{0,r}[\J^1(\beta,\Lambda)]}(\eta_{\R_{0,r}}(\mathcal{W}_+,\theta_+),\eta_{\R_{0,r}}(\mathcal{W}_+',\theta_+'))$ and hence generates~
\begin{align*}
\Hom_{\R_{0,r}[\J^1(\beta,\Lambda)]}&(\eta_{\R_{0,r}}(\mathcal{W}_+,\theta_+),\eta_{\R_{0,r}}(\mathcal{W}_+',\theta_+'))\otimes \R/\mathfrak{m}\\&\simeq \Hom_{\R/\mathfrak{m}[\J^1(\beta,\Lambda)]}(\eta_{\R/\mathfrak{m}}(\mathcal{W}_+,\theta_+),\eta_{\R/\mathfrak{m}}(\mathcal{W}_+',\theta_+'))\end{align*} as an~$\R_{0,r}/\mathfrak{m}$-module.  It follows from Lemma \ref{isomatprimesDDgivesiso}, that~$\phi$ is an isomorphism.
\end{proof}

By \cite[Proposition 3.5]{St08}, \cite[Proposition~B.8]{technicalpaper}, \cite[Proposition 4.3]{SkodlerackCuspQuart}, we deduce that Heisenberg~$\R$-representations enjoy the following intertwining properties:

\begin{corollary}\label{etaintertwining}
Let~$\R$ be a~$\R_{0,r}$-algebra, and~$\eta_{\R}$ be a Heisenberg~$\R$-representation constructed as above.  For~$g\in \G$, the intertwining of~$\eta$ is given by
\begin{equation*}
\Hom_{\R[\J^1(\beta,\Lambda)\cap \J^1(\beta,\Lambda)^g]}(\eta_{\R},\eta_{\R}^g)\simeq \begin{cases}
\R&\text{if }g\in \J^1(\beta,\Lambda)\G_{\beta}\J^1(\beta,\Lambda);\\
0&\text{otherwise.}\end{cases}\end{equation*}
\end{corollary}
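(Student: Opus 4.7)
The plan is to mirror the proof of Proposition \ref{propintertwiningthetas} (the analogous statement for the semisimple character $\theta$) and the rank-one argument at the end of Lemma \ref{Heislemma}(4), replacing the underlying character by the Heisenberg induced representation.

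First I would reduce to the case $\R = \R_{0,r}$. By Lemma \ref{lemmaprojectivesfinitegroups} the representation $\eta_{\R_{0,r}} = \ind_{\J^1(\varphi(\beta),\Lambda)_+}^{\J^1(\varphi(\beta),\Lambda)}(\theta_+) \otimes \R_{0,r}$ is a finitely generated projective $\R_{0,r}[\J^1(\varphi(\beta),\Lambda)]$-module; restricted to the pro-$p$ open subgroup $\J^1(\varphi(\beta),\Lambda) \cap \J^1(\varphi(\beta),\Lambda)^g$ it remains finitely generated projective, since $p$ is invertible in $\R_{0,r}$ and hence every smooth $\R_{0,r}$-representation of that pro-$p$ group is projective. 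Lemma \ref{Homsandscalarextension} then identifies
\[\Hom_{\R[\J^1 \cap (\J^1)^g]}(\eta_\R, \eta_\R^g) \simeq \Hom_{\R_{0,r}[\J^1 \cap (\J^1)^g]}(\eta_{\R_{0,r}}, \eta_{\R_{0,r}}^g) \otimes_{\R_{0,r}} \R,\]
reducing the claim to~$\R=\R_{0,r}$.

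Write $H := \Hom_{\R_{0,r}[\J^1 \cap (\J^1)^g]}(\eta_{\R_{0,r}}, \eta_{\R_{0,r}}^g)$. Using the explicit description $\eta_{\R_{0,r}} = \ind_{\J^1_+}^{\J^1}(\theta_+) \otimes \R_{0,r}$, Frobenius reciprocity and Mackey's formula decompose $H$ as a \emph{finite} direct sum of $\R_{0,r}$-modules of the form $\Hom_{\R_{0,r}[L]}(\theta_+^x, \theta_+^{gy})$, where $L$ runs over certain intersections of conjugates of the compact open pro-$p$ groups $\J^1_+, (\J^1_+)^g$, and $x,y$ run over a finite set of double-coset representatives. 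Each such Hom-space is either zero or a free $\R_{0,r}$-module of rank one (the character analogue of Proposition \ref{propintertwiningthetas}, which is immediate since each $\theta_+^x$ acts on a free $\R_{0,r}$-module of rank one by a character). Consequently $H$ is itself a free $\R_{0,r}$-module of finite rank.

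To pin down the rank, I would invoke Lemma \ref{Homsandscalarextension} again to identify $H \otimes_{\R_{0,r}} \mathbb{C}$ with the complex intertwining space $\Hom_{\mathbb{C}[\J^1 \cap (\J^1)^g]}(\eta_\mathbb{C}, \eta_\mathbb{C}^g)$; by the references cited just before the corollary (\cite[Proposition~3.5]{St08}, Proposition \ref{propetaIntertwiningtG}, \cite[Proposition~4.3]{SkodlerackCuspQuart}) this complex Hom-space is $\mathbb{C}$ when $g \in \J^1(\varphi(\beta),\Lambda)\G_{\varphi(\beta)}\J^1(\varphi(\beta),\Lambda)$ and zero otherwise. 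Since $H$ is free over $\R_{0,r}$, its rank equals this $\mathbb{C}$-dimension, so $H$ itself is $\R_{0,r}$ or $0$ accordingly, and the claim follows after tensoring back with $\R$.

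The main obstacle is the Mackey bookkeeping in the middle step: one must track which double cosets produce a non-zero contribution and verify that each surviving summand is indeed free of rank one over $\R_{0,r}$ (and not merely projective), so as to secure the freeness of $H$ and not just its being an invertible $\R_{0,r}$-module -- the latter being weaker in view of the non-trivial class group of $\R_{0,r} = \mathbb{Z}[1/p,\mu_{p^r}]$. This is a direct extension of the $g=1$ argument given in the proof of Lemma \ref{Heislemma}(4), with the added conjugation by $g$ introducing no new conceptual difficulty beyond notation, since all groups in play remain compact open and pro-$p$.
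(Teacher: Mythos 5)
Your proof is correct and follows essentially the same route as the paper: the paper's own argument likewise applies Mackey theory to decompose the intertwining space into Hom-spaces between characters (each free of rank one or zero), then extends scalars to $\mathbb{C}$ to determine which summand survives, citing the proof of Proposition~\ref{propintertwiningthetas} for the remaining details. Your worry about freeness versus mere invertibility is already resolved by the fact that each Mackey summand is either $0$ or free of rank one over $\R_{0,r}$, as you noted.
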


\begin{proof}
Suppose~$g$ is in the intertwining.  Then as~$\eta_{\R}=\ind_{\J^1(\beta,\Lambda)_+}^{\J^1(\beta,\Lambda)}(\theta_+)$, by Mackey theory we have an~$\R$-module decomposition\[\Hom_{\R[\J^1(\beta,\Lambda)\cap \J^1(\beta,\Lambda)^g]}(\eta_{\R},\eta_{\R}^g)\] into a direct sum of Hom-spaces between characters, 
and again we find only one space is non-zero by extending scalars to~$\mathbb{C}$ where we know the intertwining.  The rest follows mutatis mutandis the proof of Proposition \ref{propintertwiningthetas}.
\end{proof}

%%%%%%%%%%%%%%%%%%%%%%%%%%%%%
\subsection{Heisenberg representations and parabolic induction}
We let~$[\Lambda,\beta]$ be a semisimple stratum for~$(\G,h)$. We take~$\R_{0,r}=\mathbb{Z}[1/p,\mu_{p^r}]$, for~$r$ sufficiently large for all semisimple characters in~$\mathcal{C}_h(\beta,\Lambda)$, and assume throughout this section that~$\R$ is an~$\R_{0,r}$-algebra.

Let~$\V=\bigoplus_{j\in\mathcal{S}}\V^{(j)}$ be a decomposition of~$\V$ which is properly subordinate to~$[\Lambda,\beta]$ if~$h=0$ and properly self-dual subordinate to the stratum if~$h\neq 0$, in the sense of~\cite[Definition~8.(ii),~8.2]{SkodlerackCuspQuart} (cf.~\cite[Definition 5.1]{St08}).  Let~$\M=\G\cap (\prod \Aut_{\D}(V^{(j)}))$ be the Levi subgroup of~$\G$ defined by this decomposition, and~$\P=\M\N$ be any parabolic subgroup of~$\G$ with Levi factor~$\M$, and~$\P^{\circ}=\M\N^{\circ}$ the opposite parabolic subgroup of~$\P$, so that~$\P\cap\P^{\circ}=\M$. Associated to the decomposition, we also get a decomposition of the stratum as a sum of semisimple strata~$[\Lambda^{(j)},\beta^{(j)}]$. 

In this situation, let~$\theta\in\Cc_h(\beta,\Lambda)$ which we consider as a character~$\theta:\H^1(\beta,\Lambda)\rightarrow \mathbb{Z}[\mu_{p^r}]^\times$ acting on a free module of rank one over~$\R$.  Then~$\theta\mid_{\H^1(\beta,\Lambda)\cap \N},\theta\mid_{\H^1(\beta,\Lambda)\cap \N^{\circ}}$ are trivial, and
\[\theta\mid_{\H^1(\beta,\Lambda)\cap \M}=\bigotimes \theta^{(j)},\]
decomposes into a product of semisimple characters~$\theta^{(j)}\in\Cc_h(\Lambda^{(j)},\beta^{(j)})$ (which we consider as acting on the same free rank one $\R$-module as~$\theta$). 
In the following, we abbreviate~$\H^1=\H^1(\beta,\Lambda)$ etc.

\begin{lemma}
\begin{enumerate}
\item The subspaces~$(\J^1\cap \N)/(\H^1\cap \N)$ and~$(\J^1\cap \N^{\circ})/(\H^1\cap\N^{\circ})$ of~$\J^1/\H^1$ are both totally isotropic for the form~$\mathbf{k}_{\theta}$ and orthogonal to~$(\J^1\cap\M)/(\H^1\cap\M)$.
\item\label{polii} The restriction of~$\mathbf{k}_{\theta}$ to~\[(\J^1\cap\M)/(\H^1\cap \M)=\prod \J^1(\beta^{(j)},\Lambda^{(j)})/\H^1(\beta^{(j)},\Lambda^{(j)})\] is the orthogonal sum of the pairings~$\mathbf{k}_{\theta^{(j)}}$. 
\item\label{poliii} We have an orthogonal sum decomposition
\[\J^1/\H^1=((\J^1\cap\M)/(\H^1\cap \M)) \oplus \left((\J^1\cap \N)/(\H^1\cap \N)\times (\J^1\cap \N^{\circ})/(\H^1\cap\N^{\circ})\right).\]
\end{enumerate}
\end{lemma}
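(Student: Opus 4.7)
The plan is to reduce all three statements to (a) the Iwahori-type decompositions of $\H^1$ and $\J^1$ with respect to the properly subordinate splitting $\V=\bigoplus \V^{(j)}$, and (b) the commutator identity $[\J^1,\J^1]\le\H^1$ established in the previous lemma (so that $\J^1/\H^1$ is indeed an abelian $\mathbb{F}_p$-vector space and $\mathbf{k}_\theta$ is $\mathbb{Z}$-bilinear). The decompositions
\[
 \H^1 \;=\; (\H^1\cap \N^{\circ})\,(\H^1\cap\M)\,(\H^1\cap \N),\qquad \J^1 \;=\; (\J^1\cap \N^{\circ})\,(\J^1\cap\M)\,(\J^1\cap \N),
\]
with $\H^1\cap\M=\prod_j \H^1(\varphi(\beta^{(j)}),\Lambda^{(j)})$ and likewise for $\J^1\cap\M$, are the content of the properly (self-dual) subordinate hypothesis (see \cite[Prop.~5.3]{St08} and \cite[Prop.~8.5]{SkodlerackCuspQuart}); in particular each factor is a group. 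Passing to the quotient and using abelianness of $\J^1/\H^1$ gives \eqref{poliii} as an internal direct sum, provided we also know that the pair of ``unipotent'' factors is mutually orthogonal to the $\M$-factor; this will drop out of \eqref{polii} and (i) below.

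For (i), let $u,u'\in\J^1\cap \N$. The commutator $[u,u']$ lies in $\J^1\cap \N$, and since the properly subordinate decomposition forces all such commutators to sit in the ``level-raised'' piece contained in $\H^1$ (this is precisely the reason for the definition of proper subordination — see the commutator identities in \cite[\S3.2]{St08} and \cite[\S4]{SkodlerackCuspQuart}), we have $[u,u']\in \H^1\cap \N$. But $\theta$ is trivial on $\H^1\cap \N$ because the stratum decomposes and the transverse factors of a semisimple character are trivial by construction, so $\mathbf{k}_\theta(\bar u,\bar u')=\theta[u,u']=1$. The same argument with $\N^{\circ}$ in place of $\N$ shows isotropy of the other piece. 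For orthogonality with $\M$, take $m\in\J^1\cap\M$ and $u\in\J^1\cap \N$: then $[m,u]\in \J^1\cap \N\subseteq \H^1\cap \N$ (again by the commutator identities from proper subordination, combined with $[\J^1,\J^1]\le\H^1$), so $\mathbf{k}_\theta$ vanishes on this pair; the argument for $\N^{\circ}$ is identical.

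For \eqref{polii}, given $m^{(j)}\in \J^1(\varphi(\beta^{(j)}),\Lambda^{(j)})$ and $m^{(k)}\in \J^1(\varphi(\beta^{(k)}),\Lambda^{(k)})$ with $j\ne k$, the elements lie in $1+\End_{\D}(\V^{(j)})$ and $1+\End_{\D}(\V^{(k)})$ respectively and therefore commute, so their $\mathbf{k}_\theta$-pairing is trivial and the decomposition is orthogonal. For $j=k$ the commutator $[m^{(j)}_1,m^{(j)}_2]$ lies in $\H^1(\varphi(\beta^{(j)}),\Lambda^{(j)})$, on which $\theta$ restricts to $\theta^{(j)}$ by the factorisation $\theta|_{\H^1\cap\M}=\bigotimes \theta^{(j)}$ recalled above; hence $\mathbf{k}_\theta$ restricts to $\mathbf{k}_{\theta^{(j)}}$ on the $j$-th factor.

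Finally, \eqref{poliii} follows by combining the Iwahori decompositions of $\J^1$ and $\H^1$ (giving the additive decomposition of $\J^1/\H^1$ as $\mathbb{F}_p$-vector spaces) with the orthogonality statements in (i). The main obstacle, and the only non-formal step, is the commutator identity $[m,u]\in\H^1\cap \N$ (and its analogue for $\N^{\circ}$), which is where the hypothesis of proper (self-dual) subordination of the splitting to $[\Lambda,n,0,\varphi(\beta)]$ is essential; everything else is bookkeeping with Iwahori factorisations and the fact that different blocks commute.
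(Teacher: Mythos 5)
Your proof is correct and supplies details that the paper itself omits: the paper's proof of this lemma is a one-line citation (``a mild adaptation of [BK93, 7.2.3] and [St08, Lemma 5.6]''), so any reconstruction must be assembled from the referenced arguments. Your outline — Iwahori decompositions of $\H^1$ and $\J^1$ with respect to the properly (self-dual) subordinate splitting, triviality of $\theta$ on $\H^1\cap\N$ and $\H^1\cap\N^\circ$, factorization $\theta|_{\H^1\cap\M}=\bigotimes\theta^{(j)}$, and $[\J^1,\J^1]\le\H^1$ — is exactly the right collection of inputs, and the orthogonality computations you perform with them are sound.

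One inessential misattribution is worth flagging: you describe the commutator inclusions $[u,u']\in\H^1\cap\N$ and $[m,u]\in\H^1\cap\N$ as the ``main obstacle'' and locate them in special commutator identities coming from proper subordination. In fact these follow for free once the Iwahori decompositions and $[\J^1,\J^1]\le\H^1$ are in hand: for $u,u'\in\J^1\cap\N$ the commutator lies in $\N$ (a group) and in $\H^1$ (by the previous lemma), hence in $\H^1\cap\N$; for $m\in\J^1\cap\M$ and $u\in\J^1\cap\N$, $\M$ normalizes $\N$ so $[m,u]\in\N$, and again $[m,u]\in\H^1$. So the genuinely non-formal content of proper subordination is exhausted by the Iwahori decompositions of $\H^1$ and $\J^1$, the compatibility $\H^1\cap\M=\prod_j\H^1(\varphi(\beta^{(j)}),\Lambda^{(j)})$ (likewise for $\J^1$), and the triviality and factorization of $\theta$ — the commutator manipulations are then elementary group theory, not additional input. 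With that adjustment your account matches what the cited results of Bushnell--Kutzko and Stevens actually provide.
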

\begin{proof}
Follows from a mild adaptation of~\cite[7.2.3]{BK93}~and~\cite[Lemma 5.6]{St08}.
\end{proof}

We now choose our totally isotropic subspace~$\mathcal{W}_+$ of~$\J^1/\H^1$ with respect to the decomposition of~\ref{poliii}: we choose a totally isotropic subspace~$\mathcal{W}_{\M,+}$ of the image of~$(\J^1\cap\M)$ in~$\J^1/\H^1$ and take~$\W_{\N,+}$ the image of~$(\J^1\cap \N)$ in~$\J^1/\H^1$
\[\mathcal{W}_+=\mathcal{W}_{\M,+}\oplus \W_{\N,+}.\]
Then write~$\J^1_{+}$ for the pre-image in~$\J^1$ of~$\mathcal{W}_+$ .
Moreover, by \ref{polii}, we can decompose
\[\mathcal{W}_{\M,+}=\prod \mathcal{W}_{(j),+}\]
where~$ \mathcal{W}_{(j),+}$ is a totally isotropic subspace of~$ \J^1(\beta^{(j)},\Lambda^{(j)})/\H^1(\beta^{(j)},\Lambda^{(j)})$ with respect to~$\mathbf{k}_{\theta^{(j)}}$, and choosing extensions~$\theta^{(j)}_+$ of~$\theta^{(j)}$ to~$\J^1(\beta^{(j)},\Lambda^{(j)})_+$, acting on the same free~$\R$-module of rank one as~$\theta^{(j)}$, we let~$\theta_{\M,+}=\bigotimes \theta^{(j)}_+$. We also write~$\theta_+$ for the extension of~$\theta$ to~$\J^1_{+}$ which is trivial on~$\J^1\cap\N$ and agrees with~$\theta_{\M,+}$ on~$\J^1_{+}\cap\M = \prod \J^1(\beta^{(j)},\Lambda^{(j)})_+$.

We set
\[\eta_{\R}(\mathcal{W}_{\M,+},\theta_{\M,+}):=\bigotimes\eta_{\R}(\mathcal{W}_{(j),+},\theta^{(j)}_+).\]
By Lemma \ref{Heislemma} the isomorphism class of~$\eta_{\R}(\mathcal{W}_{\M,+},\theta_{\M,+})$, which we denote by~$\eta_{\M,\R}$, is independent of the choice of~$\mathcal{W}_{\M,+}$ and~$\theta_{\M,+}$. Letting~
\[
\J^1_{\P}=(\H^1\cap\N^{\circ})(\J^1\cap \P),\]
we define~$\eta_{\P,\R}(\mathcal{W}_{\M,+},\theta_{\M})$, to be the unique representation of~$\J^1_{\P}$ on the 
space of~$\eta_{\R}(\mathcal{W}_{\M,+},\theta_{\M,+})$ (which is a free~$\R$-module of rank~$[(\J^1\cap\M):(\H^1\cap\M)]^{1/2}$) on which~$\J^1\cap\M$ acts via~$\eta_{\R}(\mathcal{W}_{\M,+},\theta_{\M,+})$ and~$\J^1\cap\N$ and~$\H^1\cap\N^{\circ}$ act trivially; this does indeed define a representation because we can also realise it as~$\ind_{\J^1_{+}}^{\J^1_{\P}}(\theta_+)$.
It follows from Lemma \ref{Heislemma} that its isomorphism class, which we denote by~$\eta_{\P,\R}$, is independent of the choice of polarization. Moreover, by transitivity of induction, we have 
\[\eta_{\R}\simeq \ind_{\J^1_{+}}^{\J^1_{h}}(\theta_+) \simeq \ind_{\J^1_{\P}}^{\J^1_{h}}\ind_{\J^1_{+}}^{\J^1_{\P}}(\theta_+) \simeq\ind_{\J^1_{\P}}^{ \J^1}(\eta_{\P,\R}).\] 
The main result we will need on Heisenberg representations is:
\begin{theorem}\label{theoremHcovers}
For any parabolic subgroups~$\P,\P'$ with common Levi factor~$\M$, we have isomorphisms:
\begin{equation}\label{Coversprop}
\ind_{\J^1}^{\G}(\eta_{\R})\simeq \ind_{\J^1_{\P}}^{\G}(\eta_{\P,\R})\simeq \ind_{\P'}^{\G}(\ind_{\J^1\cap\M}^{\M}(\eta_{\M,\R})).\end{equation}
In particular, the isomorphism classes of~$\ind_{\J^1_{\P}}^{\G}(\eta_{\P,\R})$ and~$\ind_{\P'}^{\G}(\ind_{\J^1\cap\M}^{\M}(\eta_{\M,\R}))$ are independent of the choice of parabolic.
\end{theorem}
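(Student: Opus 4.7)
The first isomorphism is immediate by transitivity of compact induction applied to the identity $\eta_\R\simeq\ind_{\J^1_\P}^{\J^1}(\eta_{\P,\R})$ recorded just before the theorem. In particular the middle term $\ind_{\J^1_\P}^\G(\eta_{\P,\R})$ is independent of the parabolic~$\P$, so for the second isomorphism it suffices to treat the case $\P'=\P$. Using $\eta_{\P,\R}\simeq\ind_{\J^1_+}^{\J^1_\P}(\theta_+)$ together with the identity
\[\eta_{\M,\R}\,=\,\bigotimes_{j}\ind_{\J^1(\b^{(j)},\Lambda^{(j)})_+}^{\J^1(\b^{(j)},\Lambda^{(j)})}(\theta_+^{(j)})\,\simeq\,\ind_{\J^1_+\cap\M}^{\J^1\cap\M}(\theta_{\M,+})\]
(the second isomorphism holding because compact induction commutes with finite direct products of open subgroups), transitivity reduces the claim to producing a natural isomorphism
\[\ind_{\J^1_+}^{\G}(\theta_+)\,\simeq\,\ind_\P^\G\bigl(\ind_{\J^1_+\cap\M}^\M(\theta_{\M,+})\bigr).\]

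I would establish this last isomorphism by showing that $(\J^1_+,\theta_+)$ is a decomposed pair that forms a $\G$-cover of $(\J^1_+\cap\M,\theta_{\M,+})$ with respect to $\P$, in the Bushnell--Kutzko sense. The three ingredients needed are: (a) an Iwahori-style decomposition $\J^1_+=(\J^1_+\cap\N^\circ)(\J^1_+\cap\M)(\J^1\cap\N)$, obtained from the definition of $\J^1_+$ as the preimage of $\mathcal{W}_{\M,+}\oplus\mathcal{W}_{\N,+}$ in~$\J^1$ combined with the Iwahori decomposition of~$\H^1$ guaranteed by the proper subordination of $\V=\bigoplus_j\V^{(j)}$; (b) the compatibilities $\theta_+|_{\J^1_+\cap\N^\circ}=1$, $\theta_+|_{\J^1\cap\N}=1$ and $\theta_+|_{\J^1_+\cap\M}=\theta_{\M,+}$, the first coming from proper subordination and the other two built into the construction of $\theta_+$; and (c) existence of a strongly positive element $\zeta\in \C(\M)$ (with respect to~$\P$) whose associated Hecke-algebra element $T_\zeta\in\End_{\R[\G]}(\ind_{\J^1_+}^\G(\theta_+))$ is $\R$-invertible. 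Given (a)--(c), the standard cover argument---a support computation combined with the Mackey decomposition for $\P\backslash\G/\J^1_+$---produces the desired isomorphism functorially in a smooth $\R$-representation $\sigma$ of $\G$.

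The main obstacle is the integral verification of~(c). Over $\mathbb{C}$, the construction of the positive element $\zeta$ and the invertibility of $T_\zeta$ are part of the type-theoretic constructions for classical groups and their inner forms developed in the references for this section. To handle a general $\R_{0,r}$-algebra, I would fix an integral candidate for $T_\zeta$ using the intertwining computations of Proposition~\ref{propintertwiningthetas} and Corollary~\ref{etaintertwining}, which are valid over any $\R_{0,r}$-algebra and provide a rank-one integral model of the relevant Hecke bimodule. Using Lemma~\ref{Homsandscalarextension} and the fact that $\ind_{\J^1_+}^\G(\theta_+)$ is finitely generated projective (by Lemma~\ref{lemmaprojectivesfinitegroups} and Lemma~\ref{Heislemma}), one reduces $\R_{0,r}$-invertibility of $T_\zeta$ via Lemma~\ref{isomatprimesDDgivesiso} to invertibility over $\mathbb{C}$ at the generic point and over every residue field $\overline{\R_{0,r}/\mathfrak{m}}$, both special instances of the classical (respectively, $\ell$-modular) cover theory. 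Base change from $\R_{0,r}$ to an arbitrary $\R_{0,r}$-algebra $\R$ then preserves the isomorphism by another application of Lemma~\ref{Homsandscalarextension}.
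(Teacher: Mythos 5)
Your proof is correct in outline and reaches the same conclusion, but it takes a noticeably different route from the paper's. The paper proceeds as follows: it treats the case $\P'=\P^\circ$, writes down the explicit Jacquet-restriction morphism
\[\Psi\colon \ind_{\J^1_{\P}}^{\G}(\eta_{\P,\R})\to \ind_{\P^{\circ}}^{\G}\bigl(\ind_{\J^1\cap\M}^{\M}(\eta_{\M,\R})\bigr),\qquad f\mapsto \frac{1}{|\J\cap\N^{\circ}|}\int_{\N^{\circ}} f(nx)\,dn,\]
(defined over $\R$ because the normalizing constant is a power of $p$), then cites the known $\G$-cover result for $(\J^1_\P,\eta_{\P})$ over algebraically closed fields (from~\cite{RKSS}, following Stevens and the idea of M\'inguez--S\'echerre, with references for the other cases), applies Blondel's Th\'eor\`eme~2 to see $\Psi\otimes\K$ is an isomorphism fiberwise, and concludes via Lemma~\ref{isomatprimesDDgivesiso}. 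You instead reduce (by two extra transitivities, which are correct) to the character pair $(\J^1_+,\theta_+)$ and propose to re-derive the cover property from scratch, replacing the lift of~$\Psi$ by a lift of the Hecke operator $T_\zeta$.

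What the two approaches buy: your reformulation at the $\theta_+$-level exposes the polarization structure more transparently and avoids appealing to $\eta$-level cover statements, at the cost of reproving items~(a)--(c), and in particular item~(c) is precisely the hard step that the cited references already carry out. The paper's route is more economical: an explicit $\R$-integral morphism, fiberwise isomorphism via the literature, and a single application of Lemma~\ref{isomatprimesDDgivesiso}. One detail in your argument worth tightening: showing $T_\zeta$ invertible over $\R_{0,r}$ does not by itself produce the isomorphism of inductions over $\R_{0,r}$, because the cover criterion ``$T_\zeta$ invertible $\Rightarrow$ iso'' (Blondel) is proved over a field; you still need to exhibit the concrete morphism over $\R_{0,r}$ (essentially the same $\Psi$) and check it fiberwise, which is what the paper's argument does directly. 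So while your route would work once fully fleshed out, it ultimately collapses to the same mechanism as the paper's proof.
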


\begin{proof}
The first morphism is just transitivity of induction.  As~$\ind_{\J^1}^{\G}(\eta_{\R})$ is independent of the parabolic~$\P$, so too is the isomorphism class of~$\ind_{\J^1_{\P}}^{\G}(\eta_{\P,\R})$, and it suffices to prove the second isomorphism for~$\P'=\P^{\circ}$.  Fix an~$\R$-valued Haar measure on~$\N$, then we have a morphism
\begin{align*}
\Psi:\ind_{\J^1_{\P}}^{\G}(\eta_{-,\P})&\rightarrow \ind_{\P^{\circ}}^{\G}(\ind_{\J^1\cap\M}^{\M}(\eta_{-,\M}))\\
f&\mapsto \frac{1}{|\J\cap\N^{\circ}|}\int_{\N^{\circ}} f(nx) dn,
\end{align*}
where we choose to normalize our morphism by~$|\J\cap\N^{\circ}|$ (as in the complex case) which is a power of $p$.  Note, over an algebraically closed field, the pair~$(\J^1_{\P},\eta_{-,\P})$ is a~$\G$-cover of~$(\J^1\cap{\M},\eta_{-,\M})$ relative to~$\P$.  When $h\neq 0$ and $\D=\F$ this is deduced in \cite[Theorem 9.3]{RKSS} from the construction of covers of \cite{St08}, following an idea of M\'inguez and S\'echerre.  The same argument works in the other cases starting with the input being the construction of covers of \cite{SecherreStevensVI, SkodlerackYe}.  Hence by \cite[Th\'eor\`eme 2]{Blondel},~$\Psi\otimes \K$ is an isomorphism for all algebraically closed fields fields, and hence $\Psi$ defines an isomorphism by Lemma~\ref{isomatprimesDDgivesiso}.
\end{proof}

\begin{proposition}\label{EssGisoinductions}
Let~$\theta,\theta'$ be c-$\G$-conjugate m-semisimple characters of endo-parameter~$\mathfrak{t}$ for~$\G$.  Let~$\eta,\eta'$ be Heisenberg representations associated to~$\theta,\theta'$ on~$\J^1,\J'^1$ (defined after having fixed parametrizations).  Then~$\ind_{\J^1}^{\G}(\eta)\simeq \ind_{\J'^1}^{\G}(\eta')$.
\end{proposition}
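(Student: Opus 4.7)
The plan is as follows. By the definition of essential $\G$-conjugacy, I can fix parametrizations $((\V,h),\varphi,\Lambda,\b)$ and $((\V,h),\varphi',\Lambda',\b')$ for $\theta_-$ and $\theta'_-$ together with an element $g\in\G$ such that $g\V_i=\V_{\zeta(i)}$ for all $i\in\I$ and ${}^g(\theta_{-,\M})=\theta'_{-,\M'}$, where $\M=\M(\varphi(\b),\G)$ and $\M'=\M(\varphi'(\b'),\G)$. Since conjugation by $g\in\G$ commutes with compact induction to $\G$ (and since m-realization is a $\G$-conjugation-invariant property), replacing the first parametrization by its $g$-conjugate reduces us to the case $g=1$, so that $\M=\M'$, $\J^1\cap\M$ is made from the same lattice data as $\J'^1\cap\M$ on the $\M$-side, and $\theta_{-,\M}=\theta'_{-,\M}$.

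Next, I would argue that the pair $(\J^1\cap\M,\,\eta_{-,\M})$ depends only on $\theta_{-,\M}$ (up to isomorphism in the second factor). Indeed, via the splitting of $\V$ that defines $\M$, the group $\J^1\cap\M$ is canonically a product of block $\J^1$-groups attached to the simple block restrictions of $\theta_-$, and the Heisenberg representation $\eta_{-,\M}$ is a tensor product of the corresponding block Heisenberg representations. By the uniqueness part of Lemma~\ref{Heislemma}, each block Heisenberg representation is determined up to isomorphism by the underlying semisimple character on the block (this tensor-product description is precisely what enters the construction preceding Theorem~\ref{theoremHcovers}). Hence the equality $\theta_{-,\M}=\theta'_{-,\M}$ already forces $\J^1\cap\M=\J'^1\cap\M$ and $\eta_{-,\M}\simeq\eta'_{-,\M}$.

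Finally, I would apply Theorem~\ref{theoremHcovers} to each side. The splitting of $\V$ defining the Levi $\M$ is properly (self-dual) subordinate to the semisimple strata $[\Lambda,n,0,\varphi(\b)]$ and $[\Lambda',n',0,\varphi'(\b')]$, since it arises from the $\sigma$-orbits of the associated splitting of a full semisimple element. Fixing any parabolic $\P$ of $\G$ with Levi factor $\M$, the theorem gives
\[
\ind_{\J^1}^{\G}(\eta_-) \;\simeq\; \ind_{\P^{\circ}}^{\G}\bigl(\ind_{\J^1\cap\M}^{\M}(\eta_{-,\M})\bigr) \;\simeq\; \ind_{\P^{\circ}}^{\G}\bigl(\ind_{\J'^1\cap\M}^{\M}(\eta'_{-,\M})\bigr) \;\simeq\; \ind_{\J'^1}^{\G}(\eta'_-),
\]
where the middle isomorphism is the identity once $(\J^1\cap\M,\eta_{-,\M})=(\J'^1\cap\M,\eta'_{-,\M})$ (up to isomorphism) from the previous step. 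The only substantive point is the second step, namely that $(\J^1\cap\M,\eta_{-,\M})$ depends only on the restricted character $\theta_{-,\M}$ and not on the ambient parametrization; this is immediate from the blockwise construction, so the argument is in essence just an application of the cover-style isomorphism of Theorem~\ref{theoremHcovers}.
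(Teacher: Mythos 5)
Your overall plan matches the paper's — conjugate so the two parametrizations share the same splitting of $\V$ with $\M=\M'$ and $\theta_{-,\M}=\theta'_{-,\M}$, identify the Levi data, then apply Theorem~\ref{theoremHcovers} twice. However, there is a genuine gap in your middle step: you assert that the equality $\theta_{-,\M}=\theta'_{-,\M}$ ``already forces $\J^1\cap\M=\J'^1\cap\M$,'' but the justification you offer (the uniqueness part of Lemma~\ref{Heislemma}, plus the blockwise tensor description) does not establish this. Lemma~\ref{Heislemma} says that the Heisenberg representation on a \emph{fixed} group $\J^1(\varphi(\beta),\Lambda)$ is independent of the choice of polarization; it says nothing about whether the group itself is determined by the character. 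The group $\J^1\cap\M$ is a priori constructed from the stratum data $(\varphi(\beta),\Lambda)$, and you have two possibly different strata $(\varphi(\beta),\Lambda)$ and $(\varphi'(\beta'),\Lambda')$ inducing the same restricted character $\theta_{-,\M}$ on a common subgroup. Nothing you've cited rules out $\J^1(\varphi(\beta),\Lambda)\cap\M\ne\J^1(\varphi'(\beta'),\Lambda')\cap\M$.

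The missing ingredient is the m-realization hypothesis, which is exactly what the paper uses here: because $\theta_{-,\M}$ is an m-realization of $\ft_\M$ for $\M$, the group $\J\cap\M$ has an intrinsic characterization as the maximal compact subgroup of the normalizer of $\theta_{-,\M}$ in $\M$, and $\J^1\cap\M$ is its pro-$p$ radical. This characterization depends only on the character $\theta_{-,\M}$ (not on the chosen stratum), so $\theta_{-,\M}=\theta'_{-,\M}$ immediately gives $\J^1\cap\M=\J'^1\cap\M$. Once that group equality is in hand, the uniqueness of the Heisenberg representation from Lemma~\ref{Heislemma} does give $\eta_{-,\M}\simeq\eta'_{-,\M}$, and your final application of Theorem~\ref{theoremHcovers} is correct. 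So the proof is salvageable by replacing your appeal to the ``blockwise construction'' with this normalizer characterization.
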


\begin{proof}
By conjugating in~$\G$, we may assume that~$\theta\in\mathcal{C}_h(\beta,\Lambda)$,~$\theta'\in\mathcal{C}_h(\beta',\Lambda')$, the matching given by their intertwining is the identity,~$\M_c(\beta)=\M_c(\beta')$ which we abbreviate to~$\M$, and~$\theta_{|\M\cap \H^1}=\theta'_{|\M\cap\H'^1}$.  %
%
%we may assume that there are realizations~$((\V,h),\varphi,\Lambda,\b)$ and~$((\V,h),\varphi',\Lambda',\b')$ for~$\theta_-$ and~$\theta'_-$ which have the same splitting, in particular we have~$\M(\beta,\G)=\M(\varphi'(\beta'),\G)$, which we abbreviate to~$\M$, and for which the matching~$\zeta$ is the identity and such that the restrictions~$\theta_-|_{\M\cap\H^1}=\theta'_-|_{\M\cap\H'^1}$ coincide. 
%
%
In particular, we have~$\M\cap \J^1=\M\cap \J'^1$, noting that~$\M\cap \J^1$ is the pro-$p$-radical of the the maximal compact subgroup of the normalizer of~$\theta|_{\M\cap\H^1}$ in~$\M$. Thus the Heisenberg extensions~$\eta_{\M}$ and~$\eta'_{\M}$ also coincide and the result follows immediately from~\eqref{Coversprop}.
\end{proof}

%%%%%%%%%%%%%%%%%%%%%%%%%%%%%
\section{Endo-splitting}\label{secESing}
%%%%%%%%%%%%%%%%%%%%%%%%%%%%%

Let~$\Rep_\R(\G)$ denote the abelian category of smooth $\R$-representations of~$\G$.  We fix our base ring~$\R_0=\mathbb{Z}[1/p,\mu_{p^{\infty}}]$, which contains all values of all semisimple characters over~$\mathbb{C}$, and base rings~$\R_{0,r}=\mathbb{Z}[1/p,\mu_{p^{r}}]$ which contain all values of all semisimple characters over~$\mathbb{C}$ of depth $\leq d(r)$, so~$\R_0=\bigcup \R_{0,r}$.

\begin{definition}
Let~$\R$ be a~$\mathbb{Z}[1/p]$-algebra, $\Sigma$ a collection of finitely generated projective smooth~$\R$-representations of compact open subgroups of~$\G$, and~$\mathcal{H}$ a full abelian subcategory of~$\Rep_{\R}(\G)$.  We say that~$\Sigma$ \emph{exhausts}~$\mathcal{H}$ if, for any smooth~$\R$-representation~$\pi$ of~$\G$ contained in~$\mathcal{H}$, there exists~$(\K,\rho)\in\Sigma$ such that~$\Hom_{\R[\K]}(\rho,\pi)\neq 0$.
\end{definition}

We begin by refining a result of Dat \cite[Propositions 7.5 \& 8.5]{Dat09}, that semisimple~$\R_0$-characters for~$\G$ exhaust the category of smooth~$\R_0$-representations.  %This refinement is important to construct finitely generated projective generators of our factor categories:

\begin{proposition}\label{exhaustionprop}%[{\cite[Propositions 7.5 \& 8.5]{Dat09}, Theorem~\ref{thmSemiCharInRep}, \cite[Theorem~3.1]{SkodlerackCuspQuart}}]\label{DatProp}~
\begin{enumerate}[(1)]
\item \begin{enumerate}
\item Suppose~$\R$ is an~$\R_{0,r}$-algebra.  The collection of~$m$-semisimple~$\R$-characters for~$\G$ of depth~$\leq d(r)$ exhausts~$\Rep_{\R}(\G)_{\leqslant d(r)}$.
\item Suppose~$\R$ is an~$\R_0$-algebra.  The collection of~$m$-semisimple~$\R$-characters for~$\G$ exhausts~$\Rep_{\R}(\G)$. 
\end{enumerate}
\item\begin{enumerate}
\item\label{parta} Suppose~$\R$ is an~$\R_{0,r}$-algebra.  The collection of~Heisenberg~$\R$-representations of~$m$-semisimple~$\R$-characters for~$\G$ of depth~$\leq d(r)$ exhausts~$\Rep_{\R}(\G)_{\leqslant d(r)}$.
\item\label{partb} Suppose~$\R$ is an~$\R_0$-algebra.  The collection of~Heisenberg~$\R$-representations of~$m$-semisimple~$\R$-characters for~$\G$ exhausts~$\Rep_{\R}(\G)$. 
\end{enumerate}
\end{enumerate}
\end{proposition}
%For semisimple characters classical groups with~$\R=\R_0$ this is proved by Dat in ibid., in the other references this is proved for smooth~$\mathbb{C}$-representations and for smooth~$\Fl$-representations of~$\G$ for primes~$\ell\neq p$.  So we give a simple proof showing how to reduce the general case to the case of~$\Fl$-representations (we do not need the case of~$\mathbb{C}$-representations here).
\begin{proof}
First, we prove this in restricted depth~$\leqslant d(r)$ for~$\R_{0,r}$-representations.   The pattern of proof is identical for $m$-semisimple characters, or their Heisenberg representations -- to reduce to the case of an algebraically closed field, considered in previous works, so we just give the proof for Heisenberg~ representations.  Let~$\pi$ denote an~$\R_{0,r}$-representation, we need to show there exists a Heisenberg~$\R_{0,r}$-representation~$\eta$ of an~$m$-semisimple~$\R_{0,r}$-character so that
\[\Hom_{\R_{0,r}[\J^1]}(\eta,\pi)\neq0\]
Every smooth~$\R_{0,r}$-representation has an irreducible subquotient, and~$\Hom_{\R_{0,r}[\X^1]}(\eta,-)$ is exact by projectivity of~$\eta$, so we can reduce to the case that~$\pi$ is irreducible.
%
% so we let~$\zeta$ denote either an~$m$-semisimple~$\R_{0,r}$-character or it's Heisenberg~$\R_{0,r}$-representation, and~$\X^1$ denote either~$\H^1$ or~$\J^1$ respectively.
%
% Every smooth~$\R_{0,r}$-representation has an irreducible subquotient, so by projectivity of~$\zeta$,~$\Hom_{\R_{0,r}[\X^1]}(\zeta,-)$ is exact, and we can reduce to the case of irreducible~$\R_{0,r}$-representations of~$\G$.  

%Let~$\pi$ be an irreducible~$\R_{0,r}$-representation.   
By Corollary \ref{NFcases} and Lemma \ref{TFdontappear}\footnote{In fact, we could avoid using this lemma and consider the (empty by this lemma) torsion free case in the same way we consider the torsion case.}, there exists a unique maximal ideal~$\mathfrak{m}$ of~$\R_{0,r}$ which annihilates~$\pi$, and we let~$\R=\overline{\R_{0,r}/\mathfrak{m}}$ an algebraic closure of the finite field~$\R_{0,r}/\mathfrak{m}$ (which has characteristic different from~$p$).  As in the same corollary,~$\pi\otimes(\R_{0,r}/\mathfrak{m})$ is an irreducible~$(\R_{0,r}/\mathfrak{m})$-representation, and hence~$\pi\otimes\R$ is a non-zero~$\R$-representation of~$\G$ and thus contains a Heisenberg~$\R$-representation~$\widetilde{\eta}$ of an~$m$-semisimple~$\R$-character~$\widetilde{\theta}$ by \cite[Theorem 5.7]{technicalpaper}.
%
%a semisimple~$\R$-character~$\overline{\theta}$ for~$\G$ by the references cited above.   

Let~$\theta$ be a semisimple~$\R_{0,r}$-character (by definition acting on a free~$\R_{0,r}$-module of rank one) lifting~$\overline{\theta}$ (possible by construction of semisimple characters - see Lemma \ref{basicpropssemisimplechars}), and~$\eta$ the associated Heisenberg~$\R_{0,r}$-representation.  Then, by Lemma \ref{Homsandscalarextension},
\[\Hom_{\R_{0,r}[\H^1]}(\eta, \pi)\otimes\R\simeq \Hom_{\R[\H^1]}(\widetilde{\eta},\pi\otimes\R)\neq 0,\]
and \ref{parta} follows for~$\R=\R_{0,r}$.  Now, let~$\pi'$ is a smooth~$\R$-representation where~$\R$ is an~$\R_{0,r}$-algebra, then we have an isomorphism
\begin{align}
\label{equationreducing}\Hom_{\R[\H^1]}(\eta\otimes_{\R_{0,r}}\R,\pi')&\simeq \Hom_{\R_{0,r}[\H^1]}(\eta,\pi'_{|\R_{0,r}})\\
\notag\phi&\mapsto [ \psi_{\phi}:t\mapsto \phi(t\otimes 1)]\\
\notag[\phi_{\psi}:t\otimes r\mapsto r \psi(t)]&\mapsfrom \psi,
\end{align}
and so \ref{parta} follows, and~\ref{partb} follows immediately from \ref{parta}.
\end{proof}

\begin{definition}\label{Progen}
Let~$\ft$ be an endo-parameter for~$\G$.  Let $\P(\ft)$ denote the following finitely generated projective~$\R_{0,r}$-representation of~$\G$
\[\P(\ft)=\bigoplus_{i=1}^{r}\ind_{\J_i^1}^\G(\eta_i)\]
where 
\begin{enumerate}
\item The set $\{(\H_i^1=\H^1(\beta_i,\Lambda_i),\theta_i):1\leqslant i\leqslant r\}$ is a (finite by Theorem \ref{finiteness}) set of representatives for the set of c-$\G$-conjugacy classes of m-semisimple~$\R_{0,r}$-characters of endo-parameter $\ft$.\footnote{Note, we can first choose a set of representatives over~$\mathbb{C}$, and then we can choose~$r$ minimal so that these semisimple~$\mathbb{C}$-characters are valued in~$\mathbb{Z}[\mu_{p^r}]^\times$.}
\item $(\J_i^1,\eta_i)$ is the unique (up to isomorphism) Heisenberg~$\R_{0,r}$-representation of~$\J_i^1=\J^1(\beta_i,\Lambda_i)$ associated to~$\theta_i$.
\end{enumerate}
We extend this definition to Levi subgroups:  Let~$\M$ be a Levi subgroup of~$\G$, and~$\ft_{\M}$ an endo-parameter for~$\M$.  Recall, that writing~$\M=\prod_{j=0}^s \M_i$, as a product of with~$\M_0$ a (quarternionic) classical group (or not appearing in the product) and~$\M_i$ (inner forms of) general linear groups, we can write~$\ft_{\M}=(\ft_j)_{j=0}^s$, with~$\ft_j$ endo-parameters for~$\M_j$.    We let~$\P(\ft_{\M})$ denote the following finitely generated projective~$\R_{0,r}$-representation of~$\M$
\[\P(\ft_{\M})=\bigotimes_{j=0}^s \P(\ft_j)\]
\end{definition}

We observe some simple properties of~$\P(\ft_{\M})$:
\begin{itemize}
\item $\P(\ft)_{\M}$ is \emph{finitely generated and projective} as compact induction from an open subgroup preserves finite generation, and preserves projectivity (as it is left adjoint to restriction, an exact functor).%~$\ind_{\J_\Lambda^1}^\G$ is left adjoint to restriction to~$\J_\Lambda^1$ which is an exact functor.   
\item While the definition of~$\P(\ft_{\M})$ depends on the choice of the representatives~$\theta_i$ (chosen for each~$\P(\ft_j)$), the isomorphism class of $\P(\ft_{\M})$ is independent of this choice by Proposition \ref{EssGisoinductions}.
\item For~$\ft_{\M}\neq \ft'_{\M}$,~$\Hom_{\R[\M]}(\P(\ft_{\M}),\P(\ft'_{\M}))=0$, by Mackey theory as the~$m$-semisimple characters and the Heisenberg representations~$\P(\ft_{\M})$ is induced from do not intertwine with those which induce~$\P(\ft'_{\M})$.
\end{itemize}

%From the second point, really we have only defined the isomorphism class of~$\P(\ft)$
%
%
From now on, we fix a nice choice of representatives, and a choice of~$\P(\ft)$ representing its isomorphism class by Lemma \ref{representatives}:

\begin{lemma}\label{lemmaniceprogchoice}
 \begin{enumerate}
 \item Choose~$\theta\in\Cc_h(\beta,\Upsilon)$ a semisimple character of endo-parameter~$\ft$, and fix a set of representatives~$\mathrm{Vert}_\beta$ for the~$\G_\beta$-classes of vertices in a fixed chamber of~$\mathcal{B}(\mathbb{G}_\beta,\F)$. Let~$\theta_{\Lambda}=\tau_{\Upsilon,\Lambda,\beta}(\theta)$, and~$\eta_{\Lambda}$ the Heisenberg~$\R_{0,r}$-representation associated to~$\theta_{\Lambda}$, and write~$\J^1_{\Lambda}:=\J^1(\beta,\Lambda)$.   Then setting
\[\P(\theta):= \bigoplus_{\Lambda\in \mathrm{Vert}_{\beta}}\ind_{\J^1_{\Lambda}}^{\G}(\eta_{\Lambda}),\]
we have~$\P(\ft)\simeq \P(\theta)$.
\item  Let~$\M$ be a Levi subgroup of~$\G$, and~$\theta=\bigotimes\theta_i$ a semisimple character of endo-parameter~$\ft_{\M}$.  Then, setting~$\P(\theta_{\M})=\bigotimes \P(\theta_i)$ (after fixing representatives for the vertices in each~$\mathcal{B}(\M_{i,\beta_i},\F)$), we have~$\P(\ft_{\M})\simeq \P(\theta_{\M})$.\end{enumerate}
%
%There is a set of representatives~$\{(\H_\Lambda^1=\H^1(\beta,\Lambda),\theta):\Lambda\in\mathrm{Vert}_{\beta}\}$ for the set of~e-$\G$-conjugacy classes of $m$-semisimple~$\R_{0,r}$-characters of endo-parameter~$\mathfrak{t}$ satisfying~$\beta_i=\beta$ from Definition \ref{Progen} is fixed, and
%\begin{itemize}
%\item $\Lambda$ runs over a set of~$\G_\beta$-classes~$\mathrm{Vert}_{\beta}$  of the vertices in a fixed chamber of $\mathcal{B}(\mathbb{G}_\beta,\F)$, and,
%\item for each~$\Lambda$ there is a single~$e$-$\G$-conjugacy class of semisimple characters in~$\mathcal{C}_h(\beta,\Lambda)$ and we choose a representative~$\theta_\Lambda\in\mathcal{C}_h(\beta,\Lambda)$ of this class.
%\end{itemize}In particular, writing~%$\mathrm{Vert}_{\beta}$ for a set of representatives of these vertices under the action of~$\G_\beta$, and for~$\Lambda\in\mathrm{Vert}_{\beta}$ setting
%~$\J^1_{\Lambda}=\J^1(\beta,\Lambda)$, we have~$P(\ft)= \bigoplus_{\Lambda\in \mathrm{Vert}_{\beta}}\ind_{\J^1_{\Lambda}}^{\G}(\eta_{\Lambda})$, where~$\eta_{\Lambda}$ is the Heisenberg~$\R_{0,r}$-representation associated to~$\theta_{\Lambda}$.
\end{lemma}

\begin{definition}
Let~$\ft_{\M}$ be an endo-parameter for~$\M$.   A smooth $\R$-representation $\pi$ of $\M$ is of \emph{class} $\ft_{\M}$ if and only if every semisimple~$\R$-character~$\theta$ for~$\M$ such that~$\Hom_{\R[\H^1]}(\theta,\pi)\neq 0$ has endo-parameter~$\ft_{\M}$.
% if every semisimple character~$\theta$ for~$\G$ such that~$\Hom_{\R[\H^1]}(\theta,\pi)\neq 0$ has endo-parameter $\ft$.
\end{definition}

As semisimple~$\R$-characters are projective, a smooth~$\R$-representation~$\pi$ of~$\M$ has class~$\ft_{\M}$ if and only if every (irreducible) subquotient of~$\pi$ has class~$\ft_{\M}$.  Hence we make the following definition:

\begin{definition}
We let~$\Rep_\R(\ft_{\M})$ denote the full abelian subcategory of $\Rep_\R(\M)$ consisting of all representations of class~$\ft$.  We call~$\Rep_\R(\ft_{\M})$ an \emph{endo-factor} of~$\Rep_\R(\M)$.
\end{definition}

\begin{corollary}\label{Corollarythetaisotypic}
%Let $\pi$ be a smooth $R$-representation of $\G$.  
The representation~$\P(\ft_{\M})$ is a compact generator of~$\Rep_{\R}(\ft_{\M})$.  In particular, if $\pi$ is of class $\ft_{\M}$, then 
\[\sum_{\phi\in\Hom_{\R[\G]}(\P(\ft_{\M}),\pi)}\mathrm{Im}(\phi)=\pi.\]
Moreover, a smooth representation has \emph{class} $\ft_{\M}$ if and only if every~$m$-semisimple~$\R$-character~$\theta$ for~$\M$ such that~$\Hom_{\R[\H^1]}(\theta,\pi)\neq 0$ has endo-parameter~$\ft_{\M}$.
\end{corollary}
\begin{proof}
The functor~$\X\mapsto \Hom_{\R[\M]}(\P(\ft_{\M}),\X)$ is faithful by Propositions \ref{EssGisoinductions} and \ref{exhaustionprop}, and~$\P(\ft_{\M})$ is compact as it is finitely generated and projective.
\end{proof}

%This construction of~$\P(\ft)$ as a \emph{finite} sum, is where we need the refined version of Proposition \ref{exhaustionprop}.  
We now prove our first main theorem \emph{endo-splitting} the category~$\Rep_\R(\G)$:

\begin{theorem}
\label{endosplit}
Let~$\R$ be a~$\mathbb{Z}[\mu_{p^\infty},1/p]$-algebra and~$\G$ be as in~\S\ref{subsecClassicalGroups}.
\begin{enumerate}
\item Let~$\M$ be a Levi subgroup of~$\G$.  We have a decomposition of categories~\[\Rep_\R(\M)=\prod_{\ft}\Rep_\R(\ft_{\M})\] where the product is taken over all endo-parameters for $\M$, and the representation~$\P(\ft_{\M})$ is a finitely generated projective generator of~$\Rep_\R(\ft_{\M})$. 
\item Parabolic induction and restriction are compatible with these decompositions: let~$\P$ be a parabolic subgroup of~$\G$ with Levi decomposition~$\P=\M\N$,~$\ft$ be an endo-parameter for~$\G$, and~$\ft_\M$ an endoparameter for~$\M$.  Then~
 \begin{align*}
 i^\G_\P:\Rep_\R(\ft_\M)&\rightarrow \Rep_\R(i_{\M}^\G(\ft_{\M})),\\
 r^\G_{\P}:\Rep_\R(\ft)&\rightarrow \Rep_\R(r_{\M}^\G(\ft)),
 \end{align*}
 where~$\Rep_\R(r_{\M}^\G(\ft))$ denotes the product~$\prod_{\ft_\M\in r_{\M}^\G(\ft)}\Rep_\R(\ft_\M)$.
\end{enumerate}
\end{theorem}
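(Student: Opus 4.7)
The plan is to prove parts (1) and (2) in parallel, combining exhaustion of $\Rep_\R(\G)$ by semisimple characters with projective-generator considerations, and then to deduce (3) via the covering theorem and Frobenius adjunction. The starting observation is \emph{orthogonality}: for $\ft \ne \ft'$, any morphism $\phi : \pi \to \pi'$ with $\pi \in \Rep_\R(\ft)$ and $\pi' \in \Rep_\R(\ft')$ has image that is simultaneously of class $\ft$ (as a quotient of $\pi$) and of class $\ft'$ (as a subrepresentation of $\pi'$); if non-zero it would, by Proposition~\ref{DatProp}, contain some semisimple character, forcing that character to have two distinct endo-parameters, a contradiction. Consequently, for any $\pi$ the class-$\ft$ parts $\pi_\ft \subseteq \pi$ (the largest class-$\ft$ subrepresentations) sit in direct sum inside $\pi$, and since $\pi/\bigoplus_\ft \pi_\ft$ contains no semisimple character and is therefore zero by Proposition~\ref{DatProp}, we obtain $\pi = \bigoplus_\ft \pi_\ft$.

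Next I verify that $\P(\ft)$ is a finitely generated projective generator of $\Rep_\R(\ft)$. The sum is finite by Theorem~\ref{thmEssentiallyConjugateClasses}, well-defined up to isomorphism by Proposition~\ref{EssGisoinductions} together with Lemma~\ref{Heislemma}, and projective by Lemma~\ref{lemmaprojectivesfinitegroups} (each $\eta_i$ is $\R$-projective on the pro-$p$ group $\J_i^1$, so $\ind_{\J_i^1}^\G \eta_i$ is a finitely generated projective $\R$-representation of $\G$). To show $\P(\ft) \in \Rep_\R(\ft)$, take a semisimple character $\theta'$ contained in a summand $\ind_{\J_i^1}^\G \eta_i$: after reducing modulo a maximal ideal of $\R$ via Corollary~\ref{NFcases} and Lemma~\ref{TFdontappear} (as in the proof of Proposition~\ref{DatProp}), Vign\'eras' Lemma~\ref{Lemma1} and Corollary~\ref{CorollarytoVigneras} produce $g \in \G$ intertwining $\theta'$ with $\eta_i^g$, and then the integral intertwining computations Proposition~\ref{propintertwiningthetas} and Corollary~\ref{etaintertwining} force $\theta'$ to intertwine with $\theta_i$ in $\G$, so $\theta'$ shares the endo-parameter $\ft$. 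That $\P(\ft)$ generates $\Rep_\R(\ft)$ is then the content of Proposition~\ref{prop43}: any $\pi \in \Rep_\R(\ft)$ is generated by its $\theta_\ft$-isotypic components over m-realizations $\theta_\ft$, which by Frobenius reciprocity are precisely the images of morphisms out of the summands of $\P(\ft)$.

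For part (3), Theorem~\ref{theoremHcovers} identifies the parabolic induction $i_\P^\G\bigl(\ind_{\J_M^1}^\M \eta_M\bigr)$ of a compactly induced Heisenberg representation for $\M$ with $\ind_{\J^1}^\G \eta$, where $\eta$ is a Heisenberg representation of~$\G$ whose underlying semisimple character $\theta$ extends $\theta_M$ and hence, by the very definition of $i_\M^\G$ in Section~\ref{SeccuspsuppEP}, has endo-parameter $i_\M^\G(\ft_\M)$. Exactness of parabolic induction and the progenerator property just established thus give $i_\P^\G \Rep_\R(\ft_\M) \subseteq \Rep_\R(i_\M^\G \ft_\M)$. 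The corresponding statement for restriction is formal: if $\pi \in \Rep_\R(\ft)$ and $\sigma \in \Rep_\R(\ft_\M)$ with $\ft_\M \notin r_\M^\G(\ft)$, then $i_\M^\G(\ft_\M) \neq \ft$, so the Frobenius adjunction $\Hom_\M(r_\P^\G \pi, \sigma) \simeq \Hom_\G(\pi, i_\P^\G \sigma)$ vanishes by orthogonality, forcing the $\ft_\M$-component of $r_\P^\G \pi$ to be zero. The principal technical obstacle is the verification that $\P(\ft) \in \Rep_\R(\ft)$, which depends on transporting the integral intertwining computations of Section~\ref{Heisenberg} through a reduction to algebraically closed residue fields so that the Vign\'eras-type Mackey argument applies; once this is in place, the remainder of the proof is formal manipulation of orthogonality, exhaustion, progenerators, and adjunctions.
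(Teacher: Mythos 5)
Your proposal is correct and follows essentially the same route as the paper: orthogonality of class-$\ft$ representations plus the exhaustion result (Proposition~\ref{DatProp}/Corollary~\ref{Corollarythetaisotypic}) gives the decomposition, Proposition~\ref{prop43} and the $\eta$-intertwining computations make $\P(\ft)$ a progenerator, and the covering isomorphism (Theorem~\ref{theoremHcovers}) together with Frobenius adjunction gives compatibility with parabolic functors. The one place you elide a step is the claim in part (1) that $\pi/\bigoplus_\ft\pi_\ft$ contains no semisimple character: this is not automatic from the definition of $\pi_\ft$ as the largest class-$\ft$ subrepresentation, but requires the Vign\'eras/Mackey argument (Corollary~\ref{CorollarytoVigneras}) — which you invoke only later, for part (2) — to see that the $\G$-subrepresentation generated by a $\theta$-isotypic vector is itself of class $\ft_0$ for $\ft_0$ the endo-class of $\theta$.
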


\begin{proof}
\begin{enumerate}
\item With our preparation on endo-parameters in hand, the proof of the decomposition proceeds in the same way as the proof of the decomposition by depth, cf.~\cite[Appendice A.1]{Dat09}.  Let~$\pi$ be a smooth~$\R$-representation of~$\M$, and set
\[\pi_{\ft_{\M}}:=\sum_{\phi\in\Hom_{\R[\M]}(\P(\ft_{\M}),\pi)}\mathrm{Im}(\phi).\]
Then,~$\pi=\sum_{\ft_{\M}}\pi_{\ft_{\M}}$ by Proposition \ref{exhaustionprop} and Corollary \ref{Corollarythetaisotypic}.  
%
%Finally, the functor~$\X\mapsto \Hom_{\G}(\P(\ft),\X)$ is faithful by Propositions \ref{EssGisoinductions} and \ref{exhaustionprop}.
As the representations~$\P(\ft_{\M})$ are finitely generated and projective and for~$\ft_{\M}'\neq \ft_{\M}$ we have~$\Hom_{\R[\G]}(\P(\ft_{\M}),\P(\ft'_{\M}))=0$, the sum is direct~$\pi=\bigoplus \pi_{\ft_{\M}}$.
\item Let~$\ft_\M$ be an endo-parameter in~$\M$.  Let~$\{(\H^1_{\M,i},\theta_{\M,i}): 1\leqslant i\leqslant r\}$ be a set of representatives of the~c-$\M$-conjugacy classes of $m$-semisimple~$\R$-characters of endo-parameter~$\ft_\M$.  Let~$(\J^1_{\M,i},\eta_{\M,i})$ be a Heisenberg~$\R$-representation of~$\M$ containing~$(\H^1_{\M,i},\theta_{\M,i})$.   For each $i$, as in Theorem \ref{theoremHcovers}, we can write
\[\ind_{\J^1_i}^{\G}(\eta_i)\simeq \ind_{\P}^{\G}(\ind_{\J^1(\beta,\Lambda)_\M}^{\M}(\eta_{\M,i}))\]
where the~$\eta_i$ are Heisenberg~$\R$-representations of semisimple~$\R$-characters for~$\G$ with endo-parameter~$i_{\M}^{\G}(\mathfrak{t}_{\M})$.  By exactness of~parabolic induction, it thus takes~$\Rep_{\R}(\mathfrak{t}_\M)$ to $\Rep_\R(i_{\M}^\G(\ft_{\M}))$.

By Frobenius reciprocity, we obtain the statement for parabolic restriction~$r^\G_\P$:  Suppose, for contradiction, that~$r^\G_\P(\P(\ft))(\ft_\M)\neq 0$ for some~$\ft_\M\not\in r_\M^\G(\ft)$.  Then\[\Hom_{\M}(r^\G_\P(\P(\ft)),\cW)\neq 0\] for a representation~$\cW$ of class~$\ft_\M$.  % (in fact, as~$r^{\G}_{\P}$ preserves finite generation \cite[Corollary 1.5]{DHKMfiniteness}, we could suppose that~$\cW$ is irreducible, but we avoid using this fact). 
By Frobenius reciprocity,~$\Hom_{\M}(\P(\ft),i^\G_\P(\cW))\neq 0$; hence~$i^\G_\P(\cW)$ has endo-parameter~$\ft$ and hence~$\cW$ has endo-parameter in~$r^{\G}_{\M}(\ft)$ which is absurd.
\end{enumerate}
\end{proof}

We can further provide another description of our finitely generated projective generators using parabolic induction.  We do this just for a single general linear or classical group~$\G$ to simplify notation:

\begin{lemma} 
Suppose $\ft$ is an endo-parameter for~$\G$ and choose $\M\in[\M_c(\ft)]$, the conjugacy class of Levi subgroups associated to $\ft$. Let~$\P$ be any parabolic subgroup of~$\G$ with Levi factor~$\M$.   Then we have an isomorphism of finitely generated projective generators of~$\Rep_{\R}(\ft)$
\[\P(\ft)\simeq \ind_{\P}^\G (\P(\ft_\M)).\] 
%\item the sum over a set of representatives~$\{(\H_i^1,\theta_i):1\leqslant i\leqslant r\}$ of the~${\N_{\G}(\M)}$-conjugacy classes of $m$-semisimple~$\R_{0,r}$-characters of endo-parameter~$\ft_\M$;% of $\ind_{\J^1_\M}^\M (\eta_{-,\M})$.
%\item $(\J_i^1,\eta_i)$ is the unique (up to isomorphism) Heisenberg~$\R_{0,r}$-representation of~$\J_i^1$ associated to~$\theta_i$.
\end{lemma}

\begin{proof}
Choose~$\theta_{\M}\in\Cc_h(\beta_\M,\Upsilon)$ of endo-parameter~$\ft_{\M}$ and~$\mathrm{Vert}_{\M,\beta}$ denotes a set of representatives for the~$\M_{\beta}$-classes of vertices in a fixed chamber in~$\mathcal{B}(\mathbb{M}_{\beta},\F)$, and
\begin{enumerate}
\item for~$\Lambda\in\mathrm{Vert}_{\M,\beta}$ we write~$\theta_{\Lambda}=\tau_{\Upsilon,\Lambda,\beta_{\M}}(\theta)$, and~$\eta_{\Lambda}$ the unique (up to isomorphism) Heisenberg~$\R_{0,r}$-representation of~$\J_\Lambda^1=\J^1(\beta_{\M},\Lambda)$ associated to~$\theta_\Lambda$;
\item and $\P(\theta_\M)=\bigoplus_{\Lambda\in\mathrm{Vert}_{\M,\beta}}\ind_{\J_\Lambda^1}^\M(\eta_\Lambda)$.
\end{enumerate}
Then~$\P(\ft_{\M})\simeq \P(\theta_{\M})$.  And it follows, from Lemma \ref{EPsupportmap} and Theorem \ref{theoremHcovers} and exactness of induction, that~$\P(\theta_{\M})\simeq \P(\theta)$.
\end{proof}

%%%%%%%%%%%%%%%%%%%%%%%%%%%%%
\section{Beta extensions and types for Bernstein blocks}\label{secbetaextsandtypes}
%%%%%%%%%%%%%%%%%%%%%%%%%%%%%

Let~$\mathfrak{t}$ be an endo-parameter for~$\G$, choose~$\theta\in\Cc_h(\beta,\Upsilon)$ a semisimple character of endo-parameter~$\ft$, and set % and~$(0,\beta)$ a full semisimple pair for~$\mathfrak{t}$ and set~$\E=\F[\beta]$.  ~
~$\P(\ft):=\P(\theta)=\bigoplus_{\Lambda\in\mathrm{Vert}_{\beta}}\ind_{\J_\Lambda^1}^\G(\eta_\Lambda)$ be a finitely generated projective generator of~$\Rep_{\mathbb{Z}[1/p,\mu_{p^r}]}(\ft)$ as constructed in Lemma \ref{lemmaniceprogchoice}.  

In this section, we collect results which will allow us to decompose~$\P(\mathfrak{t})\otimes \K$, where~$\K$ is an algebraically closed field of characteristic~$\ell\neq p$.  Indeed, it follows from a simple cohomology calculation (cf.~\cite[5.2.4]{BK93}, \cite[Theorem 4.1]{St08}) in characteristic zero, and either by an analogous argument or a simple reduction modulo~$\ell$ argument for positive characteristic, that~$\eta_{\Lambda}\otimes \K$ extends to an irreducible~$\K$-representation of~$\J_\Lambda=\J(\beta,\Lambda)$.   If we now choose  extensions~$\kappa_{\Lambda}$ of~$\eta_{\Lambda}\otimes\K$, then we can write
 \[\P(\ft)\otimes \K\simeq \bigoplus_{\Lambda\in\mathrm{Vert}_{\beta}} \ind_{\J_\Lambda}^{\G}(\kappa_\Lambda\otimes \ind_{\J_\Lambda^1}^{\J_\Lambda}(1)),\]
and decompose this further using finite group theory and the decomposition of the group algebra~$\K[\J_{\Lambda}/\J_\Lambda^1]$ into blocks, or the finer decomposition into projective indecomposables.  For the construction of types for Bernstein blocks it is useful to choose extensions with strong intertwining properties, this leads to the notion of ``beta extensions''; and for questions related to understanding when the $\Lambda$-th and $\Lambda'$-th component share an irreducible subquotient over~$\K$, we would like choose beta extensions~$\kappa_\Lambda$ and~$\kappa_{\Lambda'}$ ``compatibly''. 
%\end{enumerate}

%%%%%%%%%%%%%%%%%%%%%%%%%%%%%
\subsection{Beta extensions}
We suppose that~$\R$ is an algebraically closed field of characteristic~$\ell\neq p$.  Let~$\theta$ be an~$m$-semisimple~$\R$-character for~$\G$, and choose a hermitian form~$h$ and semisimple stratum~$[\Lambda,\beta]$ so that~$\theta\in\mathcal{C}_h(\beta,\Lambda)$.  Let~$\eta$ be a Heisenberg~$\R$-representation for~$\theta$.

If~$\Lambda$ is a vertex in~$\mathcal{B}(\mathbb{G}_\beta,\F)$, \emph{beta extensions} of~$\eta$ are defined as in~\cite[Theorem 4.1]{St08},~\cite[Proposition 6.1(i)]{SkodlerackCuspQuart} as representations which extend a certain canonical representation on a pro-$p$-Sylow subgroup.  We write~$\bext(\Lambda):=\{\text{beta extensions of~$\eta$ to~$\J_{\Lambda}:=\J(\beta,\Lambda)$}\}$.  Any two beta extensions of~$\eta$ differ by a character of~$\G_{\beta,\Lambda}/\G^1_{\beta,\Lambda}$ which is trivial on the subgroup generated by all its unipotent subgroups, and the set~$\bext(\Lambda)$ does not depend on the choice of~$\beta$, or of hermitian form~$h$, or the semisimple stratum~$[\Lambda,\beta]$.

Suppose now~$\Lambda$ is not necessarily a vertex.  Let~$\overline{\Lambda}$ denote the facet in $\mathcal{B}(\mathbb{G}_\beta,\F)$, under the strong simplicial structure, containing~$\Lambda$.  As in {\cite{technicalpaper}}, we introduce
\[\J^{\st}(\beta,\Lambda)=\G_{\beta,\overline{\Lambda}}\J^1(\beta,\Lambda).\]
The set of beta extensions of~$\eta$ to~$\J(\beta,\overline{\Lambda})$ is now defined after choosing a vertex~$\Lambda'$ in the closure of the facet of~$\overline{\Lambda}$, by a compatibility condition {\cite[Definition~7.4]{technicalpaper}}, and we write~$\bext(\Lambda,\Lambda')$ for the set of beta extensions of~$\eta$, compatible with the vertex~$\Lambda'$.  When~$\Lambda$ is a vertex, the only possible choice is~$\Lambda'=\Lambda$, and~$\bext(\Lambda,\Lambda)=\bext(\Lambda)$.

As~$\eta$ is determined (up to isomorphism) by~$\theta$, and its choice does not affect the isomorphism classes of beta extensions, we often refer to beta extensions as beta extensions of~$\theta$.

\begin{lemma}[{\cite[Lemma 6.5]{technicalpaper}}]
\label{positivedepthparaindinfinitegroup} Let~$\theta$ be an m-semisimple character for~$\G$.  Choose~$[\Lambda,\beta]$ a semisimple stratum for~$(\G,h)$ so that~$\theta\in\mathcal{C}_h(\beta,\Lambda)$, and let~$\kappa\in\bext(\Lambda)$.  Let~$\mathtt{P}$ be a parabolic subgroup of~$\G=\J_{\Lambda}^{\st}/\J^1_{\Lambda}$ corresponding to a parahoric subgroup~$\P^{\st}(\Upsilon_{\E})$ of~$\P^{\st}(\Lambda_\E)$, and self-dual~$\mathfrak{o}_{\E}$-lattice sequence~$\Upsilon_\E$, and set~$\Gf=\J_{\Upsilon}^{\st}/\J^1_{\Upsilon}$.  Then, for any~$\K$-representation~$\rho$ of~$\Gf$,
\[\ind_{\J^{\st}_{\Upsilon}}^{\G}(\kappa_{\Upsilon}\otimes \rho)\simeq \ind_{\J^{\st}_{\Lambda}}^{\G}(\kappa\otimes \ind_{\mathtt{P}}^{\Gf}(\rho)). \]
where~$\kappa_{\Upsilon}$ is compatible with~$\kappa$ as in \cite[Definition 7.4]{technicalpaper}.
\end{lemma}

%%%%%%%%%%%%%%%%%%%%%%%%%%%%%
\subsection{Types for Bernstein blocks}\label{typesoverC}
As in Section \ref{projmoduleschar0}, write~$\mathfrak{B}_{\K}(\G)$ for the set of inertial classes of supercuspidal supports for~$\G$.    
\begin{definition}
Suppose~$\K$ is an algebraically closed field of characteristic zero.   Let~$\mathfrak{s}\in\mathfrak{B}_{\K}(\G)$.  A pair~$(\U,\Sigma)$, with $\U$ a compact open subgroup of~$\G$, and~$\Sigma$ an irreducible representation of~$\U$, is called an~\emph{$\mathfrak{s}$-type} if~$\ind_{\U}^{\G}(\Sigma)$ is a (finitely generated projective) generator of~$\Rep_{\K}(\mathfrak{s})$. %a direct factor category~$\Rep_{\K}(\mathfrak{s})$ of~$\Rep_{\K}(\G)$.
\end{definition}

For classical~$p$-adic groups,~$\GL_m(\D)$, and quarternionic forms of classical groups, we have a construction of types for Bernstein blocks of Miyauchi--Stevens \cite{MiSt}, S\'echerre--Stevens \cite{SecherreStevensVI}, Skodlerack-Ye \cite{SkodlerackYe}, and in depth zero for an arbitrary connected reductive group, Morris in \cite{Morris} has constructed types for Bernstein blocks.  

Let~$\mathfrak{s}\in\mathfrak{B}_{\K}(\G)$ with representative~$(\M,\rho)$.  This determines a supercuspidal inertial class~$\mathfrak{s}_{\M}\in \mathfrak{B}_{\K}(\M)$ with representative~$(\M,\rho)$. 

A Levi subgroup~$\M$ in~$\G$ decomposes as a product~$\M=\prod \M_i$ of (inner forms) of general linear groups, or of (inner forms) of general linear groups and (an inner form) of a classical group.  We define an (m-semisimple) semisimple stratum in~$\M$, to be a direct sum of (m-semisimple) semisimple stratum in the corresponding~$\M_i$, and define the groups, characters, Heisenberg representations, and beta extensions, associated to stratum in~$\M$ by taking the appropriate product or tensor product over~$i$. 

 The construction of cuspidal representations has been extended to all algebraically closed fields of characteristic~$\ell\neq p$ and, including the depth zero case for any reductive~$p$-adic group, we have:

\begin{theorem}[{Depth zero \cite{Morris, Vigbarcelona}, Classical groups \cite{St08, RKSS}, $\GL_m(\D)$ \cite{SecherreStevensIV,MinSec}, inner forms of classical \cite{SkodlerackCuspQuart}, \cite[Theorem 6.12]{technicalpaper}}]
Let~$\K$ be an algebraically closed field of characteristic~$\ell\neq p$, and~$\rho$ be an irreducible cuspidal~$\K$-representation of~$\M$.  There exist:
\begin{enumerate}
\item in positive depth, %an $m$-semisimple stratum~$[\Lambda,n,0,\beta]$ for~$\M$, 
a beta extension~$\kappa$ to~$\J_\M:=\J(\beta,\Lambda)$ of a semisimple character for an $m$-semisimple stratum~$[\Lambda,n,0,\beta]$; or
\item in depth zero, the trivial character~$\kappa$ of~$\J_\M:=\M_{\Lambda_E}^+$ of~$\M$, where~$\Lambda_{\E}$ is a vertex, in which case write~$\E=\F$ and~$\J_\M^1$ for the pro-unipotent radical of~$\J_\M$; and
%
%we set~$\E=\F$, and let~$\kappa$ be the trivial character of~$\M_{\Lambda_{\E}}^+$ which we denote by~$\J(0,\Lambda)$ with pro-$p$ unipotent radical~$\J^1(0,\Lambda)$ for uniformity; 
\item %and, 
in both cases, an irreducible cuspidal representation~$\sigma_{\M}$ of~$\M_{\Lambda_{\E}}/\M_{\Lambda_{\E}}^1$, such that setting~$\Sigma_{\M}=\kappa_{\M}\otimes\sigma_{\M}$, there is an irreducible representation~$\widetilde{\Sigma}_\M$ of~$\mathbf{J}_{\M}=\N_{\M}(\J(\beta,\Lambda))$ whose restriction contains~$\Sigma_{\M}$;
\end{enumerate}
such that, $\rho \simeq \ind_{\mathbf{J}_{\M}}^{\M}(\widetilde{\Sigma}_\M)$.  Moreover, if~$\ell=0$, then~$(\J(\beta,\Lambda),\Sigma_{\M})$ is an~$\mathfrak{s}_{\M}$-type for~$\mathfrak{s}_{\M}=[\M,\rho]_{\M}$.
\end{theorem}

When the characteristic~$\ell$ of~$\K$ is zero, the construction of covers allows one to construct an $\mathfrak{s}$-type, as we now explain again working for the construction in the broader setting of algebraically closed fields of characteristic~$\ell\neq p$ (though when~$\ell\neq 0$ one no longer obtains a generator of a direct factor category in general as the structure of the~$\ell$-blocks is in general much more complicated):  Let~$\P=\M\N$ be a parabolic subgroup of~$\G$.  Let~$\rho$ be an irreducible cuspidal~$\K$-representation of~$\M$.  

In positive depth, we can choose a semisimple stratum~$[\Lambda,\beta]$ for~$(\G,h)$ and a semisimple character~$\theta\in\Cc_h(\beta,\Lambda)$ such that the decomposition of~$\V$ associated to~$\M$ is properly subordinate to~$[\Lambda,\beta]$, and~$\theta\mid_{\H^1(\beta,\Lambda)\cap \M}$ is an $m$-semisimple character contained in~$\rho$.  We set
\[\J_\P=\J_{\P}(\beta,\Lambda)=\H^1(\beta,\Lambda)(\J^{\st}(\beta,\Lambda)\cap \P).\]
%From {Appendix \ref{AppendixC} Propositions \ref{propJPsteqJP} and \ref{propJPst}} (see also Remark~\ref{remJPst}), or \cite{Morris} in the depth zero case,
%\[\J_\P=\H^1(\beta,\Lambda)(\J(\beta,\Lambda)\cap \P),\]
This agrees with the~$\J_{\P}$ group considered in \cite{MiSt}, \cite{SkodlerackYe}, by \cite[Propositions~C.10,~C.11]{technicalpaper}.   

In depth zero, following Morris \cite[155]{Morris}, we can choose a facet in~$\mathcal{B}(\G,\F)$, such that setting~$\J_{\P}$ to be the full stabiliser of the facet, then for a cuspidal~$\K$-representation of~$\J_{\P}/\J_{\P}^1$, the pair~$(\J_{\P},\sigma_{\P})$ defines an~$\mathfrak{s}$-type.

Let~$\kappa$ be a beta extension to~$ \J^{\st}(\beta,\Lambda)$, which we interpret to be the trivial representation in depth zero.  We form the natural representation~$\kappa_{\P}$ of~$\J_{\P}$ on the space of~$(\J^{\st}(\beta,\Lambda)\cap \U)$-fixed vectors in~$\kappa$.  Then, in depth zero setting~$\eta_{\P}$ to be the trivial representation of~$\J_{\P}^1$, then~$\kappa_{\P}$ extends~$\eta_\P$ and~$\ind_{\J_{\P}}^{\J^{\st}(\beta,\Lambda)}(\kappa_{\P})\simeq \kappa$.

\begin{theorem}[{Depth zero \cite{Morris}, Classical groups \cite{MiSt}, $\GL_m(\D)$ \cite{SecherreStevensVI}, inner forms of classical \cite{SkodlerackYe}, \cite[Theorem 6.13]{technicalpaper}}]\label{Gcoverstheorem}
  Under the above notation, writing~$\Sigma_{\P}=\kappa_{\P}\otimes\sigma_{\P}$, we have 
\begin{enumerate}
\item $(\J_{\P},\Sigma_{\P})$ is a~$\G$-cover of~$(\J_{\M},\Sigma_{\M})$ relative to~$\P$. %{Want this mod ell as well}
\item If~$\ell=0$, then $(\J_{\P},\Sigma_{\P})$ is a~$\mathfrak{s}$-type. 
\end{enumerate}\end{theorem}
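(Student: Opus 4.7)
The plan is two-fold: first I would establish (1) by verifying the Bushnell--Kutzko axioms for a $\G$-cover, then deduce (2) from the standard cover theorem, which states that a $\G$-cover of an $\mathfrak{s}_{\M}$-type is automatically an $\mathfrak{s}$-type for $\mathfrak{s}=i_{\M}^\G(\mathfrak{s}_{\M})$.

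For the restriction and decomposition axioms, I would check that $\J_\P\cap\M=\J_\M$, that $\Sigma_\P|_{\J_\M}=\Sigma_\M$, and that $\J_\P$ admits an Iwahori decomposition
\[
\J_\P=(\J_\P\cap\N^\circ)(\J_\P\cap\M)(\J_\P\cap\N).
\]
All three follow from proper subordination of $\V=\bigoplus_j\V^{(j)}$ to $[\Lambda,n,0,\vphi(\b)]$, which induces matching decompositions for $\H^1(\vphi(\b),\Lambda)$ and $\J^{\st}(\vphi(\b),\Lambda)$ --- the same mechanism already used in Section~\ref{Heisenberg} to define the parabolic Heisenberg representation $\eta_\P$. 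From the construction of $\kappa_\P$ as the $(\J^{\st}\cap\U)$-fixed vectors in $\kappa$, together with the mirror description on the opposite side, $\Sigma_\P$ is trivial on $\J_\P\cap\N$ and on $\J_\P\cap\N^\circ$, and by construction restricts to $\Sigma_\M$ on $\J_\M$.

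The main obstacle will be the ``positivity'' axiom: producing a strongly $(\P,\J_\P)$-positive element $z\in\Zz(\M)$ and an element $T_z\in\mathcal{H}(\G,\Sigma_\P)$, supported on the single double coset $\J_\P z\J_\P$, which is invertible in the Hecke algebra. One takes $z$ in the centre of $\M_{\vphi(\b)}$, sufficiently positive with respect to $\P$; nonzeroness of $T_z$ then reduces, via Corollary~\ref{etaintertwining}, to an intertwining computation which descends to the finite reductive quotient $\J^{\st}/\J^1$. Invertibility of $T_z$ is the real technical core and relies precisely on the intertwining properties of the beta extension $\kappa$ discussed in Section~\ref{secbetaextsandtypes}; ``full intertwining'' (Definition~\ref{def:compatfamilyofbetas}), whenever available, is the clean blanket hypothesis that delivers invertibility.

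Granting the three axioms, $(\J_\P,\Sigma_\P)$ is a $\G$-cover of $(\J_\M,\Sigma_\M)$ relative to $\P$, which is part (1). Since $(\J_\M,\Sigma_\M)$ is an $\mathfrak{s}_\M$-type for $\mathfrak{s}_\M=[\M,\rho]_\M$ by the cuspidal construction of $\rho$ in characteristic zero, the Bushnell--Kutzko cover theorem then delivers part (2): $(\J_\P,\Sigma_\P)$ is an $\mathfrak{s}$-type for $\mathfrak{s}=[\M,\rho]_\G=i_\M^\G(\mathfrak{s}_\M)$.
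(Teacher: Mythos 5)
The paper does not prove this statement --- it is imported with citations (Morris for depth zero, Miyauchi--Stevens for classical groups, S\'echerre--Stevens for $\GL_m(\D)$, Skodlerack--Ye for quaternionic forms), and no argument appears here --- so there is nothing internal to compare against. That said, your framework is exactly what those references use: verify the Bushnell--Kutzko cover axioms and then invoke the general principle that a $\G$-cover of an $\mathfrak{s}_\M$-type is an $\mathfrak{s}$-type for $\mathfrak{s}=i_\M^\G(\mathfrak{s}_\M)$. Your reduction of the restriction and Iwahori-decomposition axioms to proper (self-dual) subordination is also the right mechanism, and it is the same one the paper already deploys in Section~\ref{Heisenberg} for $\eta_\P$.

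The gap is in how you dispose of the positivity axiom. You reduce invertibility of $T_z$ to ``full intertwining'' of $\kappa$, i.e.\ $\I_\G(\kappa)=\I_\G(\eta)$, saying this is ``the clean blanket hypothesis that delivers invertibility.'' But compare Conjecture~\ref{Conjcompatfam} and the remark after it: full intertwining is explicitly \emph{open} for inner forms of classical groups with $p\neq 2$, and not asserted as established even for classical groups over $\F$ in general, yet Theorem~\ref{Gcoverstheorem} is stated unconditionally in exactly those cases via \cite{MiSt}. If invertibility genuinely required $\I_\G(\kappa)=\I_\G(\eta)$, the theorem would be conditional on Conjecture~\ref{Conjcompatfam} --- which it is not. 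The invertibility arguments in \cite{St08,MiSt} proceed by much more local means: controlled intertwining inside parahoric subgroups, explicit support analysis of Hecke-algebra elements on individual double cosets, and transport of beta extensions along paths in the building (the mechanism reflected here in Lemma~\ref{betaextsbij} and Lemma~\ref{positivedepthparaindinfinitegroup}), rather than a global identity on $\I_\G(\kappa)$. Your positivity step --- the technical core, as you correctly identify --- is therefore not merely unfinished; as stated it points at the wrong hypothesis, and would need to be replaced by the genuine coset-by-coset argument of the cited works.
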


%%%%%%%%%%%%%%%%%%%%%%%%%%%%%
\subsection{Supercuspidal support of types}\label{scsupporttypes}
We use the notation of Section \ref{parahorics1} in depth zero: 

\begin{proposition}\label{depthzeromaxtotypes}
Let~$\K$ be an algebraically closed field of characteristic~$\ell\neq p$.  Suppose we have a pair~$(\J,\kappa\otimes\pi)$ consisting of an irreducible representation~$\kappa\otimes\pi$ of a compact open subgroup~$\J$ of~$\G$ constructed in the following fashion:
\begin{enumerate}
\item (Depth zero case) $\J=\G_x^+$ where~$x$ is a vertex in the Bruhat--Tits building of~$\G$,~$\kappa$ is trivial, and~$\pi$ is an irreducible representation of~$\Gf_x^+=\G_x^+/\G_x^1$. 
\item (Positive depth case) there is an~$m$-semisimple character~$\theta$ for~$\G$, a semisimple stratum~$[\Lambda,\beta]$ for~$(\G,h)$ so that~$\theta\in\mathcal{C}_h(\beta,\Lambda)$, and~$\kappa$ is a~$\beta$ extension of~$\theta$, and~$\pi$ is an irreducible~$\K$-representation of~$\J^{\st}(\beta,\Lambda)/\J^1(\beta,\Lambda)$ over an algebraically closed field of characteristic zero.  
\end{enumerate}
Then there exists a pair~$(\J_\P',\Sigma_\P)$ constructed in Theorem \ref{Gcoverstheorem}, which in particular is an~$\mathfrak{s}$-type if~$\ell=0$, such that~$\ind_{\J}^{\G}(\kappa\otimes \pi)$ is a subquotient of~$\ind_{\J_\P'}^{\G}(\Sigma_{\P})$.   
\end{proposition}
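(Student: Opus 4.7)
The plan is to reduce the given pair $(\J,\kappa\otimes\pi)$ to one of the form $(\J_\P',\Sigma_\P)$ produced by Theorem \ref{Gcoverstheorem}, by decomposing the ``level-zero'' factor $\pi$ according to its finite-group (super)cuspidal support and transporting this decomposition to the $p$-adic level via Lemma \ref{positivedepthparaindinfinitegroup} (in positive depth) or transitivity of compact induction (in depth zero). In both cases the claim that $\ind_\J^\G(\kappa\otimes\pi)$ is a subquotient of $\ind_{\J_\P'}^\G(\Sigma_\P)$ will follow from exactness of compact induction combined with exactness of finite-level parabolic induction.

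In the depth-zero case, $\kappa=1$ and we apply Lemma \ref{fgLemmascsupport} to $\pi$ as an irreducible representation of $\M_x^+=\G_x^+/\G_x^1$. This produces a facet $\mathcal{F}'$ (with associated parahoric $\G_{\mathcal{F}'}^+\subseteq \G_x^+$ equal to the preimage of a parabolic $\Q\subseteq \M_x$), and an irreducible representation $\tau^+$ of $\M_{\mathcal{F}'}^+$ with supercuspidal restriction to $\M_{\mathcal{F}'}$, such that $\pi$ is a subquotient of $\ind_{\Q^+}^{\M_x^+}(\tau^+)$. Inflating and applying transitivity of compact induction exhibits $\ind_{\G_x^+}^\G(\pi)$ as a subquotient of $\ind_{\G_{\mathcal{F}'}^+}^\G(\tau^+)$. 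The supercuspidal data on $\M_{\mathcal{F}'}$ is precisely what feeds into the Morris (or classical-group) construction of a depth-zero type $(\J_\M,\Sigma_\M)$ on a $p$-adic Levi $\M$, whose $\G$-cover $(\J_\P',\Sigma_\P)$ from Theorem \ref{Gcoverstheorem} then realises $\ind_{\G_{\mathcal{F}'}^+}^\G(\tau^+)$ as a sum of subquotients.

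In the positive-depth case, the finite quotient $\overline{\G}=\J(\varphi(\beta),\Lambda)/\J^1(\varphi(\beta),\Lambda)$ identifies with the (possibly disconnected) finite reductive group $(\G_{\varphi(\beta)})_\Lambda/(\G_{\varphi(\beta)})_\Lambda^1$. Applying Lemma \ref{fgLemmascsupport} to the irreducible $\pi$ yields a parabolic $\overline{\P}\subseteq\overline{\G}$ corresponding to a self-dual $\mathfrak{o}_\E$-lattice sequence $\Upsilon$ with $\P^{\st}(\Upsilon_\E)\subseteq\P^{\st}(\Lambda_\E)$, and an irreducible representation $\sigma$ of its Levi $\overline{\M}$ with supercuspidal restriction, such that $\pi$ is a subquotient of $\ind_{\overline{\P}}^{\overline{\G}}(\sigma)$. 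By Lemma \ref{positivedepthparaindinfinitegroup} and exactness of compact induction and tensoring with $\kappa$,
\begin{equation*}
\ind_{\J^{\st}(\varphi(\beta),\Lambda)}^{\G}(\kappa\otimes\pi)\ \text{is a subquotient of}\ \ind_{\J^{\st}(\varphi(\beta),\Upsilon)}^{\G}(\kappa_\Upsilon\otimes\sigma),
\end{equation*}
where $\kappa_\Upsilon$ is the beta extension at $\Upsilon$ compatible with $\kappa$. Since $\Upsilon$ splits as $\bigoplus_j\Upsilon^{(j)}$ along a decomposition of $\V$ that is (after possible refinement) properly self-dual subordinate to the stratum $[\Lambda,n,0,\varphi(\beta)]$, it determines a Levi subgroup $\M$ of $\G$ and, for any parabolic $\P=\M\N$, the cover data $(\J_\P',\Sigma_\P)$ of Theorem \ref{Gcoverstheorem}. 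Comparing the inducing systems (here $\kappa_\Upsilon$ restricts on $\J_\P'$ to the canonical subrepresentation $\kappa_\P$, and $\sigma$ plays the role of the cuspidal $\sigma_\M$) exhibits $\ind_{\J^{\st}(\varphi(\beta),\Upsilon)}^{\G}(\kappa_\Upsilon\otimes\sigma)$ as a subquotient of $\ind_{\J_\P'}^{\G}(\Sigma_\P)$, completing the reduction.

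The main obstacle is the bookkeeping in the positive-depth case: one must verify that the parabolic $\overline{\P}$ obtained abstractly from Lemma \ref{fgLemmascsupport} really arises from a \emph{self-dual} $\mathfrak{o}_\E$-lattice sequence $\Upsilon$, and that the associated decomposition of $\V$ is properly self-dual subordinate to the stratum at $\Lambda$; one must also match the cuspidal $\sigma$ on $\overline{\M}$ with the depth-zero cuspidal data $\sigma_\M$ entering Theorem \ref{Gcoverstheorem}, which may require iterating Lemma \ref{fgLemmascsupport} to reach the connected-group supercuspidal support inside $\overline{\M}^\circ$. Once these structural identifications are in place, the comparison between $\ind_{\J^{\st}(\varphi(\beta),\Upsilon)}^{\G}(\kappa_\Upsilon\otimes\sigma)$ and $\ind_{\J_\P'}^{\G}(\Sigma_\P)$ is an essentially formal consequence of the cover formalism and the compatibility of beta extensions recorded in Section \ref{secbetaextsandtypes}.
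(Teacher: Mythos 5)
Your proposal is correct and takes essentially the same approach as the paper. The paper's own proof is a single sentence citing precisely the three ingredients you deploy (the construction of $\mathfrak{s}$-types from Section \ref{typesoverC}, Lemma \ref{fgLemmascsupport}, and Lemma \ref{positivedepthparaindinfinitegroup} in positive depth); your writeup is a faithful unwinding of this. Two of the ``obstacles'' you flag at the end are in fact already built into the cited lemmas: the correspondence between parabolics of the finite quotient and parahoric subgroups (hence self-dual $\mathfrak{o}_\E$-lattice sequences $\Upsilon$) is the one recalled in \S\ref{parahorics1} and is assumed in the formulation of Lemma \ref{positivedepthparaindinfinitegroup}, and Lemma \ref{fgLemmascsupport} already produces a $\tau^+$ on the possibly-disconnected group $\M_{\mathcal{F}'}^+$ with supercuspidal restriction to the connected $\M_{\mathcal{F}'}$, which is exactly the datum $\sigma_{\M}$ entering Theorem \ref{Gcoverstheorem} — no further iteration is needed.
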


\begin{proof}
This follows from the construction of~$\mathfrak{s}$-types explained in the last section, together with Lemma \ref{fgLemmascsupport}, and Lemma \ref{positivedepthparaindinfinitegroup} in positive depth.
\end{proof}

\begin{definition}
In the notation of Proposition \ref{depthzeromaxtotypes}, we say that the pair~$(\J_\P,\Sigma_\P)$ is in the \emph{supercuspidal support} of~$(\J,\kappa\otimes\pi)$.
\end{definition}

%%%%%%%%%%%%%%%%%%%%%%%%%%%%%
\subsection{$\mathfrak{s}$-types and Bernstein projectives}
We continue in the setting of an algebraically closed field~$\K$ of characteristic not equal to~$p$.  We have the parabolically induced representation
\[\Pi_{\mathrm{BK}}:=\ind_{\M,\P}^{\G}(\ind_{\J_{\M}}^{\M}(\Sigma_{\M}))\simeq \ind_{\J_\P}^{\G}(\Sigma_\P),\]
the last isomorphism by \cite[Th\'eor\`eme 2]{Blondel}.  If~$\ell=0$, this is a finitely generated projective generator of~$\Rep_{\K}(\mathfrak{s})$ for the inertial class of~$(\M,\pi_{\M})$ where~$\pi_{\M}\simeq \ind_{\mathbf{J}_\M}^{\M}(\widetilde{\Sigma_{\M}})$ is an irreducible supercuspidal~$\K$-representation of~$\M$ and~$\mathbf{J}_\M=\N_{\M}(\J_{\M})$.    We can also consider
\[\Pi_{\mathrm{Bern}}:=i_{\M,\P}^{\G}(\ind_{\M^\circ}^{\M}(\pi_{\M}))\simeq i_{\M,\P}^{\G}(\pi_{\M}\otimes\chi_{\mathrm{univ}}),\]
which if~$\ell=0$ is Bernstein's finitely generated projective generator of~$\Rep_{\K}(\mathfrak{s})$.  

\begin{lemma}[{\cite[Appendix B]{SavinBakic} when~$\ell=0$}]
\label{projcomplemma}
The induced representation~$\ind_{\J_{\M}}^{\M^{\circ}}(\Sigma_{\M})$ is an irreducible summand of~$\pi_{\M}\mid_{\M^{\circ}}$.
\end{lemma}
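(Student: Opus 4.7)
The approach is Clifford theory applied to the normal subgroup $\M^\circ \trianglelefteq \M$, whose quotient $\M/\M^\circ$ is a finitely generated free abelian group, combined with Mackey restriction of the compactly induced model of $\pi_\M$.

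First, I would establish the structural identity
\[
\mathbf{J}_\M \cap \M^\circ \;=\; \J_\M.
\]
The inclusion $\J_\M \subseteq \mathbf{J}_\M \cap \M^\circ$ is immediate since $\J_\M$ is compact and open. For the reverse inclusion, the point is that $\mathbf{J}_\M/\J_\M = \N_\M(\J_\M)/\J_\M$ is finitely generated abelian and its natural map into $\M/\M^\circ$ is injective: in positive depth this follows from the description $\mathbf{J}_\M \subseteq \J_\M \cdot \Z(\G_\beta)\M_\beta^\circ$ together with the fact that $\J_\M$ contains the unique parahoric of $\M_\beta$ fixing $\Lambda_\E$; in depth zero it is the standard normalizer description for a maximal parahoric.

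Second, since $\M^\circ$ is normal in $\M$, Mackey restriction of $\pi_\M = \ind_{\mathbf{J}_\M}^{\M}(\widetilde\Sigma_\M)$ reads
\[
\pi_\M\mid_{\M^\circ} \;=\; \bigoplus_{g \in \M^\circ \backslash \M / \mathbf{J}_\M} \ind_{\M^\circ \cap {}^g\mathbf{J}_\M}^{\M^\circ}\!\bigl({}^g\widetilde\Sigma_\M\mid_{\M^\circ \cap {}^g\mathbf{J}_\M}\bigr),
\]
and the double coset space is finite because the image of $\mathbf{J}_\M$ in $\M/\M^\circ$ has finite index. By the first step and normality of $\M^\circ$, we have $\M^\circ \cap {}^g\mathbf{J}_\M = {}^g(\M^\circ \cap \mathbf{J}_\M) = {}^g\J_\M$, so each summand becomes $\ind_{{}^g\J_\M}^{\M^\circ}({}^g\Sigma_\M)$; the $g=1$ summand is precisely $\ind_{\J_\M}^{\M^\circ}(\Sigma_\M)$.

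Third, I would compute the endomorphism algebra of this summand by Mackey:
\[
\End_{\M^\circ}\!\bigl(\ind_{\J_\M}^{\M^\circ}(\Sigma_\M)\bigr) \;=\; \bigoplus_{x \in \J_\M \backslash \M^\circ / \J_\M} \Hom_{\J_\M \cap {}^x\J_\M}\!\bigl(\Sigma_\M, {}^x\Sigma_\M\bigr).
\]
Only $x$ in $\I_{\M^\circ}(\Sigma_\M) = \I_\M(\Sigma_\M) \cap \M^\circ$ contribute nonzero terms. The intertwining $\I_\M(\Sigma_\M) = \mathbf{J}_\M$ is a standard feature of the types cited from~\cite{MiSt, SecherreStevensVI, SkodlerackYe}, so $\I_{\M^\circ}(\Sigma_\M) = \mathbf{J}_\M \cap \M^\circ = \J_\M$ and only $x = 1$ contributes, giving $\End_{\M^\circ}(\ind_{\J_\M}^{\M^\circ}(\Sigma_\M)) = \End_{\J_\M}(\Sigma_\M) = \K$.

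Finally, standard Clifford theory for the finite abelian extension $\M/\M^\circ$ of a supercuspidal $\pi_\M$ shows that $\pi_\M\mid_{\M^\circ}$ is a finite direct sum of irreducible supercuspidal representations, in particular semisimple. The Mackey summand $\ind_{\J_\M}^{\M^\circ}(\Sigma_\M)$ is therefore semisimple with one-dimensional endomorphism algebra, hence irreducible. The main obstacle is step one: verifying $\mathbf{J}_\M \cap \M^\circ = \J_\M$ uniformly across depth zero and positive depth, and in particular for inner forms of classical groups where $\mathbf{J}_\M$ may mix contributions from a classical and several general-linear factors of $\M$.
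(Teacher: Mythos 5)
Your plan follows the paper's first two steps closely: the Mackey restriction along $\M^\circ \trianglelefteq \M$, the finiteness of $\mathbf{J}_\M\M^\circ\backslash\M$, and above all the key identity $\mathbf{J}_\M\cap\M^\circ = \J_\M$, which both you and the paper rightly isolate as the main technical point (the paper proves it by reducing to a single $\GL_r$ block of the Levi and using determinant/reduced norm, plus the Bruhat--Tits normalizer fact in depth zero). Where you diverge is the final irreducibility argument: you compute $\End_{\M^\circ}(\ind_{\J_\M}^{\M^\circ}\Sigma_\M)=\K$ via Mackey and $\I_\M(\Sigma_\M)=\mathbf{J}_\M$, then conclude by combining with semisimplicity of $\pi_\M\mid_{\M^\circ}$; the paper instead invokes Vign\'eras' irreducibility criterion \cite[Lemma 4.2]{Vigbarcelona} directly, which is designed to give irreducibility of compactly induced representations from their intertwining property, including in positive characteristic, without passing through a separate semisimplicity statement. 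Two cautions about your route. First, $\M/\M^\circ$ is a free abelian group of positive rank, not finite, so ``standard Clifford theory for the finite abelian extension'' is not quite right as worded; the finiteness lives in $\M/(\mathbf{J}_\M\M^\circ)$, and semisimplicity of a cuspidal restricted to $\M^\circ$ is a genuine theorem (e.g. \cite[II.2.7]{Vig96} and its mod $\ell$ analogues) rather than formal Clifford theory. Second, since $\K$ is allowed to have positive characteristic, ``one-dimensional endomorphism ring plus semisimple'' is the correct logic, but it silently relies on the nontrivial semisimplicity input; Vign\'eras' criterion avoids this by arguing directly, which is why it is the natural tool here. Both arguments are valid; the paper's is shorter and characteristic-agnostic from the outset, while yours makes explicit the intertwining fact $\I_\M(\Sigma_\M)=\mathbf{J}_\M$ and the endomorphism computation, which is a reasonable alternative if one is willing to invoke the semisimplicity theorem for cuspidals.
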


\begin{proof}
By Mackey Theory, the restriction of~$\pi_{\M}$ to~$\M^{\circ}$ decomposes into a finite sum containing the direct sum
\[\bigoplus_{\mathbf{J}_{\M}\M^\circ\backslash \M} (\ind_{\J_{\M}^j}^{\M^{\circ}}(\Sigma_{\M}^j))\hookrightarrow \pi_{\M}\mid_{\M^{\circ}},\]
as~$\widetilde{\Sigma}_{\M}\mid_{\J_{\M}}$ contains~$\Sigma_{\M}$, and~$\mathbf{J}_{\M}^j\cap \M^{\circ}=\J_{\M}^j$, because:
\begin{enumerate}
\item for~$\G$ classical, one can reduce to a single~$\GL_r(\F)$ block of a Levi subgroup, where we have~$\E^\times \J_i\cap \GL_r(\F)^{\circ}=(\E^\times \cap \GL_r(\F)^{\circ})\J_i=\J_i$, as~$\GL_r(\F)^{\circ}=\{g\in\GL_r(\F):\det(g)\in\mathfrak{o}_\F^\times\}$.
\item for an inner form of a classical or general linear group, the equality follows similarly with the reduced norm replacing the determinant.
\item for~depth zero representations,~$\mathbf{J_{\M}}$ is contained in~$\mathrm{N}_{\M}(\M_x^+)$ for~$x$ a vertex in the Bruhat--Tits building of~$\M$, and in this case~$\mathrm{N}_{\M}(\M_x^+)\cap \M^\circ=\mathrm{N}_{\M^{\circ}}(\M_x^+)=\M_x^+$ from results of Bruhat--Tits referenced in Section \ref{parahorics1}.
\end{enumerate}
It remains to show that~$\ind_{\J_{\M}}^{\M^{\circ}}(\Sigma_{\M})$ is irreducible, which follows from Vign\'eras' simple criterion for irreducibility \cite[Lemma 4.2]{Vigbarcelona}, analogous to the construction of cuspidal representations -- cf.~\cite[Theorem 12.1]{RKSS}.
\end{proof}

Letting~$\rho_{\M^{\circ}}=\ind_{\J_{\M}}^{\M^{\circ}}(\Sigma_{\M})$, we can consider the finitely generated representation
\[\Pi':=i_{\M,\P}^{\G}(\ind_{\M^\circ}^{\M}(\rho_{\M^{\circ}})),\]
by exactness of induction it is a summand of~$\Pi_{\mathrm{Bern}}$.%, and if~$\ell=0$ is also a finitely generated projective generator of~$\Rep_{\K}(\mathfrak{s})$ (cf.~\cite[Section 8]{BHWhittaker}).

\begin{proposition}[{\cite[Appendix B]{SavinBakic} when~$\ell=0$}]
\label{BerBKcomparison} The finitely generated projective representation~$\Pi_{\mathrm{BK}}$  is isomorphic to a summand of~$\Pi'$.  Hence~$\Pi_{\mathrm{BK}}$ is a summand of~$\Pi_{\mathrm{Bern}}$.
\end{proposition}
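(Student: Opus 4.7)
The plan is to verify directly that $\Pi'\simeq\Pi_{\mathrm{BK}}$ via transitivity of induction, and to deduce the summand claim from the preceding lemma. The key observation is that $\J_\M$ is a compact open subgroup of $\M$, and is therefore contained in $\M^\circ$, since $\M^\circ$ is the intersection of the kernels of all unramified characters of $\M$ and hence contains every compact subgroup. By transitivity of compact induction from open subgroups,
\[
\ind_{\M^\circ}^{\M}(\rho_{\M^\circ}) \;=\; \ind_{\M^\circ}^{\M}\bigl(\ind_{\J_\M}^{\M^\circ}(\Sigma_\M)\bigr) \;\simeq\; \ind_{\J_\M}^{\M}(\Sigma_\M).
\]
Applying non-normalized parabolic induction to both sides then gives
\[
\Pi' \;=\; i_{\M,\P}^{\G}\bigl(\ind_{\M^\circ}^{\M}(\rho_{\M^\circ})\bigr) \;\simeq\; i_{\M,\P}^{\G}\bigl(\ind_{\J_\M}^{\M}(\Sigma_\M)\bigr) \;=\; \Pi_{\mathrm{BK}},
\]
where the final equality is the definition of $\Pi_{\mathrm{BK}}$, together with the isomorphism $\ind_{\J_\P}^{\G}(\Sigma_\P)\simeq i_{\M,\P}^{\G}(\ind_{\J_\M}^{\M}(\Sigma_\M))$ coming from \cite{Blondel} that is already recorded in the definition of $\Pi_{\mathrm{BK}}$.

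For the summand claim, the preceding lemma exhibits $\rho_{\M^\circ}$ as an irreducible direct summand of $\pi_\M\mid_{\M^\circ}$. Since compact induction from an open subgroup is exact and commutes with direct sums, $\ind_{\M^\circ}^{\M}(\rho_{\M^\circ})$ is a direct summand of $\ind_{\M^\circ}^{\M}(\pi_\M\mid_{\M^\circ})=\ind_{\M^\circ}^{\M}(\pi_\M)$. Exactness of $i_{\M,\P}^{\G}$ then shows that $\Pi'$ is a direct summand of $\Pi_{\mathrm{Bern}}$, and combining with the isomorphism of the previous paragraph exhibits $\Pi_{\mathrm{BK}}$ as a direct summand of $\Pi_{\mathrm{Bern}}$.

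No substantive obstacle is anticipated: both assertions reduce to the inclusion $\J_\M\subseteq\M^\circ$, transitivity of compact induction, exactness of non-normalized parabolic induction, and the decomposition provided by the preceding lemma.
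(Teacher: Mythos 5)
Your argument is essentially the same as the paper's: transitivity of compact induction identifies $\ind_{\M^\circ}^{\M}(\rho_{\M^\circ})$ with $\ind_{\J_\M}^{\M}(\Sigma_\M)$, and the summand claim follows from the preceding lemma together with additivity/exactness of the two inductions. The one point you pass over silently is the possible normalization discrepancy between the parabolic inductions $\ind_{\M,\P}^{\G}$ (in the definition of $\Pi_{\mathrm{BK}}$) and $i_{\M,\P}^{\G}$ (in the definitions of $\Pi'$ and $\Pi_{\mathrm{Bern}}$): the paper's proof explicitly remarks that the modulus character $\delta_\P$ is unramified precisely so that any half-modulus twist is absorbed, since $\ind_{\J_\M}^{\M}(\Sigma_\M)\otimes\delta_\P^{1/2}\simeq\ind_{\J_\M}^{\M}(\Sigma_\M)$ (equivalently, twisting $\ind_{\M^\circ}^{\M}(-)$ by an unramified character is an isomorphism). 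If you are treating both symbols as the same non-normalized functor, your proof is complete as written; if they are intended to differ by normalization, you should add one line pointing out that $\delta_\P^{1/2}$ is trivial on $\J_\M$ (or on $\M^\circ$), which is all the paper's modulus remark is doing. The rest of your reasoning — the inclusion $\J_\M\subseteq\M^\circ$, transitivity, and the exactness/direct-sum argument for the summand claim — matches the paper.
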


\begin{proof}
As the modulus character of~$\delta_\P$ is an unramified character of~$\M$, the first statement follows from the lemma and transitivity of induction.  The second statement then follows from Lemma \ref{projcomplemma}.
\end{proof}

%%%%%%%%%%%%%%%%%%%%%%%%%%%%%
\section{Block decompositions via type theory}\label{secblocks}
%%%%%%%%%%%%%%%%%%%%%%%%%%%%%

Let~$\mathfrak{t}$ be an endo-parameter for~$\G$.   Then we have a direct factor~$\Rep_{\overline{\mathbb{Z}}[1/p]}(\mathfrak{t})$ subcategory of~$\Rep_{\overline{\mathbb{Z}}[1/p]}(\G)$, with finitely generated projective generator~$\P(\ft)$ as in Lemma \ref{lemmaniceprogchoice}.  %~$P(\ft)=\bigoplus\ind_{\J^1_\Lambda}^{\G}(\eta_\Lambda)$, where the sum is over a set of representatives for the~e-$\G$-classes of~$m$-semisimple characters~$\theta_\Lambda$ of endo-parameter~$\mathfrak{t}$ for~$(\G,h)$.
Recall, with~$\J^1_\Lambda=\J^1(\beta,\Lambda)$,~$\theta\in\mathcal{C}_h(\beta,\Lambda)$ of endo-parameter~$\ft$, for some~$\beta\in\mathrm{Lie}(\G)$, we choose~$\P(\ft)$ of the form% (cf.~\cite[Theorem 4.13]{SkodInnerFormII}) and with~$\Lambda$ in a set~$\mathrm{Vert}_\beta$ of~$\G_\beta$-representatives of the vertices in a fixed chamber in the building of~$\G_{\beta}$:
 \[\P(\ft)=\P(\theta)=\bigoplus_{\Lambda\in\mathrm{Vert}_\beta}\ind_{\J^1_\Lambda}^{\G}(\eta_\Lambda).\]

We also allow the trivial endo-parameter~$\mathfrak{t}=\mathbf{0}_\G$ of any reductive~$p$-adic group~$\G$, corresponding to the depth zero subcategory of~$\Rep_{\overline{\mathbb{Z}}[1/p]}(\G)$, with finitely generated projective generator a finite sum
\[\P(\mathbf{0}_\G)=\bigoplus_{\Lambda\in \mathrm{Vert_0}} \ind_{\G_{\Lambda}^1}^{\G}(1)\]
where the sum is over a set of representatives for the~$\G$-representatives of the vertices in the Bruhat-Tits building of~$\G$, which we can suppose all lie in the same chamber.  

Let~$r$ be sufficiently large for~$\mathfrak{t}$ so that all the~$\eta_\Lambda$ are defined over~$\R_{0,r}=\mathbb{Z}[1/p,\mu_{p^r}]$ (or in the depth zero case, we let~$r=0$).  In particular, the idempotent cutting out~$\Rep_{\overline{\mathbb{Z}}[1/p]}(\mathfrak{t})$ is defined over~$\mathbb{Z}[1/p,\mu_{p^r}]$.   

In this section~$\R$ denotes a commutative~$\mathbb{Z}[1/p,\mu_{p^r}]$-algebra, and we study the~$\R$-block decomposition of~$\Rep_{\R}(\mathfrak{t})$.  We further suppose that $\R$ is a domain, in particular the field of fractions~$\K'$ of~$\R$ is flat over~$\R$, and we fix an algebraic closure~$\K$ of~$\K'$.  The key examples we consider include:~$\R=\mathbb{Z}[1/p,\mu_{p^r}]$,~$\R=\overline{\mathbb{Z}}[1/p]$,~$\R=\W(\Fl)$,~$\R=\Zl$, and~$\R=\Fl$ for~$\ell\neq p$. 

%%%%%%%%%%%%%%%%%%%%%%%%%%%%%
\subsection{The fine and coarse graphs}
Let~$\Lambda\in\mathrm{Vert}_\beta$.  We write~$\J_{\Lambda}:=\J(\beta,\Lambda)$, %{Notation to change in the next par
and choose a decomposition~$\ind_{\J^1_\Lambda}^{\J_\Lambda}(\eta_\Lambda\otimes\R)\simeq \bigoplus_{i\in\I_{\Lambda}} \overline{P}_{\Lambda,i}$ into projective indecomposable~$\R$-representations~$\overline{P}_{\Lambda,i}$, where~$\I_\Lambda$ is an (finite) index set for the decomposition.  Set\[P_{\Lambda,i}:=\ind_{\J_i}^{\G}( \overline{P}_{\Lambda,i}).\] By exactness of compact induction
\[P(\ft)\otimes \R\simeq \bigoplus_{\substack{\Lambda\in\mathrm{Vert}_\beta, i\in\I_\Lambda}} P_{\Lambda,i}\] defines a decomposition of~$P(\ft)\otimes \R$ into finitely generated projective~$\R$-representations~$P_{\Lambda,i}$.

\begin{remark}\label{remarkotherdecomps}
\begin{enumerate}
\item If~$\R$ is Artinian then, setting~$\H$ a compact open normal subgroup of~$\J_i$ such that~$\eta_{\Lambda}\vert_{\H}$ is trivial, the group ring~$\R[\J_i/\H]$ is Artinian.  This guarantees the uniqueness up to isomorphism of the summands~$\overline{P}_{\Lambda,i}$ in the decomposition of~$\ind_{\J^1_\Lambda}^{\J_\Lambda}(\eta_\Lambda\otimes\R)$ (by the Krull--Schmidt theorem).  While similar uniqueness statements hold for various local rings (see for example \cite{ReinerKS}), they fail in general \cite{ReinerFailure}, so depending on the context this may be a choice of decomposition we are making.
\item \label{remarkotherdecompsii}For our applications, the decomposition~$\ind_{\J^1_\Lambda}^{\J_\Lambda}(\eta_\Lambda\otimes\R)\simeq \bigoplus_i\overline{P}_{\Lambda,i}$ into projective indecomposable representations can be replaced by any decomposition~$\ind_{\J^1_\Lambda}^{\J_\Lambda}(\eta_\Lambda\otimes\R)\simeq\bigoplus \Pi_{\Lambda,i}$ satisfying, for all~$i$, that there are no non-trivial central idempotents of~$\End_{\R[\J_\Lambda]}(\Pi_{\Lambda,i})$.   We can use any such decomposition to parametrize the~$\R$-blocks following the methods in this section and construct finitely generated projective generators of the~$\R$-blocks.  Different choices of decomposition can lead to different decompositions of the finitely generated projective generators we construct as direct sums of finitely generated projective representations.
\end{enumerate}
\end{remark}

Choose a beta extension~$\kappa'_\Lambda$ of~$\eta_\Lambda\otimes \overline{\mathbb{Q}}$, then it is defined over~$\mathrm{S}[1/p]$ where~$\mathrm{S}$ is the ring of integers of a number field, by \cite{CurtisReiner}. 

\begin{lemma}\label{lemmakappafunctor}
Suppose~$\R$ is an~$\mathrm{S}[1/p]$-algebra, and set~$\kappa_\Lambda=\kappa_\Lambda'\otimes \R$.  Let~$ \ind_{\J_{\Lambda}^1}^{\J_{\Lambda}}(1)=\bigoplus_{i\in\I_{\Lambda}}  \Q_{\Lambda,i}$ be a decomposition of~$\ind_{\J^1_{\Lambda}}^{\J_{\Lambda}}(1)$ into projective indecomposable representations of~$\J_{\Lambda}/\J_{\Lambda}^1$.  Then
\[\ind_{\J^1_{\Lambda}}^{\J_{\Lambda}}(\eta_{\Lambda}\otimes\R)\simeq \bigoplus_{i\in\I_{\Lambda}} \kappa_{\Lambda}\otimes \Q_{\Lambda,i},\]
defines a decompostion of~$\ind_{\J^1_{\Lambda}}^{\J_{\Lambda}}(\eta_{\Lambda}\otimes\R)$ into projective indecomposable representations. 
\end{lemma}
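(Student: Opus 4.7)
The plan is to establish the isomorphism by the projection formula, then deduce projectivity and indecomposability.

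First, by the very definition of a beta extension, $\kappa_i'$ restricts to $\eta_i\otimes\overline{\mathbb{Q}}$ on $\J_i^1$, and after base change $\kappa_i|_{\J_i^1}\simeq \eta_i\otimes\R$. The standard projection formula for induction from an open subgroup, applied with $\kappa_i$ playing the role of the ambient $\J_i$-representation, then gives
\[
\ind_{\J_i^1}^{\J_i}(\eta_i\otimes\R)
\;\simeq\;\ind_{\J_i^1}^{\J_i}\!\left(\kappa_i|_{\J_i^1}\right)
\;\simeq\;\kappa_i\otimes\ind_{\J_i^1}^{\J_i}(1),
\]
and distributing the tensor product through the given decomposition $\ind_{\J_i^1}^{\J_i}(1)=\bigoplus_j\Q_{i,j}$ yields the asserted direct sum decomposition $\ind_{\J_i^1}^{\J_i}(\eta_i\otimes\R)\simeq\bigoplus_j\kappa_i\otimes\Q_{i,j}$.

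Projectivity is then immediate: the left-hand side is projective by Lemma~\ref{lemmaprojectivesfinitegroups} (applied to the projective $\R[\J_i^1]$-module $\eta_i\otimes\R$ of Lemma~\ref{Heislemma}), and direct summands of projectives are projective.

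The remaining step, which is the substantive part, is to show that each summand $\kappa_i\otimes\Q_{i,j}$ is indecomposable as an~$\R[\J_i]$-module. The key claim is the natural $\R$-algebra isomorphism
\[
\End_{\R[\J_i]}(\kappa_i\otimes\Q_{i,j})\;\simeq\;\End_{\R[\J_i/\J_i^1]}(\Q_{i,j}).
\]
To see this, restrict first to $\J_i^1$: since $\J_i^1$ acts trivially on $\Q_{i,j}$, the restriction is $\eta_i\otimes\Q_{i,j}$, and one checks (using Corollary~\ref{etaintertwining} with $g=1$, which gives $\End_{\R[\J_i^1]}(\eta_i)\simeq\R$) that any $\J_i^1$-equivariant endomorphism has the form $\id_{\eta_i}\otimes f$ for a unique $\R$-linear $f\colon\Q_{i,j}\to\Q_{i,j}$; imposing equivariance under the full $\J_i$ forces $f$ to be $\J_i/\J_i^1$-equivariant, yielding the displayed isomorphism. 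Because $\Q_{i,j}$ is a projective indecomposable module over $\R[\J_i/\J_i^1]$, its endomorphism ring contains no non-trivial idempotents, and the same then holds for $\End_{\R[\J_i]}(\kappa_i\otimes\Q_{i,j})$, proving indecomposability. The main (small) obstacle is being careful with the $\R$-linearity reduction in identifying $\End_{\R[\J_i^1]}(\eta_i\otimes\Q_{i,j})$ with $\End_\R(\Q_{i,j})$ when $\R$ is an arbitrary $\mathrm{S}[1/p]$-algebra rather than a field, but this follows from Lemma~\ref{Homsandscalarextension} together with Lemma~\ref{Heislemma}.
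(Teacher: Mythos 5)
Your argument is correct and is essentially the same as the paper's: both hinge on the Morita-theoretic fact that $\End_{\R[\J_i^1]}(\eta_i)\simeq\R$ (Corollary~\ref{etaintertwining}) together with the triviality of $\Q_{i,j}$ on $\J_i^1$, which allows one to ``peel off'' the $\kappa_i$ factor. The only cosmetic difference is in the packaging of the indecomposability step: you compute the endomorphism ring $\End_{\R[\J_i]}(\kappa_i\otimes\Q_{i,j})\simeq\End_{\R[\J_i/\J_i^1]}(\Q_{i,j})$ and appeal to the absence of non-trivial idempotents, whereas the paper argues by contradiction, applying the functor $\Hom_{\R[\J_i^1]}(\kappa_i,-)$ to a hypothetical non-trivial decomposition $\kappa_i\otimes\Q_{i,j}=\Pi_1\oplus\Pi_2$ to produce a non-trivial decomposition of $\Q_{i,j}$ (checking that each $\Hom_{\R[\J_i^1]}(\kappa_i,\Pi_k)$ is non-zero because $\Pi_k$ is $\eta_i$-isotypic). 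These are two ways of saying the same thing. One small point worth making explicit in your version: the identification $\End_{\R[\J_i^1]}(\eta_i\otimes\Q_{i,j})\simeq\End_\R(\Q_{i,j})$ needs that $\Q_{i,j}$ is finitely generated projective over $\R$ (it is, as a summand of the free module $\ind_{\J_i^1}^{\J_i}(1)$) and that $\eta_i\otimes\R$ is finitely generated projective over $\R[\J_i^1]$ (Lemma~\ref{Heislemma}); you gesture at this via Lemma~\ref{Homsandscalarextension} but it is worth spelling out, since a bare appeal to $\End_{\R[\J_i^1]}(\eta_i)\simeq\R$ does not by itself give the tensor--Hom interchange over a non-field base.
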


\begin{proof}
We have
\[\ind_{\J^1_{\Lambda}}^{\J_{\Lambda}}(\eta_{\Lambda}\otimes\R)\simeq\kappa_{\Lambda}\otimes \ind_{\J_{\Lambda}^1}^{\J_{\Lambda}}(1)\simeq \bigoplus_{i\in\I_{\Lambda}} \kappa_{\Lambda}\otimes \Q_{\Lambda,i},\]
and it suffices to show that the $\kappa_{\Lambda}\otimes \Q_{\Lambda,i}$ are indecomposable.  Suppose that, for some~$i$, we have a non-trivial decomposition~$ \kappa_{\Lambda}\otimes \Q_{\Lambda,i}=\Pi_1\oplus\Pi_2$. Then applying~$\Hom_{\R[\J^1_{\Lambda}]}(\kappa_{\Lambda},-)$, as~$\Q_{\Lambda,i}\mid_{\J^1}$ is trivial and~$\End_{\R[\J^1]}(\eta_{\Lambda})\simeq\R$ (Corollary \ref{etaintertwining}), we have
\begin{align*}
\Q_{\Lambda,i}&\xrightarrow{\sim}\Hom_{\R[\J^1_{\Lambda}]}(\kappa_{\Lambda}, \kappa_{\Lambda}\otimes \Q_{\Lambda,i})\\
w&\mapsto \alpha_{w}: v\mapsto v\otimes w,
\end{align*}
where~$\J_{\Lambda}/\J_{\Lambda}^1$ acts on~$\Hom_{\R[\J^1_{\Lambda}]}(\kappa_{\Lambda},\kappa_{\Lambda}\otimes \Q_{\Lambda,i})$ via~$j\cdot f = j\circ f \circ j^{-1}$.  And we find
\[\Q_{\Lambda,i}\simeq \Hom_{\R[\J^1_{\Lambda}]}(\kappa_{\Lambda}, \Pi_1\oplus\Pi_2)\simeq  \Hom_{\R[\J^1_{\Lambda}]}(\kappa_{\Lambda},  \Pi_1)\oplus\Hom_{\R[\J^1_{\Lambda}]}(\kappa_{\Lambda}, \Pi_2)\] 
as representations of~$\J_{\Lambda}/\J_{\Lambda}^1$, and each hom-space is non-zero as~$\Pi_1$ and~$\Pi_2$ are~$\eta_{\Lambda}$-isotypic.
\end{proof}

\begin{definition} 
\begin{enumerate}
\item We define the \emph{fine~$(\mathfrak{t},\R)$-graph}~$\mathcal{G}_\ft=(V,E)$ for~$\mathfrak{t}$ over~$\R$ by its vertex set
\[V=\{P_{\Lambda,i}:\Lambda\in\mathrm{Vert}_{\beta}, i\in \I_{\Lambda}\}\]
 and drawing an edge between~$P_{\Lambda,i}$ and~$P_{\Lambda,j}$ if either~\[\Hom_{\R[\G]}(P_{\Lambda,i}, P_{\Lambda,j})\neq 0\text{ or }\Hom_{\R[\G]}(P_{\Lambda,j}, P_{\Lambda,i})\neq 0.\] 
\item We define the \emph{coarse~$(\mathfrak{t},\R)$-graph}~$\widetilde{\mathcal{G}}_\ft=(V,\widetilde{E})$ for~$\mathfrak{t}$ over~$\R$ by its vertex set\[V=\{P_{\Lambda,i}:\Lambda\in\mathrm{Vert}_{\beta}, i\in \I_{\Lambda}\}\] and drawing an edge between~$P_{\Lambda,i}$ and~$P_{\Lambda,j}$ if~$P_{\Lambda,i}\otimes \K$ and~$P_{\Lambda,j}\otimes \K$ have direct summands lying in the same~$\K$-block.
\end{enumerate}
\end{definition}

\begin{remark}
\begin{enumerate}
\item To avoid repetition in the projective generators we will construct from our graphs, if a representation appears (up to isomorphism) with multiplicity greater than one in the decomposition~$\ind_{\J^1_\Lambda}^{\J_\Lambda}(\eta_\Lambda\otimes\R)\simeq \bigoplus_{i\in\I_{\Lambda}} \overline{P}_{\Lambda,i}$, then we can identify the corresponding vertices in the~$(\mathfrak{t},\R)$-graphs.
\item In the setting of Lemma \ref{lemmakappafunctor}, a natural decomposition of~$\R[\J_\Lambda/\J_\Lambda^1]$, rather than choosing a decomposition of~$\R[\J_{\Lambda}/\J_{\Lambda}^1]$ into projective indecomposable modules, is the block decomposition of~$\R[\J_{\Lambda}/\J_{\Lambda}^1]$ which exists as~$\R$ is an integral domain (any decomposition of~$1$ into infinitely many orthogonal idempotents in the centre of~$\R[\J_{\Lambda}/\J_{\Lambda}^1]$ would give such a decomposition in~$\K[\J_{\Lambda}/\J_{\Lambda}^1]$ which is Noetherian, a contradiction).  
\item To obtain Lemma \ref{lemmakappafunctor} over a smaller base ring~$\mathrm{S}$, using the theory of the Weil representation (cf.,~\cite{FinKalSpi}) we expect one can construct an extension of~$\eta_{\Lambda}$ to~$\J_{\Lambda}$, then proceed with this extension or twist by a character of~$\J_{\Lambda}/\J_{\Lambda}^1$ to obtain a beta extension (in which case one would need to extend our ring $\R_{0,r}$ by these character values).
\end{enumerate}
\end{remark}

\begin{proposition}
\begin{enumerate}
\item The fine~$(\mathfrak{t},\R)$-graph~$\mathcal{G}_{\ft}$ is a subgraph of the coarse~$(\mathfrak{t},\R)$-graph~$\widetilde{\mathcal{G}}_{\ft}$.
\item Suppose~$\R$ is a characteristic zero domain.  Then the fine~$(\mathfrak{t},\R)$-graph and the coarse~$(\mathfrak{t},\R)$-graph coincide,~i.e.,~$\mathcal{G}_{\ft}=\widetilde{\mathcal{G}}_{\ft}$.
\end{enumerate}
\end{proposition}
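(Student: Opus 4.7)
The plan is to compare morphisms of the $P_{i,j}$ over $\R$ with morphisms after base change to $\K$, using Lemma \ref{Homsandscalarextension} to move between them, and using Lemma \ref{mainlemmaKprojs} to produce morphisms between finitely generated projectives lying in a common Bernstein $\K$-block (for the characteristic zero direction). A preliminary observation is that each $P_{i,j}$ is $\R$-torsion-free: the Heisenberg representation $\eta_i$ is free as an $\R$-module by Lemma \ref{Heislemma}, so $\ind_{\J_i^1}^{\J_i}(\eta_i\otimes \R)$ is $\R$-free, its direct summand $\overline{P}_{i,j}$ is $\R$-projective, and hence $P_{i,j}=\ind_{\J_i}^{\G}(\overline{P}_{i,j})$ is $\R$-flat. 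Since $\R$ is a $\mathbb{Z}[1/p,\mu_{p^r}]$-algebra, pro-$p$ compact open subgroups of $\G$ have invertible pro-order in $\R$, so Lemma \ref{Homsandscalarextension} applies to the $P_{i,j}$.

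For part (1), I would start from a non-zero morphism $\phi\colon P_{i,j}\to P_{i',j'}$ in $\Rep_\R(\G)$ (the other direction being symmetric) and observe that $\phi\otimes 1_\K$ remains non-zero, because the natural maps $P_{i,j}\hookrightarrow P_{i,j}\otimes\K$ and $P_{i',j'}\hookrightarrow P_{i',j'}\otimes\K$ are injective by $\R$-torsion-freeness. Every morphism in $\Rep_\K(\G)$ commutes with the central idempotents $e_\mathfrak{b}$ defining the $\K$-block decomposition, so $\phi\otimes 1_\K$ splits as a sum of block-by-block components; non-vanishing of the total morphism forces some component to be non-zero, and for the corresponding block $\mathfrak{b}$ both $e_\mathfrak{b}(P_{i,j}\otimes\K)$ and $e_\mathfrak{b}(P_{i',j'}\otimes\K)$ must be non-zero. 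This produces an edge in the coarse graph.

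For part (2), assume $\R$ has characteristic zero, and suppose $P_{i,j}$ and $P_{i',j'}$ share a common $\K$-block $\mathfrak{b}$. Since $\K$ is of characteristic zero, Bernstein's theorem identifies $\mathfrak{b}$ with some $\mathfrak{s}\in\mathfrak{B}_\K(\G)$, and $e_\mathfrak{s}(P_{i,j}\otimes\K)$ and $e_\mathfrak{s}(P_{i',j'}\otimes\K)$ are non-zero finitely generated projective objects of $\Rep_\K(\mathfrak{s})$ (as summands cut out by a central idempotent from a finitely generated projective). Applying Lemma \ref{mainlemmaKprojs}(\ref{PP'hom}) to this pair then produces a non-zero morphism between these $\mathfrak{s}$-projections, which extends by zero on the complementary blocks to a non-zero element of $\Hom_{\K[\G]}(P_{i,j}\otimes\K,P_{i',j'}\otimes\K)$. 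Invoking Lemma \ref{Homsandscalarextension} gives
\[\Hom_{\R[\G]}(P_{i,j},P_{i',j'})\otimes_\R \K \;\xrightarrow{\sim}\; \Hom_{\K[\G]}(P_{i,j}\otimes\K,P_{i',j'}\otimes\K)\neq 0,\]
so that $\Hom_{\R[\G]}(P_{i,j},P_{i',j'})\neq 0$, yielding the required edge in the fine graph.

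The only delicate point is the characteristic zero hypothesis in (2): Lemma \ref{mainlemmaKprojs}(\ref{PP'hom}) rests on Bernstein's description of the centre and on generic irreducibility of parabolic induction from a supercuspidal, both of which are genuinely characteristic zero inputs. This is why the fine and coarse graphs are only asserted to coincide when $\R$ has characteristic zero; in positive characteristic the coarse graph may well have strictly more edges than the fine graph, and any analogous statement would require finer information about the $\ell$-modular Hecke algebra $\End_{\R[\G]}(\P(\ft)\otimes\R)$.
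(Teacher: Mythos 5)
Your argument is correct and follows essentially the same route as the paper: torsion-freeness of the $P_{i,j}$ together with the Hom-tensor compatibility of Lemma \ref{Homsandscalarextension} to reduce to morphisms over $\K$, and Lemma \ref{mainlemmaKprojs}(\ref{PP'hom}) to produce the required map from a common $\K$-block in the characteristic zero case. The only cosmetic difference is that for part (1) you deduce non-vanishing of $\phi\otimes 1_\K$ directly from torsion-freeness, whereas the paper invokes the Hom-tensor isomorphism in both directions; the substance is the same.
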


\begin{proof}
By Mackey's decomposition, as~$\K'$ is flat over~$\R$ and~$\K$ (faithfully) flat over~$\K'$,
\begin{equation}
\label{Mackeyeq}\Hom_{\R[\G]}(P_{\Lambda,i}, P_{\Lambda',i'}) \otimes \K = \Hom_{\K[\G]}(P_{\Lambda,i} \otimes \K, P_{\Lambda',i'} \otimes \K).\end{equation}
Since~the projectives $P_{\Lambda,i}$ and $P_{\Lambda',i'}$ are torsion free, this shows that there is a nonzero map from $P_{\Lambda,i}$ to $P_{\Lambda',i'}$ if and only if there is such a map from~$P_{\Lambda,i}\otimes \K$ to~$P_{\Lambda',i'}\otimes\K$.  And we deduce the first statement.  

So suppose~$\R$ is a characteristic zero domain.  From Lemma \ref{mainlemmaKprojs} \ref{PP'hom}, we see that if~$P_{\Lambda,i}$ and $P_{\Lambda',i'}$ are joined by an edge of the coarse graph, then there is a nonzero element of $\Hom_{\K[\G]}(P_{\Lambda,i} \otimes \K,P_{\Lambda',i'} \otimes \K)$. By Equation \ref{Mackeyeq}, the latter is the same as $\Hom_{\R[\G]}(P_{\Lambda,i},P_{\Lambda',i'}) \otimes \K$, so $P_{\Lambda,i}$ and $P_{\Lambda',i'}$ are joined by an edge of the fine graph as well.
\end{proof}

%%%%%%%%%%%%%%%%%%%%%%%%%%%%%
\subsection{Computing the coarse graph}\label{Computingthecoarsegraph}
The advantage of the coarse graph is that using type theory we can give an explicit recipe to compute it.

%%%%%%%%%%%%%%%%%%%%%%%%%%%%%
\subsubsection{Characteristic zero domains}
Suppose that~$\R$ is a characteristic zero domain.  As in Section \ref{typesoverC}, we can index the Bernstein~$\K$-blocks by types, and using this parameterisation write down which blocks over~$\K$ appear in each projective~$P_{\Lambda,i}\otimes \K$.  

\begin{enumerate}
\item For each~$\Lambda\in\mathrm{Vert}_{\beta}$, we choose a beta extension~$\kappa_\Lambda$ of~$\eta_\Lambda$ over~$\K$.  (When~$\beta=0$ in depth zero, we take~$\kappa_{\Lambda}$ the trivial~$\K$-representation of~$G_{\Lambda}$).   This allows us to decompose
\[\ind_{\J^1_\Lambda}^{\J_{\Lambda}}(\eta_\Lambda)\otimes \K\simeq \kappa_\Lambda \otimes \ind_{\J^1_\Lambda}^{\J_{\Lambda}}(1\otimes\K)\simeq \bigoplus_{\pi\in \Irr_{\K}(\J_{\Lambda}/\J_{\Lambda}^1)}\kappa_\Lambda\otimes \pi^{\oplus \mathrm{dim}(\pi) };\]
and 
\[P_{\Lambda,i}\otimes \K\simeq\bigoplus_{\pi\in [\overline{P}_{\Lambda,i}\otimes \K]}\ind_{\J_{\Lambda}}^{\G}( \kappa_\Lambda\otimes \pi)^{\oplus m_{\pi}};\]
where~$m_{\pi}$ is the multiplicity of~$\pi$ in~$ \overline{P}_{\Lambda,i}\otimes \K$. 
\item For each pair~$(\J_{\Lambda},\kappa_\Lambda\otimes \pi)$, we let~$(\J_{\Lambda,\Q},\kappa_{\Lambda,\Q}\otimes\rho)$ be in its supercuspidal support (where~$\Q$ and~$\rho$ depend on~$\pi$) -- cf., Section \ref{scsupporttypes}.  Then
\[P_{\Lambda,i}\otimes \K\hookrightarrow \bigoplus_{\pi\in [\overline{P}_{\Lambda,i}\otimes \K]} \ind_{\J_{\Lambda,\Q}}^\G(\kappa_{\Lambda,\Q}\otimes \rho)^{\oplus m_{\pi}}\]
as a direct summand, and each~$(\J_{\Lambda,\Q},\kappa_{\Lambda,\Q}\otimes \rho)$ is a type for a Bernstein block over~$\K$ as recalled in Section \ref{typesoverC}.  
\item In this way each summand $\kappa_\Lambda\otimes\pi$ of $\overline{P}_{\Lambda,i} \otimes \K$ gives rise to a Bernstein block of $\Rep_\K(\G)$, and $P_{\Lambda,i}\otimes \K$ straddles the union of these blocks.
\item To compute whether there is an edge between two vertices~$P_{\Lambda,i}$ and~$P_{\Lambda',i'}$ in the coarse graph, we are reduced to computing the intertwining in~$\G$ of the types for the Bernstein components which appear in the decomposition of the projectives~$P_{\Lambda,i}\otimes \K$ and~$P_{\Lambda',i'}\otimes\K$.  And if one can choose a compatible family of beta extensions as in \cite[Definition 6.6]{technicalpaper}, then this can be reduced to depth zero.
\end{enumerate}

%%%%%%%%%%%%%%%%%%%%%%%%%%%%%
\subsection{Graphs and~$\R$-blocks}\label{GraphsStatements}
The main result of this section is:
\begin{theorem}\label{maintheoremblocks}
The~$\R$-blocks of~$\Rep_{\R}(\mathfrak{t})$ correspond to the connected components of the fine~$(\mathfrak{t},\R)$-graph.  \end{theorem}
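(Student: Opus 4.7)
The plan is to use that $P(\ft)\otimes\R$ is a finitely generated projective generator of $\Rep_\R(\ft)$ (by Theorem \ref{endosplit}), so the block decomposition of $\Rep_\R(\ft)$ corresponds to the primitive central idempotents of $E:=\End_{\R[\G]}(P(\ft)\otimes\R)$. The decomposition $P(\ft)\otimes\R=\bigoplus_v P_v$ indexed by the vertices $v=(i,j)$ yields orthogonal idempotents $e_v\in E$ with $\sum_v e_v=1$ and $E=\bigoplus_{v,w}e_vEe_w$ where $e_vEe_w=\Hom_{\R[\G]}(P_w,P_v)$. The first step is immediate: for each connected component $S$ of the fine graph, $e_S:=\sum_{v\in S}e_v$ is central in $E$, since by the edge-definition $e_vEe_w$ vanishes for $v,w$ in distinct components. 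This gives an orthogonal central decomposition $1=\sum_S e_S$, a corresponding product decomposition $\Rep_\R(\ft)=\prod_S\Rep_\R(\ft)_S$, and therefore shows each $\R$-block lies inside a single connected-component piece.

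The substantive step is to prove each $e_S$ is primitive in $Z(E)$. Supposing $e_S=z+z'$ with $z,z'$ nonzero orthogonal central idempotents, write $z=\sum_{v\in S}z_v$; then $z_v$ is a central idempotent of $\End_{\R[\G]}(P_v)$ satisfying $z_vg=gz_w$ for every $g\in\Hom_{\R[\G]}(P_w,P_v)$. Using that $P_v$ is $\R$-torsion free, Lemma \ref{Homsandscalarextension} gives an injection $\End_{\R[\G]}(P_v)\hookrightarrow\End_{\K[\G]}(P_v\otimes\K)\simeq\prod_\mathfrak{s}\End_{\K[\G]}((P_v)_\mathfrak{s})$ over the Bernstein blocks $\mathfrak{s}$ carrying a non-zero component of $P_v\otimes\K$. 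In the case where $\K$ has characteristic zero, Lemma \ref{mainlemmaKprojs}(\ref{Nocentralidemps}) forces $z_v\otimes 1$ to be a sum $\sum_{\mathfrak{s}\in T_v}1_{(P_v)_\mathfrak{s}}$ of Bernstein-block projections. The compatibility $z_vg=gz_w$, combined with Lemma \ref{mainlemmaKprojs}(\ref{PP'hom}) -- which via Mackey and torsion-freeness ensures that each Bernstein block shared by $P_v\otimes\K$ and $P_w\otimes\K$ is witnessed by an $\R$-morphism whose $\K$-image is supported in that block -- forces $T_v$ and $T_w$ to agree on their common Bernstein support whenever $v,w$ are joined by a fine-graph edge. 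Propagating this through the connected component $S$, the assumption $z,z'\neq 0$ produces two nonempty complementary sets of Bernstein blocks within $S$, contradicting connectedness.

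The main obstacle will be handling positive residue characteristic, where $\K=\overline{\mathrm{Frac}(\R)}$ has characteristic $\ell>0$ and Lemma \ref{mainlemmaKprojs} no longer applies. The natural remedy is a lift to characteristic zero: for $\R=\Fl$ one passes through $\Zl$, whose fraction field $\Ql$ has characteristic zero, combines the analysis above with Lemma \ref{basiccentrelemma} to transfer central idempotents along the flat base change $\Zl\to\Ql$, and then uses reduction modulo the uniformizer to deduce the $\Fl$-statement. For a general integral domain $\R$ one argues prime by prime, exploiting the integral and global-local machinery of Sections \ref{abstractsmoothreps} and \ref{Heisenberg}. Ensuring that the combinatorial structure of the fine graph is preserved under all these base changes should constitute the technical heart of the argument.
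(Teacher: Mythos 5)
Your first paragraph correctly identifies the orthogonal central decomposition $1 = \sum_S e_S$, so the remaining task is, as you say, to show each $e_S$ is primitive. However, your proposed argument for primitivity has a genuine gap: it only propagates constraints across edges of the fine graph, and so never addresses the possibility that a \emph{single} vertex $P_v$ straddles several Bernstein blocks of $\Rep_\K(\G)$. Concretely, the constraint $z_v g = g z_w$ forces $T_v$ and $T_w$ to agree on the Bernstein blocks shared by $P_v\otimes\K$ and $P_w\otimes\K$; but if $P_v$ is supported on two Bernstein blocks $\mathfrak{s}_1,\mathfrak{s}_2$, your argument gives no reason why both must lie in $T_v$ or both in its complement. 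In the extreme case of a connected component consisting of a single vertex $v$ with $P_v\otimes\K$ supported on $\mathfrak{s}_1\neq\mathfrak{s}_2$, there is nothing to propagate, and "contradicting connectedness" has no content. The missing assertion is precisely that each $P_{i,j}$ lies in a \emph{unique} $\R$-block, and this is not a formal consequence of your setup — it requires new input.

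The paper proves exactly this missing claim and derives the theorem from it. It first treats $\R = \K$ algebraically closed (of any characteristic $\neq p$): choosing a beta extension $\kappa_i$ so that $P_{i,j} \simeq \ind_{\J_i}^\G(\kappa_i\otimes\zeta_{i,j})$ with $\zeta_{i,j}$ an indecomposable $\K[\J_i/\J_i^1]$-module, one uses (a) the connected \emph{Ext-graph} of the irreducible subquotients $\sigma$ of $\zeta_{i,j}$ together with the injection $\mathrm{Ext}^1_{\K[\J_i]}(\sigma,\sigma')\hookrightarrow\mathrm{Ext}^1_{\K[\G]}(\ind\kappa_i\otimes\sigma,\ind\kappa_i\otimes\sigma')$ to link the $\K$-blocks of the irreducible constituents, and (b) type theory together with Proposition \ref{genirredproposition} (the center acts via the integral domain $\K[\M/\M^\circ]$) to show each $\ind_{\J_i}^\G(\kappa_i\otimes\sigma)$ lies in a unique $\K$-block. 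Both of these ingredients are missing from your proposal: step (a) is the mechanism that links the distinct Bernstein blocks inside a single $P_v\otimes\K$, and step (b) avoids the characteristic-zero hypothesis of Lemma \ref{mainlemmaKprojs} entirely, so no lift to $\Zl$ or case-by-case prime analysis is needed. Finally, for general integral $\R$, the paper does not try to propagate through the graph; instead it observes that an idempotent $e\in\mathfrak{Z}_\R(\mathfrak{t})$ acts on $P_{i,j}\otimes\K$ by $0$ or $1$ on each $\K$-indecomposable summand (by the special case just proved), hence lands in $\End_{\K[\J_i]}(\overline{P}_{i,j}\otimes\K)$, and then uses Mackey's double-coset decomposition to show $e$ comes from a central idempotent of $\End_{\R[\J_i]}(\overline{P}_{i,j})$, which is trivial since $\overline{P}_{i,j}$ is an indecomposable $\R[\J_i]$-module. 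This Mackey/indecomposability descent replaces your proposed (and underspecified) lift-to-characteristic-zero strategy.

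If you want to salvage your approach, you would need to first establish the claim "each $P_{i,j}$ lies in a unique $\R$-block" independently — at which point the edge-propagation argument in your first paragraph is exactly the paper's first paragraph, and the rest of your proposal becomes unnecessary.
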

More precisely,~the finitely generated projective~$\R$-representation of~$\G$ defined as the direct sum over the vertices in a connected component of the fine~$(\mathfrak{t},\R)$-graph is a finitely generated projective generator of an~$\R$-block, and running over the connected components of the fine~$(\mathfrak{t},\R)$-graph defines the~$\R$-block decomposition of the finitely generated projective generator~$P(\ft)\otimes \R$ of~$\Rep_{\R}(\mathfrak{t})$, hence defines the~$\R$-block decomposition of~$\Rep_{\R}(\mathfrak{t})$.

\begin{proof}[{Proof of Theorem \ref{maintheoremblocks}}]
Suppose that~$P_{\Lambda,i}$ lies in a unique~$\R$-block for all~$i,j$.  Then a central idempotent of~$\End_{\R[\G]}(P(\ft))$ acts by $1$ or $0$ on each~$P_{\Lambda,i}$.  If there is a non-zero map~$P_{\Lambda,i}\rightarrow P_{\Lambda',i'}$ (or $P_{\Lambda',i'}\rightarrow P_{\Lambda,i}$) for a central idempotent to commute with this map it must be $1$ on both or $0$ on both; this guarantees that the idempotents coming from connected components of the fine graph are primitive.  We are reduced to showing that~$P_{\Lambda,i}$ lies in a unique~$\R$-block.

Let us first consider the special case where~$\R=\K$ is an algebraically closed field.  We choose a beta extension~$\kappa_\Lambda$ of~$\eta_\Lambda$, so that~$P_{\Lambda,i}\simeq \ind_{\J_{\Lambda}}^{\G}(\kappa_\Lambda\otimes \zeta_{\Lambda,i})$ where~$\zeta_{\Lambda,i}$ is an indecomposable~$\K$-representation of~$\J_{\Lambda}/\J_{\Lambda}^1$ of finite length.  The ``Ext-graph'' of~$\zeta_{\Lambda,i}$ with vertices the irreducible subquotients of~$\zeta_{\Lambda,i}$ and an edge between irreducible subquotients~$\sigma$ and~$\sigma'$ if~$\mathrm{Ext}^1_{\R[\J_\Lambda]}(\sigma,\sigma')\neq 0$ is connected as~$\zeta_{\Lambda,i}$ is indecomposable.  Moreover, $\mathrm{Ext}^1_{\R[\J_{\Lambda}]}(\sigma,\sigma')$ embeds into~$\mathrm{Ext}^1_{\R[\G]}(\ind_{\J_{\Lambda}}^\G (\kappa_\Lambda\otimes \sigma),\ind_{\J_{\Lambda}}^\G (\kappa_\Lambda\otimes \sigma'))$, which when non-zero implies that~$\ind_{\J_{\Lambda}}^\G (\kappa_\Lambda\otimes \sigma)$ and~$\ind_{\J_{\Lambda}}^\G (\kappa_\Lambda\otimes \sigma')$ have an~$\R$-block in common.  Hence, working around the Ext-graph, it suffices to show that~$\ind_{\J_{\Lambda}}^\G (\kappa_\Lambda\otimes \sigma)$ is contained in a single $\K$-block where~$\sigma$ is an irreducible~$\K$-representation of~$\J_{\Lambda}/\J_{\Lambda}^1$.

By the theory of covers and Proposition \ref{depthzeromaxtotypes},~$\ind_{\J_{\Lambda}}^\G (\kappa_\Lambda\otimes \sigma)$ is subquotient of~$\ind_{\J_{\Lambda},\Q}^{\G}(\kappa_{\Lambda,\Q}\otimes\rho)$, and it suffices to show that~$\ind_{\J_{\Lambda},\Q}^{\G}(\kappa_{\Lambda,\Q}\otimes\rho)$ is in a unique~$\K$-block, where~$(\J_{\Lambda,\Q},\kappa_{\Lambda,\Q}\otimes\rho)$ is a cover of a supercuspidal type~$(\J_{\Lambda,\M},\kappa_{\Lambda,\M}\otimes\rho)$. If~$\K$ is of characteristic zero, we are done, as~$(\J_{\Lambda,\Q},\kappa_{\Lambda,\Q}\otimes\rho)$ is an~$\mathfrak{s}$-type.  In any case,~$\ind_{\J_{\Lambda,\Q}}^{\G}(\kappa_{\Lambda,\Q}\otimes\rho)$ is a summand of~$\Pi:=i_{\P}^{\G}(\ind_{\M^\circ}^{\M}(\pi_{\M}))$ where~$\pi_{\M}$ is a supercuspidal representation of~$\M$ containing~$(\J_{\Lambda,\M},\kappa_{\Lambda,\M}\otimes\rho)$ by Proposition \ref{BerBKcomparison}.  And it suffices to show that~$\Pi$ is contained in a unique~$\K$-block, which follows from Proposition \ref{genirredproposition} as~$\K[\M/\M^{\circ}]$ is an integral domain.  This completes the case when~$\R=\K$ is an algebraically closed field.

Now we go back to the general case:~$\R$ is an integral domain with field of fractions~$\K'$, and~$\K$ an algebraic closure of~$\K'$ and~$P_{\Lambda,i}=\ind_{\J_{\Lambda}}^{\G}(\overline{P}_{\Lambda,i})$.   We choose a beta extension~$\kappa_{\Lambda}$ of~$\eta_\Lambda\otimes \K$, and~decompose~$\overline{P}_{\Lambda,i}\otimes \K\simeq \bigoplus_{s} \kappa_{\Lambda}\otimes \zeta_{\Lambda,i}^s$ where the~$\zeta_{\Lambda,i}^s$ are indecomposable~$\K$-representations of~$\J_{\Lambda}/\J_{\Lambda}^1$ of finite length.  So we have
\[P_{\Lambda,i}\otimes \K\simeq \bigoplus_s\ind_{\J_{\Lambda}}^{\G}( \kappa_{\Lambda}\otimes \zeta_{\Lambda,i}^s).\]
Let~$e$ be an idempotent of~$\mathfrak{Z}_\R(\mathfrak{t})$, the centre of~$\Rep_{\R}(\mathfrak{t})$.   Then~$e$ acts on each~$\K$-summand\[\ind_{\J_{\Lambda}}^{\G}( \kappa_{\Lambda}\otimes \zeta_{\Lambda,i}^s)\] by a central idempotent of~$\mathfrak{Z}_{\K}(\mathfrak{t})$, and hence as~$\ind_{\J_{\Lambda}}^{\G}( \kappa_{\Lambda}\otimes \zeta_{\Lambda,i}^s)$ is in a unique~$\K$-block,~$e$ acts by either zero or the identity on~$\ind_{\J_{\Lambda}}^{\G}( \kappa_{\Lambda}\otimes \zeta_{\Lambda,i}^s)$.

This means that the action of~$e$ on $P_{\Lambda,i} \otimes \K$ is induced by an idempotent of (the centre of) $\End_{\K[\J_{\Lambda}]}(\overline{P}_{\Lambda,i}\otimes \K)$.  Explicitly, we define the idempotent in~$\End_{\K[\J_{\Lambda}]}(\overline{P}_{\Lambda,i}\otimes \K)$ to preserve the decomposition~$\overline{P}_{\Lambda,i}\otimes \K\simeq \bigoplus_{s} \kappa_{\Lambda}\otimes \zeta_{\Lambda,i}^s$ and to act on the summand~$ \kappa_{\Lambda}\otimes \zeta_{\Lambda,i}^s$ by either zero or the identity as prescribed by the action of~$e$ on~$\ind_{\J_{\Lambda}}^{\G}(\kappa_{\Lambda}\otimes \zeta_{\Lambda,i}^s)$.  So $e$ lies in the intersection (in $\End_{\R[\G]}(P_{i,j}) \otimes \K$) of $\End_{\R[\G]}(P_{\Lambda,i})$ and~$\End_{\R[\J_{\Lambda}]}(\overline{P}_{\Lambda,i}) \otimes \K\simeq \End_{\K[\J_{\Lambda}]}(\overline{P}_{\Lambda,i}\otimes \K)$.  But, by the Mackey formula,~$\End_{\R[\J_{\Lambda}]}(\overline{P}_{\Lambda,i})$ is a direct summand of~$\End_{\R[\G]}(P_{\Lambda,i})$ (the summand supported on the double coset containing the identity in fact). Thus $e$ arises from a central idempotent of $\End_{\R[\J_{\Lambda}]}(\overline{P}_{\Lambda,i})$; and since~$\overline{P}_{\Lambda,i}$ is indecomposable, $e$ is zero or the identity.  Therefore~$P_{\Lambda,i}$ lies in a unique~$\R$-block.
\end{proof}

We deduce the following result (known previously for~$\GL_n(\F)$ with~$\R=\W(\Fl)$ by \cite{HelmForum}):
\begin{corollary}\label{cormodell}
Suppose that~$\R$ is a complete discrete valuation ring (and $\mathbb{Z}[1/p,\mu_{p^r}]$-algebra), with principal maximal ideal~$\mathfrak{m}=(\varpi_{\mathfrak{m}})$, and residue field~$\varkappa=\R/\mathfrak{m}$ of characteristic~$\ell\not\in \{0,p\}$.  Suppose, for all~$\Lambda\in\mathrm{Vert}_{\beta}$,~$\varkappa$ is sufficiently large for~$\J_{\Lambda}/\J_{\Lambda}^1$ (contains all roots of unity dividing the~$\ell$-regular part of~$|\J_{\Lambda}/\J_{\Lambda}^1|$).  Then reduction modulo $\mathfrak{m}$, i.e., the map of projective generators~$P\mapsto P\otimes_{\R}\varkappa$, induces a bijection between the~$\R$-blocks in~$\Rep_{\R}(\ft)$ and the $\varkappa$-blocks in~$\Rep_{\varkappa}(\ft)$. 
%which is sufficiently large for the finite groups~$
%
%\\
%Let~$\varkappa$ be an algebraically closed field of characteristic~$\ell\not\in \{0,p\}$.  Reduction modulo $\ell$ induces a bijection between the~$\W(\varkappa)$-blocks in~$\Rep_{\W(\varkappa)}(\ft)$ and the $\varkappa$-blocks in~$\Rep_{\varkappa}(\ft)$. {perhaps better to write it over smaller coefficient ring.}
\end{corollary}
\begin{proof}
We consider the fine~$(\mathfrak{t},\R)$-graph~$\mathcal{G}_{P\otimes\R}$ and the fine~$(\mathfrak{t},\varkappa)$-graph~$\mathcal{G}_{P\otimes \varkappa}$.  The vertex sets of these two graphs are in natural bijection by reduction mod~$\mathfrak{m}$: indeed, the vertices of~$\mathcal{G}_{P\otimes\R}$ correspond to the distinct projective summands~$P_{\Lambda,i}$ of the inductions of the $\eta_\Lambda\otimes \R$, and the vertices of~$\mathcal{G}_{P\otimes \varkappa}$ correspond to the mod~$\ell$ reductions~$Q_{\Lambda,i}=P_{\Lambda,i}\otimes \varkappa$ (which identify with the distinct projective summands of the inductions of the~$\eta_\Lambda\otimes\varkappa$).

If there is a nonzero map from $Q_{\Lambda,i}$ to $Q_{\Lambda',i'}$ then we can precompose with the reduction mod $\ell$ to get a map $P_{\Lambda,i} \rightarrow Q_{\Lambda',i'}$.  Then using projectivity of $P_{\Lambda,i'}$ we can lift this to a map $P_{\Lambda,i} \rightarrow P_{\Lambda',i'}$.  Conversely, if there is a nonzero map $P_{\Lambda,i} \rightarrow P_{\Lambda',i'}$ then we can let $n$ be the largest integer such that $\varpi_{\mathfrak{m}}^n P_{\Lambda',i'}$ contains the image of this map.  Dividing by $\varpi_{\mathfrak{m}}^n$ then gives us a map whose reduction mod $\ell$ is a nonzero map $Q_{\Lambda,i} \rightarrow Q_{\Lambda',i'}$. 
\end{proof}
\begin{remark}
 Under the same hypotheses as the corollary, it is expected (cf.,~\cite{DHKMfiniteness}) that, for any reductive~$p$-adic group~$\G$, the natural map\[\mathfrak{Z}_{\R}(\G)_r\otimes \varkappa\rightarrow \mathfrak{Z}_{\varkappa}(\G)_r\] between depth $r$ centres is an isomorphism. Under our assumptions on~$\G$ (either~$r=0$ or~$\G$ is an inner form of a general linear or classical group with~$p\neq 2$), our results show the natural map at least induces a bijection between the primitive idempotents.
 \end{remark}

\begin{corollary}\label{ZlbarCorr}
\begin{enumerate}
\item %The map of projective generators~$P\mapsto P\otimes_{\W(\Fl)}\Zl$, defines 
There~$\W(\Fl)$-blocks in~$\Rep_{\W(\Fl)}(\G)$ and the~$\Zl$-blocks in~$\Rep_{\Zl}(\G)$ are in bijection by the natural map, $\mathfrak{Z}_{\W(\Fl)}(\G)\rightarrow \mathfrak{Z}_{\W(\Fl)}(\G)\otimes \Zl\rightarrow \mathfrak{Z}_{\Zl}(\G)$, on primitive central idempotents.
%
%~$\mathfrak{Z}_{\W(\Fl)}(\G)_n\otimes \Zl\rightarrow \mathfrak{Z}_{\Zl}(\G)_n$.%, given by taken the finitely generated projective generator 
%P(\ft)
\item The~$\Zl$-blocks in~$\Rep_{\Zl}(\G)$ and the~$\Fl$-blocks in~$\Rep_{\Fl}(\G)$ are in bijection by the natural map, $\mathfrak{Z}_{\Zl}(\G)\rightarrow \mathfrak{Z}_{\Zl}(\G)\otimes \Fl\rightarrow \mathfrak{Z}_{\Fl}(\G)$, on primitive central idempotents.
%, given by taken the finitely generated projective generator 
\end{enumerate}
\end{corollary}

\begin{proof}
This follows from using the variant from Remark \ref{remarkotherdecomps} \ref{remarkotherdecompsii}, as for finite groups the~$\Zl$-blocks are defined over~$\W(\Fl)$.   Hence, applying the Corollary \ref{cormodell} with~$\R=\W(\Fl)$, we deduce that the map of projective generators~$P\mapsto P\otimes_{\Zl}\Fl$, defines a bijection between the~$\Zl$-blocks in~$\Rep_{\Zl}(\ft)$ and the~$\Fl$-blocks in~$\Rep_{\Fl}(\ft)$.  Applying this to every endo-parameter, as~$\W(\Fl)$ is sufficiently large for all endo-parameters, we obtain the statement in the introduction that reduction modulo~$\ell$, given by taking a finitely generated projective generator~$P$ to~$P\otimes\Fl$, defines a bijection between the blocks of~$\Rep_{\Zl}(\G)$ and the blocks of~$\Rep_{\Fl}(\G)$.\end{proof}

%%%%%%%%%%%%%%%%%%%%%%%%%%%%%
\subsection{Application 1: The block decomposition over~$\overline{\mathbb{Z}}[1/p]$}
Let~$\G$ be a connected reductive~$p$-adic group and~$\mathcal{F}$ a facet in the Bruhat--Tits building of~$\G$.  We have the following result of Dat and Lanard about the central idempotents of~$\overline{\mathbb{Z}}[1/p][\G_{\mathcal{F}}^+/\G_{\mathcal{F}}^1]$:

\begin{lemma}[{Dat--Lanard \cite{DatLanard}}]\label{DatLanardLemma}
\begin{enumerate}
\item Suppose~$\Gf$ is a finite group of Lie type\footnote{That is, the fixed points of a twisted Frobenius morphism acting on a connected reductive group defined over~$\overline{\mathbb{F}}_p$.} over~$\mathbb{F}_p$, then~$\overline{\mathbb{Z}}[1/p][\Gf]$ has no non-trivial central idempotents.
\item Suppose that~$p$ does not divide~$|\G_{\mathcal{F}}^+/\G_{\mathcal{F}}|$, then~$\overline{\mathbb{Z}}[1/p][\G_{\mathcal{F}}^+/\G_{\mathcal{F}}^1]$ has no non-trivial central idempotents.
\end{enumerate}
\end{lemma}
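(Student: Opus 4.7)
The plan is to reformulate both parts as the connectedness of an explicit graph of irreducible $\overline{\mathbb{Q}}$-characters, to treat Part~(1) via the classification of $\ell$-blocks of finite groups of Lie type, and then to deduce Part~(2) from Part~(1) by a Clifford-theoretic argument using the coprimality hypothesis.

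First I would observe that, for any finite group $H$, primitive central idempotents of $\overline{\mathbb{Z}}[1/p][H]$ correspond to subsets $S\subseteq\Irr_{\overline{\mathbb{Q}}}(H)$ which, after localisation at every prime $\ell\neq p$, are unions of $\ell$-blocks of $H$. Equivalently, they biject with connected components of the graph $\Gamma_H$ on $\Irr_{\overline{\mathbb{Q}}}(H)$ whose edges link two characters sharing an $\ell$-block for some $\ell\neq p$. The two parts of the lemma are thereby equivalent to the connectedness of $\Gamma_{\overline{\G}}$ and of $\Gamma_{\G_{\mathcal{F}}^+/\G_{\mathcal{F}}^1}$, respectively.

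For Part~(1), with $\overline{\G}$ a finite group of Lie type over $\mathbb{F}_p$, the connectedness of $\Gamma_{\overline{\G}}$ should follow from the well-developed block theory of such groups. Brou\'e--Malle--Michel's classification of unipotent $\ell$-blocks via $d$-cuspidal pairs (where $d$ is the order of $q$ modulo $\ell$) allows one, by varying $\ell$ and hence $d$, to chain any two unipotent characters together through their common $d$-Harish-Chandra series, all of which eventually meet the principal $\ell$-block containing the trivial character. To extend this to arbitrary irreducible characters one invokes the Jordan decomposition of blocks of Bonnaf\'e--Rouquier (in the form of Cabanes--Enguehard), which matches the $\ell$-block of a character with semisimple part $s$ to a unipotent block in a suitable dual centraliser; by varying $\ell$ so as to absorb the order of $s$ progressively into the $\ell'$-part, every character is ultimately connected to a unipotent one. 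The main obstacle is coordinating these two mechanisms uniformly across all primes $\ell\neq p$, which is the technical content of Dat--Lanard's argument.

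For Part~(2), set $G=\M_{\mathcal{F}}$, $G^+=\M_{\mathcal{F}}^+$ and $H=G^+/G$, so $|H|$ is invertible in $\overline{\mathbb{Z}}[1/p]$. Given two characters of $G^+$ above characters $\sigma,\sigma'$ of $G$, Part~(1) produces a chain in $\Gamma_G$ linking $\sigma$ to $\sigma'$. By the Fong--Reynolds correspondence applied $\ell$-locally for each $\ell\neq p$, each edge in this chain lifts to an edge of $\Gamma_{G^+}$ between some choice of lifts of its endpoints. The remaining task is to bridge between different lifts above a common character of $G$; these differ by tensoring with a character of the stabiliser $H_\sigma\leqslant H$ and can be connected using the internal graph $\Gamma_{H_\sigma}$, which is connected because $|H_\sigma|$ is coprime to $p$ and the component group structure of classical $p$-adic groups makes $H_\sigma$ sufficiently simple (in particular abelian) for a prime-by-prime primary decomposition argument to apply. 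Assembling these steps yields the desired connectedness of $\Gamma_{G^+}$.
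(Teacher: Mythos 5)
Your overall plan for Part~(2) — reformulate in terms of connectedness of the block-linkage graph, reduce to $\overline{\N}=\G_{\mathcal{F}}/\G_{\mathcal{F}}^1$ via Clifford theory, and then bridge between the finitely many lifts above a fixed character of $\overline{\N}$ using that $\overline{\G}/\overline{\N}$ is abelian of order coprime to $p$ — matches the paper's argument. Two remarks, though. First, Part~(1) is simply \emph{cited} from Dat--Lanard in the paper; your paragraph on Brou\'e--Malle--Michel and Jordan decomposition of blocks is a sketch of the ingredients in their proof, not a proof, and you say as much yourself, so nothing is gained by including it. Second, and more substantively: the "each edge lifts" step you propose via Fong--Reynolds is not actually justified. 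Fong--Reynolds gives a bijection between blocks of the inertia group and blocks of $\overline{\G}$ covering a fixed block of $\overline{\N}$; it does not by itself say that if $\rho,\rho'\in\Irr(\overline{\N})$ lie in one $\ell$-block of $\overline{\N}$ then some lifts of $\rho$ and $\rho'$ lie in one $\ell$-block of $\overline{\G}$ — a priori several blocks of $\overline{\G}$ cover that block of $\overline{\N}$, and $\rho$, $\rho'$ could feed into disjoint ones. The paper avoids this by using the finer edge relation "share a common mod-$\ell$ constituent" (which generates the same components as your $\ell$-block relation) and then arguing directly: if $\overline{\rho}$ is a common mod-$\ell$ constituent of $\rho$ and $\rho'$, then $\Ind_{\overline{\N}}^{\overline{\G}}(\overline{\rho})$ is a common constituent of $\Ind_{\overline{\N}}^{\overline{\G}}(\rho)$ and $\Ind_{\overline{\N}}^{\overline{\G}}(\rho')$ modulo $\ell$, whence some summand of the first is linked mod $\ell$ to some summand of the second. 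You should replace the Fong--Reynolds appeal with this elementary computation. Finally, a small misattribution: the abelianness of $\G_{\mathcal{F}}^+/\G_{\mathcal{F}}$ is a general fact about parahoric subgroups (see Section~\ref{parahorics1}), valid for any connected reductive $p$-adic group, not something special to classical groups — and the lemma as stated is indeed general.
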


\begin{proof}
\begin{enumerate}
\item This is Dat--Lanard, \cite[Theorem 2.0.1]{DatLanard}.
\item The group~$\G_{\mathcal{F}}/\G_{\mathcal{F}}^1$ is a finite group of Lie type over~$\mathbb{F}_p$,~$\G_{\mathcal{F}}^+/\G_{\mathcal{F}}^1$ contains~$\G_{\mathcal{F}}/\G_{\mathcal{F}}^1$ as a normal subgroup with abelian quotient, and this part follows from Clifford Theory and the first part.  Similar considerations are used in \cite[\S 3.4]{DatLanard}, but we give the full argument for completeness:

For any finite group~$\H$, Dat and Lanard show that~$\overline{\mathbb{Z}}[1/p][\H]$ has no non-trivial central idempotents if and only if there is only one equivalence class of irreducible complex representations of~$\H$ under the relation that irreducible complex representations~$\rho,\rho'$ of~$\H$ are equivalent if they are \emph{connected by a (finite) chain of~$\ell$-block coincidences over~$\ell\neq p$} : that is, there is a finite sequence of primes~$(\ell_i)$ and a finite sequence of irreducible complex representations~$(\rho_i)$ of~$\H$ such that
   \[\rho\sim_{\ell_1} \rho_1 \sim_{\ell_2}\rho_2\sim_{\ell_3} \cdots \sim_{\ell_s} \rho' ,\]
where~$\rho_i\sim_{\ell_{i+1}}\rho_{i+1}$ if~$\rho_i$ and~$\rho_{i+1}$ are in the same~$\ell_{i+1}$-block.   As the~$\ell_i$-block relation is invariant under field automorphisms, we can consider each~$\sim_{\ell_i}$ as connecting $\overline{\mathbb{Q}_{\ell^i}}$-representations.  The~$\ell_i$-block decomposition on the irreducible (necessarily integral)~$\overline{\mathbb{Q}_{\ell^i}}$-representations is given by the transitive closure of the relationship of having a common constituent on reduction modulo~$\ell_i$.  Thus the equivalence relation ``connected by a chain of~$\ell$-block coincidences over~$\ell\neq p$'' on the irreducible complex representations is equivalent to the equivalence relation ``connected by a chain having common subquotients on reduction mod primes~$\ell\neq p$''.

Let~$\Gf^+=\G_{\mathcal{F}}^+/\G_{\mathcal{F}}^1$ and~$\Gf=\G_{\mathcal{F}}/\G_{\mathcal{F}}^1$.  For~$\rho$ an irreducible complex representation of~$\Gf$, set~$\I_{\Gf^+}(\rho)=\{g\in\Gf^+: \rho^g\simeq \rho\}$ the \emph{inertia subgroup} of~$\rho$.  Now given~$\pi,\pi'\in\Irr(\Gf^+)$, by Clifford Theory there exist~$\rho,\rho'\in\Irr(\Gf)$,~$\widetilde{\rho}\in\Irr(\I_{\Gf^+}(\rho),\rho)$, and~$\widetilde{\rho}'\in\Irr(\I_{\Gf^+}(\rho'),\rho')$ such that
\[\pi\simeq \Ind_{\I_{\Gf^+}(\rho)}^{\Gf^+}(\widetilde{\rho}),\qquad \pi'\simeq \Ind_{\I_{\Gf^+}(\rho')}^{\Gf^+}(\widetilde{\rho}'),\]
and, as~$\Gf^+/\Gf$ is abelian,
\begin{align*}
\Ind_{\Gf}^{ \I_{\Gf^+}(\rho)}(\rho)&\simeq \bigoplus_{\chi\in\Hom( \I_{\Gf^+}(\rho)/\Gf,\mathbb{C}^\times)}\widetilde{\rho}\otimes \chi,\quad\Ind_{\Gf}^{ \I_{\Gf^+}(\rho')}(\rho')\simeq \bigoplus_{\chi'\in\Hom( \I_{\Gf^+}(\rho')/\Gf,\mathbb{C}^\times)}\widetilde{\rho}'\otimes \chi',
\end{align*}
Applying Dat and Lanard's result to~$\Gf$ we have
\[\rho\sim_{\ell_1} \rho_1 \sim_{\ell_2}\rho_2\sim_{\ell_3} \cdots \sim_{\ell_s}\rho'\]
and working along the chain we can reduce to showing if~$\rho\sim_{\ell}\rho'$ then we can connect~$\pi$ and $\pi'$ by a sequence of equivalences.  So suppose~$\rho,\rho'$ contain a common constituent~$\overline{\rho}$ on reduction modulo~$\ell$.   Then
\begin{align*}
\Ind_{\Gf}^{\Gf^+}(\rho)&\simeq \bigoplus_{\chi\in\Hom( \I_{\Gf^+}(\rho)/\Gf,\Ql^\times)}\Ind_{\I_{\Gf^+}(\rho)}^{\Gf^+}(\widetilde{\rho}\otimes \chi)\\
\Ind_{\Gf}^{\Gf^+}(\rho')&\simeq \bigoplus_{\chi'\in\Hom( \I_{\Gf^+}(\rho')/\Gf,\Ql^\times)}\Ind_{\I_{\Gf^+}(\rho')}^{\Gf^+}(\widetilde{\rho}'\otimes \chi'),
\end{align*}
give the direct sums into irreducibles of~$\Ind_{\Gf}^{\Gf^+}(\rho)$ and~$\Ind_{\Gf}^{\Gf^+}(\rho')$, and both contain~$\Ind_{\Gf}^{\Gf^+}(\overline{\rho})$ on reduction modulo~$\ell$.  Hence (at least) one summand of~$\Ind_{\Gf}^{\Gf^+}(\rho)$ is linked mod~$\ell$ to one summand of~$\Ind_{\Gf}^{\Gf^+}(\rho')$.  But all summands of~$\Ind_{\Gf}^{\Gf^+}(\rho)$ are linked by congruences mod primes dividing~$|\I_{\Gf^+}(\rho)/\Gf|$ (as all characters~$\chi$ are linked to the trivial character), and similarly all summands of~$\Ind_{\Gf}^{\Gf^+}(\rho')$ are linked by congruences mod the primes dividing~$|\I_{\Gf^+}(\rho')/\Gf|$, and we are done.
\end{enumerate}
\end{proof}

For~$\G$ an inner form of a general linear group or an inner form of a classical~$p$-adic group (with~$p\neq 2$), it follows from Lemmas \ref{DatLanardLemma} and \ref{lemmakappafunctor}, that the vertices of the fine~$(\mathfrak{t},\overline{\mathbb{Z}}[1/p])$-graph are given by the~$\ind_{\J_{\Lambda}^1}^{\G}(\eta_i\otimes \overline{\mathbb{Z}}[1/p])$.  Moreover, in this case, it is straightforward to see that there is only one connected component of the fine ~$(\mathfrak{t},\overline{\mathbb{Z}}[1/p])$-graph without invoking the equality with the coarse graph:  By Mackey theory
\begin{align*}\Hom_{\overline{\mathbb{Z}}[1/p][\G]}&(\ind_{\J_{\Lambda}^1}^{\G}(\eta_\Lambda\otimes \overline{\mathbb{Z}}[1/p]),\ind_{\J_{\Lambda'}^1}^{\G}(\eta_{\Lambda'}\otimes \overline{\mathbb{Z}}[1/p]))\\&=\bigoplus\Hom_{\overline{\mathbb{Z}}[1/p][\J_{\Lambda}^1\cap(\J_{\Lambda'}^1)^g]}(\eta_{\Lambda}\otimes \overline{\mathbb{Z}}[1/p],\eta_{\Lambda'}^g\otimes \overline{\mathbb{Z}}[1/p])\neq 0,\end{align*}
as all the~$\eta_\Lambda$ in the projective~$P(\ft)$ intertwine.  Hence we obtain:

\begin{corollary}\label{EPCor}
Let~$\G$ be an inner form of a general linear group or an inner form of a classical~$p$-adic group (with~$p\neq 2$).  The endo-factor~$\Rep_{\overline{\mathbb{Z}}[1/p]}(\mathfrak{t})$ is indecomposable.  In other words, the decomposition by endo-parameter of Theorem~\ref{endosplit} is the~$\overline{\mathbb{Z}}[1/p]$-block decomposition. 
\end{corollary}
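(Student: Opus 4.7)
The plan is to apply Theorem \ref{maintheoremblocks}, which identifies the blocks of $\Rep_{\overline{\mathbb{Z}}[1/p]}(\ft)$ with the connected components of the fine $(\ft,\overline{\mathbb{Z}}[1/p])$-graph attached to the progenerator $P(\ft)$. It therefore suffices to show that this graph is connected, and in fact the whole argument is essentially sketched in the paragraph immediately preceding the corollary; what follows is just an assembly of the inputs already provided.

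The first step is to identify the vertices. Choose, for each representative m-realization $\theta_i$, a beta extension $\kappa_i$ of $\eta_i$ (defined over a suitable ring of integers in a number field and then base-changed to $\overline{\mathbb{Z}}[1/p]$). By Lemma \ref{lemmakappafunctor} the indecomposable summands of $\ind_{\J_i^1}^{\J_i}(\eta_i\otimes\overline{\mathbb{Z}}[1/p])$ are in bijection, via tensoring with $\kappa_i$, with those of the group algebra $\overline{\mathbb{Z}}[1/p][\J_i/\J_i^1]$. By construction $\J_i/\J_i^1$ is the parahoric quotient $(\G_{\varphi(\b)})_{\Lambda_i}/(\G_{\varphi(\b)})^1_{\Lambda_i}$ of the centralizer $\G_{\varphi(\b)}$, which is a connected finite reductive group over $\mathbb{F}_p$, so by Lemma \ref{DatLanardLemma}(1) the algebra $\overline{\mathbb{Z}}[1/p][\J_i/\J_i^1]$ has no non-trivial central idempotents. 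The remark following the definition of the fine graph then allows us to take the decomposition with the single summand $\overline{P}_{i,1}=\ind_{\J_i^1}^{\J_i}(\eta_i\otimes\overline{\mathbb{Z}}[1/p])$; the vertices of the fine graph are therefore exactly the finitely many representations $V_i := \ind_{\J_i^1}^{\G}(\eta_i\otimes\overline{\mathbb{Z}}[1/p])$ for $i=1,\dots,r$.

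The second step is to verify connectedness by showing every pair of vertices $V_i, V_j$ is joined by an edge. By Mackey's formula,
\[
\Hom_{\overline{\mathbb{Z}}[1/p][\G]}(V_i,V_j) \;\simeq\; \bigoplus_{g\in \J_i^1\backslash \G/\J_j^1} \Hom_{\overline{\mathbb{Z}}[1/p][\J_i^1\cap (\J_j^1)^g]}(\eta_i,\eta_j^g).
\]
Because $\theta_i$ and $\theta_j$ are both m-realizations of the same endo-parameter $\ft$ they intertwine in $\G$, and hence by Corollary \ref{etaintertwining} (combined with Lemma \ref{Homsandscalarextension} to pass from $\R_{0,r}$-coefficients to $\overline{\mathbb{Z}}[1/p]$) at least one summand on the right-hand side is nonzero. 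Thus an edge exists, the graph is connected, and the corollary follows.

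No step here is a genuine obstacle: the serious inputs are Dat--Lanard's Lemma \ref{DatLanardLemma}(1), the construction of $P(\ft)$ and the integral Heisenberg theory of Sections \ref{Heisenberg}--\ref{secESing}, and the graph-theoretic description of blocks in Theorem \ref{maintheoremblocks}; the present argument simply observes that over $\overline{\mathbb{Z}}[1/p]$ the fine graph reduces to the finite complete graph on the essential $\G$-conjugacy classes of m-realizations of $\ft$.
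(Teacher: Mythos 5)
Your proof follows the paper's own argument, which is sketched in the paragraph immediately preceding the corollary: identify the vertices of the fine~$(\ft,\overline{\mathbb{Z}}[1/p])$-graph with the representations $\ind_{\J_i^1}^{\G}(\eta_i\otimes\overline{\mathbb{Z}}[1/p])$ via Lemmas~\ref{DatLanardLemma} and~\ref{lemmakappafunctor}, then observe that all the~$\eta_i$ intertwine in~$\G$ (they sit over m-realizations of the same endo-parameter) so Mackey theory produces an edge between every pair of vertices. The conclusion follows from Theorem~\ref{maintheoremblocks}.

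One imprecision worth flagging: you assert that $\J_i/\J_i^1$ is a \emph{connected} finite reductive group and therefore invoke only part~(1) of Lemma~\ref{DatLanardLemma}. For inner forms of general linear groups this is fine. For classical groups, however, the group~$\J_i$ is built from the \emph{full} fixator~$\P(\Lambda_{i,\varphi(\beta)})$ of the lattice sequence in~$\G_{\varphi(\beta)}$ (see the discussion in Appendix~\ref{AppendixC}, where $\J(\beta,\Lambda)=\P(\Lambda_\beta)\J^1(\beta,\Lambda)$), so the quotient $\J_i/\J_i^1\simeq \P(\Lambda_{i,\varphi(\beta)})/\P_1(\Lambda_{i,\varphi(\beta)})$ can carry disconnected factors of orthogonal type. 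The clean way to cover this, and the reason the paper cites Lemma~\ref{DatLanardLemma} without singling out part~(1), is to note that the component group~$\P(\Lambda_{i,\varphi(\beta)})/\P^\circ(\Lambda_{i,\varphi(\beta)})$ has order a power of~$2$; since $p\neq 2$, part~(2) of Lemma~\ref{DatLanardLemma} applies and $\overline{\mathbb{Z}}[1/p][\J_i/\J_i^1]$ still has no nontrivial central idempotents. With that adjustment, your argument is complete.
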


Our methods also allow us to consider the trivial (depth zero) endo-parameter~$\mathfrak{t}=\mathbf{0}_{\G}$ of any connected reductive~$p$-adic group~$\G$ (recovering some results of Dat and Lanard \cite{DatLanard}):  

\begin{corollary}[{\cite{DatLanard}}]
Let~$\G$ be a connected reductive $p$-adic group.  Suppose that for any maximal parahoric subgroup~$\G_{\Lambda}$, the quotient~$\G_{\Lambda}^+/\G_{\Lambda}$ is of order prime to~$p$.  Then~$\Rep_{\overline{\mathbb{Z}}[1/p]}(\mathbf{0}_{\G})$ is indecomposable.
\end{corollary}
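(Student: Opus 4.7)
The plan is to mimic the proof of Corollary~\ref{EPCor} by applying Theorem~\ref{maintheoremblocks}: I will show that the fine $(\mathbf{0}_\G, \overline{\mathbb{Z}}[1/p])$-graph associated to the depth zero progenerator $P(\mathbf{0}_\G) \otimes \overline{\mathbb{Z}}[1/p] = \bigoplus_{x \in \mathrm{Vert}} \ind_{\G_x^1}^\G(1)$ has a single connected component. The vertices of this graph are the representations $P_{x,j} = \ind_{\G_x^+}^\G(\overline{P}_{x,j})$, where the $\overline{P}_{x,j}$ run over the indecomposable summands of the regular module $\ind_{\G_x^1}^{\G_x^+}(1) = \overline{\mathbb{Z}}[1/p][\G_x^+/\G_x^1]$ in a Krull--Schmidt decomposition. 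Connectedness will be established in two steps: first within each vertex class $x$, and then across vertex classes.

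For a fixed vertex $x$, the parahoric $\G_x$ is a maximal parahoric, so the hypothesis of the corollary applies and Lemma~\ref{DatLanardLemma}(2) yields that $\overline{\mathbb{Z}}[1/p][\G_x^+/\G_x^1]$ has no non-trivial central idempotents, i.e.~it constitutes a single algebra block. Standard block theory then guarantees that any two indecomposable projective summands of the algebra are linked by a chain of non-zero homomorphisms, so the family $\{\overline{P}_{x,j}\}_j$ is pairwise linked at the finite-group level. Compact induction from $\G_x^+$ to $\G$ preserves these non-zero morphisms: the identity double coset in the Mackey decomposition injects $\Hom_{\overline{\mathbb{Z}}[1/p][\G_x^+]}(\overline{P}_{x,j}, \overline{P}_{x,k})$ into $\Hom_{\overline{\mathbb{Z}}[1/p][\G]}(P_{x,j}, P_{x,k})$. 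Thus all $P_{x,j}$ for a fixed $x$ lie in a single connected component of the fine graph.

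To connect different vertex classes $x \ne y$, I would apply Mackey directly:
\[
\Hom_{\overline{\mathbb{Z}}[1/p][\G]}\!\bigl(\ind_{\G_x^1}^\G(1), \ind_{\G_y^1}^\G(1)\bigr) \;\cong\; \bigoplus_{g \in \G_x^1 \backslash \G / \G_y^1} \Hom_{\G_x^1 \cap g\G_y^1 g^{-1}}(1,1),
\]
where each summand is free of rank one over $\overline{\mathbb{Z}}[1/p]$ and hence nonzero. Decomposing both sides into their vertex summands $P_{x,a}$ and $P_{y,b}$ and using bilinearity of Hom, at least one $\Hom_\G(P_{x,a}, P_{y,b})$ is nonzero, giving an edge between the $x$-component and the $y$-component. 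Combining both steps, the entire fine graph is connected, and Theorem~\ref{maintheoremblocks} then yields that $\Rep_{\overline{\mathbb{Z}}[1/p]}(\mathbf{0}_\G)$ is a single $\overline{\mathbb{Z}}[1/p]$-block. The only substantive input is Lemma~\ref{DatLanardLemma}(2), whose hypothesis is exactly what the corollary assumes; everything else is elementary bookkeeping with Mackey and Krull--Schmidt.
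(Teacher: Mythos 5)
Your proof is correct and in spirit matches what the paper intends for this corollary, which is stated without proof as the depth-zero analogue of Corollary~\ref{EPCor}: the same two ingredients (Lemma~\ref{DatLanardLemma}(2) and a Mackey computation for the inductions from the pro-$p$ subgroups~$\G_x^1$) do all the work. The only divergence is bookkeeping. The paper's argument for Corollary~\ref{EPCor} invokes the Remark following the definition of the fine graph: since $\End_{\overline{\mathbb{Z}}[1/p][\G_x^+]}(\ind_{\G_x^1}^{\G_x^+}(1))\simeq\overline{\mathbb{Z}}[1/p][\G_x^+/\G_x^1]$ has no non-trivial central idempotents by Lemma~\ref{DatLanardLemma}(2), one is allowed to take each $\ind_{\G_x^1}^\G(1)$ as a single vertex of the graph, and Mackey then connects these finitely many vertices pairwise. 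You instead decompose all the way into genuine indecomposable projective summands $P_{x,j}$ and split the connectivity argument into two steps: linking within a fixed $x$ (via the fact that a finite-group algebra over $\overline{\mathbb{Z}}[1/p]$ with no non-trivial central idempotents has a connected Hom-graph on its projective indecomposables), and then linking across distinct $x,y$ by Mackey. Both routes are valid; yours is slightly more explicit and avoids invoking the Remark, at the cost of the extra within-$x$ linking step. One cautionary note on phrasing: the claim you label ``standard block theory'' is usually stated over a field or a complete local ring where Krull--Schmidt holds, whereas $\overline{\mathbb{Z}}[1/p]$ is neither. The needed assertion is nevertheless correct for an elementary reason that does not require Krull--Schmidt: any decomposition of the regular module into indecomposable summands $Ae_j$ gives primitive orthogonal idempotents $e_j$, and absence of a Hom between $Ae_j$ and $Ae_k$ in either direction is exactly $e_jAe_k=0=e_kAe_j$, so the idempotents $f_c=\sum_{j\in c}e_j$ over the connected components of the Hom-graph are central; it would be worth spelling this out rather than appealing to the field-case folklore.
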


\begin{remark}
In particular, this includes the case where~$\G$ is semisimple and simply connected (where~$\G_{\Lambda}^+=\G_{\Lambda}$, cf.~\cite[Lemma 7.7.8]{KP}) \cite[Corollary 3.3.3]{DatLanard}.  However, while for classical groups with $p$ odd the depth zero block over~$\overline{\mathbb{Z}}[1/p]$ is always indecomposable (in this case~$\G_{\Lambda}^+/\G_{\Lambda}$ is a $2$-group), it is not the case in general.  Dat and Lanard introduce a group which acts transitively on the primitive idempotents in the depth zero centre \cite[Corollary 3.4.2]{DatLanard}, and show that if~$\G$ is quasi-split and tamely ramified then~$\Rep_{\overline{\mathbb{Z}}[1/p]}(\mathbf{0}_{\G})$ is indecomposable \cite[Theorem 3.6.1]{DatLanard}.  
\end{remark}

%%%%%%%%%%%%%%%%%%%%%%%%%%%%%
\subsection{Application 2: Reduction of the block decomposition to depth zero}
%{Make this more precise, only need full intertwining over~$\overline{\K}$, yet to have a map of finitely generated projective generators, need beta extensions over~$\R$.} 
We say that the family~$\{\kappa_{\Lambda}:\Lambda\in\mathrm{Vert}_{\beta}\}$ of beta extensions defined over~$\R$ has \emph{full intertwining} if, for all~$\Lambda,\Lambda'\in\mathrm{Vert}_{\beta}$, we have~$\G_{\beta}\subset \I_{\G}(\kappa_{\Lambda},\kappa_{\Lambda'})$.  In \cite[Conjecture 6.8]{technicalpaper}, we conjecture such a family always exists (note that, this intertwining statement is equivalent to the statement over~$\K$:~$\G_{\beta}\subset \I_{\G}(\kappa_{\Lambda}\otimes\K,\kappa_{\Lambda'}\otimes\K)$).

\begin{theorem}\label{reductiontodepthzeromaintheorem}
Suppose that there exists a family of beta extensions~$\Xi=\{\kappa_{\Lambda}:\Lambda\in\mathrm{Vert}_{\beta}\}$, defined over~$\R$, with full intertwining.  Then the map
\[f_{\Xi}:\ind_{\J_\Lambda}^{\G}(\kappa_\Lambda\otimes \Q_{\Lambda,i})\mapsto \ind^{\G_{\beta}}_{\G_{\beta,\Lambda}}(\Q_{\Lambda,i}),\]
defines a graph isomorphism between the fine~$(\ft,\R)$-graph with the fine~$(\mathbf{0}_{\G_{\beta}},\R)$-graph.   In particular, the~$\R$-blocks of~$\Rep_{\R}(\ft)$ are in natural bijection with the~$\R$-blocks in~$\Rep_{\R}(\mathbf{0}_{\G_{\beta}})$.
\end{theorem}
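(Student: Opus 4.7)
The plan is to establish, for every pair of vertices $P_{i,j}=\ind_{\J_i}^{\G}(\kappa_i\otimes \Q_{i,j})$ and $P_{i',j'}=\ind_{\J_{i'}}^{\G}(\kappa_{i'}\otimes \Q_{i',j'})$ of the fine $(\ft,\R)$-graph, a natural isomorphism
\[
\Hom_{\R[\G]}(P_{i,j},P_{i',j'})\;\simeq\;\Hom_{\R[\G_{\varphi(\beta)}]}\!\bigl(\ind^{\G_{\varphi(\beta)}}_{\G_{\varphi(\beta),\Lambda_i}}(\Q_{i,j}),\ind^{\G_{\varphi(\beta)}}_{\G_{\varphi(\beta),\Lambda_{i'}}}(\Q_{i',j'})\bigr).
\]
Since the edges of each fine graph are defined by non-vanishing of a Hom space, such natural isomorphisms of Hom spaces immediately force $f_{\Xi}$ to be a graph isomorphism; the statement about blocks is then Theorem~\ref{maintheoremblocks}. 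Note that this map respects the decomposition of the respective projective generators into indecomposable summands, so the parameterisation of blocks is indeed the natural one.

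First I would apply Mackey/Frobenius reciprocity on the left-hand side to obtain
\[
\Hom_{\R[\G]}(P_{i,j},P_{i',j'})\;=\;\bigoplus_{g\in \J_{i'}\backslash \G/\J_i}\Hom_{\R[\J_{i'}\cap{}^g\J_i]}\bigl({}^g(\kappa_i\otimes\Q_{i,j}),\,\kappa_{i'}\otimes\Q_{i',j'}\bigr).
\]
By Corollary~\ref{etaintertwining}, the restriction of ${}^g\eta_i$ to $\J^1(\varphi(\beta),\Lambda_{i'})\cap{}^g\J^1(\varphi(\beta),\Lambda_i)$ intertwines with $\eta_{i'}$ only when $g\in \J^1(\varphi(\beta),\Lambda_{i'})\G_{\varphi(\beta)}\J^1(\varphi(\beta),\Lambda_i)$. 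Since $\kappa_i,\kappa_{i'}$ extend $\eta_i,\eta_{i'}$, the same vanishing holds for the Hom space in question, so only double cosets with representatives in $\G_{\varphi(\beta)}$ can contribute. Standard Iwahori-decomposition/coset arguments then identify the set of contributing double cosets with $\G_{\varphi(\beta),\Lambda_{i'}}\backslash\G_{\varphi(\beta)}/\G_{\varphi(\beta),\Lambda_i}$.

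Next, for each contributing $g\in \G_{\varphi(\beta)}$, the hypothesis on $\Xi$ says precisely that $g$ intertwines $\kappa_i$ with $\kappa_{i'}$: we thus have a canonical line of intertwiners $\Phi_g\colon {}^g\kappa_i\to\kappa_{i'}$ on $\J_{i'}\cap{}^g\J_i$. Because the $\Q_{i,j}$ are inflated from $\J_i/\J_i^1\simeq (\G_{\varphi(\beta)})_{\Lambda_i}/(\G_{\varphi(\beta)})^1_{\Lambda_i}$, tensoring with $\Phi_g$ yields an $\R$-linear isomorphism
\[
\Hom_{\R[\J_{i'}\cap{}^g\J_i]}\bigl({}^g(\kappa_i\otimes\Q_{i,j}),\kappa_{i'}\otimes\Q_{i',j'}\bigr)\;\simeq\;\Hom_{\R[(\G_{\varphi(\beta)})_{\Lambda_{i'}}\cap{}^g(\G_{\varphi(\beta)})_{\Lambda_i}]}({}^g\Q_{i,j},\Q_{i',j'}),
\]
using that $\End_{\R[\J_{i'}^1\cap {}^g\J_i^1]}(\eta_{i'}\mid\cdot)$ is free of rank one over $\R$ (Corollary~\ref{etaintertwining}) and that the quotients $\J_i/\J_i^1$ are canonically identified with the finite reductive quotients on the depth-zero side. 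Summing over the double cosets and reassembling via Mackey on the $\G_{\varphi(\beta)}$ side produces the desired natural isomorphism.

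The main obstacle is Step three: one must verify that the comparison between the ``positive depth'' Hom space and its depth zero counterpart is really an isomorphism of free $\R$-modules (of the same rank) for \emph{each individual} double coset, and not merely an injection that becomes an isomorphism after tensoring with a field. This requires carefully spelling out that the intertwiner $\Phi_g$ from the beta-extension compatibility hypothesis is unique up to an invertible scalar in $\R$ (so that dividing by it is an $\R$-linear isomorphism), and that the restriction-from-$\J_i$-to-$\J_i^1$ of the $\kappa_i$-isotypic functor is exact with one-dimensional multiplicity space over $\R$. Both reduce, via the projectivity statements of Lemma~\ref{Heislemma} and the global-to-local tools of Section~\ref{abstractsmoothreps}, to checks after base change to algebraically closed fields, where the corresponding results are already in the literature.
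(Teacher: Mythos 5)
Your proposal sets out to prove something strictly stronger than what is needed, and the extra strength is where the gap lives. The paper's argument never constructs an isomorphism of $\R[\G]$-Hom spaces; it only needs to show that non-vanishing is preserved in both directions, and this it does almost entirely over the algebraic closure $\K$ of the fraction field. Concretely: torsion-freeness of the projectives gives an embedding of $\Hom_{\R[\G]}(P_{i,j},P_{i',j'})$ into $\Hom_{\K[\G]}(P_{i,j}\otimes\K,P_{i',j'}\otimes\K)$ (and hence equivalence of vanishing), so the whole Mackey/intertwining analysis — the restriction to cosets in $\G_{\varphi(\beta)}$, the identification $\Hom_{\K[\J_i^g\cap\J_{i'}]}(\kappa_{i,\K}^g,\kappa_{i',\K})\simeq\K$, and the passage from $\kappa$-isotypic Hom to $\Q$-Hom — happens over $\K$ where it is standard representation theory of finite groups over an algebraically closed field. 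The only thing imported back to $\R$ is another vanishing equivalence for the finite-group Hom spaces, again via torsion-freeness and Lemma~\ref{Homsandscalarextension}.

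The gap you correctly flag in your own "main obstacle" paragraph is real: your step three needs $\Hom_{\R[\J_{i'}\cap{}^g\J_i]}(\kappa_i^g,\kappa_{i'})$ to be free of rank one over $\R$ with a generator that is a unit-scalar times an isomorphism of isotypic components. The theorem's hypothesis only gives non-vanishing of this Hom space; neither rank-one freeness nor the required compatibility between the polarization choices defining $\kappa_i$ and $\kappa_{i'}$ is supplied, and you haven't proved either. The analogue for $\eta$'s (Corollary~\ref{etaintertwining}) does not immediately pass to the $\kappa$'s because the $\J/\J^1$-quotients are not pro-$p$, so you cannot invoke Lemma~\ref{Homsandscalarextension} and faithfully-flat descent the way you do for Heisenberg representations. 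You would need a $\kappa$-version of Lemma~\ref{Heislemma} over $\R$, which is substantial extra work, and even then the double-coset-by-double-coset matching of $\R$-module ranks is not automatic. So as written the proposal does not close.

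If you want the weaker but sufficient statement, the fix is to notice that edges in the fine graph are a vanishing/non-vanishing dichotomy; then you can freely base change to $\K$, do your steps two and three there (where one-dimensionality of $\kappa$-intertwining over an algebraically closed field is known), and transport the non-vanishing back via torsion-freeness. That is exactly the paper's route and it requires none of the delicate $\R$-rank bookkeeping you were trying to carry out.
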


\begin{proof}
Suppose that we have an edge between~$\ind_{\J_{\Lambda}}^{\G}(\kappa_{\Lambda}\otimes \Q_{\Lambda,i})$ and~$\ind_{\J_{\Lambda'}}^{\G}(\kappa_{\Lambda'}\otimes\Q_{\Lambda',i'})$, and set~$\mathcal{P}= \Q_{\Lambda,i}$ and~$\mathcal{P}'=\Q_{\Lambda',i'}$, then in other words
\[\Hom_{\R[\G]}(\ind_{\J_{\Lambda}}^{\G}(\kappa_{\Lambda}\otimes \mathcal{P}),\ind_{\J_{\Lambda'}}^{\G}(\kappa_{\Lambda'}\otimes \mathcal{P}'))\neq 0.\]
This Hom-space embeds into
\begin{align*}
\Hom_{\R[\G]}&(\ind_{\J_{\Lambda}}^{\G}(\kappa_{\Lambda}\otimes \mathcal{P}),\ind_{\J_{\Lambda'}}^{\G}(\kappa_{\Lambda'}\otimes \mathcal{P}'))\otimes\K\\
&\simeq  \Hom_{\K[\G]}(\ind_{\J_{\Lambda}}^{\G}(\kappa_{\Lambda}\otimes \mathcal{P})\otimes\K,\ind_{\J_{\Lambda'}}^{\G}(\kappa_{\Lambda'}\otimes \mathcal{P}')\otimes\K).\end{align*}
In particular, this is non-zero if and only if, there exists~$g\in\G$ such that,
\[\Hom_{\K[\J_{\Lambda}^g\cap\J_{\Lambda'}]}(\kappa_{\Lambda,\K}^g\otimes (\mathcal{P}^g\otimes\K),\kappa_{\Lambda',\K}\otimes (\mathcal{P}'\otimes\K))\neq 0.\]
Restricting to~$\J^1$-groups, we see that we can assume that~$g\in\G_{\beta}$, and moreover for such an element~$\Hom_{\K[\J_{\Lambda}^g\cap\J_{\Lambda'}]}(\kappa_{\Lambda,\K}^g,\kappa_{\Lambda',\K})\simeq \K$, hence by \cite[Lemma 2.7]{RKSS} we have an isomorphism of~$\K$-vector spaces
\[\Hom_{\K[\J_{\Lambda}^g\cap\J_{\Lambda'}]}( \mathcal{P}^g\otimes\K, \mathcal{P}'\otimes\K)\simeq \Hom_{\K[\J_{\Lambda}^g\cap\J_{\Lambda'}]}(\kappa_{\Lambda,\K}^g\otimes (\mathcal{P}^g\otimes\K),\kappa_{\Lambda',\K}\otimes (\mathcal{P}'\otimes\K)).\]
Hence~$\Hom_{\K[\J_{\Lambda}^g\cap\J_{\Lambda'}]}( \mathcal{P}^g\otimes\K, \mathcal{P}'\otimes\K)\neq 0$ and as
\[\Hom_{\R[\J_{\Lambda}^g\cap\J_{\Lambda'}]}( \mathcal{P}^g, \mathcal{P}')\otimes \K\simeq \Hom_{\K[\J_{\Lambda}^g\cap\J_{\Lambda'}]}( \mathcal{P}^g\otimes \K, \mathcal{P}\otimes\K),\]
we have~$\Hom_{\R[\G_{\beta,\Lambda_i}^g\cap\G_{\beta,\Lambda_{i'}}]}( \mathcal{P}^g, \mathcal{P}')\neq 0$, and we have a non-zero Hom (by Mackey theory) and hence an edge between~$ \ind^{\G_{\beta}}_{\G_{\beta,\Lambda_i}}(\mathcal{P})$ and $ \ind^{\G_{\beta}}_{\G_{\beta,\Lambda_{i'}}}(\mathcal{P}')$.  The reverse direction follows similarly.
\end{proof}

At this point, it is tempting to make a conjecture, generalizing a result of Chinello \cite{Chinello}, and related to predictions of Dat \cite{DatFunctoriality}:

\begin{conjecture}\label{equivconjecture}
Let~$(\G,h)$ be an inner form of a general linear group or of a classical group with~$p\neq 2$, and~$\mathfrak{t}$ be an endo-parameter for~$\G$.  Let~$\R$ be an integral domain and~$\mathbb{Z}[1/p,\mu_{p^\infty}]$-algebra.  We have an equivalence of categories~$\Rep_{\R}(\ft)\simeq \Rep_{\R}(\mathbf{0}_{\G_{\beta}})$.\end{conjecture}

\begin{remark}
\begin{enumerate}
\item Notice the category~$\Rep_{\R}(\mathbf{0}_{\G_{\beta}})$ depends on a choice of a full semisimple element~$\beta$ for the endo-parameter~$\ft$ and embedding~$\varphi$.  We expect that by extending the tame parameter field (cf. \cite{BHeffective,AKMSS}) to the self-dual semisimple case, one can canonically associate to an endo-parameter a tamely ramified~$\F$-group~$\G_{\T}$ such that
\[\Rep_{\R}(\mathbf{0}_{\G_{\beta}})\simeq \Rep_{\R}(\mathbf{0}_{\G_{\T}}),\] 
for any choice of~$(\varphi,\beta)$ for~$\ft$, and so that there exists a reduction to a \emph{tamely ramified} depth zero situation.  This would then provide an analogue of the reduction to the tame case for Langlands parameters of \cite{DHKM1}.
\item For classical groups Heiermann \cite{Heiermann} has shown an arbitrary Bernstein block $\Rep_{\mathbb{C}}(\mathfrak{s})$ is equivalent to a unipotent block in a related group (in particular to a depth zero block).  Such a reduction to the unipotent block is also predicted for more general coefficient rings by Dat \cite{DatFunctoriality}.  Note that in the case of the conjecture~over~$\mathbb{Z}[1/p,\mu_{p^\infty}]$, we have already shown that the relevant depth zero category is indecomposable, so the full depth zero subcategory~$\Rep_{\mathbb{Z}[1/p,\mu_{p^\infty}]}(\mathbf{0}_{\G_{\beta}})$ is the unipotent block.  Recently, in the tame setting for~$\mathbb{C}$-representations, a reduction to depth zero has been constructed using Hecke algebra isomorphisms by Adler--Fintzen--Mishra--Ohara \cite{adler2024reductiondepthzerotame}.
\end{enumerate}
\end{remark}

%%%%%%%%%%%%%%%%%%%%%%%%%%%%%
\section{Interpretation in terms of Langlands parameters}\label{secLPs}
%%%%%%%%%%%%%%%%%%%%%%%%%%%%%

We explain how our results on blocks fit into the local Langlands in families conjecture.   For this section we suppose that~$\F$ is a~$p$-adic field to allow us to apply results on the local Langlands correspondence of Arthur, Mok, and Kaletha--M\'inguez--Shin--White.  We denote by~$\Irr(\G)$ the set of isomorphism classes of~$\mathbb{C}$-representations of~$\G$, and by~$\Phi(\mathscr{W}_\F,\G)$ the set of~$\widehat{\G}(\mathbb{C})$-conjugacy classes of Langlands parameters~$\rho:\mathscr{W}_\F\times \SL_2(\mathbb{C})\rightarrow \LG(\mathbb{C})$ for~$\G$.  If~$\G$ is symplectic, unitary, or odd split special orthogonal we denote by
\[\LL_\G:\Irr(\G)\rightarrow \Phi(\mathscr{W}_\F,\G)\]
the local Langlands correspondence of \cite{Arthur,Mok,KMSW}.  For simplicity we do not consider non-split or even orthogonal groups in this section.  

We let~$ \Phi(\mathscr{W}_\F,\G)^{ss}$ denote the set of~$\widehat{\G}(\mathbb{C})$-conjugacy classes of \emph{semisimple Langlands parameters} for~$\G$% (aka \emph{infinitesimal characters} for~$\G$)
; we have a semisimplification map~$\Phi(\mathscr{W}_\F,\G)\rightarrow \Phi(\mathscr{W}_\F,\G)^{ss}$ restricting via the embedding~$\varkappa:\W_\F\rightarrow \W_\F\times \SL_2(\mathbb{C})$ given by $w\mapsto \left(w,\left(\begin{smallmatrix}|w|^{1/2}&0 \\0&|w|^{-1/2}\end{smallmatrix}\right)\right)$.   Under the above hypotheses, by \cite[Section 7]{DHKM2}, the local Langlands correspondence for~$\G$ induces a semisimple local Langlands correspondence
\[\LL_\G^{ss}:\Cusp(\G)\rightarrow \Phi(\mathscr{W}_\F,\G)^{ss},\]
from the set of~$\G$-conjugacy classes of cuspidal supports of irreducible~$\mathbb{C}$-representations of~$\G$ to the set of semisimple Langlands parameters for~$\G$.

We let~$\Phi(\mathscr{P}_\F,\G)$ be the set of~$\widehat{\G}(\mathbb{C})$-conjugacy classes of wild inertial types for~$\G$, that is the~$\widehat{\G}(\mathbb{C})$-conjugacy classes of the restrictions to~$\mathscr{P}_\F$ (via the embedding~$\varkappa$) of the representatives of all elements of~$\Phi(\mathscr{W}_\F,\G)$.

For a~$\mathbb{Z}[1/p]$-algebra~$\R$, we let~$\mathfrak{R}_{\LG,\R}$ denote the universal~$\R$-algebra for Langlands parameters (on a fixed discretized Weil group~$\mathscr{W}_{\F}^0$) for~$\G$ constructed in \cite{DHKM1}; it carries an action of~$\widehat{\G}$ and we let~$\mathfrak{R}_{\LG,\R}^{\widehat{\G}}$ denote the subalgebra of~$\widehat{\G}$-invariant functions (the GIT-quotient, which is independent of the choice of discretization by \cite[Theorem 4.18]{DHKM1}).

Suppose now that~$\G$ is~$\F$-quasi-split.  We let~$\mathfrak{E}_{\G,\mathbb{Z}[\sqrt{q}^{-1}]}=\prod_{r\in \D(\G)}\mathfrak{E}_{\G,\mathbb{Z}[\sqrt{q}^{-1}],r}$ denote the integral model for the endomorphisms of a Gelfand--Graev representation for~$\G$ defined in \cite[Section 5]{DHKM2}, where~$\D(\G)$ denotes the set of depths for~$\G$ as in Lemma \ref{basiccentrelemma}.  We let~$\mathfrak{Z}_{\G,\R}^{\mathrm{ad}}$ denote the subring of~$\mathfrak{Z}_{\G,\R}$ of elements invariant under the automorphisms induced by the conjugation by elements of the~$\F$-points of the adjoint group (cf., \cite[Remark 5.16]{DHKM2}).  Then, by ibid., there is a canonical map 
\[\mathfrak{e}:\mathfrak{Z}_{\G,\mathbb{Z}[\sqrt{q}^{-1}]}^{\mathrm{ad}}\rightarrow  \mathfrak{E}_{\G,\mathbb{Z}[\sqrt{q}^{-1}]}.\]
Given a Whittaker datum~$(\U,\psi)$  over~$\R_0=\mathbb{Z}[\sqrt{q}^{-1},\mu_{p^\infty}]$ for~$\G$, for each~$r\in\D(\G)$, there exists a canonical isomorphism
\[\mathfrak{E}_{\G,\mathbb{Z}[\sqrt{q}^{-1}],r}\otimes \R_0\xrightarrow{\sim}\End_{\R_0[\G]}(\ind_{\U}^{\G}(\psi)_r).\]

%%%%%%%%%%%%%%%%%%%%%%%%%%%%%
\subsection{A corollary of local Langlands in families}\label{LLIFsection}
 The local Langlands in families conjecture \cite{KOberwolfach, DHKM2}, predicts a natural morphism
\[ \mathrm{LLIF}_{\G}:\mathfrak{R}_{\LG,\mathbb{Z}[\sqrt{q}^{-1}]}^{\widehat{\G}}\rightarrow \mathfrak{Z}_{\G,\mathbb{Z}[\sqrt{q}^{-1}]},\]
compatible with the semisimple local Langlands correspondence for classical groups, with image in~$\mathfrak{Z}_{\G,\mathbb{Z}[\sqrt{q}^{-1}]}^{\mathrm{ad}}$ and the property that, if~$\G$ is~$\F$-quasi-split then the induced map obtained by composing with~$\mathfrak{e}$:
\[ \mathrm{LLIF}_{\G}:\mathfrak{R}_{\LG,\mathbb{Z}[\sqrt{q}^{-1}]}^{\widehat{\G}}\xrightarrow{\sim} \mathfrak{E}_{\G,\mathbb{Z}[\sqrt{q}^{-1}]}\]
is an isomorphism.  The morphism is expected to be at least loosely compatible with the depth filtration: choose a filtration~$(\mathscr{P}_{\F}^e)_{e\in\mathbb{N}}$ of~$\mathscr{P}_\F$ by open normal subgroups of~$\mathscr{W}_\F$ and define~$\mathfrak{R}_{\LG,\mathbb{Z}[\sqrt{q}^{-1}],e}^{\widehat{\G}}$ by considering only parameters trivial on~$\mathscr{P}_{\F}^e$, then given any depth~$r\in\D(\G)$ it is expected that there exists an~$e(r)\in\mathbb{N}$ such that~$\mathrm{LLIF}_{\G}$ factors through~$\bigcup_{e\leqslant e(r)}\mathfrak{R}_{\LG,\mathbb{Z}[\sqrt{q}^{-1}],e}^{\widehat{\G}}$, and conversely given~$e\in\mathbb{N}$ it is expected that there exists~$r(e)\in\D(\G)$ such that~$\mathrm{LLIF}_{\G}$ has image in~$ \mathfrak{Z}_{\G,\mathbb{Z}[\sqrt{q}^{-1}],\leqslant r(e)}$.

In \cite{DHKM2}, such morphisms are constructed after inverting an integer which depends on~$\G$ (``the banal case'').

The formation of these rings~$\mathfrak{R}_{\LG,\mathbb{Z}[\sqrt{q}^{-1}]}^{\widehat{\G}}, \mathfrak{Z}_{\G,\mathbb{Z}[\sqrt{q}^{-1}]}, \mathfrak{E}_{\G,\mathbb{Z}[\sqrt{q}^{-1}]}$ in restricted depth/ramification are compatible with (at least) flat extensions, and hence the existence of~$\mathrm{LLIF}_{\G}$ loosely compatible with the depth filtration as above,  gives the existence of morphisms~$\mathrm{LLIF}_{\G,\R}:\mathfrak{R}_{\LG,\R}^{\widehat{\G}}\rightarrow \mathfrak{Z}_{\G,\R}$ and, in the quasi-split case,~$\mathrm{LLIF}_{\G,\R}:\mathfrak{R}_{\LG,\R}^{\widehat{\G}}\rightarrow \mathfrak{E}_{\G,\R}$, for~$\R/\mathbb{Z}[\sqrt{q}^{-1}]$ flat.  For the remainder of this section, we suppose~$\R$ is a flat~$\mathbb{Z}[\sqrt{q}^{-1}]$-algebra.

\begin{remark}\label{RemarkFS}For~$\ell\neq p$, (for any connected reductive $p$-adic group~$\G$) in a spectacular breakthrough Fargues--Scholze in \cite{FarguesScholze} constructed a natural map
\[\mathrm{FS}_{\G,\ell}:\mathrm{Exc}(\W_{\F},\widehat{\G})_{\mathbb{Z}_{\ell}[\sqrt{q}]}\rightarrow \mathfrak{Z}_{\G,\mathbb{Z}_{\ell}[\sqrt{q}]},\]
from the ``excursion algebra'' to the centre over~$\mathbb{Z}_{\ell}[\sqrt{q}]$.   Scholze \cite{scholze2025geometrizationlocallanglandscorrespondence} has proved that this construction is independent of~$\ell$ establishing \cite[Conjecture I.9.5]{FarguesScholze}, and it follows (cf.,~\cite[Lemma A.3]{DHKM2}) that the maps~$\mathrm{FS}_{\G,\ell}$ can be obtained by scalar extension from a unique morphism
\[\mathrm{FS}_{\G}:\mathrm{Exc}(\W_{\F},\widehat{\G})_{\mathbb{Z}[\sqrt{q}^{-1}]}\rightarrow \mathfrak{Z}_{\G,\mathbb{Z}[\sqrt{q}^{-1}]}.\]
It is expected that~$\mathrm{Exc}(\W_{\F},\widehat{\G})_{\mathbb{Z}[\sqrt{q}^{-1}]}$ and~$\mathfrak{R}_{\LG,\mathbb{Z}[\sqrt{q}^{-1}]}^{\widehat{\G}}$ are closely related (cf.~\cite[VIII.5]{FarguesScholze}).  In any case, they show~that there is a natural universal homeomorphism~$\mathrm{Exc}(\W_{\F},\widehat{\G})_{\mathbb{Z}[\sqrt{q}^{-1}]}\rightarrow \mathfrak{R}_{\LG,\mathbb{Z}[\sqrt{q}^{-1}]}^{\widehat{\G}}$, and in particular their primitive idempotents are in natural bijection. As in \cite[1.1]{DatLanard}, one can apply understanding of the primitive idempotents~$ \mathfrak{R}_{\LG,\R}^{\widehat{\G}}$ and of~$\mathfrak{Z}_{\G,\R}$, to understand properties of Fargues--Scholze's semisimple correspondence.
\end{remark}

As~$\widehat{\G}$ is connected, the primitive idempotents of~$\mathfrak{R}_{\LG,\R}^{\widehat{\G}}$ correspond the connected components of the full moduli space of Langlands parameters, which are studied in various cases in~\cite{DHKM1} and in \cite{cotner2024connected}.

\begin{definition}
Suppose~$\mathrm{LLIF}_{\G,\R}$ exists.  Then, via $\mathrm{LLIF}_{\G,\R}$, primitive idempotents of~$\mathfrak{R}_{\LG,\R}^{\widehat{\G}}$ correspond to sums of primitive idempotents of~$\mathfrak{Z}_{\G,\R}$.  
\begin{enumerate}
\item If an idempotent~$e\in\mathfrak{Z}_{\G,\R}$ is of the form $\mathrm{LLIF}_{\G,\R}(f)$ for an idempotent~$f\in\mathfrak{R}_{\LG,\R}^{\widehat{\G}}$ then we call it \emph{stable}.  In other words, it defines an idempotent of~the \emph{stable centre}
\[\mathfrak{Z}_{\G,\R}^{\st}:=\mathrm{LLIF}_{\G,\R}(\mathfrak{R}_{\LG,\R}^{\widehat{\G}}).\]
\item When it exists, we call the decomposition~$1=\sum e_i$ of~$1\in \mathfrak{Z}_{\G,\R}^{\st}$ into pairwise orthogonal primitive idempotents of~$\mathfrak{Z}_{\G,\R}^{\st}$, the \emph{stable $\R$-block decomposition} (again in general one will need some finiteness conditions on~$\R$ to guarantee existence).  We call the~$e_i$ \emph{ps-idempotents}.
\end{enumerate}
\end{definition}

Suppose~$\R$ is Noetherian, then given a ps-idempotent~$e\in\mathfrak{Z}_{\G,\R}^{\st}$ we can consider its decomposition~$e=\sum e_i$ into primitive orthogonal idempotents~$e_i\in\mathfrak{Z}_{\R}(\G)$.  By Corollary \ref{finitelymanyblocksofdepth}, we deduce:

\begin{lemma}
Suppose~$\R$ is a Noetherian domain,~$\mathrm{LLIF}_{\G,\R}$ exists, and is compatible with the depth filtration in the weak sense described above.  Then the ps-idempotents are finite sums of the primitive idempotents of~$\mathfrak{Z}_{\G,\R}$.
\end{lemma}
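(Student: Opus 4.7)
The plan is to use the depth-compatibility hypothesis on $\mathrm{LLIF}_{\G,\R}$ to confine a ps-idempotent to a bounded-depth subalgebra of $\mathfrak{Z}_{\G,\R}$, where Corollary \ref{finitelymanyblocksofdepth} provides the required finiteness.

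First, let $\mathbf{e} \in \mathfrak{Z}_{\G,\R}^{\st}$ be a primitive idempotent, and choose a primitive idempotent $f \in \mathcal{R}_{\LG,\R}^{\widehat{\G}}$ with $\mathbf{e} = \mathrm{LLIF}_{\G,\R}(f)$. Since $\mathcal{R}_{\LG,\R}^{\widehat{\G}} = \bigcup_{N \in \mathbb{N}} \mathcal{R}_{\LG,\R,N}^{\widehat{\G}}$ (the filtration by wild ramification exhausts the whole moduli space because every Langlands parameter is continuous, and passing to $\widehat{\G}$-invariants is compatible with the filtered colimit), the single element $f$ lies in $\mathcal{R}_{\LG,\R,N}^{\widehat{\G}}$ for some $N$. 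No primitivity of $f$ at stage $N$ is needed here; only membership.

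Next, apply the assumed weak depth-compatibility of $\mathrm{LLIF}_{\G,\R}$: there exists $r(N) \in D(\G)$ such that $\mathrm{LLIF}_{\G,\R}\bigl(\mathcal{R}_{\LG,\R,N}^{\widehat{\G}}\bigr) \subseteq \mathfrak{Z}_{\G,\R,\leqslant r(N)}$. Thus $\mathbf{e} \in \mathfrak{Z}_{\G,\R,\leqslant r(N)}$. By Lemma \ref{basiccentrelemma}, we have
\[
\mathfrak{Z}_{\G,\R,\leqslant r(N)} = \prod_{\substack{r \in D(\G)\\ r \leqslant r(N)}} \mathfrak{Z}_{\G,\R,r},
\]
and the indexing set is finite because the Moy--Prasad depths form a discrete subset of $\mathbb{Q}^{\geqslant 0}$. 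By Corollary \ref{finitelymanyblocksofdepth}, each factor $\mathfrak{Z}_{\G,\R,r}$ has only finitely many primitive idempotents, so $\mathfrak{Z}_{\G,\R,\leqslant r(N)}$ has only finitely many primitive idempotents altogether. These primitive idempotents of $\mathfrak{Z}_{\G,\R,\leqslant r(N)}$ remain primitive in $\mathfrak{Z}_{\G,\R}$, because the factorization $\mathfrak{Z}_{\G,\R} = \mathfrak{Z}_{\G,\R,\leqslant r(N)} \times \prod_{r > r(N)} \mathfrak{Z}_{\G,\R,r}$ is a ring direct product. Therefore the unique decomposition of $\mathbf{e}$ into primitive idempotents of $\mathfrak{Z}_{\G,\R,\leqslant r(N)}$ is automatically a decomposition into primitive idempotents of $\mathfrak{Z}_{\G,\R}$, and it is a finite sum.

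The argument is essentially bookkeeping once both inputs --- depth-boundedness from LLIF and block finiteness in each depth from Corollary \ref{finitelymanyblocksofdepth} --- are invoked. The only subtlety worth flagging is the passage $\mathcal{R}_{\LG,\R}^{\widehat{\G}} = \bigcup_N \mathcal{R}_{\LG,\R,N}^{\widehat{\G}}$, which requires knowing that forming $\widehat{\G}$-invariants on the moduli stack of $L$-parameters commutes with the filtered union over wild ramification bounds; this is harmless for us since we only need to locate one element, but if one wanted the full collection of ps-idempotents to lie in a \emph{single} $\mathcal{R}_{\LG,\R,N}^{\widehat{\G}}$ one would need further input, and this is the main potential obstacle in any attempt to strengthen the statement.
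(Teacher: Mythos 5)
Your argument is correct and is precisely the reasoning the paper leaves implicit: confine the ps-idempotent to a bounded-depth subalgebra $\mathfrak{Z}_{\G,\R,\leqslant r(N)}$ via the depth-compatibility of $\mathrm{LLIF}_{\G,\R}$, then invoke Corollary~\ref{finitelymanyblocksofdepth} for the finiteness of primitives in each depth factor. The paper's own proof is compressed into the one sentence preceding the lemma statement; your version usefully spells out the exhaustion $\mathcal{R}_{\LG,\R}^{\widehat{\G}}=\bigcup_N\mathcal{R}_{\LG,\R,N}^{\widehat{\G}}$ (which does hold, since taking $\widehat{\G}$-invariants is a finite limit and so commutes with the filtered colimit) and the check that primitives of the bounded-depth factor remain primitive in the full product.
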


For quasi-split groups, we can also single out a distinguished idempotent:
\begin{lemma}
Suppose~$\G$  is quasi-split,~$\R$ is a Noetherian~${\mathbb{Z}}[\mu_{p^\infty},1/p]$-algebra, the morphism~$\mathrm{LLIF}_{\G,\R}$ exists, defines an isomorphism to~$\End_{\R[\G]}(\ind_{\U}^{\G}(\psi))$, and~$e\in\mathfrak{Z}_{\G,\R}^{\st}$ is a ps-idempotent.  In the decomposition $e=\sum e_i$ of a ps-idempotent into primitive idempotents, there is a unique idempotent $e_{\psi, gen}$ whose image under the natural map~$\Phi$ to~$\End_{\R[\G]}(\ind_{\U}^{\G}(\psi))$ is non-zero.  %We call this the~``$\psi$-generic idempotent'' associated to~$e$.  
\end{lemma}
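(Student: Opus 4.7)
The plan is to leverage the hypothesis that $\mathfrak{e}\circ\mathrm{LLIF}_{\G,\R}\colon \mathcal{R}_{\LG,\R}^{\widehat{\G}}\xrightarrow{\sim}\mathfrak{E}_{\G,\R}$ is an isomorphism in order to transport the primitivity of $e$ from the stable centre into the Gelfand--Graev endomorphism ring, and then to argue that this forces all but one of the $\Phi(e_i)$ to vanish.

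First I would observe that, since $\mathfrak{e}\circ\mathrm{LLIF}_{\G,\R}$ is assumed to be an isomorphism, the map $\mathrm{LLIF}_{\G,\R}$ is itself injective, so it defines a ring isomorphism
\[
\mathrm{LLIF}_{\G,\R}\colon \mathcal{R}_{\LG,\R}^{\widehat{\G}}\xrightarrow{\sim}\mathfrak{Z}_{\G,\R}^{\st},
\]
and consequently the restriction of $\Phi$ to $\mathfrak{Z}_{\G,\R}^{\st}$ agrees with $\mathfrak{e}|_{\mathfrak{Z}_{\G,\R}^{\st}}$ and is also an isomorphism onto $\mathfrak{E}_{\G,\R}=\End_{\R[\G]}(\ind_{\U}^{\G}(\psi))$. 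In particular, primitivity of $e$ as an idempotent of $\mathfrak{Z}_{\G,\R}^{\st}$ transfers to primitivity of $\Phi(e)$ as a non-zero idempotent of $\mathfrak{E}_{\G,\R}$.

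Next I would apply the ring homomorphism $\Phi$ to the decomposition $e=\sum_i e_i$ of $e$ into pairwise orthogonal primitive idempotents of $\mathfrak{Z}_{\G,\R}$ (which exists and is finite by Corollary~\ref{finitelymanyblocksofdepth} together with the preceding lemma, using the hypothesis that $\mathrm{LLIF}_{\G,\R}$ is compatible with the depth filtration so that $e$ lies in a finitely generated depth-restricted summand of the centre). This gives a decomposition
\[
\Phi(e)=\sum_i \Phi(e_i)
\]
of $\Phi(e)$ into pairwise orthogonal idempotents of $\mathfrak{E}_{\G,\R}$. In any associative ring, a primitive idempotent admits no non-trivial decomposition into orthogonal idempotents, so at most one $\Phi(e_i)$ is non-zero; and since $\Phi(e)\ne 0$, exactly one is non-zero. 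This $e_i$ is then the desired $\psi$-generic idempotent $e_{\psi,gen}$, and the uniqueness is immediate from the argument.

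There is no substantive obstacle once the hypothesis on $\mathrm{LLIF}_{\G,\R}$ is accepted; the only small points to verify are that $\Phi$ is indeed a well-defined ring homomorphism from the whole centre $\mathfrak{Z}_{\G,\R}$ to $\mathfrak{E}_{\G,\R}$ (which follows from the Bernstein centre acting on any smooth representation, in particular on $\ind_{\U}^{\G}(\psi)$) and that its restriction to $\mathfrak{Z}_{\G,\R}^{\mathrm{ad}}$ coincides with $\mathfrak{e}$ (which is the definition of $\mathfrak{e}$). If one wished to remove the compatibility-with-depth assumption used to guarantee finiteness of $\{e_i\}$, one could instead phrase the argument in terms of idempotent lifting inside the completed centre at the depth of $e$, but this refinement does not affect the structural argument above.
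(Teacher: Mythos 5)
Your proof is correct and matches the paper's argument: both use the hypothesis that $\Phi\circ\mathrm{LLIF}_{\G,\R}$ is an isomorphism to conclude that $\Phi(e)$ is a primitive idempotent of $\End_{\R[\G]}(\ind_{\U}^{\G}(\psi))$, then apply $\Phi$ to the orthogonal decomposition $e=\sum e_i$ and observe that a primitive idempotent admits no non-trivial orthogonal decomposition. The only cosmetic difference is that the paper routes through the preimage $e'\in\mathcal{R}_{\LG,\R}^{\widehat{\G}}$ rather than noting directly that $\Phi$ restricts to an isomorphism on $\mathfrak{Z}_{\G,\R}^{\st}$.
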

Note that~$e_{\psi, gen}$ is the unique summand of~$e$ which supports~$\psi$-generic representations.

\begin{proof}
Let~$e'\in\mathfrak{R}_{\LG,\R}^{\widehat{\G}}$ be the unique primitive idempotent such that\[\mathrm{LLIF}_{\G,\R}(e')=e.\]  Then~$\Phi\circ \mathrm{LLIF}_{\G,\R}(e')=e''$ is a primitive idempotent of~$\End_{\R[\G]}(\ind_{\U}^{\G}(\psi))$ as~$\Phi\circ \mathrm{LLIF}_{\G,\R}$ is an isomorphism.  Moreover,~$e''=\Phi(\sum e_i)=\sum\Phi( e_i)$ is a decomposition of~$e''$ into orthogonal idempotents.  Hence, there is a unique~$i$ such that~$\Phi( e_i)\neq 0$ and for this~$i$ we have~$e''=\Phi(e_i)$.
 \end{proof}
 
 In terms of our description of the idempotents of~$\mathfrak{Z}_{\overline{\mathbb{Z}}[1/p]}(\G)$ in terms of endo-parameters for~$\G$, we let~$\mathcal{EP}(\G)$ denote the set of unrefined endo-parameters for~$\G$ of \cite[Remark 3.10]{technicalpaper}. %Note, unlike the set of endo-parameters for~$(\G,h)$, the set~$\mathcal{EP}(\G)$ does not depend on fixing a hermitian form~$h$ defining~$\G$.
 \begin{conjecture}\label{conjepstable}
We have a natural bijection (i.e., a unique bijection compatible with local Langlands)
\[ \left\{\text{primitive idempotents of }\mathfrak{Z}_{\G,\overline{\mathbb{Z}}[1/p]}^{\st}\right\} \leftrightarrow \mathcal{EP}(\G),\]
i.e.,~the ps-idempotents are given by endo-parameters forgetting their Witt data.  
\end{conjecture}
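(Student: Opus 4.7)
The plan is to exploit the assumed LLIF morphism to translate both sides of the conjecture into Galois-theoretic data, and match them via the ramification theorem. Under LLIF, a ps-idempotent of $\mathfrak{Z}^{\st}_{\G,\overline{\mathbb{Z}}[1/p]}$ is the image of a unique primitive idempotent of $\mathcal{R}_{\LG,\overline{\mathbb{Z}}[1/p]}^{\widehat{\G}}$, and such primitive idempotents parameterize the connected components of the moduli space $[\mathcal{R}_{\LG,\overline{\mathbb{Z}}[1/p]}/\widehat{\G}]$ of Langlands parameters. On the automorphic side, Theorem~\ref{endosplit} together with Corollary~\ref{EPCor} shows that primitive idempotents of $\mathfrak{Z}_{\G,\overline{\mathbb{Z}}[1/p]}$ correspond to endo-parameters for $(\G,h)$. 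The conjecture therefore reduces to showing that under LLIF these primitive idempotents are grouped into ps-idempotents precisely according to the fibres of the forgetful map from endo-parameters for $(\G,h)$ to $\mathcal{EP}(\G)$ of Remark~\ref{remEndoparG}\eqref{remEndoparGi}.

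The first step is to check that any two endo-parameters refining a common unrefined endo-parameter $\ft\in\mathcal{EP}(\G)$ yield irreducible $\mathbb{C}$-representations whose Langlands parameters have the same restriction to wild inertia, hence lie in a common connected component of the moduli. This follows from the ramification theorem (Bushnell--Henniart for inner forms of general linear groups, Blondel--Henniart--Stevens in the symplectic case, with the analogue for the remaining classical inner forms as anticipated in current work), which says that the $\widehat{\G}(\mathbb{C})$-orbit of $\LL_\G(\pi)\vert_{\mathscr{P}_\F}$ depends only on the unrefined endo-parameter of $\pi$. The second step is to show that these wild inertial types exhaust and separate the connected components of the moduli. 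Over $\overline{\mathbb{Q}_\ell}$ this will follow from the structure results on moduli of Langlands parameters in \cite{DHKM1,cotner2024connected}: the restriction map to $\Phi(\mathscr{P}_\F,\G)$ is locally constant on the moduli, and distinct $\widehat{\G}(\mathbb{C})$-orbits of wild inertial types lie in distinct components. Surjectivity onto components is then clear, as every component contains a closed $\overline{\mathbb{Q}_\ell}$-point of the form $\LL^{ss}_\G(\pi)$ for some $\pi\in\Irr(\G)$.

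To promote this from $\overline{\mathbb{Q}_\ell}$- to $\overline{\mathbb{Z}}[1/p]$-coefficients, one uses that the formation of primitive idempotents is compatible with mod-$\ell$ reduction on both sides: on the automorphic side by Corollary~\ref{cormodell}, and on the spectral side by the corresponding statement for $\mathcal{R}^{\widehat\G}_{\LG,\overline{\mathbb{Z}}[1/p]}$ implicit in the construction of \cite{DHKM1}. This ensures that components cannot glue further over residue characteristics beyond the glueings already matched by LLIF, and so an integral component of $[\mathcal{R}_{\LG,\overline{\mathbb{Z}}[1/p]}/\widehat{\G}]$ corresponds to a single $\widehat{\G}(\mathbb{C})$-orbit of wild inertial types, and thus to a single element of $\mathcal{EP}(\G)$. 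Naturality of the resulting bijection is forced by the compatibility of LLIF with the semisimple local Langlands correspondence of Arthur--Mok--Kaletha--M\'inguez--Shin--White.

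The main obstacle is the precise description of connected components of $[\mathcal{R}_{\LG,\overline{\mathbb{Z}}[1/p]}/\widehat{\G}]$ in the required integral generality: the existing results of \cite{DHKM1,cotner2024connected} are cleanest over $\overline{\mathbb{Q}_\ell}$, and ruling out extra glueings across residue characteristics requires genuinely integral input from LLIF together with a careful compatibility with the depth filtration. A secondary obstacle is the current partial status of the ramification theorem for some non-symplectic classical inner forms; for the general statement this input must be assumed or imported from work in progress.
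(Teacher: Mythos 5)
You are sketching a proof of something the paper explicitly leaves as a conjecture; there is no proof of Conjecture~\ref{conjepstable} in the paper for your argument to be compared against. What the paper does prove, in the immediately following subsection, is a theorem restricted to $\G=\Sp_{2n}(\F)$: after invoking the result of \cite{DHKM1} that (for $p\neq 2$) the connected components of $\mathcal{R}_{\LG,\overline{\mathbb{Z}}[1/p]}^{\widehat{\G}}$ are indexed by $\Phi(\mathscr{P}_\F,\G)$, the conjecture reduces to the existence of a unique bijection $\mathrm{LL}_{\G}^{\text{wild}}:\mathcal{EP}(\G)\to\Phi(\mathscr{P}_\F,\G)$ compatible with local Langlands, which the paper deduces from the ramification theorem of \cite{BHS} together with compatibility of $\LL_\G$ with parabolic induction up to semisimplification (and the factorization of both the unrefined endo-parameter map and the wild-inertia restriction through cuspidal support, respectively semisimple parameters). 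This is essentially your first step, restricted to the one classical case where the required inputs are theorems rather than expectations.

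Your second and third steps go beyond what the paper asserts, and your own list of obstacles correctly explains why the general statement must remain a conjecture. To be precise about what is missing: (a) the existence of a natural $\mathrm{LLIF}_{\G}$ morphism compatible with the depth filtration is itself conjectural for classical groups outside the banal range, so even the translation of ``ps-idempotent'' into ``connected component of the moduli of parameters'' is not available unconditionally; (b) the integral component count for $[\mathcal{R}_{\LG,\overline{\mathbb{Z}}[1/p]}/\widehat{\G}]$ that you need holds in the references only under hypotheses, and the paper only invokes it for $\Sp_{2n}$; (c) the ramification theorem has at present been proved for $\GL_n$ and its inner forms and for $\Sp_{2n}(\F)$ via \cite{BHS}, not for unitary or special orthogonal groups or their inner forms; (d) the ``corresponding statement for $\mathcal{R}^{\widehat{\G}}_{\LG,\overline{\mathbb{Z}}[1/p]}$ implicit in the construction of \cite{DHKM1}'' that you invoke in step three is not proved there in the form you need, so treating it as a citation rather than as another assumption is a gap. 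You have been honest about all of this, which is the right posture for a conjecture; just make sure the write-up distinguishes sharply between what is proved (the symplectic case of the reformulated bijection), what is assumed via LLIF, and what is an independent open input (the ramification theorem for the remaining classical groups, and the integral component structure), since your current phrasing blurs the last category into the second.
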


%%%%%%%%%%%%%%%%%%%%%%%%%%%%%
\subsection{The case of symplectic groups}
Let~$\G=\Sp_{2n}(\F)$.  Over~$\overline{\mathbb{Z}}[1/p]$ (using $p\neq 2$) the connected components of~$\mathfrak{R}_{\LG,\R}^{\widehat{\G}}$ correspond to wild inertial types by \cite{DHKM1}.  So we can reinterpret the Conjecture \ref{conjepstable} as saying that there is a unique bijection
 \[\Phi(\mathscr{P}_\F,\G) \leftrightarrow \mathcal{EP}(\G)\]
compatible with local Langlands.   In this special case, this becomes a mild extension of the ramification theorem of the fourth author, Blondel, and Henniart \cite{BHS}:

\begin{theorem}
There is a unique bijection~$\mathrm{LL}_{\G}^{\text{wild}}:\mathcal{EP}(\G)\rightarrow \Phi(\mathscr{P}_\F,\G)$ which is compatible with the local Langlands correspondence. 
\end{theorem}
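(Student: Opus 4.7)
The plan is to construct $\mathrm{LL}_{\G}^{\text{wild}}$ via Langlands parameters of irreducible representations, show it is well-defined on endo-parameters by the ramification theorem of Blondel--Henniart--Stevens (BHS), and then read off bijectivity from the parallel decomposition of both sides into simple pieces.

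First, given $\ft\in\mathcal{EP}(\G)$, I would pick any irreducible $\mathbb{C}$-representation $\pi$ of $\G=\Sp_{2n}(\F)$ whose endo-parameter (in the sense of the preceding sections) reduces to $\ft$; such $\pi$ exists because every endo-parameter is realized by some semisimple character and hence, by Proposition~\ref{DatProp} and projectivity, by an irreducible representation. Set
\[
\mathrm{LL}_{\G}^{\text{wild}}(\ft):=\LL_{\G}(\pi)\big|_{\mathscr{P}_\F},
\]
where the restriction is via the canonical embedding $\mathscr{P}_\F\hookrightarrow\mathscr{W}_\F\hookrightarrow\mathscr{W}_\F\times\SL_2(\mathbb{C})$. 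The point is that $\LL_\G(\pi)|_{\mathscr{P}_\F}$ depends only on the semisimplification of $\LL_\G(\pi)$ (wild inertia acts through a finite quotient), and hence only on the supercuspidal support of $\pi$ via the semisimple local Langlands correspondence $\LL_\G^{\mathrm{ss}}$.

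The well-definedness on $\mathcal{EP}(\G)$ is exactly the content of the BHS ramification theorem: any two irreducible representations sharing the same unrefined endo-parameter have Langlands parameters whose restrictions to $\mathscr{P}_\F$ are $\widehat{\G}$-conjugate. Compatibility with local Langlands then forces the map, so uniqueness is automatic. For bijectivity, I would exploit that both sides decompose additively into ``simple'' pieces. On the representation-theoretic side, an unrefined endo-parameter $\ft=\sum_t n_t t$ (for $t\in(\cE(\F)/\Sigma)$) decomposes according to its simple endo-classes, paralleling the decomposition of the support Levi $\M(\ft)=\prod_j\M_j$ into general linear blocks (and at most one classical block, which here is just trivial since $\widehat{\G}$-parameters absorb everything into $\GL$-type blocks via $\SO_{2n+1}$). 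On the parameter side, a wild inertial type for $\widehat{\G}=\SO_{2n+1}(\mathbb{C})$ decomposes as an orthogonal direct sum of irreducible summands (Galois-orbit-stable in the $\GL$-sense), each carrying no further Witt-type information on $\mathscr{P}_\F$ since the wild inertia has pro-order prime to $2$.

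I would then reduce to the simple case: endo-parameters $\ft$ supported on a single orbit $t$, which (via the essentially tame part of the construction of types and Bushnell--Henniart's tame parameter field) correspond to discrete-series-like pieces, and this is precisely where BHS establish a bijection with the corresponding stratum of $\Phi(\mathscr{P}_\F,\G)$. Combining the decompositions on both sides gives a bijection; injectivity follows because different simple endo-classes produce non-conjugate wild inertial summands (again by BHS, as the simple endo-class determines the tame parameter field and the wild part of the supercuspidal), and surjectivity follows because every wild inertial type of $\widehat{\G}$-dimension $2n+1$ arises from the additive combinations of such simple pieces constrained by the dimension formula \eqref{endo2}. The main obstacle is the bookkeeping at this final step: carefully matching the multiplicities $n_t$ on the endo-parameter side against the multiplicities of irreducible wild summands on the Galois side, and verifying that the forgetting of Witt types on the endo-parameter side corresponds exactly to the loss of $\SL_2$-data plus tame-inertia refinements on the Galois side. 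This bookkeeping is essentially done in \cite{BHS} for the ``ramified'' (discrete) case, so the ``mild extension'' consists in gluing these together over all Levi subgroups and checking compatibility with parabolic induction via Theorem~\ref{endosplit}.
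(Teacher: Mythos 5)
Your overall strategy---defining $\mathrm{LL}_{\G}^{\text{wild}}$ via a choice of irreducible representative, observing that the restriction to $\mathscr{P}_\F$ factors through the semisimple Langlands correspondence, and invoking the Blondel--Henniart--Stevens (BHS) ramification theorem---matches the paper's. The concrete gap is in the well-definedness step, where you state the ramification theorem as: ``any two irreducible representations sharing the same unrefined endo-parameter have Langlands parameters whose restrictions to $\mathscr{P}_\F$ are $\widehat{\G}$-conjugate.'' BHS do not prove this; their theorem is for \emph{cuspidal} representations of $\Sp_{2n}(\F)$. To pass from cuspidals to all irreducibles one needs two separate parabolic-induction compatibilities: that of the unrefined endo-parameter map (which is exactly what Theorem~\ref{endosplit} provides on the representation side) and that of $\LL_\G^{ss}$ on the Galois side. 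You relegate these to post-hoc ``bookkeeping,'' but they are the load-bearing step: without Theorem~\ref{endosplit} it is not visible that the endo-parameter map even factors through $\Cusp(\G)$, and that factorization is what the proof rests on.

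The bijectivity argument has a related slippage. You claim BHS establish a bijection for endo-parameters supported on a single orbit $t$; but endo-parameters with a single orbit and multiplicity $n_t>1$ are not realized by cuspidal representations of $\G$, so BHS do not directly apply there. The paper's route is cleaner: pass to the Levi $\M=\M(\ft)$, where $\ft_{\M}$ is cuspidal-supported, obtain a bijection on that piece from BHS (and from Bushnell--Henniart for the $\GL$-factors of $\M$), and propagate it to all of $\mathcal{EP}(\G)$ via the two parabolic-induction compatibilities and the compatible cuspidal-support/semisimplification structure on both sides. Your surjectivity claim (``every wild inertial type \ldots arises from additive combinations \ldots'') is asserted but not closed, and in fact would reduce to exactly this cuspidal-level bijectivity from BHS; trying to match multiplicities of orthogonal summands directly, as you propose, is more work than the parabolic-induction route and adds no new content.
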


\begin{proof}
Up to semisimplification, the map
\[\LL_\G:\Irr(\G)\rightarrow \Phi(\mathscr{W}_\F,\G)\]
is compatible with parabolic induction and induces a unique map
\[\LL_\G^{ss}:\Cusp(\G)\rightarrow \Phi(\mathscr{W}_\F,\G)^{ss},\]
from cuspidal supports to orbits of semisimple parameters \cite{DHKM2}.  The unrefined endo-parameter map factors through~$\Cusp(\G)$, and restriction to wild inertia through~$\Phi(\mathscr{W}_\F,\G)^{ss}$.  The statement for~$\LL_{\M}$ follows from \cite{BHS} for cuspidal representations of Levi subgroups~$\M$ of~$\G$, and parabolic induction (i.e., compatibility of~$\LL_\G$ with parabolic induction up to semisimplification \cite{DHKM2}) in general.
\end{proof}

We thus obtain the following description of our decomposition by unrefined endo-parameter:

\begin{corollary}\label{corgaloisdecomp}
Let~$\G=\Sp_{2n}(\F)$.  We have a decomposition of categories~\[\Rep_{\overline{\mathbb{Z}}[1/p]}(\G)=\prod_{\nu\in \Phi(\mathscr{P}_\F,\G)}\Rep_{\overline{\mathbb{Z}}[1/p]}(\nu)\]
where a smooth~$\overline{\mathbb{Z}}[1/p]$-representation lies in~$\Rep_{\overline{\mathbb{Z}}[1/p]}(\nu)$ if and only if it has unrefined endoparameter~$(\mathrm{LL}_{\G}^{\text{wild}})^{-1}(\nu)$.  Moreover, ~$\pi\in \Irr(\G)$ lies in~$\Rep_{\overline{\mathbb{Z}}[1/p]}(\nu)$ if and only if~$\mathrm{LL}(\pi)\vert_{\P_\F}\simeq \nu$.
\end{corollary}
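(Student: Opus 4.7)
The plan is to assemble three ingredients already at our disposal: (a) the block decomposition over $\overline{\mathbb{Z}}[1/p]$ by endo-parameters for $(\G,h)$ provided by Theorem \ref{endosplit} together with Corollary \ref{EPCor}; (b) the forgetful map from endo-parameters for $(\G,h)$ to unrefined endo-parameters in $\mathcal{EP}(\G)$ of Remark \ref{remEndoparG}\eqref{remEndoparGi}; and (c) the bijection $\LL_{\G}^{\text{wild}}\colon\mathcal{EP}(\G)\to\Phi(\mathscr{P}_\F,\G)$ of the preceding theorem.

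First I would start from the decomposition
\[
\Rep_{\overline{\mathbb{Z}}[1/p]}(\G)\;=\;\prod_{\ft}\Rep_{\overline{\mathbb{Z}}[1/p]}(\ft),
\]
where $\ft$ runs over endo-parameters for $(\G,h)$ (here $h$ is the symplectic form defining $\G$). By Corollary \ref{EPCor} each factor is an $\overline{\mathbb{Z}}[1/p]$-block, and by definition of $\Rep_{\overline{\mathbb{Z}}[1/p]}(\ft)$ a smooth $\overline{\mathbb{Z}}[1/p]$-representation lies in this factor if and only if all of its irreducible subquotients have endo-parameter $\ft$. Next I would group the factors according to the forgetful map sending $\ft$ to its underlying unrefined endo-parameter $\bar\ft\in\mathcal{EP}(\G)$: setting
\[
\Rep_{\overline{\mathbb{Z}}[1/p]}(\bar\ft)\;:=\;\prod_{\ft\mapsto\bar\ft}\Rep_{\overline{\mathbb{Z}}[1/p]}(\ft),
\]
the above decomposition refines to a coarser product indexed by $\mathcal{EP}(\G)$, and a representation lies in $\Rep_{\overline{\mathbb{Z}}[1/p]}(\bar\ft)$ if and only if all of its irreducible subquotients have unrefined endo-parameter $\bar\ft$. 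Reindexing via $\LL_{\G}^{\text{wild}}$ then yields the first assertion.

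It remains to verify the moreover statement for $\pi\in\Irr(\G)$. By the characterization just obtained, $\pi$ lies in $\Rep_{\overline{\mathbb{Z}}[1/p]}(\nu)$ if and only if its unrefined endo-parameter equals $(\LL_{\G}^{\text{wild}})^{-1}(\nu)$. Since $\LL_{\G}^{\text{wild}}$ was constructed in the preceding theorem precisely so as to be compatible with the local Langlands correspondence, applied to $\pi$ this compatibility reads
\[
\LL_{\G}^{\text{wild}}\bigl(\text{unrefined endo-parameter of }\pi\bigr)\;\simeq\;\LL_{\G}(\pi)\big|_{\mathscr{P}_\F},
\]
and both sides equal $\nu$ if and only if $\pi$ lies in the factor indexed by $\nu$.

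I expect no serious obstacle here: once the ramification theorem of Blondel--Henniart--Stevens \cite{BHS} has been upgraded (in the preceding theorem) to a canonical bijection $\mathrm{LL}_\G^{\text{wild}}$ on the level of unrefined endo-parameters compatible with $\LL_\G$, the corollary is a formal consequence of the block decomposition over $\overline{\mathbb{Z}}[1/p]$ established earlier. The only mild point to handle carefully is the passage from endo-parameters for $(\G,h)$ to unrefined endo-parameters via Remark \ref{remEndoparG}: one must check that this forgetful map is well-defined independent of the choice of hermitian form defining $\G$, but this is built into the definition of $\mathcal{EP}(\G)$.
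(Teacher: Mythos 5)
Your argument is correct and reconstructs what the paper leaves implicit (the paper supplies no proof of this corollary). The chain (endo-parameter block decomposition from Theorem \ref{endosplit} and Corollary \ref{EPCor}) $\to$ (coarsening along the forgetful map of Remark \ref{remEndoparG}) $\to$ (reindexing by the bijection $\mathrm{LL}_\G^{\text{wild}}$ of the preceding theorem) is exactly the formal derivation intended, and the ``moreover'' statement for irreducible $\pi$ is precisely the commutativity of the square $\Irr(\G)\to\Cusp(\G)\to\mathcal{EP}(\G)$ against $\Irr(\G)\to\Phi(\mathscr{W}_\F,\G)^{ss}\to\Phi(\mathscr{P}_\F,\G)$ that the preceding theorem establishes via \cite{BHS} and compatibility with parabolic induction. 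One small remark: the invocation of Corollary \ref{EPCor} (each endo-factor is a single $\overline{\mathbb{Z}}[1/p]$-block) is harmless but not actually needed here, since the corollary only asserts a decomposition of categories rather than that each $\Rep_{\overline{\mathbb{Z}}[1/p]}(\nu)$ is indecomposable; likewise, for $\Sp_{2n}(\F)$ the symplectic form is unique up to isomorphism so the ``independence of $h$'' point is automatic, though your appeal to the definition of $\mathcal{EP}(\G)$ is fine.
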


\begin{remark}
%\begin{enumerate}
%\item 
For~$\mathfrak{t}\in \Phi(\mathscr{P}_\F,\G)$, to compute~$(\mathrm{LL}_{\G}^{\text{wild}})^{-1}( \mathfrak{t})$ explicitly, assuming the compatible picture we have, we can use any~$\pi\in\Irr(\G)$ with endoparameter~$\mathfrak{t}$ such that we can compute~$\LL(\pi)\mid_{\mathscr{P}_\F}$.  Using compatibility of~$\LL_\G$ with parabolic induction, for~$\mathfrak{t}$ with~$\M(\mathfrak{t})=\T$ (a maximal split torus in~$\Sp_{2n}(\F)$), we recover
\[(\mathrm{LL}_{\G}^{\text{wild}})^{-1}( \mathfrak{t})=(\phi_{\presuper{L}\T\hookrightarrow \LG}\circ\LL_{\T}(\chi))\mid_{{\mathscr{P}_\F}},\]
where~$\phi_{\presuper{L}\T\hookrightarrow \LG}$ is the (unique up to conjugacy) inclusion of~$L$-groups associated to~$\T\hookrightarrow \G$, and~$\chi\in\Irr(\T)$ is such that~$i_{\T,\B}^{\G}(\chi)$ has endo-parameter~$\mathfrak{t}$.  In particular, the special case when~$\chi$ is trivial, shows that if~$\pi\in\Irr(\G)$ has depth zero, then~$\LL_{\G}(\pi)$ is tame.
\end{remark}

%%%%%%%%%%%%%%%%%%%%%%%%%%%%%
%\appendix
%%%%%%%%%%%%%%%%%%%%%%%%%%%%%

\bibliographystyle{plain}
\bibliography{./Endosplitting}

\end{document}